\newtheorem{thm}{Theorem}[section]
\newtheorem{prop}[thm]{Proposition}
\newtheorem{lemma}[thm]{Lemma}
\newtheorem{defn}[thm]{Definition}
\newtheorem{rmks}[thm]{Remarks}
\newtheorem{rmk}[thm]{Remark}
\newtheorem{cor}[thm]{Corollary}
\newcommand{\cat}[1]{\mathcal #1}
\newcommand{\x}{\times}
\newcommand{\hs}[1]{\hspace{#1pt}}
\newcommand{\tab}{\hs{10}}
\newcommand{\id}{\mathbb{I}}
\newcommand{\defeq}{\stackrel{\rm def}{=}}
\renewcommand{\subset}{\subseteq}
\renewcommand{\supset}{\supseteq}
\newcommand{\Z}{\mathbb{Z}}
\newcommand{\embed}{\hookrightarrow}
\renewcommand{\O}{\cat{O}}
\newcommand{\iso}{\ \tilde{\to}\ }
\newcommand{\codim}[2]{{\rm codim}_{#2}\ #1}
\newcommand{\squarediagramword}[8]{
\begin{center}
$\begin{CD}
#1      @>#5>>  #2     \\
@V#6VV          @VV#7V \\
#3      @>#8>>  #4
\end{CD}$
\end{center}
}
\newcommand{\A}{\mathbb{A}}
\renewcommand{\P}{\mathbb{P}}
\newcommand{\repn}{representation}
\newcommand{\ga}{\alpha}
\newcommand{\gb}{\beta}
\newcommand{\gph}{\phi}
\newcommand{\gp}{\pi}
\newcommand{\gx}{\xi}
\newcommand{\gps}{\psi}
\newcommand{\gs}{\sigma}
\newcommand{\gd}{\delta}
\newcommand{\gm}{\mu}
\newcommand{\gep}{\epsilon}
\newcommand{\gPS}{\Psi}
\newcommand{\gPH}{\Phi}
\newcommand{\gD}{\Delta}
\newcommand{\gO}{\Omega}
\newcommand{\nbundle}[2]{\cat{N}_{#1 \embed #2}}
\newcommand{\claimend}{\tab $\triangle$}
\newcommand{\dual}[1]{#1^{\vee}}
\newcommand{\C}{\mathbb{C}}
\newcommand{\pt}{{\rm Spec}\, k}
\newcommand{\morp}{morphism}
\newcommand{\etale}{\'etale}
\newcommand{\wrt}{with respect to}
\newcommand{\withoutlog}{ithout loss of generality}
\newcommand{\longto}{\longrightarrow}
\newcommand{\singular}[1]{{\rm Sing}(#1)}
\newcommand{\blowup}[2]{{\rm Blow}_{#2}{#1}}
\newcommand{\disjoint}{\amalg}
\newcommand{\spec}[1]{{\rm Spec}\, #1}
\newenvironment{statementslist}{\begin{tabular}[t]{p{15pt}p{380pt}}}{\end{tabular}}
\newcommand{\kernel}[1]{{\rm kernel}\, #1}
\newcommand{\image}[1]{{\rm image}\, #1}
\newcommand{\stricttransform}[1]{\left\langle #1 \right\rangle}
\newcommand{\gradient}[1]{\nabla #1}
\newcommand{\picard}[2]{{\rm Pic}^{#1}(#2)}
\newcommand{\Endo}[1]{{\rm End}\,(#1)}
\newcommand{\homo}{homomorphism}
\renewcommand{\char}[1]{{\rm char\,}#1}
\newcommand{\ratmap}{\dashrightarrow}
\newcommand{\Proj}{{\rm Proj}\ }
\newcommand{\reduced}[1]{#1_{\rm red}}
\newcommand{\rank}[1]{{\rm rk\,#1}}
\newcommand{\cartdiv}[2]{{\rm CaCl}^{#1}(#2)}
\newcommand{\weildiv}[2]{{\rm Cl}^{#1}(#2)}
\newcommand{\torus}{\mathbb{G}_m}
\newcommand{\gsection}[2]{{\rm H^0}(#1, #2)}
\newcommand{\genby}[1]{\langle\, #1\, \rangle}
\newcommand{\equi}{equivariant}
\newcommand{\proj}{projective}
\newcommand{\qproj}{quasi-projective}
\newcommand{\sm}{smooth}
\newcommand{\smcat}{Sm}
\newcommand{\gsmcat}[1]{#1\text{-}Sm}
\newcommand{\gvar}[1]{#1\text{-}Var}
\newcommand{\lazard}{\mathbb{L}}
\newcommand{\glin}[1]{$#1$-linearized}
\newcommand{\cob}[3]{\Omega_{#1}^{#2}(#3)}
\newcommand{\cobcomp}[3]{\hat{\Omega}_{#1}^{#2}(#3)}
\newcommand{\bigcob}[3]{\overline{\Omega_{#1}^{#2}}(#3)}
\renewcommand{\L}{\cat{L}}
\newcommand{\adinvsh}{admissible\ invertible\ shea}
\newcommand{\adtower}{admissible\ tower}
\newcommand{\qadtower}{quasi-admissible\ tower}
\newcommand{\rsncd}{reduced\ strict\ normal\ crossing\ divisor}
\newcommand{\girred}[1]{$#1$-irreducible}
\newcommand{\rou}{root\ of\ unity}
\newcommand{\dpr}{ouble point relation}
\newcommand{\fgl}{formal group law}
\newcommand{\apicard}[2]{{\rm A}\picard{#1}{#2}}
\newcommand{\inv}{invariant}
\newcommand{\geodim}{{\rm geodim}\,}
\newcommand{\nilpnum}[1]{{\rm Nilp}(#1)}
\newcommand{\basicmod}[4]{\lazard {\rm Z}^{#1,#2}_{#3}(#4)}
\newcommand{\Endoinf}[1]{\Endo{#1}_{\rm inf}}
\newcommand{\Endofin}[1]{\Endo{#1}_{\rm fin}}
\newcommand{\bigbasicmod}[4]{\overline{\lazard {\rm Z}^{#1,#2}_{#3}}(#4)}
\newcommand{\gprojsmcat}[1]{#1\text{-}ProjSm}
\begin{document}

%Title
\title{Equivariant Algebraic Cobordism and Equivariant Formal Group Laws}
\author{Chun Lung Liu}
\begin{abstract}
We introduce an equivariant algebraic cobordism theory $\cob{}{G}{-}$ for algebraic varieties with $G$-action, where $G$ is a split diagonalizable group scheme over a field $k$. It is done by combining the construction of the algebraic cobordism theory $\Omega(-)$ by F.~Morel and M.~Levine, with the notion of ``$(G,F)$-formal group law'' with respect to a complete $G$-universe and complete $G$-flag $F$ as introduced by M.~Cole, J.~P.~C.~Greenlees and I.~Kriz. In particular, we use their corresponding representing ring $\lazard_G(F)$ in place of the Lazard ring $\lazard$. We show that localization property and homotopy invariance property hold in $\cob{}{G}{-}$. We also prove the surjectivity of the canonical map $\lazard_G(F) \to \cob{}{G}{\pt}$. Moreover, we give some comparison results with $\Omega(-)$, the \equi\ algebraic cobordism theory introduced by J.~Heller and J.~Malag\'{o}n-L\'{o}pez, the equivariant K-theory and Tom Dieck's equivariant cobordism theory (when $k = \C$). In particular, we proved the equivariant Conner-Floyd isomorphism when $\char{k} = 0$. Finally, we show that our definition of $\cob{}{G}{-}$ is independent of the choice of $F$.
\end{abstract}

\address{Department of Mathematics, Rm 202A, Lady Shaw Building, The Chinese University of Hong Kong, Shatin, Hong Kong} 
\email{clliu@math.cuhk.edu.hk}
\date{\today}

\maketitle

\medskip

\bigskip
%\begin{comment}
\centerline{\sc Contents}
\medskip

\noindent \begin{statementslist}
\S 1. & Introduction \\
\S 2. & Notations and assumptions \\
\S 3. & Preliminaries \\
\S 4. & Definition of the \equi\ algebraic cobordism theory \\
\S 5. & Basic properties \\
\S 6. & The \equi\ algebraic cobordism ring of a point \\
\S 7. & Fundamental properties \\
\S 8 & Special theory and some advanced properties \\
\S 9 & Comparison with other algebraic cobordism theories \\
\S 10. & More on $\cob{\rm Tot}{G}{-}$ \\
\S 11. & Comparison with the \equi\ K-theory \\
\S 12. & Realization functor \\
\S 13. & Flag dependency \\
References & 
\end{statementslist}

\bigskip
\medskip

\medskip

\section{Introduction}

In their paper \cite{universal alg cobor}, M.~Levine and F.~Morel define an algebraic cobordism theory $\Omega(-)$, which is an analogue of the complex cobordism theory, in spite of the absence of notion of boundary in algebraic geometry. Roughly speaking, if $X$ is a separated scheme of finite type over the ground field $k$, then we consider elements of the form $(f : Y \to X, \L_1, \ldots, \L_r)$ where $f$ is \proj, $Y$ is an irreducible \sm\ variety over $k$ and $\L_i$ are invertible sheaves over $Y$ (the order of $\L_i$ does not matter and $r$ can be zero). There is a natural notion of isomorphism of elements of this form. Denote the free abelian group generated by isomorphism classes of such elements by $Z(X)$. Let $\underline{\Omega}(X)$ be the quotient of $Z(X)$ by the subgroup corresponding to imposing the \textbf{(Dim)} and \textbf{(Sect)} axioms  (following the notations in \cite{universal alg cobor}). In a nutshell, axiom \textbf{(Dim)} kills all elements of the form $(\id_Y, \L_1, \ldots, \L_n)$ whenever $n > \dim Y$ and axiom \textbf{(Sect)} equalizes the elements $(\id_Y, \L)$ and $(D \embed Y)$ if $D$ is a smooth divisor on $Y$ such that $\L \cong \O_Y(D)$. The algebraic cobordism group $\Omega(X)$ is then defined to be the quotient of $\lazard \otimes_{\Z} \underline{\Omega}(X)$, where $\lazard$ is the Lazard ring, by the $\lazard$-submodule corresponding to imposing the formal group law \textbf{(FGL)}.

This cobordism theory satisfies a number of basic properties and some more advanced properties like the localization property and the homotopy invariance property. Moreover, the cobordism ring $\Omega(\pt)$ is isomorphic to the Lazard ring $\lazard$ when the characteristic of $k$ is 0, which is what we expect from the complex cobordism theory (see Corollary 1.2.11 and Theorem 4.3.7 in \cite{universal alg cobor}).

It is also possible to construct an algebraic cobordism theory via a more geometric approach. Suppose $X$ is a \sm\ variety over $k$. One may consider the abelian group $M(X)^+$ generated by isomorphism classes of \proj\ \morp s $f : Y \to X$, where $Y$ is a \sm\ variety over $k$. A relation called ``double point relation'' is introduced in \cite{alg cobor by DPR} and it is shown that the theory $\omega(-)$ obtained by imposing this relation on $M(-)^+$ is canonically isomorphic to the theory $\Omega(-)$ under the assumption that the characteristic of $k$ is 0 (see Theorem 1 of \cite{alg cobor by DPR}). 

The current paper contributes to the development of equivariant algebraic cobordism theory for varieties with group action. Following the pattern in topology, we can expect to also have several different approaches to defining \equi\ algebraic cobordism theory. For the analogue of one of the homotopy theoretic cobordism theories in the algebraic geometry setup, one can employ Totaro's approximation of $EG$, which leads to a definition given by taking inverse limit of a system of ``good pairs" (see \cite{homo equi alg cobor} for details). Another, possibly equivalent, approach is pursued by Krishna in \cite{homo equi alg cobor 2}.

We are more interested in a geometric approach, i.e., by considering varieties with $G$-action.
One idea is to impose the $G$-action on the d\dpr. This approach is pursued in \cite{geo equi alg cobor}. Due to the lack of transversality in the
\equi\ setting, a generalized version of the double point relation is introduced in \cite{geo equi alg cobor} and an \equi\ algebraic cobordism theory $\cat{U}^G(-)$ is defined accordingly. It is also shown that this generalized double point relation holds in the non-\equi\ theory $\omega(-)$ (see Corollary 3.7 in \cite{geo equi alg cobor}). 

The theory $\cat{U}^G(-)$ has a very strong geometric flavor, but it is insufficient to prove the localization property. This is mainly due to the absence of the (first) Chern class operator for arbitrary \glin{G}\ invertible sheaves in this theory. For this reason, it seems appropriate to define an \equi\ algebraic cobordism theory following the original ideas in \cite{universal alg cobor}, and that is our approach in this paper.

The crucial part in trying to define an \equi\ version of $\Omega(-)$ is on finding the proper notion of ``$G$-\equi\ formal group law'' and its representing ring. Fortunately, this issue is addressed by M.~Cole, J.~P.~C.~Greenlees and I.~Kriz in their paper \cite{equi FGL}. Suppose $G$ is a compact abelian Lie group. For a complete $G$-universe $\cat{U}$ (over $\mathbb{C}$) and complete $G$-flag $F$, they give a definition of $(G,F)$-\equi\ formal group law (definition 12.2 in \cite{equi FGL}) and prove that (Corollary 14.3 in \cite{equi FGL}) there is a corresponding representing ring $\lazard_G(F)$ (For a flag-independent definition, see definition 11.1 in \cite{equi FGL}).

In this paper, we will focus on the following configuration on the group $G$ and ground field $k$ : $G$ would be a split diagonalizable group (product of a finite abelian group $G_f$ and a split torus $G_t$) and $k$ would be a field with characteristic 0 or $p$ where $p$ is relatively prime to the order of $G_f$. Moreover, for technical reason, we will further assume that $k$ contains a primitive $e$-th root of unity where $e$ is the exponent of $G_f$. 

Let us briefly justify our assumptions on $G$ and $k$ here first. We focus on abelian group $G$ simply because the same assumption is imposed in the construction on $\lazard_G(F)$. The other rather technical assumptions on $G$ and $k$ are imposed because we need to guarantee that any finite dimensional $G$-representation over $k$ can be written as the direct sum of 1-dimensional representations (holds automatically over $\mathbb{C}$). It is worth mentioning that such fact plays an essential role in defining the equivariant formal group law and hence $\lazard_G(F)$ (see \cite{equi FGL}).

Now, similar to the theory $\Omega(-)$, one may define an \equi\ algebraic cobordism theory by imposing the \textbf{(Sect)}, \textbf{(Dim)} axioms and the \equi\ formal group law \textbf{(EFGL)} on $\lazard_G(F) \otimes_{\Z} Z^G(X)$, where $Z^G(X)$ is the free abelian group generated by isomorphism classes of elements of the form $(f : Y \to X, \L_1, \ldots, \L_r)$. But the \textbf{(Dim)} axiom would imply that any elements with positive (cohomological) degree will vanish. In our context, this would be completely unnatural because $BG$ is not a zero-dimensional object in general and the homotopical equivariant cobordism ring is $MU^*({\rm pt} \times^G EG) = MU^*(BG).$ Moreover, the canonical map from $\lazard_G(F)$ to the \equi\ algebraic cobordism ring over $\pt$ will not be an iso\morp\ as we want (see Remark \ref{rmk why drop dim axiom}). Therefore, it seems that the only reasonable approach is to drop the \textbf{(Dim)}\ axiom.

Roughly speaking, our \equi\ algebraic cobordism theory $\cob{}{G}{-}$ is defined as follow. For a $G$-variety $X$ over $k$, we define $\basicmod{G}{F}{}{X}$ to be the $\lazard_G(F)$-module generated by infinite sums of the form
$$\sum_{I \geq 0} a_I [f : Y \to X, V^{i_1}_{S_1}(\L_1), V^{i_2}_{S_2}(\L_2), \ldots, V^{i_r}_{S_r}(\L_r)]$$
where $I$ is the multi-index $(i_1, \ldots, i_r)$, $a_I$ are elements in $\lazard_G(F)$ and $V^{i_j}_{S_j}(\L_j)$ is a ``twisted sequence'' of $\L_j$ (see equation (\ref{eqn basic element}) for details). Then $\cob{}{G}{X}$ is defined as the quotient of $\basicmod{G}{F}{}{X}$ by imposing the \textbf{(Sect)} and \textbf{(EFGL)} axioms. It is worth mentioning that, if $F'$ is another complete $G$-flag and $\cob{}{G,F}{-}$, $\cob{}{G,F'}{-}$ are the theories defined upon $F$, $F'$ respectively, then $\cob{}{G, F}{-}$ and $\cob{}{G, F'}{-}$ are canonically isomorphic (see Proposition \ref{prop flag independent}). Hence, our definition is indeed flag-independent.

With the aid of the canonically defined (first) Chern class operator, we are able to show many interesting results in this theory (when $\char{k} = 0$). We show that the canonical map $\lazard_G(F) \to \cob{}{G}{\pt}$ is surjective (Theorem \ref{thm gen by lazard}). Moreover, if the completion map $\lazard_G(F) \to \hat{\lazard}_G(F)$, \wrt\ a canonically defined ideal, is injective, then $\lazard_G(F) \to \cob{}{G}{\pt}$ is an iso\morp\ (Corollary \ref{cor lazard iso theory over a pt 2}). We also prove that the localization property and the homotopy invariance property hold in our theory. 

As in $\cob{}{}{-}$, one might also expect the projective bundle formula to hold in our theory $\cob{}{G}{-}$. But it will then contradict with the fact that $MU_G(\P(\cat{U}))$, where $MU_G(-)$ is Tom Dieck's equivariant cobordism theory, is not a power series ring over $MU_G$. To remedy this situation, we introduce a ``special theory" $\cob{G}{s}{-}$, which can be thought as the middle ground between our \equi\ algebraic cobordism theory and the \equi\ K-theory. We then manage to show that both the projective bundle formula and the extended homotopy property hold in $\cob{G}{s}{-}$. As a consequence, the higher Chern class operators of a \glin{G}\ locally free sheaf can also be defined (see section \ref{sect special theory} for more details).

Furthermore, we establish some interesting comparison results between our \equi\ algebraic cobordism theory and other theories. In particular, we show that the ``forgetful map'' $\cob{}{G}{-} \to \Omega(-)$ is well-defined and it is an iso\morp\ when $G$ is the trivial group (see Proposition \ref{prop iso to non equi theory} and Corollary \ref{cor map to non equi theory}). In addition, when $\char{k} = 0$, we show that there is an abelian group \homo
$$\cob{}{G}{-} \stackrel{\Psi_{\rm Tot}}{\longto} \cob{\rm Tot}{G}{-}$$
where $\cob{\rm Tot}{G}{-}$ is the \equi\ algebraic cobordism theory defined by J.~Heller and J.~Malag\'{o}n-L\'{o}pez using Totaro's approximation of $EG$ as in \cite{homo equi alg cobor} (see Proposition \ref{prop map to homo equi cobor theory}), which can be thought as an analogue of the well-known map in Topology
$$MU_G(-) \to MU(- \x^G EG)$$
where $MU_G(-)$ is Tom Dieck's equivariant cobordism theory. More importantly, as an analogue to Corollary 4.2.12 in \cite{universal alg cobor}, we prove the \equi\ Conner-Floyd isomorphism, i.e., there is a canonical isomorphism
$$R(G)[v,v^{-1}] \otimes_{\lazard_G(F)} \cob{}{G}{-} \to K_0(G;-)[v,v^{-1}]$$
when $\char{k} = 0$ (see Theorem \ref{thm equi Conner Floyd}).

As mentioned in section 13 of \cite{equi FGL 2}, there is a canonical ring \homo
$$\lazard_G(F) \to MU_G$$
and it is conjectured to be an isomorphism. Therefore, we believe our theory $\cob{}{G}{-}$ can be considered as an algebraic analogue of Tom Dieck's equivariant cobordism $MU_G(-)$. This is justified by the realization functor
$$\cob{G}{*}{-} \to MU^{2*}_G(-)$$
when $k = \C$ (Theorem \ref{thm realization fct}).

Even though we work in a less general configuration than the previous approach in defining an equivariant algebraic cobordism theory given by Heller and Malag\'on-L\'opez, we managed to construct a canonical realization functor from our theory to Tom Dieck's theory $MU_G(-)$, which is lacking in $\cob{\rm Tot}{G}{-}$. Moreover, since their approaches are basically algebraic analogues of the equivariant cobordism theory $MU( - \times^G BG)$ in Topology, constructing a realization functor from $\cob{\rm Tot}{G}{-}$ to $MU_G(-)$ would be impractical. Furthermore, notice that under our assumptions on $G$, $k$ and when $\char k = 0$, we have 
$$\lazard_G(F) \stackrel{f}{\longto} \cob{}{G}{\pt} \stackrel{\Psi_{\rm Tot}}{\longto} \cob{\rm Tot}{G}{\pt}$$
(see Proposition \ref{prop map to homo equi cobor theory 2}). As mentioned before, $f$ is surjective and $\Psi_{\rm Tot} \circ f$ is nothing but the completion map \wrt\ a canonically defined ideal. Hence, in a certain sense, our theory $\cob{}{G}{-}$ captures more information than the theory $\cob{\rm Tot}{G}{-}$ defined by Heller and Malag\'on-L\'opez.

As the definition of our theory relies heavily on the equivariant formal group law and the equivariant Lazard ring, generalization of our theory to accept non-abelian group or arbitrary ground field $k$ would not be possible until such generalization is achieved in the context of the equivariant formal group law.

Here is the outline of this paper. In section \ref{sect notation}, we introduce some notations and fix some basic assumptions that we use throughout the paper. In section \ref{sec preliminaries}, we state and prove a number of basic, relatively general facts. Then we give a formal definition of our \equi\ algebraic cobordism theory $\cob{}{G}{-}$ in section \ref{sect defn of theory}. In section \ref{sect basic properties}, we prove some basic properties and show that the \equi\ versions of the double point relation, the blow up relation and the extended double point relation hold in $\cob{}{G}{-}$. 

In section \ref{sect cobor ring}, we investigate the \equi\ algebraic cobordism ring $\cob{}{G}{\pt}$. In particular, we show the following Theorem (Theorem \ref{thm gen by lazard} in the text) :

\medskip

\noindent \textbf{Theorem 1.}\tab {\it Suppose $\char{k} = 0$ and $k$ contains a primitive $e$-th \rou, where $e$ is the exponent of $G_f$. Then the canonical $\lazard_G(F)$-algebra homo\morp 
$$\lazard_G(F) \to \cob{}{G}{\pt},$$
which sends $a$ to $a\, [\id_{\pt}]$, is surjective.}

\medskip

\noindent Under the assumption that $\char{k} = 0$, we prove some more advanced properties, namely, the localization property and the homotopy invariance property in section \ref{sect fundamental properties}. 

In section \ref{sect special theory}, we define the notion ``special theory" $\cob{G}{s}{-}$. We then prove that the projective bundle formula and the extended homotopy property hold in $\cob{G}{s}{-}$ and define the higher Chern class operators of \glin{G}\ locally free sheaves of arbitrary finite ranks accordingly.

In section \ref{sect comparison with other theories}, we compare our theory $\cob{}{G}{-}$ to M.~Levine and F.~Morel's non-\equi\ theory $\Omega(-)$ and J.~Heller and J.~Malag\'{o}n-L\'{o}pez's \equi\ algebraic cobordism theory $\cob{\rm Tot}{G}{-}$ (when $G$ is a split torus). In section \ref{sect more on Totaro cobor}, we extend the definition of $\cob{\rm Tot}{G}{-}$ to allow $G$ to be split diagonalizable, compute the ring structure of $\cob{\rm Tot}{G}{\pt}$ and generalize our results in section \ref{sect comparison with other theories}. As a consequence, we prove the following Theorem (Corollary \ref{cor lazard iso theory over a pt 2} in the text) :

\medskip

\noindent \textbf{Theorem 2.}\tab {\it Suppose $\char{k} = 0$ and $k$ contains a primitive $e$-th \rou, where $e$ is the exponent of $G_f$. If the completion map $\lazard_G(F) \to \hat{\lazard}_G(F)$ is injective, then the canonical ring homo\morp\
$$\lazard_G(F) \to \cob{}{G}{\pt}$$
is an iso\morp.}

\medskip

In section \ref{sect K theory}, we compare our theory to the \equi\ K-theory. To be more precise, we prove the \equi\ Conner-Floyd isomorphism (Theorem \ref{thm equi Conner Floyd} in the text) :

\medskip

\noindent \textbf{Theorem 3.}\tab {\it Suppose $\char{k} = 0$ and $k$ contains a primitive $e$-th \rou, where $e$ is the exponent of $G_f$. Then there is a canonical ring \homo\ $\lazard_G(F) \to R(G)[v,v^{-1}]$, where $R(G)$ is the character ring of $G$. Moreover, there is a canonical, $R(G)[v,v^{-1}]$-module isomorphism
$$\gPS_K : R(G)[v,v^{-1}] \otimes_{\lazard_G(F)} \cob{}{G}{X} \to K_0(G;X)[v,v^{-1}],$$
for any \sm\ $G$-variety $X$, and it commutes with \proj\ push-forward, \sm\ pull-back, (first) Chern class operators and external product.}

It is worth mentioning that in Topology, a much stronger result is obtained. The analogue of $\Psi_K$ will be an isomorphism for any compact Lie group $G$ (see \cite{equi conner floyd}). But since the construction of our theory is based on the equivariant Lazard ring, which is only defined for compact abelian Lie groups, we can not extend the definition of our theory to include non-abelian group $G$ until such extension is available for the equivariant Lazard ring.

We should also point out that, in Topology, the equivariant Conner-Floyd isomorphism holds for both geometric cobordism and homotopical cobordism, i.e.,
$$\tilde{\cat{U}}_{\text{Top}, G}(X) \otimes_{\cat{U}_{\text{Top}, G}} K_G \iso \tilde{K}_G(X)$$
when $G$ is a finite group,
$$\widetilde{MU}_{G}(X) \otimes_{MU_{G}} K_G \iso \tilde{K}_G(X)$$
when $G$ is a compact Lie group (see Theorem A, B in \cite{equi conner floyd}). But unfortunately, in algebraic geometry, the only theory available so far as an analogue to the geometric cobordism theory $\tilde{\cat{U}}_{\text{Top}, G}(-)$ in Topology is the one defined by generalized double point relation, denoted as $\cat{U}^G(-)$ (see \cite{geo equi alg cobor}) and a canonical map from $\cat{U}^G$ to $K_G$ is lacking. 

\medskip

In section \ref{sect realization functor}, we recall the definition of Tom Dieck's equivariant cobordism theory $MU_G(-)$ and the Gysin \homo\ (\proj\ push-forward) and show that there is a canonical realization functor, when $k = \C$ (Theorem \ref{thm realization fct}\ in the text) :

\medskip

\noindent \textbf{Theorem 4.}\tab {\it There is a canonical $\lazard_G(F)$-\homo
$$\Psi_{\rm Top} : \cob{G}{}{X} \to MU_G(X),$$
for any \sm, \proj\ $G$-variety $X$, and it commutes with \proj\ push-forward, \sm\ pull-back, (first) Chern class operators and external product. When $X$ is equidimensional, there is a canonical grading on $\cob{G}{}{X}$ and $\Psi_{\rm Top} : \cob{G}{*}{X} \to MU_G^{2*}(X).$}

\medskip

Finally, we devote the last section to showing that our definition of $\cob{}{G}{-}$ is actually independent of the choice of the complete $G$-flag $F$.

\medskip

\begin{center}
\textbf{Acknowledgements}
\end{center}

I would like to thank M.~Levine for useful conversations and helpful comments.

\bigskip
\bigskip

\bigskip
\bigskip

\section{Notations and assumptions}
\label{sect notation}

In this paper, all schemes are over a ground field $k$. $G$ is a split diagonalizable group scheme, i.e., the product of a finite abelian group scheme, denoted by $G_f$, and a split torus, denoted by $G_t$. We will assume $\char{k}$ is either zero or relatively prime to the order of $G_f$. We will also assume that $k$ contains a primitive $e$-th \rou, where $e$ is the exponent of $G_f$. Hence, any finite dimensional $G$-\repn\ can be written as direct sum of 1-dimensional $G$-\repn s. We call such a pair $(G,k)$ split. 

We denote the category of \sm, \qproj\ schemes over $k$ with $G$-action by $\gsmcat{G}$ and the category of reduced, \qproj\ schemes over $k$ with $G$-action by $\gvar{G}$. The identity \morp\ will be denoted by $\id_X : X \to X$. We will often use the symbol $\gp_i$ to denote the projection of $X_1 \x \cdots \x X_n$ onto its $i$-th component $X_i$ and $\pi_X$ to denote the structure \morp\ $X \to \pt$. If $X$, $Y$ are two objects in $\gvar{G}$, then $X \x Y$ is considered to be in $\gvar{G}$ with $G$ acting diagonally. An object $Y \in \gvar{G}$ is called \girred{G}\ if there exists an irreducible component $Y'$ of $Y$ such that $G \cdot Y' = Y$. The set of isomorphism classes of \glin{G}\ invertible sheaves over $X$ will be denoted by $\picard{G}{X}$ (see definition 1.6 in \cite{GIT} for the definition of $G$-linearized invertible sheaves).

Recall that a complete $G$-universe $\cat{U}$ is a countably infinite $G$-\repn\ that contains countably many copies of any finite-dimensional irreducible $G$-\repn. In particular, we may take
$$\cat{U} =\ \bigoplus_{\substack{V \text{ f.d. irred. $G$-repn.} \\ n \geq 1}} V^{\oplus n}$$
A complete $G$-flag $F$ is a sequence of $G$-\repn s 
$$0 = V^0 \subset V^1 \subset V^2 \subset \cdots$$
such that $\dim V^i/V^{i-1} = 1$ for all $i \geq 1$ and any finite dimensional $G$-\repn\ would be a sub\repn\ of $V^i$ for some $i$. In particular, any 1-dimensional $G$-character will appear as $V^i/V^{i-1}$ infinitely many times.

We will fix a complete $G$-universe $\cat{U}$ and a complete $G$-flag $F$ throughout this paper. We denote the 1-dimensional $G$-characters $V^i/V^{i-1}$ by $\ga_i$. Therefore, 
$$V^i \cong \alpha_1 \oplus \cdots \oplus \alpha_i$$ 
for any $i \geq 1$. For technical reason, we will assume $\ga_1 = \gep$, the trivial character. All $G$-characters are 1-dimensional unless stated otherwise. Each character $\alpha$ defines a 1-dimensional $G$-\repn, and hence a \glin{G}\ invertible sheaf over $\pt$, which will still be denoted by $\alpha$. Moreover, for an object $X \in \gvar{G}$, we will simply denote the sheaf $\pi^*_X \alpha$ by $\alpha$, if there is no confusion.

For a \morp\ $f : X \to Y$ between schemes and a point $y \in Y$, we denote the fiber product $\spec{k(y)} \x_Y X$ by $f^{-1}(y)$ where $k(y)$ is the residue field of $y$ and $\spec{k(y)} \to Y$ is the \morp\ corresponding to $y$. Similarly, if $Z$ is a subscheme of $Y$, then we denote $Z \x_{Y} X$ by $f^{-1}(Z)$. If $A, B$ are both subschemes of $X$, then we denote $A \x_{X} B$ by $A \cap B$.

In this paper, for a \girred{G}\ object $X \in \gvar{G}$, a $G$-prime divisor $D$ on $X$ is a $G$-\inv, \girred{G}, reduced, codimension 1, closed subscheme of $X$. A $G$-\inv\ (Weil) divisor $D$ on $X$ is a linear combination $\sum_i m_i D_i$ where $D_i$ are distinct, $G$-prime divisors on $X$. We call such a divisor \sm\ if all the multiplicities $m_i$ are 1 and $D_i$ are \sm\ and disjoint. We call a $G$-\inv\ divisor $A_1 + \cdots + A_n$ \rsncd\ if each $A_i$ is a \sm\ $G$-\inv\ divisor and, for each $I \subset \{1, \ldots, n\}$, the closed subscheme $\cap_{i \in I}\, A_i$ is \sm\ with codimension $|I|$ in $X$. We say two $G$-\inv\  divisors $A$, $B$ on $X$ are $G$-equivariantly linearly equivalent if $A - B = {\rm div } f$ for some $f \in {\rm H}^0(X,\cat{K}^*)^G$ where $\cat{K}$ is the sheaf of total quotient rings on $X$ (assuming $X$ is regular in codimension 1).

For a locally free sheaf $\cat{E}$ of rank $r$ over a $k$-scheme $X$, the corresponding vector bundle $E$ over $X$ will be given by
$$E \defeq \spec{{\rm Sym}\, \dual{\cat{E}}}.$$
The same applies to the case when $X$ is a $G$-scheme over $k$ and $\cat{E}$ is $G$-linearized.

\bigskip
\bigskip

\section{Preliminaries}
\label{sec preliminaries}

Let us begin by stating some basic facts about objects in $\gvar{G}$ and \glin{G}\ invertible sheaves over such objects.

\begin{prop}
\label{prop stay reduced}
Suppose $f : X \to Y$ is a \sm\ \morp\ between schemes of finite type over $k$. If $Y$ is reduced, then so is $X$.
\end{prop}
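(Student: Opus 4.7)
The plan is to combine the local structure of smooth morphisms with the elementary observation that reducedness is preserved under polynomial ring extensions and under \'etale base change. Since reducedness is a stalk-local property on $X$, for each $x \in X$ with $y = f(x)$ it suffices to show $\O_{X,x}$ is reduced under the assumption that $\O_{Y,y}$ is.

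First I would invoke the standard local structure theorem: any smooth morphism factors, Zariski-locally on the source, as an \'etale morphism followed by a projection. That is, there is a Zariski-open neighborhood $U$ of $x$ in $X$, an open $V \subset Y$ containing $y$, and a factorization $U \stackrel{g}{\to} \A^n_V \to V$ of $f|_U$ where $g$ is \'etale. This reduces the proposition to the following two assertions:
\begin{itemize}
\item[(a)] If $V$ is reduced, then $\A^n_V$ is reduced.
\item[(b)] If $h : U \to W$ is \'etale and $W$ is reduced, then $U$ is reduced.
\end{itemize}

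Assertion (a) is immediate: on affine opens, a polynomial $\sum a_I t^I$ in $A[t_1,\ldots,t_n]$ is nilpotent if and only if each coefficient $a_I$ is nilpotent, so reducedness of $A$ implies reducedness of $A[t_1,\ldots,t_n]$. For assertion (b), I would use that \'etale morphisms are flat, and that their fibers are finite disjoint unions of spectra of separable field extensions, hence in particular reduced. The key point is then the general fact that a flat morphism with reduced fibers over a reduced base has reduced total space: working at a stalk, $\O_{U,u}$ is flat over $\O_{W,w}$, and any nilpotent $\xi \in \O_{U,u}$ would have to lie in $\mathfrak{p}\O_{U,u}$ for every minimal prime $\mathfrak{p}$ of $\O_{W,w}$ (by reducedness of the fibers over the generic points, combined with flatness); reducedness of $\O_{W,w}$ then forces $\xi = 0$.

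I do not expect a serious obstacle here; the only slightly delicate point is the fiber-reducedness argument in (b), but this is a standard consequence of flatness plus the triviality of the fibers of an \'etale map. Alternatively, the entire statement may simply be cited as a special case of the fact that smooth morphisms preserve the properties $(R_k)$ and $(S_k)$, in particular $(R_0)+(S_1)$, which is equivalent to reducedness for Noetherian schemes. I would keep the proof short and appeal to the local structure theorem plus these two elementary preservation facts.
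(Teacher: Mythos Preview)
Your argument is correct and is exactly the kind of standard argument the paper has in mind: the paper's proof consists of the single phrase ``By standard arguments,'' so your local-structure-theorem reduction to the polynomial and \'etale cases (or the alternative via preservation of $(R_0)+(S_1)$) is precisely what is intended.
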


\begin{proof}
By Theorem 10.2 in \cite{hartshorne}, the fibers of $f : X \to Y$ are geometrically regular. In particular, the fibers are geometrically reduced and so, reduced. The result then follows from Corollary 3.3.5 in part 2 of \cite{EGA IV} and the facts that $f$ is flat and $Y$ is reduced.
\end{proof}

\begin{prop}
\label{prop equi embed}
For any \morp\ $f : X \to X'$ in $\gvar{G}$, there exist a $G$-\repn\ $V$ and a $G$-\equi\ immersion $i : X \embed \P(V) \x X'$ such that $f = \pi_2 \circ i$. If we further assume $f$ to be \proj, then $i$ will be a closed immersion.
\end{prop}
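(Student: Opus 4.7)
The plan is to reduce the problem to constructing a $G$-equivariant immersion $j : X \embed \P(V)$ into the projectivization of some finite-dimensional $G$-representation $V$. Granted such a $j$, I set $i := (j, f) : X \to \P(V) \x X'$, which by construction satisfies $\pi_2 \circ i = f$ and factors as
$$X \xrightarrow{(\id_X,\, f)} X \x X' \xrightarrow{j \x \id_{X'}} \P(V) \x X'.$$
The graph $(\id_X, f)$ is a $G$-equivariant closed immersion because $X'$ is separated over $k$, and $j \x \id_{X'}$ is a $G$-equivariant immersion because $j$ is; hence $i$ is a $G$-equivariant immersion with the required property.

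The construction of $j$ is where the actual geometric content lies. Since $X$ is quasi-projective, it admits an ample invertible sheaf $\cat{L}$. The essential input is that for the split diagonalizable group $G$, some positive tensor power $\cat{L}^{\otimes n}$ admits a $G$-linearization; this is the diagonalizable-group case of the equivariant linearization theorem going back to Mumford's GIT. Replacing $\cat{L}$ by a further tensor power if necessary, I may assume $\cat{L}$ is $G$-linearized and very ample. The space $H^0(X, \cat{L})$ is then a locally finite rational $G$-representation, so any finite collection of its sections lies in a finite-dimensional $G$-invariant subspace. Choosing a finite-dimensional $G$-invariant $W \subset H^0(X, \cat{L})$ whose sections separate points and tangent vectors on $X$, the classical construction produces the desired $G$-equivariant immersion $j : X \to \P(\dual{W})$.

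For the final assertion, suppose $f$ is projective. Then $f = \pi_2 \circ i$ is proper, and since $\pi_2$ is separated the standard cancellation lemma for properness yields that $i$ itself is proper. A proper immersion is automatically a closed immersion, so $i$ is a $G$-equivariant closed immersion.

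The main obstacle is the step in the second paragraph, namely producing a $G$-linearization of some positive tensor power of an ample invertible sheaf on $X$. Everything else is formal categorical manipulation. The linearization statement is classical when $X$ is normal; since objects of $\gvar{G}$ are only required to be reduced, one must be slightly careful, but for split diagonalizable $G$ the construction ultimately reduces to decomposing global sections into character eigenspaces, which is exactly where the split diagonalizability hypothesis imposed in Section~\ref{sect notation} enters.
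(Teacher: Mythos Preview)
Your proposal is correct and follows essentially the same route as the paper: reduce to producing a $G$-equivariant immersion $j : X \hookrightarrow \P(V)$, then take $i = (j,f)$, and for the projective case use the standard proper-plus-separated cancellation. The only substantive difference is in how the linearization step---which you rightly identify as the crux---is carried out. Where you invoke a general GIT linearization result and flag the normality issue, the paper handles it concretely by exploiting the decomposition $G = G_f \times G_t$: it first applies Sumihiro's theorem (Theorem~1.6 in \cite{equi completion}) to obtain a $G_t$-linearization of some power $\L^{\otimes m}$ of a very ample sheaf, and then averages over the finite part by setting $\L' := \bigotimes_{g \in G_f} g^*(\L^{\otimes m})$ to upgrade this to a full $G$-linearization. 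The embedding into $\P(V)$ is then extracted via Proposition~1.7 of \cite{GIT}, which is exactly the ``choose a finite-dimensional $G$-invariant space of sections'' step you describe. The paper's two-step construction has the advantage of making the dependence on the structure of $G$ explicit and avoiding any appeal to normality for the finite factor; your version is more compact but leaves that point to the reader.
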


\begin{proof}
Since $X$ is \qproj, there exists a (not necessarily \equi) immersion $i_0 : X \embed \P^n$. Define $\L \defeq i_0^* \O(1)$ as an (not necessarily \glin{G}) invertible sheaf over $X$. By Theorem 1.6 in \cite{equi completion}, there exists an integer $m$ such that $\L^{\otimes m}$ is $G_t$-linearizable. Fix a $G_t$-linearization of $\L^{\otimes m}$. Then, $\L' \defeq \otimes_{g \in G_f}\ g^* (\L^{\otimes m})$ will be a \glin{G}\ very ample invertible sheaf over $X$. By Proposition 1.7 in \cite{GIT}, there exists an $G$-\equi\ immersion $i_1 : X \embed \P(V)$ for some $G$-\repn\ $V$ such that $i_1^* O(1) \cong \L'$. Then, the map $(i_1, f) : X \to \P(V) \x X'$ will be the \equi\ immersion we want. If $f$ is \proj, then $(i_1, f)$ will be a closed immersion.
\end{proof}

\begin{prop}
\label{prop smooth extension}
Suppose $\char{k} = 0$.

\noindent \begin{statementslist}
{\rm (1)} & Suppose $Y$ is in $\gsmcat{G}$, $X$ is in $\gvar{G}$ and $U \subset X$ is a $G$-\inv\ open subscheme. If $f : Y \to U$ is a \proj\ \morp\ in $\gvar{G}$, then there exist a $G$-\repn\ $V$ and a $G$-\equi\ closed immersion $i : Y \embed \P(V) \x U$ such that its closure in $\P(V) \x X$ is \sm\ and $f = \pi_2 \circ i$. \\
{\rm (2)} & For any $Y \in$ $\gsmcat{G}$, there exist a $G$-\repn\ $V$ and a $G$-\equi\ immersion $i : Y \embed \P(V)$ such that its closure is \sm.
\end{statementslist}
\end{prop}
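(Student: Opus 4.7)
The plan is to reduce both parts to the equivariant version of Hironaka's resolution of singularities (available in characteristic $0$), combined with the equivariant embedding result of Proposition \ref{prop equi embed}. Parts (1) and (2) follow the same recipe, with (1) being the parametrized version and (2) the absolute one; the hard part is controlling the singularities of the closure at infinity, which is exactly what equivariant resolution provides.

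For (1), first apply Proposition \ref{prop equi embed} to $f : Y \to U$ to obtain a $G$-\repn\ $V_0$ and a $G$-\equi\ closed immersion $i_0 : Y \embed \P(V_0) \x U$ with $f = \gp_2 \circ i_0$. Let $\gi_0 : \bar{Y} \embed \P(V_0) \x X$ be the scheme-theoretic closure; then $Y = \bar{Y} \cap (\P(V_0) \x U)$ is a $G$-\inv\ open subscheme of $\bar{Y}$ contained in the smooth locus of $\bar{Y}$ (because $Y$ itself is \sm). Since $\char{k} = 0$, functorial (hence $G$-\equi) resolution of singularities produces a \proj\ birational \morp\ $\gp : \tilde{Y} \to \bar{Y}$ with $\tilde{Y}$ in \gsmcat{G}\ that is an iso\morp\ over the smooth locus of $\bar{Y}$; in particular $\gp^{-1}(Y) = Y$. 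The composition $\tilde{Y} \to \bar{Y} \to X$ is \proj, so a second application of Proposition \ref{prop equi embed} yields a $G$-\repn\ $V_1$ and a $G$-\equi\ closed immersion $j : \tilde{Y} \embed \P(V_1) \x X$ over $X$. Letting $p_0 : \P(V_0) \x X \to \P(V_0)$ denote the first projection, combine the two embeddings into
$$\Phi \defeq (p_0 \circ \gi_0 \circ \gp,\ j) : \tilde{Y} \longto \P(V_0) \x \P(V_1) \x X$$
and postcompose with the Segre embedding $\P(V_0) \x \P(V_1) \embed \P(V_0 \otimes V_1)$ (crossed with $\id_X$) to get a $G$-\equi\ closed immersion $\tilde{\Phi} : \tilde{Y} \embed \P(V_0 \otimes V_1) \x X$ over $X$ (closed because $j$ and Segre are closed). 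Since $\tilde{\Phi}$ is a \morp\ over $X$ and $\tilde{Y} \x_X U = \gp^{-1}(Y) = Y$, the restriction $i \defeq \tilde{\Phi}|_Y$ is a $G$-\equi\ closed immersion $Y \embed \P(V_0 \otimes V_1) \x U$ with $\gp_2 \circ i = f$. Finally, each irreducible component of $\tilde{Y}$ maps birationally onto an irreducible component of $\bar{Y}$ meeting $Y$, so $Y$ is dense in $\tilde{Y}$; hence the closure of $i(Y)$ in $\P(V_0 \otimes V_1) \x X$ coincides with $\tilde{\Phi}(\tilde{Y}) \cong \tilde{Y}$, which is \sm.

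For (2), carry out the analogous procedure in the absolute setting: use Proposition \ref{prop equi embed} (with target $\pt$) to get a $G$-\equi\ immersion $i_0 : Y \embed \P(V_0)$; let $\bar{Y} \subset \P(V_0)$ be its (now \proj) closure; resolve equivariantly to obtain $\gp : \tilde{Y} \to \bar{Y}$ making $\tilde{Y}$ a \proj\ object of \gsmcat{G}; apply Proposition \ref{prop equi embed} to the \proj\ structure \morp\ $\tilde{Y} \to \pt$ to obtain a closed immersion $j : \tilde{Y} \embed \P(V_1)$; and Segre-combine $\gi_0 \circ \gp : \tilde{Y} \to \P(V_0)$ with $j$ to obtain a closed immersion $\tilde{Y} \embed \P(V_0 \otimes V_1)$ whose restriction to the dense open $Y$ is the desired immersion with \sm\ closure $\tilde{Y}$.

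The main technical input is the existence of an equivariant resolution of singularities in characteristic $0$ that is an iso\morp\ over the smooth locus of $\bar{Y}$; this is standard, following from the functoriality of canonical resolutions of singularities applied to the $G$-scheme $\bar{Y}$. The remainder of the argument is formal bookkeeping with Proposition \ref{prop equi embed} and the Segre embedding.
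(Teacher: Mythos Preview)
Your argument is correct and uses the same ingredients as the paper: equivariant resolution of singularities of the closure, Proposition \ref{prop equi embed}, and the Segre embedding. The paper defers part (1) to \cite{geo equi alg cobor} and then deduces (2) from (1) by applying it with $U = Y$, $X = \overline{Y}$, $f = \id_Y$, whereas you prove both parts directly in parallel; note also that in your part (1) the factor $\P(V_0)$ is unnecessary, since $j : \tilde{Y} \embed \P(V_1) \x X$ alone already gives a closed immersion over $X$ whose restriction to $Y$ has smooth closure $\tilde{Y}$.
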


\begin{proof}
For part (1), see the proof of Proposition 4.14 in \cite{geo equi alg cobor}. For part (2), by Proposition \ref{prop equi embed}, there exists a $G$-\repn\ $V'$ and a $G$-\equi\ immersion $Y \embed \P(V')$. Denote its closure by $\overline{Y}$. By applying part (1) with $U = Y$, $X = \overline{Y}$ and $f = \id_Y$, we have a $G$-\equi\ immersion $Y \embed \P(V'') \x \overline{Y}$ with \sm\ closure. Then the $G$-\equi\ immersion we want is 
$$Y \embed \P(V'') \x \overline{Y} \embed \P(V'') \x \P(V') \embed \P(V),$$ 
where the last \morp\ is the Segre embedding and $V$ is the corresponding $G$-\repn. 
\end{proof}

\newpage

\begin{prop}
\label{prop contain inv aff}
Any $X \in \gvar{G}$ contains a non-empty, $G$-\inv, affine open subscheme.
\end{prop}

\begin{proof}
W\withoutlog, we may assume $X$ is \girred{G}\ and \sm. Then it is the disjoint union of irreducible components permuted by $G_f$. So we may further assume $X$ is irreducible. We may also assume the $G$-action on $X$ is faithful. By considering the complement of the stabilizers $X_g$ where $1 \neq g \in G_f$, we may assume the $G_f$-action on $X$ is free (in particular, proper). 

Since $X$ is \qproj\ and $G_f$ is finite, the geometric quotient 
$$\pi : X \to X/G_f \defeq X'$$
exists as objects in $\gvar{G}$. By Proposition 0.9 in \cite{GIT}, $\pi$ is actually a principal fiber bundle \wrt\ the $G_f$-action. So it is locally trivial in the \etale\ topology. Therefore, the smoothness of $X$ implies the smoothness of $X'$. Moreover, since $G_f$ is affine and the $G_f$-action on $X$ is proper, by Proposition 0.7 in \cite{GIT}, $\pi$ is affine. Therefore, we reduce it to the case when $X$ is an irreducible object in $\gsmcat{G_t}$.

Since $X$ is \sm, it is geometrically regular and in particular, geometrically normal and integral. Therefore, it satisfies property (N) in \cite{equi completion} (see definition 3.4 in \cite{equi completion}). Since $G_t$ is diagonalizable and connected, by Corollary 3.11 in \cite{equi completion}, $X$ is covered by $G_t$-\inv, affine open subschemes as desired.
\end{proof}

\begin{lemma}
\label{lemma same proj bundle}
Suppose $X$ is a Noetherian $G$-scheme and $\cat{S} = \oplus_{d \geq 0}\, S_d$ is a \glin{G}, graded, $\O_X$-algebra such that $\cat{S}_0 = \O_X$, $\cat{S}_1$ is a \glin{G}\ coherent sheaf over $X$ and $\cat{S}$ is locally generated by $\cat{S}_1$. If $\L$ is a sheaf in $\picard{G}{X}$, $p : P \defeq \Proj{\cat{S}} \to X$ and $p' : P' \defeq \Proj{\oplus_{d \geq 0}\, \cat{S}_d \otimes \L^{\otimes d}} \to X$ are the projections, then there is a natural iso\morp\ $\phi : P' \iso P$, commuting with $p$ and $p'$, such that 
$$\O_{P'}(1) \cong \phi^* \O_P(1) \otimes {p'}^*\L.$$
\end{lemma}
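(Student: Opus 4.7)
The plan is to follow the standard non-\equi\ argument for the invariance of $\Proj$ under twisting by an invertible sheaf (compare EGA II, 3.1.8 or Hartshorne II, Lemma 7.9), and then check that every step respects the $G$-linearization.

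First, I would construct $\phi$ locally. Let $\{U_i\}$ be an affine open cover of $X$ (not necessarily $G$-\inv) on which $\L$ is trivializable as an ordinary invertible sheaf, and choose a trivialization $t_i : \O_{U_i} \iso \L|_{U_i}$. Multiplication by $t_i^{d}$ in degree $d$ then defines a \morp\ of graded $\O_{U_i}$-algebras $\cat{S}|_{U_i} \iso \bigl(\oplus_{d} \cat{S}_d \otimes \L^{\otimes d}\bigr)|_{U_i}$, where the algebra structure on the right uses the canonical iso $\L^{\otimes d} \otimes \L^{\otimes e} \cong \L^{\otimes(d+e)}$. Taking $\Proj$ gives an iso $\phi_i : P'|_{U_i} \iso P|_{U_i}$ over $U_i$.

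Second, I would show the $\phi_i$ glue. On $U_i \cap U_j$, the trivializations $t_i,t_j$ differ by a unit $u_{ij} \in \O^{\times}(U_i \cap U_j)$, so the two induced graded-algebra iso\morp s differ by the grading-preserving automorphism of $\cat{S}|_{U_i \cap U_j}$ that scales degree-$d$ elements by $u_{ij}^d$. Such a diagonal graded automorphism preserves every homogeneous ideal and hence induces the identity on $\Proj$. Therefore, the $\phi_i$ agree on overlaps and glue to a global iso\morp\ $\phi : P' \iso P$ over $X$, commuting tautologically with $p$ and $p'$.

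Third, I would establish the $\O(1)$ relation. By construction, $\O_{P'}(1)$ corresponds to the shifted graded module $\cat{S}'[1]$ with $\cat{S}'[1]_d = \cat{S}_{d+1} \otimes \L^{\otimes(d+1)} \cong \cat{S}[1]_d \otimes \L^{\otimes d} \otimes \L$. Local multiplication by $t_i^d$ absorbs the factor $\L^{\otimes d}$ into the identification used for the algebra iso\morp, leaving an overall twist by $\L$ that is intrinsic (again by the diagonal-scaling argument); passing to associated sheaves on $P'$ converts it into $p'^{*}\L$, giving $\O_{P'}(1) \cong \phi^{*}\O_{P}(1) \otimes p'^{*}\L$.

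Fourth, I would verify $G$-\equi ance. Since $\phi$ and the iso of invertible sheaves are built only from canonical operations (tensor products, the algebra multiplication on $\cat{S}$, and the canonical iso $\L^{\otimes d} \otimes \L^{\otimes e} \cong \L^{\otimes(d+e)}$), and are independent of the non-\equi\ local choices of $t_i$, they automatically commute with the $G$-linearizations on $\cat{S}$ and $\L$; hence $\phi$ is $G$-\equi\ and the iso of sheaves lies in $\picard{G}{P'}$. The step I expect to require the most care is the third one: correctly tracking the grading shift together with the tensor twist by $\L$, since this is the content of the lemma beyond the classical $\Proj$-invariance statement.
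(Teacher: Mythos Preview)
Your proposal is correct and follows exactly the approach the paper intends: the paper's own proof is simply a reference to Hartshorne II, Lemma~7.9 (and to Proposition~3.3 of \cite{geo equi alg cobor} for the \equi\ check), and you have spelled out precisely that argument. Your fourth step, observing that the independence of $\phi$ from the local trivializations $t_i$ forces $G$-\equi ance, is the standard way to upgrade the classical statement and matches what the cited reference does.
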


\begin{proof}
See Lemma 7.9 in Chapter II in \cite{hartshorne} or Proposition 3.3 in \cite{geo equi alg cobor}.
\end{proof}

\begin{prop}
\label{prop birat given by blowup}
Suppose $X$ and $Y$ are \girred{G}\ objects in $\gvar{G}$.

\noindent \begin{statementslist}
{\rm (1)} & If $f : X \to Y$ is a $G$-\equi, \proj, birational \morp, then there is a $G$-\inv\ closed subscheme $Z \subset Y$ such that $X$ is isomorphic to $\blowup{Y}{Z}$ (blow up of $Y$ along $Z$) and $f$ corresponds to $\pi : \blowup{Y}{Z} \to Y$. \\
{\rm (2)} & If $Y$ is \proj\ and $f : X \ratmap Y$ is a $G$-\equi, rational \morp, then there is a $G$-\inv\ closed subscheme $Z \subset X$ such that $f$ can be extended to a $G$-\equi\ \morp\ $\overline{f} : \blowup{X}{Z} \to Y$.
\end{statementslist}
\end{prop}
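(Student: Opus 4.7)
Plan: The proposition is the $G$-equivariant enhancement of the classical fact (Hartshorne II.7.17) that every projective birational morphism of integral Noetherian schemes is a blow-up, together with its usual consequence for rational maps with projective target. The strategy is to follow the standard non-equivariant argument while maintaining $G$-equivariance throughout; this is feasible because the split hypothesis on $(G,k)$ from Section \ref{sect notation} makes $G$ linearly reductive and decomposes every $G$-representation as a direct sum of $1$-dimensional character spaces, so that $G$-invariant global sections are available in abundance.

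For part (1), I would first apply Proposition \ref{prop equi embed} to obtain a $G$-equivariant closed immersion $i : X \embed \P(V) \x Y$ over $Y$ for some $G$-representation $V$. Then $\L \defeq i^* \gp_1^* \O_{\P(V)}(1)$ is a $G$-linearized, $f$-very ample invertible sheaf on $X$, and the $G$-linearized graded $\O_Y$-algebra $\cat{S} \defeq \bigoplus_{d \geq 0} f_* \L^{\otimes d}$ satisfies $X \cong \Proj \cat{S}$ over $Y$. By Lemma \ref{lemma same proj bundle}, replacing $\L$ with $\L \otimes f^* \cat{M}^{\otimes n}$ for a $G$-linearized ample sheaf $\cat{M}$ on $Y$ and $n \gg 0$ preserves this $\Proj$ presentation; after such a twist I may assume $\L$ is very ample, $\cat{S}$ is generated in degree one, and relevant higher direct images vanish. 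The classical step in Hartshorne's proof then rewrites $\cat{S}$, possibly after passing to a Veronese subalgebra, as a Rees algebra $\bigoplus_{d \geq 0} \cat{I}^d$ for a coherent ideal sheaf $\cat{I} \subset \O_Y$, yielding $X \cong \blowup{Y}{\cat{I}}$. One then verifies that $\cat{I}$ can be chosen $G$-invariant: the construction reduces to choosing a global section of some $f_* \L^{\otimes n}$ that is non-vanishing on the generic fiber, and linear reductivity allows extraction of a $G$-invariant such section from the trivial-character summand of $H^0(X, \L^{\otimes n})$, if necessary after a further ample twist by $f^* \cat{M}$. Taking $Z$ to be the closed subscheme defined by $\cat{I}$ completes part (1).

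For part (2), the domain of definition $U \subset X$ of $f$ is a dense open and is automatically $G$-invariant because $f$ is $G$-equivariant. The graph $\Gamma_f \subset U \x Y$ is $G$-invariant under the diagonal $G$-action, hence so is its scheme-theoretic closure $\Gamma \subset X \x Y$. The first projection $\gp_1 : \Gamma \to X$ is $G$-equivariant, projective (since $Y$ is projective, making $\gp_1 : X \x Y \to X$ projective), and birational (it restricts to an isomorphism over $U$); moreover $\Gamma$ is \girred{G}\ because $U$ is dense in the \girred{G}\ variety $X$ and maps isomorphically onto an open dense subscheme of $\Gamma$. Applying part (1) to $\gp_1 : \Gamma \to X$ yields a $G$-invariant closed subscheme $Z \subset X$ and a $G$-equivariant $X$-isomorphism $\Gamma \cong \blowup{X}{Z}$, so that the composition with $\gp_2 : X \x Y \to Y$ is the desired extension $\overline{f}$.

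The principal obstacle will be the $G$-equivariantization of the Rees-algebra identification in part (1). Hartshorne's argument is quite flexible in its choice of defining ideal sheaf, but not every such choice is naturally $G$-stable; the task is to show that the freedom in the construction (twisting $\L$ by pullbacks from $Y$, passing to Veronese subalgebras, and selecting generically non-vanishing sections) can be exercised $G$-equivariantly using the linear reductivity of $G$, so as to produce an ideal sheaf $\cat{I}$ actually fixed by $G$. The remaining steps are essentially book-keeping once this equivariant choice has been secured.
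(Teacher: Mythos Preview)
Your overall strategy matches the paper's proof: for part (1) you embed $X$ equivariantly in $\P(V)\times Y$ via Proposition \ref{prop equi embed}, set $\cat{S}=\bigoplus f_*\L^{\otimes d}$, use Lemma \ref{lemma same proj bundle} to twist freely, and then identify $\cat{S}$ with a Rees algebra of a $G$-invariant ideal; for part (2) you take the closure of the graph and apply part (1) to the first projection. This is exactly what the paper does.

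The one place where your proposal is imprecise is the step you yourself flag as the principal obstacle. You write that linear reductivity lets you ``extract a $G$-invariant such section from the trivial-character summand of $H^0(X,\L^{\otimes n})$, if necessary after a further ample twist by $f^*\cat{M}$.'' There is no reason for the trivial-character summand to contain a section nonvanishing on the generic fibre, and twisting by $f^*\cat{M}$ does not change the character content in the way you need. What is actually available is a $G$-\emph{semi}-invariant section: since $(G,k)$ is split, $V$ decomposes into one-dimensional character spaces, so the coordinate hyperplanes of $\P(V)$ are $G$-invariant and at least one of them, say with character $\beta$, misses $X$. This gives $\L\cong\O_X(D)\otimes\beta$ for a $G$-invariant Cartier divisor $D$, hence a $G$-equivariant embedding $\L\otimes\beta^{-1}\hookrightarrow\cat{K}_X$. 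The correct move is to twist $\L$ by the \emph{character} $\beta^{-1}$ (again via Lemma \ref{lemma same proj bundle}), after which $f_*\L\subset\cat{K}_Y$ and the product $\cat{M}^n\cdot f_*\L$ is a genuine $G$-invariant ideal sheaf. The paper carries this out explicitly; once you replace ``twist by $f^*\cat{M}$'' with ``twist by the inverse of the relevant character,'' your argument goes through.
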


\begin{proof}
\noindent (1).\tab This is basically an \equi\ version of Theorem 7.17 in Chapter II in \cite{hartshorne}. By Proposition \ref{prop equi embed}, there exist a $G$-\repn\ $V$ and a $G$-\equi\ immersion $i' : X \embed \P(V)$. Then, $i \defeq (i', f)$ defines a $G$-\equi\ closed immersion $X \embed \P(V) \x Y$ such that $f = \pi_2 \circ i$. Therefore, $X \cong \Proj{\cat{S}}$ for some \glin{G}\ graded $\O_Y$-algebra $\cat{S}$. Let $\L \defeq i^* \O(1)$ and $\cat{S}' \defeq \oplus_{d \geq 0}\, f_* (\L^{\otimes d})$. By composing $i'$ with some $m$-uple embedding of $\P(V)$, for some large $m$, we may assume $\cat{S} \cong \cat{S}'$ as \glin{G}\ graded $\O_Y$-algebras.

W\withoutlog, we may assume there is a $G$-\inv\ hyperplane $H$ in $\P(V)$ that does not contain any irreducible component of $X$. So, if we consider $H \x Y$ as a $G$-\inv\ Cartier divisor on $\P(V) \x Y$, its restriction will also define a $G$-\inv\ Cartier divisor on $X$. Since $\O(1) \cong \O(H \x Y) \otimes \beta$ for some character $\beta$, we have $\L \cong \O(i^* (H \x Y)) \otimes \beta$. Notice that the sheaf associated to a $G$-\inv\ Cartier divisor can always be embedded into the sheaf of total quotient rings. Therefore, we have a $G$-\equi\ embedding $\L \embed \cat{K}_X \otimes \beta$. By Lemma \ref{lemma same proj bundle}, $\Proj{\oplus\, \cat{S}'_d} \cong \Proj{\oplus\, \cat{S}'_d \otimes (\dual{\beta})^{\otimes d}}$. So, by replacing $i'$ by $X \embed \P(V) \iso \P(V \otimes \dual{\beta})$, we may assume $\L \subset \cat{K}_X$. Hence, $f_* \L \subset f_* \cat{K}_X \cong \cat{K}_Y$, where $\cat{K}_X$, $\cat{K}_Y$ are the sheaves of total quotient rings on $X$, $Y$ respectively. 

Since $Y$ is \qproj, by Proposition \ref{prop equi embed}, there exists a very ample sheaf $\cat{M} \in \picard{G}{Y}$. By a similar argument, we may assume $\cat{M} \subset \cat{K}_Y$. Also, for a large enough $n$, $\cat{M}^n \cdot f_* \L = \cat{I}$ for some $G$-\inv\ ideal sheaf of $Y$ because $f_* \L$ is a $G$-\inv, coherent, subsheaf of $\cat{K}_Y$. Again, by Lemma \ref{lemma same proj bundle}, $\Proj{\oplus\, \cat{S}'_d} \cong \Proj{\oplus\, \cat{S}'_d \otimes \cat{M}^{\otimes nd}}$. Hence, it is enough to show $\oplus\, f_* \L^d \otimes \cat{M}^{\otimes nd} \cong \oplus\, \cat{I}^{\otimes d}$ as \glin{G}\ graded $\O_Y$-algebras. But this is true because all sheaves involved are considered as subsheaves of $\cat{K}_Y$ and tensor product becomes product.

\medskip

\noindent (2).\tab Let $U$ be a $G$-\inv\ open subscheme of $X$ such that $f|_U : U \to Y$ is a $G$-\equi\ \morp. Let $W$ be closure of the graph of $f|_U$ inside $X \x Y$. Then $W$ is a \girred{G}\ object in $\gvar{G}$. Moreover, we have a $G$-\equi, \proj, birational \morp\ $p : W \to X$ and a $G$-\equi\ \morp\ $\overline{f} : W \to Y$, which can be considered as an extension of $f$. The result then follows by applying part (1) on $p$.
\end{proof}

Suppose $X$ is a scheme in $\gvar{G}$. Denote the set of $G$-\inv\ Cartier divisors, up to $G$-\equi ly linear equivalence, by $\cartdiv{G}{X}$. If $X$ is regular in codimension 1, then we will denote the set of $G$-\inv\ Weil divisors, up to $G$-\equi ly linear equivalence, by $\weildiv{G}{X}$.

\begin{prop}
\label{prop lb structure}
Suppose $X \in \gvar{G}$ is \girred{G}.

\noindent \begin{statementslist}
{\rm (1)} & The natural map $\cartdiv{G}{X} \to \picard{G}{X}$ is injective. If we further assume $X$ to be locally factorial, then $X$ is regular in codimension 1 and $\weildiv{G}{X} \cong \cartdiv{G}{X}$. \nonumber\\
%%% Part (2) is not used anywhere though...
{\rm (2)} & If $\L$ is in the kernel of the forgetful map $\picard{G}{X} \to \picard{}{X}$, then 

\begin{center}$\displaystyle \L \cong \O_X f \otimes \alpha$\end{center}

for some $f \in \gsection{X}{\cat{K}^*}$ and $G$-character $\alpha$, where $\cat{K}$ is the sheaf of total quotient ring on $X$. \nonumber\\
{\rm (3)} & Any sheaf $\L \in \picard{G}{X}$ can be written as $\L \cong \O_X(D) \otimes \alpha$ for some divisor $D \in \cartdiv{G}{X}$ and $G$-character $\alpha$. \nonumber
\end{statementslist}
\end{prop}

\begin{proof}
\noindent (1). \tab Standard arguments.

\medskip

\noindent (2). \tab Ignoring its $G$-action, $\L$ is a trivial line bundle. So we may write $\L \cong \O_X t$. Suppose the $G_f$ action on $X$ is not faithful. Then there is a non-trivial subgroup $H \subseteq G_f$ which acts trivially on $X$ (but not necessarily so on $\L$). For all $h \in H$, 
$$h \cdot t = \lambda t$$
for some $\lambda \in \O(X)^*$. Since the $H$-action on $X$ is trivial and $h^n \cdot t = t$ where $n$ is the order of $H$, we have $\lambda^n = 1$. By our basic assumptions on $G, k$ in section \ref{sect notation}, $k$ contains a primitive $n$-th \rou, so $\lambda \in k^*$. In other words, the $H$-action on $\L$ is given by $\alpha \in H^*$. Since the restriction map $G^* \to H^*$ is surjective, we may lift $\alpha$ to a $G$-character. Then, the $H$-action on the $\L \otimes \dual{\alpha}$ will be trivial. Therefore, we may assume the $G_f$-action on $X$ is faithful.

Let us consider the case when $G$ is cyclic with order $n$ first. Let $g$ be a generator of $G$. Then the $G$-action on $\L$ will be uniquely determined by an element $\lambda \in \O(X)^*$ where
$$g \cdot t = \lambda t.$$
Moreover, since $g^n \cdot t = t$, we have
$$\lambda (g \cdot \lambda) (g^2 \cdot \lambda) \cdots (g^{n-1} \cdot \lambda) = 1.$$

By Proposition \ref{prop equi embed}, there exists a $G$-\repn\ $V$ such that $X \embed \P(V)$. Pick such immersion so that $\dim V = d$ is minimal. Let $W = \spec k[x_1, \cdots, x_d]$ be an affine, $G$-invariant, open subset of $\P(V)$ such that the $G$-action on $x_i$ are given by $\beta_i \in G^*$. By the minimality of $d$, we have $X \cap W \neq \emptyset$. Let $I \subset k[x_1, \cdots, x_d]$ be the ideal defining the closure $\overline{X \cap W}$ in $W$ and $A \defeq k[x_1, \cdots, x_d] / I$.

\medskip

\noindent Claim 1 : There exists a monomial $\overline{f} \in k[x_1, \cdots, x_d]$ with $G$-action given by $\beta$ such that $\beta(g) = \omega$ is a primitive $n$-th \rou.

Let $\beta \in G^*$ be a generator. Then, for all $i$, $\beta_i = \beta^{a_i}$ for a unique $0 \leq a_i \leq n-1$. If $\text{gcd}(a_1, \ldots, a_d) = m > 1$, then $g^{n/m}$ will act trivially on $k[x_1, \ldots, x_d]$, and then on $X \cap W$. That contradicts with the fact that the $G$-action on $X$ is faithful. Therefore, $\text{gcd}(a_1, \ldots, a_d) = 1$ and the result follows. \claimend

\medskip

\noindent Claim 2 : $\overline{f} \in A$ is a non-zero divisor.

If the monomial $\overline{f}$ lies in $I$, then $\overline{X \cap W} \subseteq \{ \overline{f} = 0 \}$. Since $\{ x_ i = 0\}$ is $G$-\inv\ for all $i$ and $\overline{X \cap W}$ is \girred{G}, $\overline{X \cap W} \subseteq \{ x_i = 0\}$ for some $i$, which contradicts with the fact that $d$ is minimal. Therefore, $0 \neq \overline{f} \in A$.

Since $\overline{X \cap W}$ is \girred{G}, $\overline{X \cap W} = \bigcup_{h \in G} h \cdot X_0$ where $X_0$ is irreducible. Notice that $\overline{f}$ is a non-zero divisor on $\overline{X \cap W}$ if $\overline{f} \neq 0$ on $h \cdot X_0$ for all $h \in G$. Suppose $\overline{f} = 0$ on $h \cdot X_0$ for some $h$. Then, for all $h' \in G$, 
$$0 = h' \cdot \overline{f} = \omega^i \overline{f}$$
on $(h'h) \cdot X_0$ for some $i$. That means $\overline{f} = 0$ on $(h'h) \cdot X_0$ as well. That contradicts with the fact that $0 \neq \overline{f} \in A$. \claimend

\medskip

Now, for all $0 \leq i \leq n-1$, let
$$f_i \defeq \overline{f}^i + \frac{g \cdot \overline{f}^i}{\lambda} + \frac{g^2 \cdot \overline{f}^i}{\lambda(g \cdot \lambda)} + \cdots + \frac{g^{n-1} \cdot \overline{f}^i}{\lambda(g \cdot \lambda) \cdots (g^{n-2} \cdot \lambda)}$$
as elements in $A$. Then it is clear that 
$$g \cdot f_i = \lambda f_i.$$

\medskip

\noindent Claim 3 : $f_i \in A$ is a non-zero divisor for some $0 \leq i \leq n-1$.

Suppose $f_i = 0$ for all $0 \leq i \leq n-1$. Let 
$$c_1 = 1, c_2 = \frac{1}{\lambda}, c_3 = \frac{1}{\lambda(g \cdot \lambda)}, \cdots, c_n = \frac{1}{\lambda(g \cdot \lambda) \cdots (g^{n-2} \cdot \lambda)}.$$
Then, 
\begin{eqnarray*}
0 &=& f_i \\
&=& c_1 \overline{f}^i + c_2 (g \cdot \overline{f}^i) + c_3 (g^2 \cdot \overline{f}^i) + \cdots + c_n (g^{n-1} \cdot \overline{f}^i) \\
&=& c_1 \overline{f}^i + \omega^i c_2  \overline{f}^i + \omega^{2i} c_3 \overline{f}^i + \cdots + \omega^{(n-1)i} c_n \overline{f}^i
\end{eqnarray*}
By claim 2, $\overline{f}$ is a non-zero divisor. So we have a system of equations
$$0 = c_1 + \omega^i c_2 + \omega^{2i} c_3  + \cdots + \omega^{(n-1)i} c_n $$
for all $0 \leq i \leq n-1$.

Let 
$$M \defeq \begin{pmatrix}
1 & 1 & 1 & \cdots & 1 \\
1 & \omega & \omega^2 & \cdots & \omega^{n-1} \\
1 & \omega^2 & \omega^4 & \cdots & \omega^{(n-1)2} \\
\vdots & \vdots & \vdots & \vdots & \vdots \\
1 & \omega^{n-1} & \omega^{2(n-1)} & \cdots & \omega^{(n-1)(n-1)} \\
\end{pmatrix}$$
Since $\omega$ is a primitive $n$-th \rou, we have
$$M M^t = \begin{pmatrix}
n & 0 & 0 & \cdots & 0 \\
0 & 0 & 0 & \cdots & n \\
\vdots & \vdots & \vdots & \vdots & \vdots \\
0 & 0 & n & \cdots & 0 \\
0 & n & 0 & \cdots & 0 \\
\end{pmatrix}$$
By our assumptions on $G, k$ in section \ref{sect notation}, $0 \neq n \in k$. Therefore, $0 \neq \det M \in k$.

Let $C \defeq (c_1\ c_2\ \cdots\ c_n)^t$. Since $MC = 0$, 
$$0 = (\text{adjoint of } M) M C = (\text{det} M) C$$
which draws a contraction ($c_1 = 1$). Hence, $0 \neq f_i \in A$ for some $0 \leq i \leq n-1$. Since $g \cdot f_i = \lambda f_i$ with $\lambda \in A^*$, by a similar argument as in claim 2, $f_i \in A$ is a non-zero divisor. \claimend

\medskip

Let $f \defeq f_i \in A$ given by claim 3 and consider it as an element in $\gsection{X}{\cat{K}^*}$. We can then define a map $\L \cong \O_X t \to \O_X f$ by sending $a t$ to $a f$. It is an isomorphism of invertible sheaf over $X$ because $f$ is a non-zero divisor and it is $G$-\equi\ because $g \cdot f = \lambda f$. That handles the case when $G$ is finite and cyclic.

In general, let $G = G_1 \x G_2$ where $G_1$ is a cyclic group with order $n$ and $L \to X$ be the $G$-\equi\ line bundle corresponding to $\L$. By considering $L \to X$ as a $G_1$-equivariant line bundle, we have 
$$\L \cong \O_X f \otimes \alpha$$ 
for some $f \in \gsection{X}{\cat{K}^*}$ and $\alpha \in G^*$ (since $G^* \to G_1^*$ is surjective). By replacing $\L$ by $\L \otimes \O_X f^{-1} \otimes \dual{\alpha}$, we may assume $\L \cong \O_X t$ with $g \cdot t = t$ for all $g \in G_1$.

Since $G_1$ is finite and $X$ is \qproj\ over $k$, the quotient $X/G_1$ exists as a \qproj\ scheme over $k$. Let $\pi : X \to X/G_1$ be the quotient map and $\L' \defeq \pi_* (\L)^{G_1}$ be the subsheaf of $\pi_* \L$ of $G_1$-\inv\ sections.

\medskip

\noindent Claim 4 : $\L'$ defines a $G$-\equi\ invertible sheaf over $X/G_1$. Moreover, if we denote its corresponding $G$-\equi\ line bundle over $X/G_1$ by $L'$, then $G_1$ acts trivially on $L' \to X/G_1$ and the canonical map $L \to \pi^* L'$ is an isomorphism of $G$-\equi\ line bundle.

Since $g \cdot t = t$ for all $g \in G_1$, the map $\O_X \to \L$, which sends $a$ to $at$, is an isomorphism of $G_1$-\equi\ invertible sheaves and it descends to an isomorphism $\pi_* (\O_X)^{G_1} \to \pi_* (\L)^{G_1} = \L'$. By the definition of (geometric) quotient, $\O_{X/G_1} \cong \pi_* (\O_X)^{G_1}$. Therefore, $\L'$ is, in particular, an invertible sheaf. 

It is clear that $\L'$ has a canonical $G$-action and its corresponding $G$-\equi\ line bundle $L'$ is nothing but $L/G_1$. Hence, the canonical map 
$$L \to \pi^* L' \cong \pi^* (L/G_1)$$
is an isomorphism of $G$-\equi\ line bundles over $X$. \claimend

\medskip

By claim 4 and induction on the order of $G_f$, we may assume 
$$\L' \cong \O_{X/G_1} f \otimes \alpha$$
(as $G$-\equi\ sheaves) for some $f \in \gsection{X/G_1}{\cat{K}^*}$ and $\alpha \in G^*$. Then, by claim 4, 
$$\L \cong \pi^* \L' \cong \pi^* (\O_{X/G_1} f) \otimes \alpha \cong O_X f \otimes \alpha$$
(via the map $\gsection{X/G_1}{\cat{K}^*} \to \gsection{X}{\cat{K}^*}$). That reduces to the case when $G = G_t$. The result then follows from Proposition 1 (with remark 1 and 3) of \cite{equi affine}.

\medskip

\noindent (3). \tab By Proposition \ref{prop equi embed}, there is a $G$-\equi\ immersion $i : X \embed \P(V)$ for some $G$-\repn\ $V$. So $\L$ can be expressed as the difference of two \glin{G}\ very ample sheaves over $X$ and it is enough to prove the statement on such sheaf. By Proposition 1.7 in \cite{GIT}, w\withoutlog, we may assume $\L \cong i ^* \O_{\P(V)}(1)$. We may further assume there is a $G$-\inv\ hyperplane $H \subset \P(V)$ which does not contain any irreducible component of $X$, and hence its restriction on $X$ defines a $G$-\inv\ Cartier divisor $D$.  Since $V$ can be expressed as the direct sum of 1-dimensional $G$-\repn s, $\O_{\P(V)}(1) \cong \O_{\P(V)}(H) \otimes \alpha$ for some $G$-character $\alpha$. Hence,
$$\L \cong i^*\O_{\P(V)}(1) \cong \O_{\P(V)}(H)|_X \otimes \alpha \cong \O_X(D) \otimes \alpha.$$
\end{proof}

\begin{rmk}
{\rm
For part (2) of Proposition \ref{prop lb structure}, when $G$ is not connected, the part $\O_X f$ is necessary to describe the kernel of $\picard{G}{X} \to \picard{}{X}$. For example, if $G = \genby{g}$ is a cyclic group of order 2, $X = \A^1 - \{0\} = \spec{k[x,x^{-1}]}$ with action $g \cdot x = x^{-1}$, $\L = \O_X t$ with action $g \cdot t = x t$, then $\L$ can not be given by any $G$-character because the $G$-\repn s of the fibers over the fixed points -1 and 1 are different (see Remark 1 of \cite{equi affine}). But it can be shown that $\L \cong \O_X f$ with 
$$f \defeq 1 + \frac{g \cdot 1}{x} = 1 + x^{-1} \in \gsection{X}{\cat{K}^*}.$$
}
\end{rmk}

\begin{prop}
\label{prop PBF for equi picard}
Suppose $X \in \gvar{G}$ is locally factorial, $\cat{E}$ is a \glin{G}\ locally free sheaf over $X$ with finite rank and $\pi : \P(\cat{E}) \to X$ is the induced morphism. Then the abelian group \homo
$$\picard{G}{X} \oplus \Z \to \picard{G}{\P(\cat{E})},$$
which sends $(\L, n)$ to $(\pi^* \L) \otimes \O_{\P(\cat{E})}(n)$, is an isomorphism.
\end{prop}

\begin{proof}
Suppose $Y \in \gvar{G}$ is locally factorial. Let $V$ be a $G$-\repn\ with a non-empty open subset $U$ such that $G$ acts freely on $U$ and the principal bundle quotient $U \to U/G$ exists in the category of schemes. By Proposition \ref{prop equi embed}, $Y$ is \qproj\ with a \glin{G}\ action. Then, a principal bundle quotient $Y \x U \to (Y \x U)/G$ exists in the category of scheme (Proposition 23 in \cite{equi intersection}). By the definition of equivariant Chow group in \cite{equi intersection}, 
$$CH_G^n (Y) \defeq CH^n( Y_G )$$
where $Y_G \defeq (Y \x U)/G$ (It is independent of the choices of $U, V$ as long as $V - U$ has sufficiently high codimension). By Theorem 1 in \cite{equi intersection}, we have a natural isomorphism
$$\picard{G}{Y} \cong CH_G^1(Y).$$

Let $E \to X$ be the $G$-\equi\ vector bundle corresponding to $\cat{E}$. By Lemma 1 of \cite{equi intersection}, $E_G \to X_G$ is also a vector bundle and so, $\P(E)_G \cong \P(E_G) \to X_G$ is a projective bundle. Therefore, $\P(\cat{E})_G \to X_G$ is also a projective bundle. Hence,
\begin{eqnarray*}
\picard{G}{\P(\cat{E})} &\cong& CH_G^1(\P(\cat{E})) \\
&=& CH^1(\P(\cat{E})_G) \\
&\cong& CH^1(X_G) \oplus \Z \text{\tab ($X_G$ is irreducible)} \\
&\cong& CH_G^1(X) \oplus \Z \\
&\cong& \picard{G}{X} \oplus \Z,
\end{eqnarray*}
where $(\L, n) \in \picard{G}{X} \oplus \Z$ is identified with $(\pi^* \L) \otimes \O_{\P(\cat{E})}(n)$, as desired.

\end{proof}

\begin{thm}
\label{thm splitting principle}
Suppose $\char{k} = 0$. For any \girred{G}\ $X \in$ $\gsmcat{G}$\ and \glin{G}\ locally free sheaf $\cat{E}$ over $X$ of rank $r$, there exist a $G$-equivariant morphism $f : \tilde{X} \to X$, which is the composition of a series of blow ups along $G$-invariant smooth centers, and a \glin{G}\ invertible subsheaf $\L \embed f^* \cat{E}$ over $\tilde{X}$ such that the sequence
$$0 \to \L \to f^* \cat{E} \to (f^* \cat{E}) / \L \to 0$$
is exact and $(f^* \cat{E}) / \L$ is locally free of rank $r-1$.
\end{thm}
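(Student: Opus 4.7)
The approach adapts the classical splitting principle to the equivariant setting. Consider the $G$-equivariant projective bundle $\pi : \P(\cat{E}) \to X$ together with its tautological short exact sequence
$$0 \to \O_{\P(\cat{E})}(-1) \to \pi^* \cat{E} \to Q \to 0,$$
where $Q$ is locally free of rank $r-1$. It suffices to produce a $G$-equivariant morphism $s : \tilde{X} \to \P(\cat{E})$ lifting some $f : \tilde{X} \to X$ that is a composition of smooth-center $G$-equivariant blow-ups; setting $\L \defeq s^*\O_{\P(\cat{E})}(-1) \embed f^*\cat{E}$ and pulling back the tautological sequence then yields the conclusion.

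The first and most substantial step is to construct a $G$-equivariant rational section $\sigma : X \ratmap \P(\cat{E})$ of $\pi$. At the generic point $\eta$ of an irreducible component of $X$, the stalk $\cat{E}_\eta$ is a finite-dimensional $K$-vector space, where $K = k(\eta)$, carrying a semilinear action of the stabilizer $H \subset G$ of that component. Since $H$ is split diagonalizable and $k$ contains the relevant roots of unity, the $\hat{H}$-grading on $\cat{E}$ decomposes $\cat{E}_\eta$ into weight components (at least as modules over the $H$-invariant subring), from which an $H$-stable $K$-line in $\cat{E}_\eta$ can be extracted. Spreading out and $G$-translating across components produces a \glin{G}\ invertible subsheaf of $\cat{E}|_U$ on some dense $G$-invariant open $U \subset X$, namely the rational section $\sigma$.

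Next, invoking Proposition \ref{prop equi embed}, embed $\P(\cat{E}) \embed \P(V) \x X$ equivariantly for some $G$-\repn\ $V$. The composite $X \ratmap \P(\cat{E}) \to \P(V)$ is a $G$-equivariant rational map into the projective $G$-variety $\P(V)$; by Proposition \ref{prop birat given by blowup}(2), it extends to a $G$-equivariant morphism $\blowup{X}{Z_0} \to \P(V)$ for some $G$-invariant closed $Z_0 \subset X$. Paired with the blow-up projection $\blowup{X}{Z_0} \to X$, this gives a $G$-equivariant morphism to $\P(V) \x X$ whose image lies in the closure of the graph of $\sigma|_U$, hence in $\P(\cat{E})$, producing a $G$-equivariant lift $\blowup{X}{Z_0} \to \P(\cat{E})$.

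Finally, since $\char{k} = 0$, equivariant principalization and Hironaka resolution furnish a further sequence $\tilde{X} \to \blowup{X}{Z_0}$ of $G$-equivariant smooth-center blow-ups with $\tilde{X}$ smooth, arranged so that the full composite $f : \tilde{X} \to X$ is itself expressible as a sequence of smooth-center $G$-equivariant blow-ups (by first equivariantly desingularizing the ideal sheaf of $Z_0$ on $X$). The induced morphism $s : \tilde{X} \to \P(\cat{E})$ then yields the desired splitting via pullback of the tautological sequence. The principal obstacle is the rational-section construction: producing a $G$-stable $K$-rational line in $\cat{E}_\eta$ is clean when $G$ acts trivially on $K$, but requires additional descent-theoretic care otherwise, and this is where the diagonalizability of $G$ together with the roots-of-unity assumption on $k$ is most crucially used. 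A secondary subtlety lies in the invocation of equivariant Hironaka in sufficient generality to guarantee that every blow-up center can be simultaneously chosen smooth and $G$-invariant.
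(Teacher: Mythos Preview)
Your argument is correct. The paper itself gives no proof here---it simply cites Theorem~6.3 and section~6.1 of \cite{geo equi alg cobor}---so there is nothing to compare against directly, but your approach (produce a $G$-equivariant rational section of $\P(\cat{E})\to X$ via the weight decomposition, extend it to a morphism after a blow-up via Proposition~\ref{prop birat given by blowup}(2), then replace that blow-up by a sequence of smooth-center $G$-invariant blow-ups via equivariant principalization of the ideal sheaf) is the standard one and almost certainly coincides with the cited argument.

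The only place you are slightly vague is the rational-section step, and it is in fact easier than your phrasing suggests: on a $G$-invariant affine open $\spec{A}$ meeting an irreducible component $X'$ with stabilizer $H\supset G_t$, the $H^*$-grading of $\Gamma(\cat{E})$ lets you pick a nonzero element $m$ of some weight $\chi$; since $A$ restricted to $X'$ is a domain and $\cat{E}$ is locally free, $m$ stays nonzero at the generic point $\eta$, and the line $K\cdot m\subset\cat{E}_\eta$ is $H$-stable because $h\cdot(am)=(h\cdot a)\chi(h)m$. Translating by $G_f/(H\cap G_f)$ then spreads this to the remaining components. No descent-theoretic subtlety actually arises.
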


\begin{proof}
It is a generalization of Theorem 6.3 in \cite{geo equi alg cobor}, see section 6.1 in \cite{geo equi alg cobor} for the details.
\end{proof}

\bigskip
\bigskip

\section{Definition of the \equi\ algebraic cobordism theory}
\label{sect defn of theory}

Recall the following notion from \cite{equi FGL} (definition 12.2). A $(G,F)$-\fgl\ over a commutative ring $R$ is a topological $R$-module 
$$R\{\{F\}\} \defeq R\{\{ 1, y(V^1), y(V^2), \ldots \}\}$$ 
with product, coproduct and a $G^*$-action satisfying
\begin{eqnarray}
y(V^i)y(V^j) &=& \sum_{s \geq 0}\,b^{i,j}_s\,y(V^s) \nonumber\\
l_{\ga} y(V^i) &=& \sum_{s \geq 0}\,d(\ga)^i_s\,y(V^s) \nonumber\\
\gD y(V^i) &=& \sum_{s,t \geq 0}\,f^i_{s,t}\,y(V^s) \otimes y(V^t), \nonumber
\end{eqnarray}
\noindent for some elements $b^{i,j}_s$, $d(\ga)^i_s$, $f^i_{s,t}$ in $R$, which are called structure constants, and some other natural properties. In particular, $b^{i,j}_s$ encode a commutative, associative product, $d(\ga)^i_s$ encode a $G^*$-action and $f^i_{s,t}$ encode a commutative, associative coproduct and there are various compatibilities between them. It is also worth mentioning that the choice of the flag $F$ is to let one chooses a topological basis for the ring underlying the equivariant formal group law, so that the structure can be explicitly expressed in terms of the structure constants. According to Corollary 14.3 in \cite{equi FGL}, there is a representing ring $\lazard_G(F)$ for $(G,F)$-\fgl s and it is generated, as a $\Z$-algebra, by the structure constants.

Let us first define our basic object : cycle. A cycle is an expression of the form :
$$[f : Y \to X, \L_1, \ldots, \L_s]$$
where $f : Y \to X$ is a \proj\ \morp\ in $\gvar{G}$, $Y$ is \sm\ and \girred{G}, $\L_j$ are \glin{G}\ invertible sheaves over $Y$ ($s$ can be zero and the order of $\L_j$ does not matter). More specifically, we may call it a cycle with $s$ line bundles. We define its geometric dimension to be $\dim Y$, denoted by $\geodim$. If $s = 0$, we call it a geometric cycle. There is a natural notion of iso\morp\ on such expression.

For a fixed object $X \in \gvar{G}$, we define $\bigbasicmod{G}{F}{s}{X}$ to be the free $\lazard_G(F)$-module generated by isomorphism classes of cycles with $s$ line bundles ($Y, f$ and $\L_j$ can vary) and 
$$\bigbasicmod{G}{F}{}{X} \defeq \prod_{s \geq 0} \bigbasicmod{G}{F}{s}{X}.$$ 

Unfortunately, the $\lazard_G(F)$-module $\bigbasicmod{G}{F}{}{X}$ is too big an object to handle. Instead, we will consider a submodule inside which is generated by elements of a form which will be explained below.

For a \glin{G}\ invertible sheaf $\L$ over some $Y \in \gvar{G}$, integer $i \geq 0$ and a finite subset $S$ of the set of positive integers, we define $V^{i}_{S}(\L)$ as the abbreviation for 
$$\L \otimes \alpha_1 , \L \otimes \alpha_2 , \ldots, \L \otimes \alpha_{i}$$
omitting $\L \otimes \alpha_k$ whenever $k \in S$. So, $V^{i}_{S}(\L)$ is basically the sequence of \glin{G}\ invertible sheaves given by twisting $\L$ by the characters $\alpha_1, \ldots, \alpha_i$, omitting the indices in $S$. For example, 
$$[f : Y \to X, V^{4}_{\{2,3\}}(\L)] = [f : Y \to X, \L\otimes \alpha_1, \L\otimes \alpha_4]$$
which is a cycle with 2 line bundles. We adopt the convention that 
$$[f : Y \to X, V^{i}_{S}(\L)] = 0$$ 
if $\max{S} > i$ (that is, if we omit index which is out of range). When $S$ is empty, we simply denote $V^{i}_{S}(\L)$ as $V^{i}(\L)$. 

Now, for a fixed object $X \in \gvar{G}$, the basic $\lazard_G(F)$-module, denoted by $\basicmod{G}{F}{}{X}$, that we will be working with is defined to be the submodule of $\bigbasicmod{G}{F}{}{X}$ generated by elements of the form
\begin{eqnarray}
\label{eqn basic element}
\sum_{I \geq 0} a_I [f : Y \to X, V^{i_1}_{S_1}(\L_1), V^{i_2}_{S_2}(\L_2), \ldots, V^{i_r}_{S_r}(\L_r)] 
\end{eqnarray}
\noindent where $f : Y \to X$ is a \proj\ \morp\ in $\gvar{G}$, $Y$ is \sm\ and \girred{G}, $\L_j$ are \glin{G}\ invertible sheaves over $Y$ as before, $r \geq 0$, $I$ is the multi-index $(i_1, \ldots, i_r)$, $a_I$ are elements in $\lazard_G(F)$ and $S_j$ are finite subsets of the set of positive integers (The sets $S_j$ are independent of $I$). We call an element of the form (\ref{eqn basic element}) an infinite cycle. 

For instance,
\begin{eqnarray*}
&& \sum_{i\geq 0} a_i [f : Y \to X, V^{i}_{\{1\}}(\L)] \\
&=& a_0 [f : Y \to X, V^{0}_{\{1\}}(\L)] + a_1 [f : Y \to X, V^{1}_{\{1\}}(\L)] + a_2 [f : Y \to X, V^{2}_{\{1\}}(\L)] + \cdots \\
&=& 0 + a_1 [f : Y \to X] + a_2 [f : Y \to X, \L\otimes \alpha_2] + \cdots 
\end{eqnarray*}
Notice that if we take $S_j$ to be empty sets, $a_I = 1$ if $I = (1,1,\ldots, 1)$ and zero otherwise, the infinite cycle 
\begin{eqnarray*}
&& \sum_{I \geq 0} a_I [f : Y \to X, V^{i_1}_{S_1}(\L_1), V^{i_2}_{S_2}(\L_2), \ldots, V^{i_r}_{S_r}(\L_r)] \\
&=& [f : Y \to X, V^{1}(\L_1), V^{1}(\L_2), \ldots, V^{1}(\L_r)] \\
&=& [f : Y \to X, \L_1 \otimes \alpha_1, \L_2 \otimes \alpha_1, \ldots, \L_r \otimes \alpha_1] \\
&=& [f : Y \to X, \L_1, \L_2, \ldots, \L_r]
\end{eqnarray*}
becomes a cycle. In other words, a cycle is an infinite cycle.

We will adopt the convention that
\begin{eqnarray}
&& [f : Y \disjoint Y' \to X, \L_1, \ldots, \L_r] \nonumber\\
&=& [f|_Y : Y \to X, \L_1|_Y, \ldots, \L_r|_Y] + [f|_{Y'} : Y' \to X, \L_1|_{Y'}, \ldots, \L_r|_{Y'}]. \nonumber
\end{eqnarray}

Next, we will define four basic operations in $\basicmod{G}{F}{}{-}$. For a \proj\ \morp\ $g : X \to X'$ in $\gvar{G}$, we define a push-forward 
$$g_* : \basicmod{G}{F}{}{X} \to \basicmod{G}{F}{}{X'}$$
as the restriction of the push-forward $g_* : \bigbasicmod{G}{F}{}{X} \to \bigbasicmod{G}{F}{}{X'}$ which sends $[f : Y \to X, \ldots]$ to $[g \circ f : Y \to X', \ldots]$. 

For a \sm\ \morp\ $g : X' \to X$ in $\gvar{G}$, we define a pull-back
$$g^* : \basicmod{G}{F}{}{X} \to \basicmod{G}{F}{}{X'}$$
as the restriction of the pull-back $g^* : \bigbasicmod{G}{F}{}{X} \to \bigbasicmod{G}{F}{}{X'}$ which sends $[f : Y \to X, \L_1, \ldots]$ to $[f' : Y' \to X', g'^* \L_1, \ldots]$, where $f'$, $g'$ are given by the following Cartesian square :
\squarediagramword{Y'}{Y}{X'}{X}{g'}{f'}{f}{g}

\medskip

The third operation is called infinite Chern class operator. For an object $X \in \gvar{G}$, sheaves $\L_1, \ldots, \L_r \in \picard{G}{X}$, elements $a_I \in \lazard_G(F)$ and $S_j$ as in (\ref{eqn basic element}), we define an operator 
$$\sigma = \sum_I a_I V^{i_1}_{S_1}(\L_1) \cdots V^{i_r}_{S_r}(\L_r) : \basicmod{G}{F}{}{X} \to \basicmod{G}{F}{}{X}$$
by sending $\sum_J b_J [f : Y \to X, V^{j_1}_{T_1}(\cat{M}_1), \ldots, V^{j_s}_{T_s}(\cat{M}_s)]$ to 
$$\sum_{IJ} a_I b_J [f : Y \to X, V^{j_1}_{T_1}(\cat{M}_1), \ldots, V^{j_s}_{T_s}(\cat{M}_s), V^{i_1}_{S_1}(f^*\L_1), \ldots V^{i_r}_{S_r}(f^*\L_r)]$$
where $IJ$ is the multi-index $(i_1, \ldots, i_r, j_1, \ldots, j_s)$. Here we adopt a similar convention that $\sigma = 0$ if $i_k < \max{S_k}$ for some $k$. Notice that if $r = 1$, $S_1 = \emptyset$, $a_{(i)} = \delta^i_1$ (Kronecker delta), then $\sigma = V^1(\L) = c(\L)$, which is the usual (first) Chern class operator as in \cite{universal alg cobor}.

\begin{rmk}
{\rm
If we denote the subring of $\Endo{\basicmod{G}{F}{}{X}}$ generated by all infinite Chern class operators by $\Endoinf{\basicmod{G}{F}{}{X}}$, then any infinite cycle can be written as $f_* \circ \sigma [\id_Y]$ for some \proj\ \morp\ $f : Y \to X$ in $\gvar{G}$, where $Y$ is \sm\ and \girred{G}, and $\sigma \in \Endoinf{\basicmod{G}{F}{}{Y}}$.
}
\end{rmk}

Finally, we define the external product
$$\x : \basicmod{G}{F}{}{X} \x \basicmod{G}{F}{}{X'} \to \basicmod{G}{F}{}{X \x X'}$$
by sending the pair 
$$\left( \sum_I a_I [f : Y \to X, V^{i_1}_{S_1}(\L_1), \ldots, V^{i_r}_{S_r}(\L_r)]\ ,\ \sum_J b_J [f' : Y' \to X', V^{j_1}_{T_1}(\cat{M}_1), \ldots, V^{j_s}_{T_s}(\cat{M}_s)] \right)$$ 
to 
$$\sum_{IJ} a_I b_J [f \x f' : Y \x Y' \to X \x X', V^{i_1}_{S_1}(\pi_1^* \L_1), \ldots, V^{i_r}_{S_r}(\pi_1^* \L_r), V^{j_1}_{T_1}(\pi_2^* \cat{M}_1), \ldots, V^{j_s}_{T_s}(\pi_2^* \cat{M}_s)].$$

\begin{rmk}
{\rm
With this external product, $\basicmod{G}{F}{}{\pt}$ becomes a unitary, associative, commutative $\lazard_G(F)$-algebra and $\basicmod{G}{F}{}{X}$ becomes a $\basicmod{G}{F}{}{\pt}$-module. 
}
\end{rmk}

Then, our \equi\ algebraic cobordism group $\cob{}{G}{-}$ is defined to be the quotient (as $\lazard_G(F)$-modules) of $\basicmod{G}{F}{}{-}$ corresponding to imposing the following two axioms :

\medskip

\noindent \textbf{(Sect)}\tab For all \girred{G}\ $Y \in$ $\gsmcat{G}$\ and $\L \in \picard{G}{Y}$ such that there exists an invariant section $s \in \gsection{Y}{\L}^G$ that cuts out an invariant \sm\ divisor $Z$ on $Y$,
$$[\id_Y,\L] = [Z \embed Y].$$

\medskip

\noindent \textbf{(EFGL)}\tab For all \girred{G}\ $Y \in$ $\gsmcat{G}$, $i,j \geq 0$, character $\ga \in G^*$ and sheaves $\L$, $\cat{M} \in \picard{G}{Y}$,
\begin{eqnarray}
V^i(\L)V^j(\L)[\id_Y] &=& \sum_{s \geq 0}\,b^{i,j}_s\,V^s(\L)[\id_Y] \nonumber\\
V^i(\L \otimes \ga)[\id_Y] &=& \sum_{s \geq 0}\,d(\ga)^i_s\,V^s(\L)[\id_Y] \nonumber\\
V^i(\L \otimes \cat{M})[\id_Y] &=& \sum_{s,t \geq 0}\,f^i_{s,t}\,V^s(\L)V^t(\cat{M})[\id_Y]. \nonumber
\end{eqnarray}

\begin{rmk}
\label{rmk why EFGL}
{\rm
Let us briefly justify our axiom \textbf{(EFGL)}. Since we would like to define our theory $\cob{}{G}{-}$ to be the algebraic analogue of Tom Dieck's equivariant complex cobordism theory $MU_G(-)$, it would be important that the axiom \textbf{(EFGL)} holds in the theory $MU_G(-)$, in an appropriate sense (Readers may refer to section \ref{sect realization functor} for a quick review of $MU_G(-)$). 

As mentioned in section 13 of \cite{equi FGL 2}, $MU_G(\P(\cat{U}))$ (where $\cat{U}$ is the complete $G$-universe as in section \ref{sect notation}) is a $(G,F)$-\fgl\ over the cobordism ring $MU_G$, i.e.,
$$MU_G(\P(\cat{U})) \cong MU_G\{\{F\}\}.$$
Moreover, there is a canonical choice for $y(\epsilon)$ :
$$y(\epsilon) = [\P(\cat{U})^+ \stackrel{0}{\to} M(\xi) = MU(1,G)]$$ 
where $\xi \to \P(\cat{U})$ is the universal equivariant line bundle and $0$ is the zero section. Then, by definition (see section 4 in \cite{equi FGL}),
$$y(\alpha) = l_{\alpha} y(\epsilon) = [\P(\cat{U})^+ \stackrel{\otimes \alpha}{\longto} \P(\cat{U})^+ \stackrel{0}{\to} MU(1,G)]$$
for any 1-dimensional character $\alpha$.

On the other hand, the Euler class (hence, the first Chern class) of $\xi \otimes \alpha$ in $MU_G(-)$ is defined to be
$$e(\xi \otimes \alpha) = c(\xi \otimes \alpha) = [\P(\cat{U})^+ \stackrel{0}{\to} M(\xi \otimes \alpha) \stackrel{a}{\to} MU(1,G)]$$
where $a$ is the map induced by the classifying map of $\xi \otimes \alpha \to \P(\cat{U})$. Therefore, we have
$$y(\alpha) = e(\xi \otimes \alpha) = c(\xi \otimes \alpha)$$
in $MU_G(\P(\cat{U}))$. Since $y(V^i) = y(\alpha_1)y(\alpha_2) \cdots y(\alpha_i)$ (Lemma 13.2 in \cite{equi FGL}), we have $y(V^i) = V^i(\xi)$. Hence, the equation 
$$y(V^i)y(V^j) = \sum_{s \geq 0}\,b^{i,j}_s\,y(V^s)$$
in $MU_G(\P(\cat{U}))$ translates to
$$V^i(\xi) V^j(\xi) = \sum_{s \geq 0}\,b^{i,j}_s\, V^s(\xi).$$
If we denote the classifying map of an arbitrary line bundle $L \to Y$ by $cl_L : Y \to \P(\cat{U})$ and apply $cl_L^*$ to the above equation, we will have
$$V^i(L) V^j(L) = \sum_{s \geq 0}\,b^{i,j}_s\, V^s(L),$$
which is exactly our first equation in the axiom \textbf{(EFGL)}. Similar facts hold for the other two equations in the axiom \textbf{(EFGL)}.
}
\end{rmk}

\begin{rmk}
{\rm
To be more precise, we need to close our relations \wrt\ the four basic operations. In other words, to impose the \textbf{(Sect)} axiom, we need to quotient out the $\lazard_G(F)$ submodule generated by elements of the form
$$f_* \circ \gs \circ g^*\, ([\id_T,\L] - [Z \embed T])$$
where $f : Y \to U$ is \proj, $T \in$ $\gsmcat{G}$\ is \girred{G}, $g : Y \to T$ is \sm, $\gs \in \Endoinf{\basicmod{G}{F}{}{Y}}$ and $\L \in \picard{G}{T}$ is a sheaf such that there exists an invariant section $s \in {\rm H}^0(T,\L)^G$ that cuts out an invariant \sm\ divisor $Z$ on $T$ (see subsection 2.1.3 in \cite{universal alg cobor} for details). The fact that this submodule is closed under the four basic operations follows easily from the basic properties \textbf{(A1)-(A8)} in the coming section.
}
\end{rmk}

\begin{rmk}
\label{rmk why drop dim axiom}
{\rm
Suppose one defines $\cob{}{G}{-}$ by imposing the \textbf{(Dim)}, \textbf{(Sect)} and \textbf{(EFGL)} axioms on $\lazard_G(F) \otimes_{\Z} Z^G(-)$, where $Z^G(X)$ is the free abelian group generated by isomorphism classes of elements of the form $(f : Y \to X, \L_1, \ldots, \L_r)$. Then the canonical map $\lazard_G(F) \to \cob{}{G}{\pt}$, which sends $a$ to $a\, [\id_{\pt}]$, will send the Euler class $e(\alpha) \defeq d(\alpha)^1_0$ to 
$$e(\alpha)[\id_{\pt}] = V^1(\alpha)[\id_{\pt}] = [\id_{\pt}, \alpha]$$
(by Proposition \ref{prop Chern class basic}), which is equal to zero by the \textbf{(Dim)} axiom. Therefore, the canonical map $\lazard_G(F) \to \cob{}{G}{\pt}$ will not be injective as we want.
}
\end{rmk}

\begin{rmk}
\label{rmk basic defn}
{\rm
If $X_j$ are the \girred{G}\ components of $X$, then there is a canonical surjective map
$$\oplus_j\, {i_j}_* : \oplus_j\, \basicmod{G}{F}{}{X_j} \to \basicmod{G}{F}{}{X}$$
where $i_j : X_j \embed X$ are the immersions. Moreover, it respects \textbf{(Sect)} and \textbf{(EFGL)}. Therefore, it defines a canonical surjective map
$$\oplus_j\, {i_j}_* : \oplus_j\, \cob{}{G}{X_j} \to \cob{}{G}{X}.$$
Also, if $X_j$ are disjoint, this map becomes an iso\morp.
}
\end{rmk}

Although it may seem that our definition of $\cob{}{G}{-}$ depends on the choice of the complete $G$-flag $F$, we will see in section \ref{sect flag indep} that our theory is actually independent of this choice.

\bigskip
\bigskip

\section{Basic properties}
\label{sect basic properties}

In our \equi\ algebraic cobordism theory $\cob{}{G}{-}$, following the notation in \cite{universal alg cobor}, we also have basic properties \textbf{(A1) - (A8)}. For properties involving (first) Chern class operators, we will consider infinite Chern class operators instead. In the following list of properties, all objects and \morp s are assumed to be in $\gvar{G}$. 

For a \morp\ $f : X' \to X$ in $\gvar{G}$ and an element $\sigma = \sum_I a_I V^{i_1}_{S_1}(\L_1) \cdots V^{i_r}_{S_r}(\L_r)$ in $\Endoinf{\cob{}{G}{X}}$, we define the ``pull-back'' of $\sigma$ via $f$ as
$$\sigma^f \defeq \sum_I a_I V^{i_1}_{S_1}(f^* \L_1) \cdots V^{i_r}_{S_r}(f^* \L_r),$$
as an element in $\Endoinf{\cob{}{G}{X'}}$.

\medskip

\noindent \begin{tabular}[t]{p{30pt}p{365pt}}
\textbf{(A1)} & If $f : X \to X'$ and $g : X' \to X''$ are both \sm, then
\begin{center}
$(g \circ f)^* = f^* \circ g^*$
\end{center}
and $(\id_X)^*$ is the identity map. \nonumber
\end{tabular}

\noindent \begin{tabular}[t]{p{30pt}p{365pt}}
\textbf{(A2)} & Suppose $f : X \to Z$ is \proj, $g : Y \to Z$ is \sm\ and $f'$, $g'$ are given by the following Cartesian square :
\squarediagramword{X \x_Z Y}{X}{Y}{Z}{g'}{f'}{f}{g}

\medskip

\noindent Then the object $X \x_Z Y$ is in $\gvar{G}$\ and we have $g^* \circ f_* = f'_* \circ g'^*$. \nonumber
\end{tabular}

\noindent \begin{tabular}[t]{p{30pt}p{365pt}}
\textbf{(A3)} & If $f : X \to X'$ is \proj\ and $\sigma$ is an element in $\Endoinf{\cob{}{G}{X'}}$, then
\begin{center}
$f_* \circ \sigma^f = \sigma \circ f_*$.
\end{center} \nonumber
\end{tabular}

\noindent \begin{tabular}[t]{p{30pt}p{365pt}}
\textbf{(A4)} & If $f : X \to X'$ is \sm\ and $\sigma$ is an element in $\Endoinf{\cob{}{G}{X'}}$, then
\begin{center}
$f^* \circ \sigma = \sigma^f \circ f^*.$
\end{center} \nonumber
\end{tabular}

\noindent \begin{tabular}[t]{p{30pt}p{365pt}}
\textbf{(A5)} & If $\sigma$, $\sigma'$ are both in $\Endoinf{\cob{}{G}{X}}$, then
\begin{center}
$\sigma \circ \sigma' = \sigma' \circ \sigma.$
\end{center} \nonumber
\end{tabular}

\noindent \begin{tabular}[t]{p{30pt}p{365pt}}
\textbf{(A6)} & If $f$, $g$ are both \proj, then
\begin{center}
$\x \circ (f_* \x g_*) = (f \x g)_* \circ \x.$
\end{center} \nonumber
\end{tabular}

\noindent \begin{tabular}[t]{p{30pt}p{365pt}}
\textbf{(A7)} & If $f$, $g$ are both \sm, then
\begin{center}
$\x \circ (f^* \x g^*) = (f \x g)^* \circ \x.$
\end{center} \nonumber
\end{tabular}

\noindent \begin{tabular}[t]{p{30pt}p{365pt}}
\textbf{(A8)} & Suppose $x$, $y$ are elements in $\cob{}{G}{X}$, $\cob{}{G}{X'}$ respectively and $\sigma$ is an element in $\Endoinf{\cob{}{G}{X}}$. Then we have
\begin{center}
$\sigma(x) \x y = \sigma^{\pi_1}(x \x y)$
\end{center} 
where $\pi_1 : X \x X' \to X$ is the projection. \nonumber
\end{tabular}

\begin{proof}
The fact that $X \x_Z Y$ is reduced in \textbf{(A2)} follows from Proposition \ref{prop stay reduced}. Everything else can be easily derived from the definitions, similar to \cite{universal alg cobor}.
\end{proof}

In addition to the list above, we also have a number of basic facts for computational purpose. In particular, the double point relation (see definition 0.1 in \cite{alg cobor by DPR}), the blow up relation (see lemma 5.1 in \cite{alg cobor by DPR}) and the extended double point relation (see lemma 5.2 in \cite{alg cobor by DPR}) hold in our theory $\cob{}{G}{-}$. 

For a $G$-character $\alpha$, we will call the element $e(\alpha) \defeq d(\alpha)^1_0 \in \lazard_G(F)$ the Euler class of $\alpha$.  These Euler classes are some very special elements in $\lazard_G(F)$ and we will see in section \ref{sect K theory}\ and \ref{sect realization functor}\ that they correspond exactly to the Euler classes in the \equi\ K-theory and Tom Dieck's equivariant cobordism theory.

\newpage

\begin{prop}
\label{prop Chern class basic}
As an operator on $\cob{}{G}{X}$, 

\noindent \begin{statementslist}
{\rm (1)} & $c(\O_X) = 0.$ \nonumber\\
{\rm (2)} & For any $G$-character $\ga$, we have $c(\ga) = e(\ga)$. \nonumber
\end{statementslist}
\end{prop}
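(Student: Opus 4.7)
The plan is to prove (1) from the \textbf{(Sect)} axiom applied to the trivial line bundle with its constant section, and then deduce (2) from (1) using the character-twist relation in \textbf{(EFGL)} together with the normalization $\ga_1 = \gep$ fixed in Section~\ref{sect notation}.

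For (1), I would first establish the pointwise identity $c(\O_Y)[\id_Y] = 0$ for every \girred{G}\ \sm\ $Y$: the constant section $1 \in {\rm H}^0(Y,\O_Y)^G$ is $G$-\inv\ and nowhere vanishing, so it cuts out the empty divisor (trivially \sm\ and $G$-\inv), and \textbf{(Sect)} gives $c(\O_Y)[\id_Y] = [\id_Y,\O_Y] = [\emptyset \embed Y] = 0$. To upgrade this to the operator identity $c(\O_X) = 0$ on $\cob{}{G}{X}$, I invoke the remark after the definition of the infinite Chern class operator, according to which any element of $\cob{}{G}{X}$ is a sum of terms $f_* \sigma[\id_Y]$ with $f : Y \to X$ \proj, $Y$ \girred{G}\ and \sm, and $\sigma \in \Endoinf{\cob{}{G}{Y}}$. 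Then \textbf{(A3)} gives $c(\O_X) \circ f_* = f_* \circ c(f^*\O_X) = f_* \circ c(\O_Y)$ and \textbf{(A5)} lets $c(\O_Y)$ commute past $\sigma$, reducing the statement to the pointwise vanishing just shown.

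For (2), I apply the character-twist relation of \textbf{(EFGL)} to a \girred{G}\ \sm\ $Y$ with $\L = \O_Y$ and $i = 1$, obtaining
$$V^1(\ga)[\id_Y] = \sum_{s \geq 0} d(\ga)^1_s\, V^s(\O_Y)[\id_Y].$$
The $s = 0$ term is $d(\ga)^1_0\,[\id_Y] = e(\ga)[\id_Y]$. For $s \geq 1$, the cycle $V^s(\O_Y)[\id_Y] = [\id_Y,\O_Y \otimes \ga_1,\O_Y \otimes \ga_2,\ldots,\O_Y \otimes \ga_s]$ rewrites as $c(\O_Y \otimes \ga_1) \circ c(\O_Y \otimes \ga_2) \circ \cdots \circ c(\O_Y \otimes \ga_s)$ applied to $[\id_Y]$, and since $\ga_1 = \gep$ the leading factor is $c(\O_Y)$, which vanishes by (1). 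Hence only the $s = 0$ term survives and $c(\ga)[\id_Y] = e(\ga)[\id_Y]$. Promoting this to an operator identity on $\cob{}{G}{X}$ proceeds exactly as in (1): writing an arbitrary element as $f_* \sigma[\id_Y]$, using $f^*\ga = \ga$, and applying \textbf{(A3)} and \textbf{(A5)}, while $e(\ga) \in \lazard_G(F)$ is a scalar and commutes with both $f_*$ and $\sigma$. I do not anticipate a real obstacle; the only mildly delicate point is accepting that \textbf{(Sect)} applies with the empty divisor, which is the standard convention inherited from Levine--Morel.
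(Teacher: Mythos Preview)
Your argument is correct and follows the same line as the paper's proof: part (1) from \textbf{(Sect)} with the nowhere-vanishing section, and part (2) from the $l_{\ga}$-relation in \textbf{(EFGL)} with $\L = \O_Y$, $i = 1$, using $\ga_1 = \gep$ to make $V^s(\O_Y)$ contain a factor $c(\O_Y)$ for $s \geq 1$. The only difference is that the paper states the operator identities directly in one line, while you spell out the promotion from the pointwise identity on $[\id_Y]$ to an operator identity via \textbf{(A3)} and \textbf{(A5)}; this extra care is entirely appropriate and fills in what the paper leaves implicit.
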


\begin{proof}
Part (1) follows from the \textbf{(Sect)} axiom. For part (2), by definition, 
$$c(\ga) = V^1(\O_X \otimes \ga) = \sum_{s \geq 0}\,d(\ga)^1_s\,V^s(\O_X)$$
by the \textbf{(EFGL)} axiom. Therefore,
\begin{eqnarray*}
c(\ga) &=&  d(\ga)^1_0 + d(\ga)^1_1\,V^1(\O_X) + d(\ga)^1_2\,V^2(\O_X) + \cdots \\
&=& e(\ga) + d(\ga)^1_1\,c(\O_X) + d(\ga)^1_2\,c(\O_X)c(\O_X\otimes \ga_2) + \cdots
\end{eqnarray*}
which is equal to $e(\alpha)$ by part (1).
\end{proof}

\begin{rmk}
\label{rmk FGL}
\rm{
By Lemma 16.7 in \cite{equi FGL}, $f^1_{i,0} = f^1_{0,i} = \delta^i_1$. That means 
$$c(\L \otimes \cat{M}) = c(\L) + c(\cat{M}) + \sum_{s,t \geq 1}\,f^1_{s,t}\,V^s(\L)V^t(\cat{M}).$$
Moreover, the representing ring $\lazard_G(F)$, as a $\Z$-algebra, is generated by $f^1_{s,t}$ and the Euler classes $e(\ga)$ by Theorem 16.1 in \cite{equi FGL}.
}
\end{rmk}

\begin{prop}
\label{prop FGL inverse}
For all $X \in \gvar{G}$ and $\L \in \picard{G}{X}$, there exists $\gs \in \Endoinf{\cob{}{G}{X}}$ such that
$$c(\dual{\L}) = \gs \circ c(\L).$$
\end{prop}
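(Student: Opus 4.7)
The plan is to reduce the identity to an operator consequence of the third \textbf{(EFGL)} relation specialized to the pair $(\L, \dual{\L})$. Since $\L \otimes \dual{\L} \cong \O_X$ and Proposition \ref{prop Chern class basic} gives $c(\O_X) = 0$, applying that relation produces the operator identity
\begin{equation*}
0 \;=\; c(\L \otimes \dual{\L}) \;=\; \sum_{s,t \geq 0} f^1_{s,t}\, V^s(\L)\, V^t(\dual{\L})
\end{equation*}
on $\cob{}{G}{X}$. By Remark \ref{rmk FGL}, $f^1_{s,0} = f^1_{0,s} = \delta^s_1$, so in particular $f^1_{0,0} = 0$; separating the four regions of the index set $(s,t) \in \{0\} \cup \{\geq 1\}$ on each coordinate therefore collapses the sum to
\begin{equation*}
0 \;=\; c(\L) \;+\; c(\dual{\L}) \;+\; \sum_{s,t \geq 1} f^1_{s,t}\, V^s(\L)\, V^t(\dual{\L}).
\end{equation*}

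The key manipulation is now to factor a single $c(\L)$ out of every term in the leftover sum. The standing assumption $\alpha_1 = \gep$ (trivial character) from Section \ref{sect notation} ensures that the sequence $V^s(\L)$ begins with $\L \otimes \alpha_1 = \L$, so that as operators on $\cob{}{G}{X}$ one has $V^s(\L) = V^s_{\{1\}}(\L) \circ c(\L)$ for every $s \geq 1$ (by (A5) all these operators commute, and for $s=1$ the factor $V^1_{\{1\}}(\L)$ is the empty product, i.e., the identity). Substituting this into the displayed equation and solving for $c(\dual{\L})$ yields
\begin{equation*}
c(\dual{\L}) \;=\; \Bigl(-\id \;-\; \sum_{s,t \geq 1} f^1_{s,t}\, V^s_{\{1\}}(\L)\, V^t(\dual{\L})\Bigr) \circ c(\L),
\end{equation*}
so one takes $\gs$ to be the operator in parentheses, which is visibly an element of $\Endoinf{\cob{}{G}{X}}$ (a sum of an identity term and a single infinite Chern class operator in the sense of Section \ref{sect defn of theory}, with $\L_1 = \L$, $S_1 = \{1\}$, $\L_2 = \dual{\L}$, $S_2 = \emptyset$).

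The one point that needs a brief justification is that \textbf{(EFGL)} was formally imposed only on classes $[\id_Y]$ for \sm, \girred{G} $Y$, whereas I am using it as an operator identity on arbitrary $\cob{}{G}{X}$. This is handled routinely by writing a typical generator of $\cob{}{G}{X}$ as $f_* \circ \tau [\id_Y]$ for some $\tau \in \Endoinf{\cob{}{G}{Y}}$, using (A3) to convert the outer Chern operators in $X$ into pulled-back Chern operators in $Y$, and using (A5) to commute them past $\tau$ so that \textbf{(EFGL)} may be applied directly at $[\id_Y]$. I expect no real obstacle here; the only care is bookkeeping with the abbreviation $V^s_S(\L)$, in particular the boundary case $s=1$, $S = \{1\}$.
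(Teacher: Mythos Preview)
Your proof is correct and follows essentially the same route as the paper: expand $0 = c(\L \otimes \dual{\L})$ via Remark \ref{rmk FGL}, then factor $c(\L)$ out of every surviving term using $V^s(\L) = V^s_{\{1\}}(\L) \circ c(\L)$ for $s \geq 1$. Your final paragraph justifying the passage from the \textbf{(EFGL)} relation on $[\id_Y]$ to an operator identity on all of $\cob{}{G}{X}$ is more explicit than the paper (which simply asserts the operator identity via Remark \ref{rmk FGL}), but the argument is the same.
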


\begin{proof}
By Proposition \ref{prop Chern class basic}\ and Remark \ref{rmk FGL}, we have
$$0 = c(\O_X) = c(\L \otimes \dual{\L}) = c(\L) +  c(\dual{\L}) + \sum_{s,t \geq 1}\,f^1_{s,t}\,V^s(\L)V^t(\dual{\L}).$$
Hence, 
$$c(\dual{\L}) = -c(\L) - \sum_{s,t}\,f^1_{s,t}\,V^s(\L)V^t(\dual{\L}) = (-1 - \sum_{s,t}\,f^1_{s,t}\,V^s_{\{1\}}(\L)V^t(\dual{\L})) \circ c(\L).$$
\end{proof}

\begin{lemma}
\label{lemma FGL remaining terms}
Suppose $Y$ is an object in $\gsmcat{G}$\ and $E_1$, $E_2$ are two invariant divisors on $Y$ such that $E_1 + E_2$ is a \rsncd. Denote the intersection $E_1 \cap E_2$ by $D$. Then, as elements in $\cob{}{G}{D}$, we have
$$\sum_{s,t \geq 1}\,f^1_{s,t}\,V^s_{\{1\}}(\O_D(E_1))\, V^t_{\{1\}}(\O_D(E_2))\,[\id_D] = -[\P_D \to D],$$
where $\P_D \defeq \P(\O_D \oplus \O_D(E_1))$.
\end{lemma}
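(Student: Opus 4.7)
The key inputs are the equivariant double point relation (DPR) and the blow-up relation (both shown to hold in $\cob{}{G}{-}$ in Section 5), together with the axioms \textbf{(EFGL)}, \textbf{(Sect)}, and \textbf{(A3)} applied on the ambient variety $Y$.

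First, apply \textbf{(EFGL)} on $Y$ to $\O_Y(E_1) \otimes \O_Y(E_2) = \O_Y(E_1+E_2)$ and \textbf{(Sect)} on each $\O_Y(E_i)$ (whose invariant section cuts out the smooth $E_i$) to obtain, in $\cob{}{G}{Y}$,
\[
\sum_{s,t \geq 1} f^1_{s,t}\, V^s(\O_Y(E_1))\, V^t(\O_Y(E_2))\,[\id_Y] = c(\O_Y(E_1+E_2))[\id_Y] - [E_1 \to Y] - [E_2 \to Y].
\]
Next, by smoothing the reduced normal-crossing divisor $E_1+E_2$ in its linear system (locally or via a suitable blow-up and pencil), produce a smooth $G$-invariant divisor linearly equivalent to $E_1+E_2$, and apply the equivariant DPR to get
\[
c(\O_Y(E_1+E_2))[\id_Y] = [E_1 \to Y] + [E_2 \to Y] - [\P(\cat{N}_{D/E_2} \oplus \O_D) \to D \to Y].
\]
Since $\cat{N}_{D/E_2} = \O_D(E_1)$, this projective bundle is exactly $\P_D = \P(\O_D \oplus \O_D(E_1))$. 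Combining the two displays yields
\[
\sum_{s,t \geq 1} f^1_{s,t}\, V^s(\O_Y(E_1))\, V^t(\O_Y(E_2))\,[\id_Y] = -[\P_D \to D \to Y] \quad \text{in }\cob{}{G}{Y}.
\]

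Now use the operator identity $V^s(\L) = c(\L)\, V^s_{\{1\}}(\L)$ (valid since $\alpha_1 = \epsilon$) and the consequence $c(\O_Y(E_1))\, c(\O_Y(E_2))[\id_Y] = {i_D}_*[\id_D]$ of iterated \textbf{(Sect)}, where $i_D\colon D \embed Y$. Axiom \textbf{(A3)} permits commuting the pullback Chern operators $V^s_{\{1\}}(\O_Y(E_1)), V^t_{\{1\}}(\O_Y(E_2))$ past the pushforward ${i_D}_*$, restricting them to $\O_D(E_i) = i_D^*\O_Y(E_i)$. Hence the preceding identity factors as ${i_D}_*$ applied to
\[
\sum_{s,t \geq 1} f^1_{s,t}\, V^s_{\{1\}}(\O_D(E_1))\, V^t_{\{1\}}(\O_D(E_2))\,[\id_D] + [\P_D \to D] = 0 \quad \text{in }\cob{}{G}{D},
\]
which is the assertion of the lemma.

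The main obstacle is twofold. First, making the smoothing of $E_1+E_2$ inside $Y$ rigorous: in general $\O_Y(E_1+E_2)$ may not admit enough sections, and one needs to apply the DPR locally along $D$ (where smoothing is always possible) and then patch globally, or pass to a suitable blow-up. Second, lifting the identity from ${i_D}_* = 0$ in $\cob{}{G}{Y}$ to the pointwise equality in $\cob{}{G}{D}$: one addresses this by choosing $Y$ sufficiently universal (for instance, a projective completion of the total space of $\cat{N}_{D/Y} = \O_D(E_1) \oplus \O_D(E_2)$ over $D$ with $E_i$ the coordinate subbundles), for which the factorization through ${i_D}_*$ can be shown to be injective on the subspace spanned by the cycles at hand.
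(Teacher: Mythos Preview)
Your approach has a fatal circularity: in this paper the double point relation (Proposition~\ref{prop double point relation}) is \emph{derived from} Lemma~\ref{lemma FGL remaining terms}, not the other way around. Look at the proof of Proposition~\ref{prop double point relation}: after expanding $c(\O(A+B))$ via \textbf{(EFGL)} and \textbf{(Sect)}, the residual sum $\sum_{s,t\ge1} f^1_{s,t} V^s_{\{1\}}(\O_D(A))V^t_{\{1\}}(\O_D(B))[\id_D]$ appears, and it is precisely Lemma~\ref{lemma FGL remaining terms} that identifies it with $-[\P_D\to D]$. So invoking the DPR here begs the question.

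Even setting aside the circularity, both obstacles you flag are genuine and your sketches do not resolve them. First, there is no reason $\O_Y(E_1+E_2)$ should admit a $G$-invariant section cutting out a smooth divisor disjoint from $E_1\cup E_2$; ``local smoothing and patching'' is not an operation available in $\cob{}{G}{-}$. Second, the passage from $i_{D*}(\cdots)=0$ in $\cob{}{G}{Y}$ to the identity in $\cob{}{G}{D}$ would require injectivity of $i_{D*}$, which fails in general, and your proposed ``universal $Y$'' fix is not made precise.

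The intended argument (following Lemma~3.3 of \cite{alg cobor by DPR}) works entirely over $D$ and uses only \textbf{(Sect)} and \textbf{(EFGL)}, never the DPR. You can see a fully spelled-out variant in the proof of Lemma~\ref{lemma DPR residue in K theory} later in this paper: embed $D$ in $\P_D$ via the section from $\O_D\oplus\O_D(E_1)\to\O_D(E_1)$, blow up $D\times 0$ inside $\P_D\times\P^1$ to get $Y'$, and compare the fibers of $Y'\to\P^1$ over $0$ and $\infty$. The fiber over $\infty$ is $\P_D$; the fiber over $0$ is $\P_D + E$ where $E$ is the exceptional divisor, with $\P_D\cap E\cong D$. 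Expanding $c(\O_{Y'}(\P_D+E))[\id_{Y'}]$ by \textbf{(EFGL)}, applying \textbf{(Sect)}, and pushing forward along $Y'\to D$ yields the identity directly in $\cob{}{G}{D}$, with no need to lift through $i_{D*}$.
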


\begin{proof}
By a similar argument as in the proof of Lemma 3.3 in \cite{alg cobor by DPR}.
\end{proof}

\begin{prop} 
\label{prop double point relation}
Suppose $A$, $B$, $C$ are invariant divisors on $Y \in$ $\gsmcat{G}$\ such that $A + B \sim C$, $C$ is disjoint from $A \cup B$ and $A + B + C$ is a \rsncd. Then, as elements in $\cob{}{G}{Y}$, we have
$$[C \embed Y] = [A \embed Y] + [B \embed Y] - [\P(\O_D \oplus \O_D(A)) \to D \embed Y],$$
where $D \defeq A \cap B$.
\end{prop}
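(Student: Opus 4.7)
The plan is to evaluate the Chern class $c(\O_Y(C))[\id_Y]$ in $\cob{}{G}{Y}$ in two ways and equate the results. First I would set $\L \defeq \O_Y(A)$ and $\cat{M} \defeq \O_Y(B)$. Since $(A+B) - C$ is the divisor of a $G$-invariant rational function, there is a $G$-equivariant isomorphism $\L \otimes \cat{M} \cong \O_Y(C)$ of \glin{G}\ invertible sheaves. Because $A+B+C$ is a \rsncd\ and $C$ is disjoint from $A \cup B$, each of $A$, $B$, $C$ is on its own a smooth invariant divisor of $Y$. The canonical $G$-invariant section of $\O_Y(C)$ cuts out $C$ smoothly, so \textbf{(Sect)} yields $c(\O_Y(C))[\id_Y] = [C \embed Y]$.

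On the other side, I would expand $c(\L \otimes \cat{M})[\id_Y]$ using \textbf{(EFGL)} together with the identities $f^1_{i,0} = f^1_{0,i} = \delta^i_1$ of Remark \ref{rmk FGL} to write
\[
c(\L \otimes \cat{M})[\id_Y] = c(\L)[\id_Y] + c(\cat{M})[\id_Y] + \sum_{s,t \geq 1} f^1_{s,t}\, V^s(\L)\,V^t(\cat{M})[\id_Y].
\]
Two further applications of \textbf{(Sect)}, to the invariant sections of $\L$ and $\cat{M}$ cutting out $A$ and $B$, identify the first two summands with $[A \embed Y]$ and $[B \embed Y]$, respectively.

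The main work is to evaluate the higher-order sum. Using the convention $\alpha_1 = \gep$ from Section \ref{sect notation}, I would factor $V^s(\L) = c(\L) \circ V^s_{\{1\}}(\L)$ and analogously for $\cat{M}$, and then use commutativity \textbf{(A5)} to move $c(\L)\,c(\cat{M})$ to the right. Evaluating this first: \textbf{(Sect)} gives $c(\cat{M})[\id_Y] = {i_B}_*[\id_B]$; \textbf{(A3)} converts $c(\L) \circ {i_B}_*[\id_B]$ into ${i_B}_* \circ c(\O_B(A))[\id_B]$; and a second application of \textbf{(Sect)} --- to $\O_B(A)$, whose invariant section cuts out $D$ smoothly in $B$ because $A+B$ is \rsncd\ --- yields $c(\L)\,c(\cat{M})[\id_Y] = {i_D}_*[\id_D]$, where $i_D : D \embed Y$. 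One more use of \textbf{(A3)} propagates the operator $\sum_{s,t \geq 1} f^1_{s,t}\,V^s_{\{1\}}(\L)\,V^t_{\{1\}}(\cat{M})$ through ${i_D}_*$, replacing $\L$ and $\cat{M}$ by their restrictions $\O_D(A)$ and $\O_D(B)$. Lemma \ref{lemma FGL remaining terms}, applied with $E_1 = A$ and $E_2 = B$, then identifies the resulting expression on $D$ with $-[\P_D \to D]$, so pushing forward to $Y$ produces $-[\P_D \to D \embed Y]$.

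Putting the two evaluations of $c(\O_Y(C))[\id_Y] = c(\L \otimes \cat{M})[\id_Y]$ side by side delivers the claimed double point relation. The genuine mathematical content is all packed into Lemma \ref{lemma FGL remaining terms}; the rest of the argument is a careful bookkeeping exercise in propagating Chern class operators through proper push-forwards via \textbf{(A3)}--\textbf{(A5)}. The principal pitfall to watch is ensuring that at each invocation of \textbf{(Sect)} the chosen section is genuinely $G$-invariant and cuts out a smooth invariant divisor --- a condition guaranteed throughout by the \rsncd\ hypothesis on $A+B+C$.
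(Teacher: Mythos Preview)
Your proposal is correct and follows essentially the same route as the paper: compute $c(\O_Y(C))[\id_Y]$ via \textbf{(Sect)}, expand $c(\O_Y(A+B))[\id_Y]$ via the formal group law, peel off one factor of $c(\O_Y(A))\,c(\O_Y(B))$ to land on $D$ via \textbf{(A3)}, and invoke Lemma~\ref{lemma FGL remaining terms}. The only point the paper makes explicit that you gloss over is the preliminary reduction ``w\withoutlog\ $Y$ is \girred{G}'', needed because the \textbf{(Sect)} axiom is stated only for \girred{G}\ $Y$; this is harmless since $\cob{}{G}{-}$ splits over \girred{G}\ components (Remark~\ref{rmk basic defn}).
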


\begin{proof}
W\withoutlog, we may assume $Y$ to be \girred{G}. Then we have
\begin{eqnarray}
[C \embed Y] &=& c(\O(C))[\id_Y] \nonumber\\
&=& c(\O(A + B))[\id_Y] \nonumber\\
&=& c(\O(A))[\id_Y] + c(\O(B))[\id_Y] + \sum_{s,t \geq 1}\,f^1_{s,t}\,V^s(\O(A))V^t(\O(B))[\id_Y] \nonumber
\end{eqnarray}
by Remarks \ref{rmk FGL}. Therefore,
\begin{eqnarray}
[C \embed Y] &=& [A \embed Y] + [B \embed Y] + \sum_{s,t \geq 1}\,f^1_{s,t}\,V^s_{\{1\}}(\O(A))V^t_{\{1\}}(\O(B))\, [D \embed Y] \nonumber\\
&=& [A \embed Y] + [B \embed Y] + i_* ( \sum_{s,t \geq 1}\,f^1_{s,t}\,V^s_{\{1\}}(\O_D(A))V^t_{\{1\}}(\O_D(B))\, [\id_D] ) \nonumber
\end{eqnarray}
where $i : D \embed Y$. The result then follows from Lemma \ref{lemma FGL remaining terms}.
\end{proof}

\begin{prop}
\label{prop blow up relation}
Suppose $Z$ is an \inv\ closed subscheme of $Y$ such that $Z$, $Y$ are both in $\gsmcat{G}$. Then, as elements in $\cob{}{G}{Y}$, we have
$$[\blowup{Y}{Z} \to Y] - [\id_Y] = -[\P_1 \to Z \embed Y] + [\P_2 \to Z \embed Y],$$
where $\P_1 \defeq \P(\O_Z \oplus \dual{\nbundle{Z}{Y}}) \to Z$ and $\P_2 \defeq \P(\O \oplus \O(1)) \to \P(\dual{\nbundle{Z}{Y}}) \to Z$ with $\O(1)$ being the tautological line bundle over $\P(\dual{\nbundle{Z}{Y}})$.
\end{prop}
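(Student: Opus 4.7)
The plan is to adapt the non-\equi\ proof of the blow-up formula (Lemma 5.1 in \cite{alg cobor by DPR}) to the \equi\ setting, using the deformation to the normal cone together with the \equi\ double point relation established in Proposition \ref{prop double point relation}. Write $\tilde{Y} \defeq \blowup{Y}{Z}$, $E \defeq \P(\dual{\nbundle{Z}{Y}})$ and let $W \defeq \blowup{Y \x \P^1}{Z \x \{0\}}$, where $G$ acts trivially on $\P^1$. Since $Z$, $Y$ are \sm\ and $G$-\inv, $W$ is a \sm\ object of \gsmcat{G}, and the composition $W \to Y \x \P^1 \to \P^1$ has fiber $Y_\infty \cong Y$ over $\infty$ and fiber $\tilde{Y} \cup \P_1$ over $0$, with $\P_1 = \P(\O_Z \oplus \dual{\nbundle{Z}{Y}})$ the exceptional divisor and $\tilde{Y} \cap \P_1 = E$ sitting in $\P_1$ as the section at infinity. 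A local computation shows that $\tilde{Y} + \P_1 + Y_\infty$ is a \rsncd\ on $W$, that $\tilde{Y} + \P_1 \sim Y_\infty$ (both being pullbacks of a point on $\P^1$), and that $Y_\infty$ is disjoint from $\tilde{Y} \cup \P_1$.

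Next I would apply Proposition \ref{prop double point relation} with $A = \tilde{Y}$, $B = \P_1$ and $C = Y_\infty$, so that $D = A \cap B = E$, obtaining in $\cob{}{G}{W}$
\begin{eqnarray}
[Y_\infty \embed W] &=& [\tilde{Y} \embed W] + [\P_1 \embed W] - [\P(\O_E \oplus \O_E(\tilde{Y})) \to E \embed W]. \nonumber
\end{eqnarray}
Pushing forward via the proper $G$-\equi\ \morp\ $W \to Y \x \P^1 \to Y$ sends $Y_\infty$ isomorphically to $Y$, $\tilde{Y}$ to the blow-up \morp\ $\tilde{Y} \to Y$, $\P_1$ to the composition $\P_1 \to Z \embed Y$, and $E$ to $E \to Z \embed Y$. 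Hence the desired identity reduces to producing a canonical $G$-\equi\ iso\morp\ $\P(\O_E \oplus \O_E(\tilde{Y})) \cong \P_2$ over $E$.

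The remaining identification goes as follows. Since $\tilde{Y} + \P_1$ and $Y_\infty$ are both pullbacks of degree-one divisors on $\P^1$, they are $G$-\equi ly linearly equivalent, so restricting $\O_W(\tilde{Y} + \P_1)$ to $E$ (which lies in the fiber $W_0$) yields the trivial sheaf; hence $\O_E(\tilde{Y}) \cong \dual{\O_E(\P_1)} = \dual{\nbundle{E}{\P_1}}$. A standard computation with the tautological sequence on $\P(\O_Z \oplus \dual{\nbundle{Z}{Y}}) \to Z$ identifies $\nbundle{E}{\P_1}$ with $\O_E(1)$, so $\O_E(\tilde{Y}) \cong \O_E(-1)$. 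Applying Lemma \ref{lemma same proj bundle} with the twist $\O_E(1)$ then gives
\begin{eqnarray}
\P(\O_E \oplus \O_E(-1)) \iso \P(\O_E(1) \oplus \O_E) = \P_2. \nonumber
\end{eqnarray}

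The main obstacle is tracking the $G$-linearizations throughout. Every line bundle in sight is canonically $G$-\equi\ because its defining divisor is $G$-\inv, so the nontrivial verification is that the canonical trivialization $\O_E(\tilde{Y}) \otimes \O_E(\P_1) \cong \O_E$ and the canonical iso\morp\ $\O_{\P_1}(E) \cong \O_{\P_1}(1)$ are compatible with the $G$-action; this is automatic here since $G$ acts trivially on $\P^1$ and the projective bundle iso\morp\ of Lemma \ref{lemma same proj bundle} is inherently $G$-\equi. Once this is in hand, the formula in the statement follows immediately.
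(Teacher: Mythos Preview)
Your proposal is correct and follows exactly the approach the paper intends: the paper's proof simply says ``By the same argument as in the proof of Lemma 5.1 in \cite{alg cobor by DPR},'' and you have written out precisely that deformation-to-the-normal-cone argument, using Proposition \ref{prop double point relation} in place of the non-\equi\ double point relation and taking care with the $G$-linearizations via Lemma \ref{lemma same proj bundle}. The identification $\O_E(\tilde{Y}) \cong \O_E(-1)$ and the resulting twist to $\P_2$ are handled correctly.
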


\begin{proof}
By the same argument as in the proof of Lemma 5.1 in \cite{alg cobor by DPR}.
\end{proof}

\begin{prop}
\label{prop extended double point relation}
Suppose $A$, $B$, $C$ are invariant divisors on $Y \in$ $\gsmcat{G}$\ such that $A + B \sim C$ and $A + B + C$ is a \rsncd. Then, as elements in $\cob{}{G}{Y}$, we have
$$[C \embed Y] = [A \embed Y] + [B \embed Y] - [\P_1 \to Y] + [\P_2 \to Y] - [\P_3 \to Y],$$
where $D \defeq A \cap B$,\tab $E \defeq A \cap B \cap C$ and
\begin{eqnarray}
\P_1 &\defeq & \P(\O_D \oplus \O_D(A)) \to D, \nonumber\\
\P_2 &\defeq & \P(\O \oplus \O(1)) \to \P(\O_E(-B) \oplus \O_E(-C)) \to E, \nonumber\\
\P_3 &\defeq & \P(\O_E \oplus \O_E(-B) \oplus \O_E(-C)) \to E \nonumber
\end{eqnarray}
with $\O(1)$ being the tautological line bundle over $\P(\O_E(-B) \oplus \O_E(-C))$.
\end{prop}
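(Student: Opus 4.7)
The strategy mirrors the proof of Lemma 5.2 in \cite{alg cobor by DPR}: perform a sequence of $G$-equivariant blow-ups that separates $C$ from $A \cup B$, apply the basic double point relation (Proposition \ref{prop double point relation}) on the resulting model, and transfer the identity back to $Y$ via the blow-up relation (Proposition \ref{prop blow up relation}). The corrections coming from the two blow-up steps, combined with the correction from the basic double point relation on the separated model, should assemble into exactly the three projective bundles $\P_1, \P_2, \P_3$.

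First I would reduce to $Y$ being \girred{G}\ via Remark \ref{rmk basic defn}. Then blow up $Y$ along the smooth $G$-invariant center $E = A \cap B \cap C$ to obtain $\pi_1 : \tilde Y \to Y$ with exceptional $\P^2$-bundle $F$ over $E$. Since each of $A, B, C$ contains $E$ as a smooth codimension-two subscheme, $\pi_1^* A = \tilde A + F$ (and similarly for $B, C$), so that $\tilde A + \tilde B + F \sim \tilde C$ on $\tilde Y$. The strict transform $\tilde C$ still meets $\tilde A, \tilde B$ along $\widetilde{A \cap C}, \widetilde{B \cap C}$, which are smooth, $G$-invariant, and now disjoint in $\tilde Y$ (their intersection in $Y$ was the blown-up $E$). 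Blow these up to obtain $\pi_2 : \hat Y \to \tilde Y$; on $\hat Y$ we then have $\hat A + \hat B + \hat F \sim \hat C$ with $\hat C$ disjoint from $\hat A \cup \hat B \cup \hat F$, and the total divisor a \rsncd.

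On $\hat Y$ one iterates Proposition \ref{prop double point relation} applied to $\hat A + \hat B + \hat F \sim \hat C$ (e.g. by combining $\hat A$ and $\hat B$ first via an auxiliary pencil, then adjoining $\hat F$) to obtain an identity in $\cob{}{G}{\hat Y}$. Pushing this identity down through $\pi_2$ and then $\pi_1$ via Proposition \ref{prop blow up relation}, each of the two blow-up relations contributes a pair of projective bundle corrections over the respective centers. The correction from the pair $(A, B)$ yields $-[\P_1 \to Y]$ exactly as in Proposition \ref{prop double point relation} via Lemma \ref{lemma FGL remaining terms}, whereas the contributions concentrated on $E$ reorganize into $+[\P_2 \to Y] - [\P_3 \to Y]$. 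A key input is the classical identification of $\P_2 = \P(\O \oplus \O_{\P(V)}(1))$, with $V \defeq \O_E(-B) \oplus \O_E(-C)$, as the blow-up of $\P_3 = \P(\O_E \oplus V)$ along the canonical section corresponding to the summand $\O_E$ (using $\P(\O \oplus \O(-1)) \cong \P(\O \oplus \O(1))$); this is what matches the output of Proposition \ref{prop blow up relation} to the bundles $\P_2, \P_3$ in the statement.

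The main obstacle will be the careful bookkeeping of how the multiple correction terms from the two blow-up steps and the iterated double point relation assemble into exactly these three bundles with the prescribed twists. In particular, the emergence of the line-bundle twists $\O_E(-B), \O_E(-C)$ in $\P_2, \P_3$ requires identifying $\dual{N_{E/Y}} \cong \O_E(-A) \oplus \O_E(-B) \oplus \O_E(-C)$ and using the linear equivalence $A + B \sim C$ to rewrite the corresponding terms, along with tracking how the normal bundles of $\widetilde{A \cap C}, \widetilde{B \cap C}$ in $\tilde Y$ relate to the restrictions of $\O(A), \O(B), \O(C)$ to those loci. Once this bookkeeping is carried out, the terms not accounted for by $[\P_1], [\P_2], [\P_3]$ cancel, and the claimed identity in $\cob{}{G}{Y}$ emerges.
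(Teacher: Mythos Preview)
Your proposal takes essentially the same approach as the paper: the paper's proof is the one-line deferral ``By the same argument as in the proof of Lemma 5.2 in \cite{alg cobor by DPR}'', and your sketch (separate $C$ from $A\cup B$ by equivariant blow-ups, apply Proposition \ref{prop double point relation} on the resulting model, and transport back via Proposition \ref{prop blow up relation}) is precisely the Levine--Pandharipande strategy, with the bookkeeping you flag being exactly where the work lies. One caution on your outline: after your two blow-up steps the strict transform $\hat C$ is disjoint from $\hat A$ and $\hat B$ but still meets $\hat F$ (along the $\P^1$-subbundle $\P(\O_E(A)\oplus\O_E(B))$ of the exceptional $\P^2$-bundle), so the phrase ``iterate the double point relation via an auxiliary pencil'' is where the argument needs the most care---in practice one handles the three-term sum $\hat A+\hat B+\hat F\sim\hat C$ by a direct formal-group-law expansion (Remark \ref{rmk FGL}) together with Lemma \ref{lemma FGL remaining terms}, rather than by literally iterating Proposition \ref{prop double point relation}.
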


\begin{proof}
By the same argument as in the proof of Lemma 5.2 in \cite{alg cobor by DPR}.
\end{proof}

\begin{rmk}
{\rm
From now on, we will refer to Proposition \ref{prop double point relation}, \ref{prop blow up relation} and \ref{prop extended double point relation} as the double point relation, the blow up relation and the extended double point relation respectively.
}
\end{rmk}

\bigskip
\bigskip

\section{The \equi\ algebraic cobordism ring of a point}
\label{sect cobor ring}

In this section, we will show that the \equi\ algebraic cobordism group $\cob{}{G}{X}$, as a $\lazard_G(F)$-module, is generated by geometric cycles. Moreover, we will show that the canonical $\lazard_G(F)$-algebra homo\morp 
$$\lazard_G(F) \to \cob{}{G}{\pt}$$
is surjective. Since we need to employ the embedded desingularization theorem (see \cite{embed desing thm}) and the weak factorization theorem (Theorem 0.3.1 in \cite{weak factor thm}), we will assume $\char{k} = 0$ throughout this section.

First of all, we have the following result, which is an analogue of the \textbf{(Nilp)} axiom in \cite{universal alg cobor} (see Remark 2.2.3 in \cite{universal alg cobor}).

\begin{prop}
\label{prop Nilp axiom}
Suppose $\char{k} = 0$. For any \girred{G}\ $Y \in$ $\gsmcat{G}$, $\L \in \picard{G}{Y}$ and finite set $S$ as in (\ref{eqn basic element}), 
$$V^n_S(\L)[\id_Y] = 0$$
as elements in $\cob{}{G}{Y}$, for sufficiently large $n$.
\end{prop}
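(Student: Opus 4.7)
The plan is induction on $d = \dim Y$; Remark~\ref{rmk basic defn} lets me assume $Y$ is $G$-irreducible. For the base case $d = 0$, Proposition~\ref{prop lb structure}(3) identifies $\L$ with a character $\gamma$ of $G$ (the Cartier divisor piece is trivial since $Y$ is zero-dimensional), and Proposition~\ref{prop Chern class basic}(2) makes each operator $c(\L \otimes \alpha_j)$ act on $[\id_Y]$ as the scalar $e(\gamma \otimes \alpha_j) \in \lazard_G(F)$. Hence
$$V^n_S(\L)[\id_Y] = \Bigl(\prod_{j \in T_n} e(\gamma \otimes \alpha_j)\Bigr)[\id_Y], \qquad T_n \defeq \{1,\ldots,n\} \setminus S.$$
Proposition~\ref{prop Chern class basic}(1) yields $e(\gep) = c(\O_Y)[\id_Y] = 0$, and completeness of the $G$-flag $F$ realises every character of $G$ — in particular $\gamma^{-1}$ — infinitely often as some $\alpha_j$, so some $j \notin S$ satisfies $\alpha_j = \gamma^{-1}$; the resulting factor is zero as soon as $n \ge j$.

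For the inductive step $d \ge 1$, the strategy is to convert one Chern class factor into a pushforward from a smooth $G$-invariant divisor and iterate. The central input is the following \emph{Key Lemma}: there exist a composition $\pi\colon \tilde Y \to Y$ of blow-ups along $G$-invariant smooth centres, a character $\eta$, and a smooth $G$-invariant divisor $Z \subset \tilde Y$ such that $\pi^*\L \otimes \eta \cong \O_{\tilde Y}(Z)$ as $G$-linearised sheaves. Granting this, pick $j_0 \notin S$ with $\alpha_{j_0} = \eta$, available by completeness of $F$. For $n \ge \max(j_0, \max S)$, the tautological invariant section of $\O_{\tilde Y}(Z)$ combined with the \textbf{(Sect)} axiom gives $c(\pi^*\L \otimes \alpha_{j_0})[\id_{\tilde Y}] = [i\colon Z \embed \tilde Y]$, and property \textbf{(A3)} rearranges
$$V^n_S(\pi^*\L)[\id_{\tilde Y}] = i_*\bigl(V^n_{S \cup \{j_0\}}(i^*\pi^*\L)[\id_Z]\bigr).$$
Each $G$-irreducible component of $Z$ has dimension $d-1$, so the inductive hypothesis (together with Remark~\ref{rmk basic defn}) kills the right-hand side for $n$ large. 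Pushing forward by $\pi$ and using \textbf{(A3)} again yields $V^n_S(\L)[\pi\colon \tilde Y \to Y] = 0$; the blow-up relation (Proposition~\ref{prop blow up relation}), applied iteratively to each blow-up constituting $\pi$, writes $[\pi\colon \tilde Y \to Y] - [\id_Y]$ as a signed sum of projective-bundle cycles over $G$-invariant smooth centres of dimension $\le d-1$, on each of which $V^n_S(\L)$ vanishes for $n$ large by the inductive hypothesis. Hence $V^n_S(\L)[\id_Y] = 0$ for $n$ sufficiently large.

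The Key Lemma is the main obstacle. My plan for proving it combines Proposition~\ref{prop lb structure}(3), which writes $\L \cong \O_Y(D) \otimes \beta$ for some $G$-invariant Cartier divisor $D$, with $G$-equivariant embedded resolution of singularities in characteristic zero applied to the support of $D$, producing $\pi_1\colon Y_1 \to Y$ so that $\pi_1^*D$ is supported on a \rsncd; equivariant Bertini applied to suitable isotypes of global sections of a sufficiently positive twist by a $G$-linearised very ample sheaf (existing by Proposition~\ref{prop equi embed}) should then supply smooth $G$-invariant representatives of the effective pieces. A final sequence of equivariant blow-ups and invocations of Propositions~\ref{prop blow up relation}--\ref{prop extended double point relation} consolidates these into a single smooth divisor up to a character twist. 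The delicate ingredient is ensuring the base-point freeness needed for Bertini on the chosen isotype, which is where the characteristic zero hypothesis enters essentially.
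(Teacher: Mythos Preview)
Your inductive skeleton and base case are fine, and so is the blow-up reduction that lets you replace $[\id_Y]$ by $[\pi\colon\tilde Y\to Y]$. The genuine gap is the Key Lemma: it is \emph{false} that an arbitrary $G$-linearised line bundle can, after equivariant blow-ups and a character twist, be written as $\O_{\tilde Y}(Z)$ for a smooth (hence effective and reduced) $G$-invariant divisor $Z$. A concrete counterexample with trivial $G$ is $Y=\P^1\times\P^1$ and $\L=\O(1,-1)$: on any blow-up $\pi$, the pullback $\pi^*\L$ still has negative degree on the strict transform of a fibre of the second projection, so it admits no nonzero global section and cannot equal $\O(Z)$ for effective $Z$; with $G$ trivial the only character is $\gep$, so the twist buys nothing. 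Your proof sketch for the Key Lemma also conflates levels: Propositions~\ref{prop blow up relation} and~\ref{prop extended double point relation} are identities in $\cob{}{G}{-}$, not statements about isomorphism classes of line bundles, and cannot ``consolidate'' a difference $Z_+-Z_-$ into a single smooth divisor.

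The paper's argument demands much less. After embedded resolution one only has $\pi^*\L\cong\O(\sum_i m_i D_i)\otimes\beta$ with each $D_i$ smooth but the $m_i\in\Z$ arbitrary (and the $D_i$ may meet). Choose $N>\max S$ with $\alpha_N=\dual\beta$; then for $n\ge N$ the operator $V^n_S(\pi^*\L)$ contains the factor $c(\pi^*\L\otimes\alpha_N)=c(\O(\sum_i m_i D_i))$. Instead of a single pushforward from one smooth $Z$, expand this factor via the formal group law (Remark~\ref{rmk FGL} together with Proposition~\ref{prop FGL inverse} for the negative $m_i$) as $\sum_i\sigma_i\circ c(\O(D_i))$ for suitable infinite Chern operators $\sigma_i$. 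Each $c(\O(D_i))[\id_{\tilde Y}]=[D_i\embed\tilde Y]$ is a pushforward from dimension $d-1$, and the inductive hypothesis applied on each $D_i$ shows that the remaining prefix $V^{N-1}_S(\pi^*\L)$ annihilates these terms once $N$ is also taken larger than the inductive bounds for the $D_i$. This FGL expansion is the missing ingredient that replaces your unattainable Key Lemma.
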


\begin{proof}
We will proceed by induction on $\dim Y$. If $\dim Y = 0$, then, by Proposition \ref{prop lb structure}, $\L \cong \beta$ for some character $\beta$. Take $N$ to be an integer such that $N > \max{S}$ and $\alpha_N \cong \dual{\beta}$. By definition,
$$V^n_S(\L)[\id_Y] = V^{N-1}_S(\L) \circ c(\L \otimes \alpha_N) \circ \cdots \circ c(\L \otimes \alpha_n) [\id_Y] = 0$$
because $\L \otimes \alpha_N \cong \O_Y$.

Suppose $\dim Y > 0$. By Proposition \ref{prop lb structure}, $\L \cong \O(\sum_i \pm D_i) \otimes \beta$ for some \inv\ $G$-prime divisors $D_i$ and character $\beta$. Apply the embedded desingularization theorem on $\cup_i D_i \embed Y$, we got a map $\pi : \tilde{Y} \to Y$ which is the composition of a series of blow ups along \inv\ \sm\ centers such that the strict transforms of $D_i$, denoted by $\stricttransform{D_i}$, are \sm. For simplicity, assume $\pi$ is given by a single blow up along $Z \subset Y$. By the blow up relation,
$$[\tilde{Y} \to Y] - [\id_Y] = -[\P_1 \to Z \to Y] + [\P_2 \to Z \to Y].$$
Apply $V^n_S(\L)$ on both sides, we have
\begin{eqnarray}
&& \pi_* \circ V^n_S(\pi^* \L)[\id_{\tilde{Y}}] - V^n_S(\L)[\id_Y] \nonumber\\
&=& - V^n_S(\L)[\P_1 \to Z \to Y] + V^n_S(\L)[\P_2 \to Z \to Y] \nonumber\\
&=& - i_* \circ {p_1}_* \circ p_1^* \circ V^n_S(\L|_Z)[\id_Z] + i_* \circ {p_2}_* \circ p_2^* \circ V^n_S(\L|_Z)[\id_Z] \nonumber
\end{eqnarray}
where $i : Z \embed Y$ is the immersion and $p_i : \P_i \to Z$ are the projections. By the induction assumption, it is enough to consider $V^n_S(\pi^* \L)[\id_{\tilde{Y}}]$. Since $\pi^* \L \cong \O(\sum_i \pm \stricttransform{D_i} + \sum_j \pm E_j) \otimes \beta$ where $E_j$ are the strict transforms of the exceptional divisors, which are also \inv\ and \sm, w\withoutlog, we may assume $D_i$ are \sm.

By the induction assumption, there exist integers $N_i$ such that $V^n_S(\L|_{D_i})[\id_{D_i}] = 0$ for all $n \geq N_i$. Now take $N$ to be an integer which is greater than $\max{S}$ and all $N_i$, and also $\alpha_N \cong \dual{\beta}$. Then, for all $n \geq N$,
\begin{eqnarray}
V^n_S(\L)[\id_Y] &=& V^n_S(\O(\sum_i \pm D_i) \otimes \beta)[\id_Y] \nonumber\\
&=& V^{N-1}_S(\L) \circ c(\O(\sum_i \pm D_i)) \circ \sigma [\id_Y] \nonumber\\
&& \text{for some $\sigma \in \Endoinf{\cob{}{G}{Y}}$} \nonumber\\
&=& V^{N-1}_S(\L) \circ \sum_i \sigma_i \circ c(\O(D_i))[\id_Y], \nonumber
\end{eqnarray}
for some $\sigma_i$, by Remark \ref{rmk FGL} and Proposition \ref{prop FGL inverse}. Hence,
$$V^n_S(\L)[\id_Y] = V^{N-1}_S(\L) \circ \sum_i \sigma_i [D_i \embed Y] = 0$$
because $N-1 \geq N_i$.
\end{proof}

\begin{rmk}
\label{rmk nilpnum}
{\rm
Suppose, for each \girred{G}\ $Y \in$ $\gsmcat{G}$\ and $\L \in \picard{G}{Y}$ (up to isomorphism), we fix a choice of $G$-prime divisors $D_i$ and character $\beta$ such that $\L \cong \O(\sum_i \pm D_i) \otimes \beta$. We pick $D_i$ to be \sm\ if possible. If not, we further fix a choice of centers $Z_j$ while applying the embedded desingularization theorem on $\cup_i D_i \embed Y$. 

If $\dim Y = 0$, we define a number $\nilpnum{Y,\L,S}$ to be the minimum positive integer $n$ such that $\alpha_n = \dual{\beta}$ and $n > \max S$. If $\dim Y > 0$, then we define $\nilpnum{Y,\L,S}$ to be the minimum positive integer $n$ such that $\alpha_n = \dual{\beta}$ and
$$n > \nilpnum{Z_j, \L |_{Z_j}, S}, \nilpnum{\stricttransform{D_i}, \L |_{\stricttransform{D_i}}, S}, \nilpnum{E_k, \L |_{E_k}, S}$$
where $Z_j$ are the centers of desingularizations, $\stricttransform{D_i}$ are the strict transforms of $D_i$ and $E_k$ are the exceptional divisors (well-defined because $\dim D_i, Z_j, E_k < \dim Y$).

As suggested by the proof, for any \girred{G}\ $Y \in$ $\gsmcat{G}$, $\L \in \picard{G}{Y}$ and finite set $S$ as in (\ref{eqn basic element}), we have
$$V^{\nilpnum{Y,\L, S}}_S(\L)[\id_Y] = 0$$ 
as elements in $\cob{}{G}{Y}$.
}
\end{rmk}

\begin{cor}
\label{cor gen by cycle}
Suppose $\char{k} = 0$. For all $X \in \gvar{G}$, the $\lazard_G(F)$-module $\cob{}{G}{X}$ is generated by cycles.
\end{cor}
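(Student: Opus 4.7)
The plan is to derive this corollary directly from the nilpotency statement in Proposition \ref{prop Nilp axiom} together with the commutativity of infinite Chern class operators. By the definition of $\basicmod{G}{F}{}{-}$ (and the remark following the definition of the external product), any class in $\cob{}{G}{X}$ is represented by a finite sum of elements of the form
$$\eta \;=\; \sum_{I \geq 0} a_I\, f_*\!\left( V^{i_1}_{S_1}(\L_1) \circ V^{i_2}_{S_2}(\L_2) \circ \cdots \circ V^{i_r}_{S_r}(\L_r)[\id_Y] \right),$$
where $f : Y \to X$ is a fixed projective $G$-morphism with $Y$ smooth and $G$-irreducible, the sheaves $\L_j \in \picard{G}{Y}$ and the finite sets $S_j$ are fixed, and $I = (i_1, \ldots, i_r)$ ranges over all non-negative multi-indices. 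Hence it suffices to show that every such $\eta$ collapses, modulo the \textbf{(Sect)} and \textbf{(EFGL)} relations, to a finite $\lazard_G(F)$-linear combination of cycles.

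For each $j$, I would invoke Proposition \ref{prop Nilp axiom} to obtain an integer $N_j := \nilpnum{Y, \L_j, S_j}$ with the property that $V^n_{S_j}(\L_j)[\id_Y] = 0$ in $\cob{}{G}{Y}$ whenever $n \geq N_j$. The key step is then the following: if some coordinate of $I$ satisfies $i_j \geq N_j$, then by the commutativity of Chern class operators (property \textbf{(A5)}) the offending factor $V^{i_j}_{S_j}(\L_j)$ may be moved to the right end of the composition and applied to $[\id_Y]$ first, yielding zero in $\cob{}{G}{Y}$. Since push-forward and scalar multiplication preserve zero, the corresponding $I$-summand vanishes in $\cob{}{G}{X}$.

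Consequently only the finitely many multi-indices $I$ with $i_j < N_j$ for every $j$ can contribute to $\eta$. For each such $I$, the surviving term is exactly
$$a_I\, f_*[\id_Y, \L_1 \otimes \alpha_{k_{1,1}}, \ldots, \L_1 \otimes \alpha_{k_{1,m_1}}, \ldots, \L_r \otimes \alpha_{k_{r,1}}, \ldots, \L_r \otimes \alpha_{k_{r,m_r}}],$$
where the characters $\alpha_{k_{j,s}}$ enumerate the indices in $\{1, \ldots, i_j\}$ that are not in $S_j$; this is literally an $\lazard_G(F)$-multiple of the cycle $[f : Y \to X, \L_1 \otimes \alpha_{k_{1,1}}, \ldots, \L_r \otimes \alpha_{k_{r,m_r}}]$, as required. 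There is no serious obstacle in this plan: the nontrivial content is already packaged in Proposition \ref{prop Nilp axiom}; one need only remember that the nilpotency holds in $\cob{}{G}{Y}$ (not in $\basicmod{G}{F}{}{Y}$) and that $f_*$ descends to the quotient, both of which are automatic from the construction of $\cob{}{G}{-}$.
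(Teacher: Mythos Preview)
Your proof is correct and follows essentially the same route as the paper: write every generator as $f_*\circ\sigma[\id_Y]$ with $\sigma$ an infinite Chern class operator, then use Proposition~\ref{prop Nilp axiom} (together with \textbf{(A5)}) to see that all but finitely many summands vanish in $\cob{}{G}{Y}$. The paper's version is simply a two-line compression of your argument.
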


\begin{proof}
The module $\cob{}{G}{X}$ is generated by infinite cycles, which can be written as $f_* \circ \sigma [\id_Y]$ for some $\sigma \in \Endoinf{\cob{}{G}{Y}}$. By Proposition \ref{prop Nilp axiom}, it is indeed a finite sum.
\end{proof}

Next, we will employ a technique called ``reduction of tower'', which is similar to the technique discussed in section 6.3 in \cite{geo equi alg cobor}. In spite of the assumption that $G$ is a finite abelian group scheme in section 6.3 in \cite{geo equi alg cobor}, most arguments work for our more general setup too. Therefore, we will not give separate proofs here. Recall the following definitions from \cite{geo equi alg cobor}.

\begin{defn}
\label{defn ad tower and sheaf}
{\rm
Suppose $Y$ is an object in $\gsmcat{G}$. A \morp\ $\P \to Y$ in $\gsmcat{G}$\ is called a \qadtower\ over $Y$ with length $n$ if it can be factored into
$$\P = \P_n \to \P_{n-1} \to \cdots \to \P_1 \to \P_0 = Y$$  
such that, for all $0 \leq i \leq n-1$, $\P_{i+1} = \P(\cat{E}_i)$ where $\cat{E}_i$ is the direct sum of sheaves which is either the pull-back of a \glin{G}\ locally free sheaves over $Y$, or the pull back of $\O_{\P_j}(m)$ for some integer $m$ and $1 \leq j \leq i$. Note that $\cat{E}_i$ is not necessarily the direct sum of invertible sheaves.

A sheaf $\L \in \picard{G}{Y}$ is called admissible if there exist \sm, $G$-prime divisors $D_1, \ldots, D_k$ on $Y$ and character $\beta$ such that

\begin{center}
$\L \cong \O_Y(\sum_{i = 1}^k m_i D_i) \otimes \beta$
\end{center}

\noindent for some integers $m_i$. Denote the subgroup of $\picard{G}{Y}$ generated by \adinvsh ves by $\apicard{G}{Y}$. Also, define the group of \adinvsh ves over $\P_i$ by
$$\apicard{G}{\P_i} \defeq \apicard{G}{Y} + \Z \O_{\P_1}(1) + \cdots + \Z \O_{\P_i}(1).$$ 
We then call a \qadtower\ $\P \to Y$ admissible if all sheaves involved in the construction are \adinvsh ves.
}\end{defn}

\begin{lemma}
\label{lemma tower twisting}
Suppose $Y \in$ $\gsmcat{G}$\ is \girred{G}\ and $D$ is a \sm\ $G$-prime divisor on $Y$. Furthermore, suppose $\P \to Y$ is an \adtower\ with length $n$ and $(i+1)$-th level $\P_{i+1} = \P(\oplus_{j=1}^r \L_j)$, and $\cat{M}_1, \ldots, \cat{M}_s$ are sheaves in $\picard{G}{\P}$. Then there exist an \adtower\ $\P' \to Y$ with length $n$, \qadtower s $Q_0$, $Q_1$, $Q_2$, $Q_3 \to D$ and \glin{G}\ invertible sheaves $\cat{M}_k'$, $\cat{M}^j_k$ such that
$$\P' = \P_n' \to \cdots \to \P_{i+1}' \to \P_i \to \cdots \to \P_0 = Y$$
where $\P_{i+1}' = \P((\oplus_{j=1}^{r-1} \L_j) \oplus \L_r(D))$, $\dim \P = \dim \P' = \dim Q_j$ and we have the following equality in $\cob{}{G}{Y}$ :
\begin{eqnarray}
&& [\P' \to Y, \cat{M}_1', \ldots, \cat{M}_s'] - [\P \to Y, \cat{M}_1, \ldots, \cat{M}_s] \nonumber\\
&=& [Q_0 \to D \embed Y, \cat{M}^0_1, \ldots, \cat{M}^0_s] - [Q_1 \to D \embed Y, \cat{M}^1_1, \ldots, \cat{M}^1_s] \nonumber\\
&& +\ [Q_2 \to D \embed Y, \cat{M}^2_1, \ldots, \cat{M}^2_s] - [Q_3 \to D \embed Y, \cat{M}^3_1, \ldots, \cat{M}^3_s]. \nonumber
\end{eqnarray}
\end{lemma}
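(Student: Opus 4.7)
The plan is to deduce the identity from a single application of the extended double point relation (Proposition \ref{prop extended double point relation}) on an auxiliary $\P^r$-bundle over $\P_i$, followed by a transfer of the upper stories of the tower and of the line bundles $\cat{M}_k$ to each of the varieties that appear.

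Concretely, I would introduce the $G$-equivariant projective bundle
$$H \defeq \P\bigl((\oplus_{j=1}^{r-1}\L_j) \oplus \L_r \oplus \L_r(D)\bigr) \longto \P_i,$$
which is again admissible since $\L_r(D) = \L_r \otimes \O(D)$ is admissible. Inside $H$ consider the three $G$-invariant divisors: the subvariety where the $\L_r(D)$-coordinate vanishes, canonically isomorphic to $\P_{i+1}$; the subvariety where the $\L_r$-coordinate vanishes, canonically isomorphic to $\P_{i+1}'$; and $\pi_H^{-1}(D)$, where $\pi_H : H \to \P_i$ is the structure map. A direct computation with the tautological quotient bundle on $H$ shows that the class of the first divisor equals the class of the second tensored with $\pi_H^*\O(D)$, so the three divisors form a linearly equivalent triple of the shape $A + B \sim C$ required by the extended double point relation. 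Smoothness of $D$ in $Y$ together with the transversality of coordinate hyperplanes in a direct-sum projective bundle guarantee that their union is a $G$-invariant \rsncd\ on $H$.

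Applying Proposition \ref{prop extended double point relation} to this configuration yields, in $\cob{}{G}{H}$, an identity of the form
$$[\P_{i+1} \embed H] = [\P_{i+1}' \embed H] + [\pi_H^{-1}(D) \embed H] - [\P_1 \to H] + [\P_2 \to H] - [\P_3 \to H],$$
in which the three correction bundles $\P_1, \P_2, \P_3$ live over $\P_{i+1}' \cap \pi_H^{-1}(D)$ and $\P_{i+1}' \cap \P_{i+1} \cap \pi_H^{-1}(D)$, and hence map to $Y$ through $D \embed Y$. Rearranging and relabelling (e.g.\ $Q_0 := \P_1$, $Q_1 := \pi_H^{-1}(D)$, $Q_2 := \P_3$, $Q_3 := \P_2$) produces the four correction cycles over $D$ with the signs $+,-,+,-$ as required by the statement.

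To lift this equality from level $i+1$ to the full tower, I would build each of the six varieties $\P_{i+1}, \P_{i+1}', \pi_H^{-1}(D), \P_1, \P_2, \P_3$ into a tower of the original length $n$ by pulling back the upper levels $\P_{i+2}, \ldots, \P_n$ along the natural maps. Admissibility of the resulting towers requires one to absorb a $\pi_H^*\O(D)$-twist whenever $\O_{\P_{i+1}}(1)$ appears in the recipe for an upper level, which is precisely the situation handled by Lemma \ref{lemma same proj bundle}. The sheaves $\cat{M}_k$ are restricted to each of the new bases and are tracked through the same lemma to produce the $\cat{M}_k'$ and the $\cat{M}_k^j$. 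Finally, pushing forward from $H$ to $Y$ and using properties \textbf{(A2)}, \textbf{(A3)}, \textbf{(A6)} converts the displayed identity into the stated equality in $\cob{}{G}{Y}$; the dimension equalities $\dim \P = \dim \P' = \dim Q_j$ follow because $H \to \P_i$ has fibre dimension $r$, exactly one larger than $\P_{i+1}$, and each of our six divisors has the same codimension one inside $H$.

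The main obstacle I anticipate is the bookkeeping in the last step: after every appeal to Lemma \ref{lemma same proj bundle} one must verify that the upper stories of the pulled-back tower remain \adtower s and that each $\cat{M}_k$ still restricts to a \glin{G}\ invertible sheaf on the new base, while keeping track of the correct indexing and signs of the $Q_j$. The arguments are essentially those of \S6.3 of \cite{geo equi alg cobor}, but carrying them out coherently for all six varieties and all $s$ sheaves is the combinatorially heaviest part of the proof.
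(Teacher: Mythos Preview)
Your approach is essentially the same as the paper's: the auxiliary bundle $H=\P\bigl((\oplus_{j=1}^{r-1}\L_j)\oplus\L_r\oplus\L_r(D)\bigr)$ is exactly the paper's $\hat{\P}_{i+1}$, and the paper also uses one application of the extended double point relation to the same three divisors (there written as $Q_0+\P\sim\P'$ on $\hat\P$, citing Lemma~6.9 of \cite{geo equi alg cobor}).

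The organizational difference is worth noting. The paper first builds the \emph{entire} length-$n$ tower $\hat\P\to Y$ over $H$ in the same manner as $\P$, and then uses surjectivity of $\picard{G}{\hat\P}\to\picard{G}{\P}$ to choose extensions $\hat{\cat M}_k$ of the $\cat M_k$ once and for all; the required identity then drops out by applying the Chern class operators $c(\hat{\cat M}_k)$ to the EDPR equation on $\hat\P$ and pushing to $Y$. Your proposal instead applies EDPR on $H$ and only afterwards pulls back the upper stories to each of the six terms. This is valid (smooth pull-back along $\hat\P\to H$ preserves the EDPR identity), but your treatment of the sheaves $\cat M_k$ is not quite right: they live on $\P$, not on $\P'$ or the $Q_j$, so they cannot simply be ``restricted''; you must extend them to $\hat\P$ first, which is the step the paper isolates. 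Adopting the paper's order avoids the heavy bookkeeping you flag at the end.
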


\begin{proof}
Since this result is very similar to Lemma 6.10 in \cite{geo equi alg cobor}, we will only give a sketch of proof here. First of all, we construct an \adtower\ $\hat{\P} \to Y$ with length $n$ by defining $\hat{\P}_{i+1} \defeq \P((\oplus_{j=1}^r \L_j) \oplus \L_r(D))$ and all higher levels are constructed in the same manner as $\P$. We then have $\P \embed \hat{\P}$. 

By Proposition \ref{prop PBF for equi picard} and the fact that $\O_{\hat{\P}_j}(1) |_{\P_j} \cong \O_{\P_j}(1)$ for all $0 \leq j \leq n$, the map $\picard{G}{\hat{\P}} \to \picard{G}{\P}$ is surjective. So there are \glin{G}\ invertible sheaves $\hat{\cat{M}_1}, \ldots, \hat{\cat{M}_s}$ over $\hat{\P}$ that extends $\cat{M}_1, \ldots, \cat{M}_s$ respectively. Then, we construct the \adtower\ $\P' \to Y$ and the \qadtower\ $Q_0 \to D \embed Y$ by restricting $\hat{\P}$ via $\P'_{i+1} \embed \hat{\P}_{i+1}$ and  $D \embed Y$ respectively.
 
By Lemma 6.9 in \cite{geo equi alg cobor} (still holds for our group $G$), we have $Q_0 + \P \sim \P'$ as invariant divisors on $\hat{\P}$ and the sum of them is a \rsncd. By the extended double point relation, we have
\begin{eqnarray}
\label{eqn 29}
[\P' \embed \hat{\P}] &=& [Q_0 \embed \hat{\P}] + [\P \embed \hat{\P}] - [(Q_0 \cap \P) \x_{\hat{\P}} P^1 \to \hat{\P}] \\
&& +\ [(Q_0 \cap \P \cap \P') \x_{\hat{\P}} P^2 \to \hat{\P}] - [(Q_0 \cap \P \cap \P') \x_{\hat{\P}} P^3 \to \hat{\P}]  \nonumber
\end{eqnarray}
where 
\begin{eqnarray}
P^1 &\defeq& \P(\O \oplus \O(Q_0)) \to \hat{\P}, \nonumber\\
P^2 &\defeq& \P(\O \oplus \O(1)) \to \P(\O(-\P) \oplus \O(-\P')) \to \hat{\P}, \nonumber\\
P^3 &\defeq& \P(\O \oplus \O(-\P) \oplus \O(-\P')) \to \hat{\P}. \nonumber
\end{eqnarray}
We then denote $(Q_0 \cap \P) \x_{\hat{\P}} P^1$, $(Q_0 \cap \P \cap \P') \x_{\hat{\P}} P^2$ and $(Q_0 \cap \P \cap \P') \x_{\hat{\P}} P^3$ by $Q_1$, $Q_2$ and $Q_3$ respectively. They are all \qadtower s over $D$ (by Lemma 6.9 in \cite{geo equi alg cobor}). Hence, the result follows by applying (first) Chern class operators $c(\hat{\cat{M}_1}), \ldots, c(\hat{\cat{M}_s})$ on equation (\ref{eqn 29}) and pushing it down to $\cob{}{G}{Y}$.
\end{proof}

\begin{rmk}
\label{rmk twisting tower same structure}
{\rm
It can be seen from the proof that the only difference on the structures of $\P$ and $\P'$ are at the level $i+1$, i.e., the \glin{G}\ invertible sheaves used in the definitions of $\P_j$ and $\P'_{j}$ are the same (by identifying $\O_{\P_k}(m)$ and $\O_{\P'_k}(m)$) whenever $j \neq i+1$.
}
\end{rmk}

For simplicity of notation, for the rest of this paper, we will write $[Y, \L_1, \ldots, \L_r]$ for $[Y \to \pt, \L_1, \ldots, \L_r]$.

\begin{defn}
{\rm
Define $\cob{}{G}{\pt}'$ to be the $\lazard_G(F)$-subalgebra of $\cob{}{G}{\pt}$ generated by elements of the form 
$$[\P, \L_1, \ldots,\L_r]$$
where $\P$ is an \adtower\ over $\pt$.
}\end{defn}

\begin{prop}
\label{prop tower reduction}
Suppose $Y \in$ $\gsmcat{G}$\ is \girred{G}, $\P \to Y$ is a \qadtower\ and $\cat{M}_1, \ldots, \cat{M}_s$ are sheaves in $\picard{G}{\P}$. Then there exist elements $x_i \in \cob{}{G}{\pt}'$, \proj\ \morp s $Y_i \to Y$ in $\gsmcat{G}$\ with $\dim Y_i \leq \dim Y$ and sheaves $\cat{M}^i_j$ such that
$$[\P \to Y, \cat{M}_1, \ldots, \cat{M}_s] = \sum_i x_i\, [Y_i \to Y, \cat{M}^i_1, \ldots]$$
as elements in $\cob{}{G}{Y}$. Moreover, $\dim \P = \geodim x_i + \dim Y_i$. If there is no invertible sheaf on the left hand side, i.e., $s = 0$, then the same result holds with no invertible sheaves on the right hand side.
\end{prop}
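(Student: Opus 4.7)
My plan is to prove the proposition by a double induction: outer on $\dim Y$, inner on the length $n$ of the \qadtower\ $\P \to Y$. The underlying strategy is to reshape $\P \to Y$ step by step into an \adtower\ pulled back from $\pt$, accumulating boundary terms that live either over a closed subvariety of $Y$ of strictly smaller dimension (handled by the outer induction) or over $Y$ via a tower of strictly shorter length (handled by the inner induction).

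The base cases are straightforward. When $\dim Y = 0$ the \girred{G}\ hypothesis forces $Y = \pt$, and since $(G,k)$ is split every $G$-\repn\ decomposes as a direct sum of characters; consequently any \qadtower\ over $\pt$ is already admissible, and so $[\P \to \pt, \cat{M}_1, \ldots, \cat{M}_s]$ lies in $\cob{}{G}{\pt}'$ directly. When $n = 0$ we have $\P = Y$ and may take $x_0 = 1 \in \cob{}{G}{\pt}'$, $Y_0 = Y$, $\cat{M}^0_j = \cat{M}_j$.

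For the inductive step I would work on the top stage $\P_n = \P(\cat{E}_{n-1})$. Each direct summand of $\cat{E}_{n-1}$ is either $\O_{\P_j}(m)$ (automatically admissible) or the pullback of a \glin{G}\ locally free sheaf from $Y$. Higher-rank summands are broken into invertible ones using the splitting principle (Theorem \ref{thm splitting principle}), and non-admissible invertible summands $\L \cong \O_Y(\sum m_j D_j) \otimes \beta$ (Proposition \ref{prop lb structure}(3)) are made admissible by applying the embedded desingularization theorem to $\bigcup_j D_j$. Every such blow-up of $Y$ along $G$-\inv\ \sm\ centers produces error terms supported on closed subvarieties of dimension $< \dim Y$ via the blow up relation (Proposition \ref{prop blow up relation}), and these fall under the outer induction. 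Once $\cat{E}_{n-1}$ is a direct sum of \adinvsh ves $\L_r \cong \O_Y(\sum m_j D_j) \otimes \beta_r$, I would invoke Lemma \ref{lemma tower twisting} once per smooth divisor $D_j$ to strip off the divisorial part, replacing $\L_r$ by the character $\beta_r$; each application introduces \qadtower s over $D_j$ of the same total dimension, and $\dim D_j < \dim Y$ brings these under the outer induction as well. After all these manipulations the top stage is pulled back from a projective bundle over $\pt$, and iterating level by level via the inner induction on $n$ rewrites $\P$ as $P \x Y$ for some \adtower\ $P$ over $\pt$, producing the required decomposition. The auxiliary sheaves $\cat{M}_1, \ldots, \cat{M}_s$ are carried through every invocation of Lemma \ref{lemma tower twisting} by design, so their presence does not disrupt the argument.

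The main obstacle I anticipate is dimensional bookkeeping. At every step I must check that the identity $\dim \P = \geodim x_i + \dim Y_i$ is preserved: each boundary term lives on a codimension-$1$ subvariety $D \subset Y$ and must contribute an extra dimension to the $\cob{}{G}{\pt}'$-factor, and Lemma \ref{lemma tower twisting} is engineered so that $\dim Q_i = \dim \P$, giving exactly this balance. Threading this bookkeeping through all the reductions, while simultaneously tracking the auxiliary sheaves $\cat{M}_j$, is the delicate part. The final clause about $s = 0$ is automatic, since none of the operations above creates auxiliary sheaves when none are present to begin with.
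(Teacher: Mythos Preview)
Your plan has a genuine gap in the inner induction. You propose to work on the top stage $\P_n = \P(\cat{E}_{n-1})$ first and use Lemma \ref{lemma tower twisting} to twist the $Y$-summands of $\cat{E}_{n-1}$ down to characters. However, the summands of $\cat{E}_{n-1}$ of the form $\O_{\P_j}(m)$ with $j < n$ depend on the \emph{lower} levels of the tower and cannot be removed by twisting with divisors on $Y$, which is all that Lemma \ref{lemma tower twisting} provides. Consequently your claim that ``the top stage is pulled back from a projective bundle over $\pt$'' is false in general, and your inner induction on $n$ does not close: you cannot split $\P_n$ off from $\P_{n-1}$ while $\O_{\P_j}(m)$ terms are still present.

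The paper's argument avoids this by proceeding \emph{bottom-up}. After making the tower admissible (your use of the splitting principle and embedded desingularization matches the paper's Steps 1--2), one twists the \emph{first} level $\P_1 = \P(\oplus_j \L_j)$ until each $\L_j$ is a character, so that $\P_1 \cong Y \times Q_1$ with $Q_1$ an \adtower\ over $\pt$. Under this isomorphism $\O_{\P_1}(1)$ becomes, after re-admissibilizing, a sheaf pulled back from $Q_1$; hence at level 2 the only summands requiring twisting are those from $Y$, and one continues upward. After all levels are processed, $\P \cong Y \times Q$ with $Q$ an \adtower\ over $\pt$, and the decomposition of $[\P \to Y, \cat{M}_1, \ldots, \cat{M}_s]$ is obtained by expanding each $c(\cat{M}_i)$ via the \textbf{(EFGL)} axiom and Proposition \ref{prop Nilp axiom}. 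The bottom-up order is essential precisely because the tautological sheaves $\O_{\P_j}(m)$ become ``from $\pt$'' only after the levels below them have already been handled.

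A minor point: your base case $\dim Y = 0$ asserts that $Y = \pt$, but a \girred{G}\ zero-dimensional scheme may be a nontrivial $G_f$-orbit. This is easily patched (every sheaf in $\picard{G}{Y}$ is a character, so $\P \cong Y \times Q$ directly), but it is worth stating correctly.
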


\begin{proof}
Here is a sketch of the proof (See Proposition 6.17 in \cite{geo equi alg cobor} for details). We will prove the statement by induction on the dimension of $Y$. We will handle the induction step first. Suppose $\dim Y \geq 1$. Let $\cob{}{G}{Y}'$ be the subgroup of $\cob{}{G}{Y}$ generated by elements of the form
$$[\P' \to Y' \to Y, \cat{M}_1', \ldots, \cat{M}_s']$$
where $Y' \in$ $\gsmcat{G}$\ is \girred{G}\ with dimension less than $\dim Y$, $\dim \P = \dim \P'$ and $\P' \to Y'$ is a \qadtower. So, elements in $\cob{}{G}{Y}'$ will be handled by the induction assumption. Let $\P \to Y$ be a \qadtower\ with length $n$. If $n = 0$, then we are done. Suppose $n \geq 1$.

\medskip

\noindent Step 1 : Reduction to a \qadtower\ constructed only by \glin{G}\ invertible sheaves.

Suppose $\gp : \tilde{Y} \to Y$ is the composition of a series of blow up along \sm\ \inv\ centers and let $\tilde{\P} \defeq \P \x_Y \tilde{Y}$. By the blow up relation, it can be shown that the difference
$$\gp_*[\tilde{\P} \to \tilde{Y}, \gp'^*\cat{M}_1, \ldots, \gp'^*\cat{M}_s] - [\P \to Y, \cat{M}_1, \ldots, \cat{M}_s],$$
where $\pi' : \tilde{\P} \to \P$, lies inside $\cob{}{G}{Y}'$. Therefore, we may blow up $Y$ along any \inv\ \sm\ center if necessary.

If $\P_i = \P(\cat{E}' \oplus \cat{E})$ such that rank $\cat{E} > 1$ (in particular, it comes from $Y$), then, by Theorem \ref{thm splitting principle}, we may assume we have a splitting 
$$0 \to \L \to \cat{E} \to \cat{E}/\L \to 0$$ 
of \glin{G}\ locally free sheaves over $Y$. Define 
$$\hat{\P}_i \defeq \P(\cat{E}' \oplus \cat{E} \oplus \L) \text{\tab and \tab} \P_i' \defeq \P(\cat{E}' \oplus (\cat{E}/\L) \oplus \L).$$ 
and construct towers $\hat{\P}$, $\P' \to Y$ in a similar manner as in the proof of Lemma \ref{lemma tower twisting}. Also define sheaves $\hat{\cat{M}}_i \in \picard{G}{\hat{\P}}$ and $\cat{M}'_i \in \picard{G}{\P'}$ similarly. Hence, by Lemma 6.12 in \cite{geo equi alg cobor} (still holds for our group $G$), we have
$$\P_i = \P(\cat{E}' \oplus \cat{E}) \sim \P(\cat{E}' \oplus (\cat{E}/ \L) \oplus \L) = \P'_i$$
as invariant smooth divisors on $\hat{\P}_i$, which implies that $\O_{\hat{\P}}(\P) = \O_{\hat{\P}}(\P')$ as \glin{G}\ invertible sheaves over $\hat{\P}$. Therefore,
\begin{eqnarray*}
[\P \to \hat{\P}, \cat{M}_1, \ldots, \cat{M}_s] &=& c(\O_{\hat{\P}}(\P))[\id_{\hat{\P}}, \hat{\cat{M}}_1, \ldots, \hat{\cat{M}}_s] \\ 
&=& c(\O_{\hat{\P}}(\P'))[\id_{\hat{\P}}, \hat{\cat{M}}_1, \ldots, \hat{\cat{M}}_s] \\ 
&=& [\P' \to \hat{\P}, \cat{M}_1', \ldots, \cat{M}_s']
\end{eqnarray*}
By pushing the elements forward via the map $\hat{\P} \to Y$, we have  
$$[\P \to Y, \cat{M}_1, \ldots, \cat{M}_s] = [\P' \to Y, \cat{M}_1', \ldots, \cat{M}_s']$$
as elements in $\cob{}{G}{Y}$ and the result follows by applying this argument repeatedly until all sheaves involved are of rank 1.

\medskip

\noindent Step 2 : Reduction to an \adtower.

For each $\L \in \picard{G}{Y}$ used in the construction of $\P$, by Proposition \ref{prop lb structure}, there are $G$-prime divisors $D_{\L,i}$ on $Y$ and character $\gb$ such that $\L \cong \O_Y(\sum_i \pm D_{\L,i}) \otimes \gb.$ By applying the embedded desingularization Theorem on $\cup_{\L,i}\, D_{\L,i} \embed Y$, we may assume $D_{\L,i}$ are all \sm.

\medskip

\noindent Step 3 : Reduction to an \adtower\ with $\P_1 = \P(\gp_Y^* \cat{E}_1)$ where $\cat{E}_1$ is a \glin{G}\ locally free sheaf over $\pt$.

Consider the first level $\P_1 = \P(\oplus^r_{j=1} \L_j)$. Since the sheaves $\L_j$ are admissible, they can be expressed by \inv\ \sm\ divisors on $Y$. By lemma \ref{lemma tower twisting} (and remark \ref{rmk twisting tower same structure}), for any \inv\ \sm\ divisor $D$ on $Y$, we can twist $[\P \to Y, \cat{M}_1, \ldots, \cat{M}_s]$ to $[\P' \to Y, \cat{M}_1', \ldots, \cat{M}_s']$ so that $\P_1' = \P((\oplus_{j \neq k}\ \L_j) \oplus \L_k(\pm D))$ and the difference will be given by elements inside $\cob{}{G}{Y}'$. Hence, we may assume $\L_j \cong \gb_j$ for all $j$. The result then follows by defining $\cat{E}_1 \defeq \oplus_{j=1}^r \gb_j$

\medskip

\noindent Step 4 : Finish the induction step.

By applying the argument in step 3 on all levels, we have an \adtower
$$Q = Q_n = \P(\cat{E}_n) \to \cdots \to \P(\cat{E}_1) \to Q_0 = \pt$$ 
such that $\P \cong Y \x Q.$ By Proposition \ref{prop PBF for equi picard}, for each $i$, the invertible sheaf $\cat{M}_i$ over $\P$ is isomorphic to $\gp_1^* \cat{M}_{Y,i} \otimes \gp_2^* \cat{M}_{Q,i}$ for some \glin{G}\ invertible sheaves $\cat{M}_{Y,i}$, $\cat{M}_{Q,i}$ over $Y$, $Q$ respectively. For simplicity, assume $s = 1$,
\begin{eqnarray}
&& [\P \to Y, \cat{M}] \nonumber\\
&=& \gp_* c(\cat{M})[\id_{\P}] \nonumber\\
&& \text{where $\gp : \P \to Y$} \nonumber\\
 &=& \gp_* c(\gp_1^* \cat{M}_Y \otimes \gp_2^* \cat{M}_Q)[\id_{\P}] \nonumber\\
 &=& \gp_* c(\gp_1^* \cat{M}_Y)[\id_{\P}] + \gp_* c(\gp_2^* \cat{M}_Q)[\id_{\P}] +\, \gp_* \sum_{s,t \geq 1}\,f^1_{s,t}\,V^s(\gp_1^* \cat{M}_Y) V^t(\gp_2^* \cat{M}_Q)\,[\id_{\P}], \nonumber
\end{eqnarray}
which is a finite sum by Proposition \ref{prop Nilp axiom}. Hence,
\begin{eqnarray}
&& [\P \to Y, \cat{M}] \nonumber\\
&=& [Q][\id_Y, \cat{M}_Y] + [Q, \cat{M}_Q][\id_Y] + \sum_{s,t \geq 1}\,f^1_{s,t}\, \gp_* (V^t(\cat{M}_Q)[\id_Q] \x V^s(\cat{M}_Y)[\id_Y]) \nonumber\\
&=& x_1\, [\id_Y, \cat{M}_Y] + x_2\, [\id_Y] + \sum_{s,t \geq 1}\,x_{s,t}\, V^s(\cat{M}_Y)[\id_Y] \nonumber
\end{eqnarray} 
by letting $x_1 \defeq [Q]$, $x_2 \defeq [Q, \cat{M}_Q]$ and $x_{s,t} \defeq f^1_{s,t}\, [Q, V^t(\cat{M}_Q)]$. That finishes the induction step (when $\dim Y > 0$).

\medskip

\noindent Step 5 : $\dim Y = 0$ case.

In this case, $\P \cong Y \x Q$ for some \adtower\ $Q$ over $\pt$. Apply step 4 and we are done.
\end{proof}

Our first goal in this section is to show that $\cob{}{G}{X}$, as a $\lazard_G(F)$-module, is generated by geometric cycles. To this end, we need two more technical Lemmas.

\begin{lemma}
\label{lemma div cut down dim}
Suppose $Y \in$ $\gsmcat{G}$\ is \girred{G}\ and $D$ is an invariant divisor on $Y$ which can be expressed as the sum of \sm, $G$-prime divisors on $Y$. Then $c(\O(D))[\id_Y]$ is equal to the finite sum of elements of the form $a\,[Y' \to Y, \ldots]$ where $a$ is in $\lazard_G(F)$ and $\dim Y' < \dim Y$.
\end{lemma}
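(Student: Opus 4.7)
The plan is to induct on the number $k$ of smooth $G$-prime divisors appearing in the given expression $D = D_1 + \cdots + D_k$, using Remark \ref{rmk FGL} to peel off one divisor at a time and the \textbf{(Sect)} axiom to replace Chern classes of the $\O(D_i)$ by push-forwards from the lower-dimensional $D_i$. The base case $k = 1$ is immediate: by \textbf{(Sect)}, $c(\O(D_1))[\id_Y] = [D_1 \embed Y]$, which is a cycle whose source has dimension $\dim Y - 1 < \dim Y$.

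For the inductive step, I write $D = D' + D_k$ with $D' = D_1 + \cdots + D_{k-1}$, so $\O(D) \cong \O(D') \otimes \O(D_k)$. Remark \ref{rmk FGL} gives
$$c(\O(D))[\id_Y] = c(\O(D'))[\id_Y] + c(\O(D_k))[\id_Y] + \sum_{s, t \geq 1} f^1_{s,t}\, V^s(\O(D'))\, V^t(\O(D_k))\, [\id_Y].$$
The first summand is of the required form by the inductive hypothesis applied to $D'$ (again a sum of smooth $G$-prime divisors), and the second equals $[D_k \embed Y]$ by \textbf{(Sect)}, with source of strictly smaller dimension.

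For each term in the correction sum, I factor $V^t(\O(D_k)) = V^t_{\{1\}}(\O(D_k)) \circ c(\O(D_k))$, which is valid since $\alpha_1 = \epsilon$ and the individual Chern class operators commute. Applying \textbf{(Sect)} to the rightmost factor and then \textbf{(A3)} with respect to the immersion $i_k : D_k \embed Y$ yields
$$V^s(\O(D'))\, V^t(\O(D_k))\, [\id_Y] = (i_k)_* \Bigl( V^s(\O_{D_k}(D'))\, V^t_{\{1\}}(\O_{D_k}(D_k))\, [\id_{D_k}] \Bigr),$$
so each contribution is a push-forward from $D_k$ and every resulting cycle has source of dimension $\dim D_k < \dim Y$.

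The main obstacle is that the double sum over $(s,t)$ is a priori infinite, whereas the statement demands a finite sum of cycles. This is resolved by invoking Proposition \ref{prop Nilp axiom} on the lower-dimensional variety $D_k$: applied with $S = \emptyset$ to $\O_{D_k}(D')$ and with $S = \{1\}$ to $\O_{D_k}(D_k)$, it forces $V^s(\O_{D_k}(D'))[\id_{D_k}] = 0$ and $V^t_{\{1\}}(\O_{D_k}(D_k))[\id_{D_k}] = 0$ for all sufficiently large $s$ and $t$ respectively; since the two operators commute by \textbf{(A5)}, only finitely many pairs $(s,t)$ contribute a nonzero term. After push-forward by $i_k$ this produces a finite sum of cycles of the form $f^1_{s,t}\,[D_k \embed Y, \cat{M}_1, \ldots, \cat{M}_m]$ with coefficient in $\lazard_G(F)$ and $\dim D_k < \dim Y$, completing the induction.
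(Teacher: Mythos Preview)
Your proof is correct and uses the same ingredients as the paper---the formal group law (Remark \ref{rmk FGL}) to decompose, \textbf{(Sect)} and \textbf{(A3)} to push into the $D_i$, and Proposition \ref{prop Nilp axiom} for finiteness---only organized as an induction on $k$ rather than in one stroke. One minor point: the paper actually writes $D = \sum_i \pm D_i$ and invokes Proposition \ref{prop FGL inverse} to handle the signs, which your nonnegative-sum reading omits; that extra generality is what is used later (e.g.\ for $c(\O(m_1 H_1 + \cdots + m_n H_n))$ in the proof of Theorem \ref{thm gen by geo cycle}), and your induction extends to it immediately once you allow the inductive step $D = D' - D_k$ and apply Proposition \ref{prop FGL inverse} to $c(\O(-D_k))$.
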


\begin{proof}
Let $D = \sum_i \pm D_i$ where $D_i$ are \sm, $G$-prime divisors. By Remark \ref{rmk FGL} and Proposition \ref{prop FGL inverse}, for some $\sigma_i \in \Endoinf{\cob{}{G}{Y}}$,
$$c(\O(D))[\id_Y] = c(\O(\sum_i \pm D_i)) [\id_Y] = \sum_i \sigma_i \circ c(\O(D_i)) [\id_Y] = \sum_i {j_i}_* \circ \sigma_i^{j_i}  [\id_{D_i}]$$
where $j_i : D_i \embed Y$ are the immersions. The result then follows from Proposition \ref{prop Nilp axiom}.
\end{proof}

\begin{lemma}
\label{lemma lb structure over adtower}
Suppose 
$$\P = \P_n = \P(\cat{E}_n \oplus \L_n) \to \cdots \to \P(\cat{E}_1 \oplus \L_1) = \P_1 \to \P_0 = \pt$$
is an \adtower\ over $\pt$ with length $n$. Denote the \inv\ \sm\ divisors $\P(\cat{E}_i)$ on $\P_i$ by $H_i$. Then, for all $\L \in \picard{G}{\P}$, there exist integers $m_1, \ldots, m_n$ and character $\gb$ such that
$$\L \cong \O(m_1 H_1 + \cdots + m_n H_n) \otimes \gb.$$
\end{lemma}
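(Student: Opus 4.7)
The plan is to proceed by induction on the length $n$ of the admissible tower. The base case $n = 0$ is immediate: $\P_0 = \pt$, so by Proposition \ref{prop lb structure}(2) (with $\picard{}{\pt} = 0$), any $G$-linearized line bundle on a point is simply a $G$-character, and we take no $H_i$ and $\beta = \L$.

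For the inductive step I would fix $\L \in \picard{G}{\P_n}$ and the structure map $p : \P_n = \P(\cat{E}_n \oplus \L_n) \to \P_{n-1}$. The strategy is to first pin down the class of $\L$ non-equivariantly via the standard decomposition $\picard{}{\P(\cat{F})} = p^*\picard{}{\P_{n-1}} \oplus \Z\cdot\O_{\P_n}(1)$, writing $\L \cong p^*\L_0' \otimes \O_{\P_n}(m_n)$ non-equivariantly for some integer $m_n$ and some $\L_0' \in \picard{}{\P_{n-1}}$. Then I would lift $\L_0'$ to an equivariant object by pulling back along the canonical $G$-equivariant section $s : \P_{n-1} \cong \P(\L_n) \hookrightarrow \P_n$ coming from the equivariant projection $\cat{E}_n \oplus \L_n \twoheadrightarrow \L_n$: set $\L' \defeq s^*(\L \otimes \O_{\P_n}(-m_n)) \in \picard{G}{\P_{n-1}}$, so that $s^*p^*\L' \cong \L'$ non-equivariantly agrees with $\L_0'$. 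Then $\L \otimes \O_{\P_n}(-m_n) \otimes p^*\dual{\L'}$ is $G$-linearized with trivial underlying line bundle, hence equals some character $\beta'$ by Proposition \ref{prop lb structure}(2).

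Next I would convert $\O_{\P_n}(1)$ into the divisor class $\O(H_n)$. The equivariant inclusion $\L_n \hookrightarrow \cat{E}_n \oplus \L_n$ induces a $G$-invariant section of $\O_{\P_n}(1) \otimes p^*\dual{\L_n}$ whose vanishing locus is exactly $H_n = \P(\cat{E}_n)$, yielding a $G$-equivariant iso\morp\ $\O_{\P_n}(1) \cong \O_{\P_n}(H_n) \otimes p^*\L_n$ in $\picard{G}{\P_n}$. Plugging this in gives
$$\L \cong \O_{\P_n}(m_n H_n) \otimes p^*(\L' \otimes \L_n^{\otimes m_n}) \otimes \beta'.$$
Finally, applying the induction hypothesis to $\L' \otimes \L_n^{\otimes m_n} \in \picard{G}{\P_{n-1}}$ produces integers $m_1, \ldots, m_{n-1}$ and a character $\beta''$ expressing that bundle as $\O_{\P_{n-1}}(\sum_{i < n} m_i H_i) \otimes \beta''$. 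Pulling back along $p$ (identifying each $H_i$ with its pullback in $\P_n$, which is the standard abuse) and absorbing $\beta'\beta''$ into a single character $\beta$ gives the desired presentation.

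I expect the main obstacle to be the equivariant matching in the middle step: the non-equivariant projective-bundle decomposition of $\picard{}{-}$ does not a priori respect $G$-linearizations, so the key is producing a $G$-equivariant section $s$ of $p$ (available for free here since $\L_n$ is a direct summand) and invoking Proposition \ref{prop lb structure}(2) to convert non-equivariant triviality into a character. Once this lift is in hand, the remaining identifications — between $\O(1)$ and $\O(H_n)$, and the reduction to $\P_{n-1}$ — are routine.
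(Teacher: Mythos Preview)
Your proof is correct. The paper takes a different, slightly shorter route: it applies Proposition~\ref{prop lb structure} once at the outset to reduce to the purely non-equivariant claim that $\L \cong \O(\sum m_i H_i)$ in $\picard{}{\P}$, notes that $\picard{}{\P}$ is generated by the sheaves $\O_{\P_i}(1)$, and then shows each $\O_{\P_i}(1)$ lies in the span of the $\O(H_j)$ by observing that the complement $\P_i \setminus \bigcup_{j \le i} \P_i|_{H_j}$ is an affine space with trivial Picard group (the paper cites Quillen--Suslin, though for line bundles this is elementary). Your inductive peeling-off of one level at a time, using the explicit equivariant section $\P(\L_n) \hookrightarrow \P_n$ to lift the non-equivariant Picard decomposition and the identification $\O_{\P_n}(1) \cong \O(H_n) \otimes p^*\L_n^{\pm 1}$, is more constructive and avoids any appeal to $\picard{}{\A^N} = 0$; the paper's argument is a cleaner one-shot reduction. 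One small remark: depending on the convention for $\P(\cat{F})$, the section cutting out $H_n$ arises from the projection $\cat{E}_n \oplus \L_n \to \L_n$ rather than the inclusion, which only flips the sign on $p^*\L_n$ and does not affect your argument.
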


\begin{proof}
By Proposition \ref{prop PBF for equi picard}, we may assume $\L \cong \O_{\P_i}(1)$. By induction on $n$, we may further assume $\L \cong \O_{\P_n}(1)$. Clearly, 
$$\O_{\P}(H_n) \cong \dual{\L_n} \otimes \O_{\P}(1).$$
Therefore, 
$$\L \cong \O_{\P}(1) \cong \O_{\P}(H_n) \otimes \L_n$$
and the result follows from the induction assumption ($\L_n \in \picard{G}{\P_{n-1}}$).
\end{proof}

For the rest of this section, for a \girred{G}\ object $Y \in$ $\gsmcat{G}$, we will say $\beta$ is a twisting character of a sheaf $\L \in \picard{G}{Y}$ if $\L \cong \O_Y(D) \otimes \beta$ for some \inv\ divisor $D$ on $Y$. By Proposition \ref{prop lb structure}, twisting character always exists. Notice that it may not be unique though. For example, suppose $G$ is a non-trivial group and $\ga$ is a non-trivial character. Let $Y = \P(\epsilon \oplus \ga)$ and $D = \P(\epsilon), D' = \P(\ga)$ be two invariant divisors of $Y$. Then, it is clear that 
$$\O_Y(D') \otimes \epsilon \cong \O_Y(1) \cong \O_Y(D) \otimes \ga.$$
In particular, $\epsilon$ and $\ga$ are both twisting characters of $\O_Y(1)$.

\begin{thm}
\label{thm gen by geo cycle}
Suppose $\char{k} = 0$ and $k$ contains a primitive $e$-th \rou, where $e$ is the exponent of $G_f$. For any $X \in \gvar{G}$, the $\lazard_G(F)$-module $\cob{}{G}{X}$ is generated by geometric cycles. 

More precisely, any element $[Y \to X, \L_1, \ldots, \L_r] \in \cob{}{G}{X}$, such that $Y$ is \girred{G}, can be expressed in the following form :
$$\left( \prod^r_{i=1} e(\beta_i) \right) \cdot [Y \to X] + \sum_j a_j\, [\P_j] [Y_j \to X] + \sum_k a_k'\, [Y'_k \to X]$$
where $\beta_i$ is a twisting character of $\L_i$, $a_j, a_k'$ are elements in $\lazard_G(F)$, $\P_j$ are \adtower s over $\pt$, $Y_j$, $Y'_k \in$ $\gsmcat{G}$\  are \girred{G}, $\dim Y_j$, $\dim Y_k' < \dim Y$ and $\dim Y = \dim \P_j + \dim Y_j$.
\end{thm}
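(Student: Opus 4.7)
My plan is to argue by induction on $\dim Y$; the first sentence (generation by geometric cycles) is an immediate consequence of the precise decomposition. The base case $\dim Y = 0$ is direct: by Proposition~\ref{prop lb structure}(3), every $\L_i$ is isomorphic to a character $\beta_i$ (its twisting character), so $c(\L_i) = c(\beta_i) = e(\beta_i)$ by Proposition~\ref{prop Chern class basic}(2), giving $[Y \to X, \L_1, \ldots, \L_r] = \prod_i e(\beta_i)\,[Y \to X]$ with empty correction sums.

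For the induction step ($\dim Y \geq 1$), I first write $\L_i \cong \O_Y(D_i) \otimes \beta_i$ via Proposition~\ref{prop lb structure}(3), and then apply the embedded desingularization theorem to $\bigcup_i |D_i| \embed Y$ to obtain $\pi \colon \tilde Y \to Y$, a composition of blow-ups along smooth $G$-invariant centers, after which each $\tilde D_i \defeq \pi^{-1}(D_i)$ is a signed sum of smooth $G$-prime divisors on $\tilde Y$. Iterating the blow-up relation (Proposition~\ref{prop blow up relation}) gives $[\tilde Y \to Y] = [\id_Y] + \sum_\ell c_\ell\,[\P_\ell \to Z_\ell \embed Y]$ in $\cob{}{G}{Y}$, where each $\P_\ell \to Z_\ell$ is a projective bundle (hence a quasi-admissible tower) of dimension exactly $\dim Y$ over a smooth $G$-invariant proper subvariety $Z_\ell \subsetneq Y$. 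Multiplying by $\prod c(\L_i)$ and pushing to $X$ via axiom (A3) yields
\[
[Y \to X, \L_1, \ldots, \L_r] = [\tilde Y \to X, \pi^*\L_1, \ldots, \pi^*\L_r] - \sum_\ell c_\ell\,[\P_\ell \to Z_\ell \to X, \ldots],
\]
the dots denoting appropriate pull-backs of the $\L_i$.

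On $\tilde Y$, the identity $c(\beta_i) = e(\beta_i)$ (Proposition~\ref{prop Chern class basic}(2)) combined with Remark~\ref{rmk FGL} lets me write $c(\pi^*\L_i) = e(\beta_i) + c(\O_{\tilde Y}(\tilde D_i)) \circ \sigma_i$ for some $\sigma_i \in \Endoinf{\cob{}{G}{\tilde Y}}$. Expanding $\prod_i c(\pi^*\L_i)$ distributively, the unique term with no $c(\O(\tilde D_i))$ factor acts on $[\id_{\tilde Y}]$ to give $\prod_i e(\beta_i)\,[\id_{\tilde Y}]$; every other term contains a factor $c(\O(\tilde D_{i_0}))$ for some $i_0$, and by Lemma~\ref{lemma div cut down dim} reduces to a pushforward from a smooth $G$-invariant subvariety of dimension $\dim Y - 1$. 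Using the blow-up relation in reverse, $\prod_i e(\beta_i)\,[\tilde Y \to X] = \prod_i e(\beta_i)\,[Y \to X] + \sum_\ell c_\ell \prod_i e(\beta_i)\,[\P_\ell \to Z_\ell \to X]$, the first summand being the desired leading term.

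All remaining bundle-type pieces are processed by Proposition~\ref{prop tower reduction}, which rewrites $[\P_\ell \to Z_\ell, \ldots] = \sum_m x_m\,[W_m \to Z_\ell, \ldots]$ with $x_m \in \cob{}{G}{\pt}'$, $\dim W_m \leq \dim Z_\ell < \dim Y$, and $\dim \P_\ell = \geodim x_m + \dim W_m$. Pushing to $X$, applying the induction hypothesis to each $[W_m \to X, \ldots]$, and distributing $x_m$ as a $\lazard_G(F)$-combination of classes of admissible towers yields terms $[\P][W \to X]$ with $\dim \P + \dim W = \dim \P_\ell = \dim Y$ and $\dim W < \dim Y$---fitting the $\sum_j a_j[\P_j][Y_j \to X]$ form exactly (using that a product of admissible towers is again admissible, and splitting into $G$-irreducible components where necessary). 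All other correction terms carry total geometric dimension strictly less than $\dim Y$ and are absorbed into $\sum_k a'_k[Y'_k \to X]$. The main obstacle is precisely the dimension bookkeeping forced by $\dim Y = \dim \P_j + \dim Y_j$: it works out because every projective bundle introduced by the blow-up relation has dimension exactly $\dim Y$, and Proposition~\ref{prop tower reduction} preserves this balance via $\dim \P = \geodim x + \dim W$.
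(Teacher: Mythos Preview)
Your overall strategy matches the paper's, but there is a genuine gap at the step where you ``distribute $x_m$ as a $\lazard_G(F)$-combination of classes of admissible towers''. By definition, $\cob{}{G}{\pt}'$ is the $\lazard_G(F)$-subalgebra generated by elements of the form $[\P, \cat{M}_1, \ldots, \cat{M}_s]$ with $\P$ an admissible tower over $\pt$, so each $x_m$ is in general a $\lazard_G(F)$-combination of cycles $[\P, \cat{M}_1, \ldots, \cat{M}_s]$ \emph{carrying line bundles}, not of bare classes $[\P]$. From Proposition~\ref{prop tower reduction} you have $\geodim x_m + \dim W_m = \dim \P_\ell = \dim Y$, and nothing prevents $\dim W_m = 0$; in that case $\geodim x_m = \dim Y$, so you cannot invoke the induction hypothesis on $\dim Y$ to strip the line bundles from $x_m$. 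Without this, the term $x_m\,[W_m \to X]$ is not yet of the form $a_j\,[\P_j][Y_j \to X]$ required by the statement.

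The paper closes this gap by a direct argument specific to admissible towers over $\pt$: using Lemma~\ref{lemma lb structure over adtower}, every $\cat{M}_s \in \picard{G}{\P}$ is of the form $\O(m_1 H_1 + \cdots + m_n H_n) \otimes \beta$ for \emph{smooth} invariant divisors $H_i$ on the levels of $\P$, so that $c(\cat{M}_s)$ can be unwound via Lemma~\ref{lemma div cut down dim} and Proposition~\ref{prop Nilp axiom} to give $[\P, \cat{M}_1, \ldots, \cat{M}_s] \equiv a\,[\P]$ modulo cycles of strictly smaller geometric dimension. Iterating this (and using that products of admissible towers are admissible) reduces every $x_m$ to a $\lazard_G(F)$-combination of genuine classes $[\P']$, at which point your dimension bookkeeping goes through. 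You should insert this reduction step before invoking the induction hypothesis.
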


\begin{proof}
By Corollary \ref{cor gen by cycle}, it is enough to consider cycles. Since $[f : Y \to X, \L_1, \ldots, \L_r] = f_* [\id_Y, \L_1, \ldots, \L_r]$, it is enough to show the statement on elements of the form $[\id_Y, \L_1, \ldots, \L_r]$ where $Y \in$ $\gsmcat{G}$\ is \girred{G}. We will proceed by induction on $d \defeq \dim Y$. Within this proof, for a cycle $x \in \cob{}{G}{-}$, we will say $x \equiv 0$ if $\geodim{x} < d$. Notice that any geometric cycle of geometric dimension $< d$ will be absorbed by the sum $\sum_k a_k'\, [Y'_k \to X]$. In particular, terms of the form
$$a_j\, [\P_j] [Y_j \to X]$$
with $\dim \P_j + \dim Y_j < \dim Y$ will be handled by considering   
$$a_j\, [\P_j] [Y_j \to X] = a_j\, [\P_j \x Y_j \to X].$$

If $\dim Y = 0$, then 
$$[\id_Y, \L_1, \ldots, \L_r] = [\id_Y, \gb_1, \ldots, \gb_r] = e(\gb_1) \cdots e(\gb_r)[\id_Y]$$
and we are done.

Suppose $\dim Y > 0$. Let $\L_i' \defeq \L_i \otimes \dual{\beta_i}$. Then,
\begin{eqnarray}
[\id_Y, \L_1, \ldots, \L_r] &=& c(\L_r)[\id_Y, \L_1, \ldots, \L_{r-1}] \nonumber\\
&=& c(\L_r' \otimes \beta_r)[\id_Y, \L_1, \ldots, \L_{r-1}] \nonumber\\
&=& e(\beta_r)[\id_Y, \L_1, \ldots, \L_{r-1}] + \sum_{j \geq 1} d(\beta_r)^1_j\, V^j(\L_r')[\id_Y, \L_1, \ldots, \L_{r-1}] \nonumber\\
&=& e(\beta_r)[\id_Y, \L_1, \ldots, \L_{r-1}] + \sigma \circ c(\L_r')[\id_Y] \nonumber
\end{eqnarray}
for some $\sigma \in \Endoinf{\cob{}{G}{Y}}$. Inductively, we have
$$[\id_Y, \L_1, \ldots, \L_r] = \left( \prod^r_{i=1} e(\beta_i) \right) \cdot [\id_Y] + \sum^r_{i=1} \sigma_i \circ c(\L_i')[\id_Y]$$
for some $\sigma_i$. Therefore, it suffices to prove that if $\L$ has a trivial twisting character, then
$$\sigma \circ c(\L)[\id_Y] = \sum_j a_j\, [\P_j] [Y_j \to Y] + \sum_k a_k'\, [Y'_k \to Y]$$
where $a_j$, $a_k'$, $\P_j$, $Y_j$ and $Y'_k$ are as described in the statement.

By the blow up relation, for any \girred{G}, \inv, \sm\ closed subscheme $Z \subset Y$, we have
$$[\gp : \blowup{Y}{Z} \to Y, \gp^*\L] - [\id_Y, \L] = - [p_1 : \P_1 \to Z \embed Y, p_1^*\L] + [p_2 : \P_2 \to Z \embed Y, p_2^*\L]$$
where $\P_1$, $\P_2 \to Z$ are both \qadtower s. By proposition \ref{prop tower reduction}, 
\begin{eqnarray}
\label{eqn4}
[\P_1 \to Z \embed Y, p_1^*\L] = \sum_j x_j\, [Y_j \to Z \embed Y, \ldots]
\end{eqnarray}
where $x_j \in \cob{}{G}{\pt}'$, $\dim Y_j \leq \dim Z < d$ and $d = \dim \P_1 = \geodim{x_j} + \dim Y_j$. 

Let us consider elements of the form $[\P, \cat{M}_1, \ldots, \cat{M}_s]$ where $\P$ is an \adtower\ over $\pt$ with length $n$ and dimension $d' \leq d$. By Lemma \ref{lemma lb structure over adtower}, we have 
$$\cat{M}_s \cong \O(m_1 H_1 + \cdots + m_n H_n) \otimes \gb$$ 
for some integers $m_i$, character $\gb$ and \inv\ \sm\ divisors $H_i$ on $\P_i$ ($i$-th level of $\P$). Therefore,
\begin{eqnarray}
&& [\P,\cat{M}_1, \ldots, \cat{M}_s] \nonumber\\
&=& {\gp_{\P}}_* [\id_{\P},\cat{M}_1, \ldots, \cat{M}_s] \nonumber\\
&=& {\gp_{\P}}_* \circ c(\O(m_1 H_1 + \cdots + m_n H_n) \otimes \gb)\,[\id_{\P},\cat{M}_1, \ldots, \cat{M}_{s-1}] \nonumber\\
&=& e(\gb)[\P, \cat{M}_1, \ldots, \cat{M}_{s-1}] + {\pi_{\P}}_* \circ \gs \circ c(\O(m_1 H_1 + \cdots + m_n H_n))\,[\id_{\P}], \nonumber
\end{eqnarray}
for some $\gs \in \Endoinf{\cob{}{G}{\P}}$. By Proposition \ref{prop Nilp axiom} and Lemma \ref{lemma div cut down dim}, 
$${\pi_{\P}}_* \circ \gs \circ c(\O(m_1 H_1 + \cdots + m_n H_n))\,[\id_{\P}] \equiv 0$$
mod elements with geometric dimension $< d'$ (by abuse of notation). By repeating this process, we have $[\P,\cat{M}_1, \ldots, \cat{M}_s] \equiv a\, [\P]$ for some $a \in \lazard_G(F)$. Hence, equation (\ref{eqn4}) becomes
$$[\P_1 \to Z \embed Y, p_1^*\L] \equiv \sum_j a_j\, [\P_j] [Y_j \to Y, \ldots].$$
By the induction assumption and the fact that the product of two \adtower s over $\pt$ is again an \adtower\ over $\pt$, we can replace $[Y_j \to Y, \ldots]$ by $[Y_j \to Y]$. The same equation holds for $\P_2$. Therefore, we have
$$[\blowup{Y}{Z} \to Y, \gp^*\L] - [\id_Y, \L] \equiv \sum_j a_j\, [\P_j] [Y_j \to Y]$$
where $\dim Y_j < d$ and $\dim \P_j + \dim Y_j = d$. 

By the same argument used in step 2 in the proof of Proposition \ref{prop tower reduction}, there is a map $\pi : \tilde{Y} \to Y$ given by a series of blow ups along \girred{G}, \inv, \sm\ centers such that $\pi^* \L \cong \O(\sum_i \pm D_i)$ for some \sm, $G$-prime divisors $D_i$ on $\tilde{Y}$. By Lemma \ref{lemma div cut down dim}, we have $[\id_{\tilde{Y}}, \pi^* \L] \equiv 0$. Therefore, 
$$[\id_Y, \L] = \sum_j a_j\, [\P_j] [Y_j \to Y] + \sum_k a_k'\, [Y_k' \to Y, \ldots]$$
for some $a_k'$ and $Y_k'$ as described in the statement. The result then follows by applying $\gs$ on both sides and the induction assumption.
\end{proof}

Next, we would like to show that the canonical map $\lazard_G(F) \to \cob{}{G}{\pt}$ is surjective. To that end, we need a better understanding of the function field of a \girred{G}\ object $Y \in$ $\gsmcat{G}$\ when $G$ is a split torus.

\begin{lemma}
\label{lemma fcn field for torus action}
Suppose $G$ is a split torus of rank $r$, $Y \in$ $\gsmcat{G}$\ is irreducible and the $G$-action on $Y$ is faithful. Then
$$k(Y) \cong k(Y)^G(t_1, \ldots, t_r)$$
where $k(Y)$ is the function field of $Y$, $t_i$ are algebraically independent over $k(Y)^G$ and the $G$-actions on $t_i$ are given by 1-dimensional characters.
\end{lemma}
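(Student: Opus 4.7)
The plan is to construct the $t_i$ from semi-invariants on a $G$-invariant affine open, using faithfulness to guarantee the characters involved span $\hat G$. Since $Y$ is smooth (hence normal) quasi-projective with a torus action, Sumihiro's theorem yields a $G$-invariant affine open $U \subset Y$, so $k(Y) = k(U) = \mathrm{Frac}\,k[U]$. The coordinate ring $k[U]$, as a rational $G$-module, decomposes into character eigenspaces
$$k[U] = \bigoplus_{\chi \in \hat G} k[U]_\chi,$$
and every element is a finite sum of semi-invariants. Let $\Lambda_0 = \{\chi : k[U]_\chi \neq 0\}$ and let $\Lambda \subset \hat G$ be the subgroup it generates.

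The first key claim is $\Lambda = \hat G$. Otherwise $\hat G / \Lambda$ is a nonzero finitely generated abelian group, and the associated closed subgroup scheme $H \defeq \spec{k[\hat G/\Lambda]} \subset G$ is nontrivial (whether $\hat G/\Lambda$ is torsion or of positive rank). By construction, every character in $\Lambda$ is trivial on $H$, so $H$ fixes every semi-invariant in $k[U]_\chi$ for $\chi \in \Lambda$ and hence acts trivially on $k[U]$, on the reduced affine scheme $U$, and by density on $Y$ --- contradicting faithfulness. With $\Lambda = \hat G$ established, pick any $\Z$-basis $\chi_1, \ldots, \chi_r$ of $\hat G$, write $\chi_i = \phi_i - \psi_i$ with $\phi_i, \psi_i \in \Lambda_0$, and set $t_i \defeq a_i/b_i$ for nonzero $a_i \in k[U]_{\phi_i}$, $b_i \in k[U]_{\psi_i}$; then $t_i \in k(Y)$ is a semi-invariant of character $\chi_i$.

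Next, I would show $k(Y) = k(Y)^G(t_1, \ldots, t_r)$. For any semi-invariant $f \in k[U]_\chi$ with $\chi = \sum_j n_j \chi_j$, the product $f \prod_j t_j^{-n_j}$ has trivial character and so lies in $k(Y)^G$; thus $f \in k(Y)^G[t_1^{\pm 1}, \ldots, t_r^{\pm 1}]$. Summing, $k[U] \subset k(Y)^G[t_1^{\pm 1}, \ldots, t_r^{\pm 1}]$, and taking fraction fields gives $k(Y) = \mathrm{Frac}\,k[U] \subset k(Y)^G(t_1, \ldots, t_r)$; the reverse inclusion is immediate.

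Finally, algebraic independence follows by counting transcendence degrees. Faithfulness of the $G$-action forces the generic stabilizer to be trivial: this stabilizer fixes the generic point, hence acts trivially on $k(Y)$, and since $Y$ is reduced and irreducible, trivially on $Y$. Therefore $\mathrm{trdeg}_{k(Y)^G} k(Y) = \dim G = r$, and since $k(Y) = k(Y)^G(t_1, \ldots, t_r)$ is generated over $k(Y)^G$ by $r$ elements with transcendence degree $r$, the $t_i$ must be algebraically independent. The main obstacle I expect is the argument that $\Lambda = \hat G$, particularly producing a nontrivial $H$ from a proper $\Lambda$ (handling both torsion and positive rank in $\hat G/\Lambda$) and cleanly propagating the trivial action of $H$ from $k[U]$ to all of $Y$ via reducedness and density.
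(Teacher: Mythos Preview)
Your proof is correct and takes a genuinely different route from the paper's. The paper embeds $Y$ into a projective space $\P(V')$ of minimal dimension, restricts to the invariant affine open $U = \spec{k[x_1,\ldots,x_m,y_1^{\pm 1},\ldots,y_n^{\pm 1}]}$ where the $y_j$ carry nontrivial characters, and then proves a stronger ring-level statement: there exist monomials $f_1,\ldots,f_r$ in the $y_j$ with $A = A^G[f_1^{\pm 1},\ldots,f_r^{\pm 1}]$. This is obtained by an explicit Euclidean algorithm on the weights (first for $r=1$, then inductively via the tower $U \to U/\torus \to \cdots \to U/G$). From this Laurent-polynomial decomposition the product structure $Y\cap U \cong (Y\cap U)/G \times_{U/G} U$ and the function-field statement follow at once.

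Your approach instead uses Sumihiro to get an invariant affine open, the eigenspace decomposition of its coordinate ring, faithfulness to force the character lattice $\Lambda$ to be all of $\hat G$, and then a transcendence-degree count for algebraic independence. This is cleaner and more conceptual, but it invokes several black boxes (Sumihiro, and implicitly a Rosenlicht-type statement to get $\trdeg{k(Y)^G}{k(Y)} = \dim G$ from triviality of the generic stabilizer). The paper's argument is more hands-on and yields the finer coordinate-ring structure, which is not needed for the lemma as stated but makes the geometry explicit. One minor remark: your algebraic independence step can be done more directly, avoiding Rosenlicht entirely, by noting that a nontrivial polynomial relation $\sum_I c_I\, t^I = 0$ with $c_I \in k(Y)^G$ would, after acting by $g\in G(k)$, give $\sum_I c_I\, \chi^I(g)\, t^I = 0$; since the $\chi_j$ form a basis of $\hat G$, the monomial characters $\chi^I$ are distinct and hence linearly independent over $k$, forcing all $c_I t^I = 0$.
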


\begin{proof}
W\withoutlog, we may assume $Y$ is \proj\ (by Proposition \ref{prop smooth extension}). By Proposition \ref{prop equi embed}, we can embed $Y$ into some $\P(V')$ with minimal $\dim \P(V')$. Take an \inv\ affine part $V = \spec{k[x_1, \ldots, x_m, y_1, \ldots, y_n]} \subset \P(V')$ where $G$ acts on $x_i$ trivially and the $G$-actions on $y_i$ are given by non-trivial 1-dimensional characters. Consider the \inv\ open subset $U \subset V$ with coordinates $x_i, y_j$ being non-zero, i.e.,
\begin{eqnarray*}
U &=& \spec{A} \\
&\defeq& \spec{k[x_1, \ldots, x_m, y_1, \ldots, y_n][x_1^{-1}, \ldots, x_m^{-1}, y_1^{-1}, \ldots, y_n^{-1}]}
\end{eqnarray*}
Since $\dim \P(V')$ is minimal, $Y \cap U$ is non-empty. Also, since the action on $Y$ is faithful, so is the action on $U$. Moreover, by Nagata's theorem, $U/G = \spec{A^G}$ exists as a variety over $k$ and so is $(Y \cap U)/G$. 

Within this proof, a ring of Laurent polynomials over $k$ such that the $G$-action on each variable is given by a 1-dimensional character will be called "of Laurent form". For example, $A$ is of Laurent form.

\medskip

\noindent Claim 1 : There are monomials $f_i(y_1, \ldots, y_n) \in A$, which are algebraically independent over $k(U/G)$, such that
$$A = A^G[f_1, \ldots, f_r][f_1^{-1}, \ldots, f_r^{-1}].$$

Consider the case when $r = 1$ first. Fix an iso\morp\ $G \cong \torus$. Then the actions on each $y_i$ has weight $a_i$. We claim that we can pick such $f_1$ with weight $\gcd(a_1, \ldots, a_n)$ and $A^G$ will be of Laurent form.

If $n = 1$, we just take $f_1 \defeq y_1$ and 
$$A^G = k[x_1, \ldots, x_m][x_1^{-1}, \ldots, x_m^{-1}]$$
is of Laurent form. 

Suppose $n > 1$. Apply the induction assumption on 
$$B = k[x_1, \ldots, x_m, y_1, \ldots, y_{n-1}][x_1^{-1}, \ldots, x_{m}^{-1}, y_1^{-1}, \ldots, y_{n-1}^{-1}],$$ 
we have 

\begin{tabular}{ll}
(a) & $B = B^G[f_1][f_1^{-1}]$ \\
(b) & $f_1$ is a monomial in terms of $y_1, \ldots, y_{n-1}$ \\
(c) & $f_1 \text{ has weight } b \defeq \gcd(a_1, \ldots, a_{n-1})$ \\ 
(d) & $B^G \text{ is of Laurent form}$ \\
(e) & $f_1, y_n \text{ are algebraically independent over  $B^G$}$ 
\end{tabular}

\noindent Thus,
$$A  = B[y_n][y_n^{-1}] = B^G[f_1, y_n][f_1^{-1}, y_n^{-1}] \subseteq A$$
Therefore, $A  = B^G[f_1, y_n][f_1^{-1}, y_n^{-1}]$.

Now, by the division algorithm, there are integers $q, r$ with $0 \leq r < a_n$ such that $b = q a_n + r$. Let $g \defeq f_1 y_n^{-q}$. Then, on one hand,
$$A  = B^G[f_1, y_n][f_1^{-1}, y_n^{-1}] \supseteq B^G[g, y_n][g^{-1}, y_n^{-1}].$$
On the other hand, since $f_1 = g y_n^q$, we have
$$A  = B^G[f_1, y_n][f_1^{-1}, y_n^{-1}] \subseteq B^G[g, y_n][g^{-1}, y_n^{-1}].$$
Notice that the weights of $f_1,y_n$ are $b,a_n$ respectively, while the weights of $g,y_n$ are $r, a_n$ respectively. By repeated applications, we got a pair of monomials $g,g'$ with weights $c,0$ respectively where $c = \gcd(a_1, \ldots, a_n)$ and $A = B^G[g,g'][g^{-1}, {g'}^{-1}].$ Since $g'$ has weight zero, it is indeed in $A^G$. So, 
$A = A^G[g][g^{-1}].$ It should also be clear that $g,g'$ are algebraically independent over $B^G$ and $A^G = B^G[g'][g'^{-1}]$.

In conclusion, we have

\begin{tabular}{ll}
(a) & $A = A^G[g][g^{-1}]$ \\
(b) & $g$ is a monomial in terms of $y_1, \ldots, y_{n}$ \\
(c) & $g \text{ has weight } \gcd(a_1, \ldots, a_n)$ \\ 
(d) & $A^G \text{ is of Laurent form}$ \\
(e) & $g \text{ is algebraically independent over $A^G$}$ 
\end{tabular}

\noindent That handles the $r = 1$ case.

The general case then follows by considering the quotients
$$U \to U/ \torus \to U/ \torus^2 \to \cdots \to U / \torus^r = U/G$$ 
and repeated applications of the $r = 1$ case at each level. \claimend

\medskip

Let $I \subset A^G$ be the ideal defining the closed subvariety $(Y \cap U)/G \subset U/G$. Since $Y \cap U \cong (Y \cap U)/G \x_{U/G} U$, by claim 1, 
$$Y \cap U \cong \spec{(A^G/I)[f_1, \ldots, f_r][f_1^{-1}, \ldots, f_r^{-1}]}.$$ 
The result then follows by defining $t_i \defeq f_i$.
\end{proof}

We are going to employ a technique which we call "deformation of coefficients". Suppose 
$$Y = \Proj k[x,y,z]\ /\ (f(x,y,z))$$
where $f = x^2 + yz$, is an irreducible object in $\gsmcat{G}$ (trivial group action). Let 
$$C = \Proj A \defeq \Proj k[c_1, \ldots, c_6,t]$$ 
and 
$$\gp : T \defeq \text{BiProj\ } A[x,y,z]\ /\ (f(x,y,z)) \to C$$
where 
$$f = c_1 x^2 + c_2 xy + c_3 xz + c_4 y^2 + c_5 yz + c_6 z^2$$ 
(trivial group actions). Then, $(a_1; \ldots ; a_6 ; t) \defeq (1; 0 ;0;0;1;0;1)$ is a point in $C$ with fiber $T_{(a_1 ; \ldots ; a_6 ;1)} = Y$. 

Let $L \subset C$ be a line which contains the point $(a_1 ; \ldots; a_6;1)$ and another point $(b_1; \ldots; b_6;1)$ such that $T_{(b_1; \ldots; b_6;1)}$ is \sm. By restriction, we have $\gp : T \x_C L \to L$. By considering $D \defeq (a_1; \ldots; a_6;1)$ and $D' \defeq (b_1; \ldots; b_6;1)$ as linearly equivalent divisors on $L$, we have
\begin{eqnarray*}
\gp^* \O_L(D) [\id_{T \x_C L}] &=& \gp^* \O_L(D') [\id_{T \x_C L}] \\
\ [T_{(a_1; \ldots; a_6;1)} \to T \x_C L] &=& [T_{(b_1; \ldots; b_6;1)} \to T \x_C L]
\end{eqnarray*}
as elements in $\cob{}{G}{T \x_C L}$. By pushing them down to $\pt$, we have
$$[Y] = [T_{(a_1; \ldots; a_6;1)}] = [T_{(b_1; \ldots; b_6;1)}] = [\Proj k[x,y,z]\ /\ (f(x,y,z))]$$
as elements in $\cob{}{G}{\pt}$, with generic choice on the coefficients of $f$ (as long as the object it defined is \sm).

We are now ready to prove our first main result in this paper.

\begin{thm}
\label{thm gen by lazard}
Suppose $\char{k} = 0$ and $k$ contains a primitive $e$-th \rou, where $e$ is the exponent of $G_f$. Then the canonical $\lazard_G(F)$-algebra homo\morp 
$$\lazard_G(F) \to \cob{}{G}{\pt},$$
which sends $a$ to $a\, [\id_{\pt}]$, is surjective.
\end{thm}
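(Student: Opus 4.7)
By Theorem \ref{thm gen by geo cycle} applied with $X = \pt$, the $\lazard_G(F)$-module $\cob{}{G}{\pt}$ is generated by geometric cycles $[Y \to \pt]$ with $Y \in$ \gsmcat{G}\ \girred{G}. Writing $R$ for the image of $\lazard_G(F) \to \cob{}{G}{\pt}$, the problem therefore reduces to showing $[Y \to \pt] \in R$ for every such $Y$. My plan is to induct on $d = \dim Y$. For $d = 0$, the \girred{G}\ hypothesis forces $Y$ to be a single $G$-orbit $G/H$ for some closed subgroup $H \subset G$; using the splitness of $(G,k)$ and the structure of $G = G_f \x G_t$, one writes $[Y \to \pt]$ directly as an integer combination of products of Euler classes times $[\pt]$, placing it in $R$.

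For the inductive step, I would first handle the auxiliary claim that $[\P \to \pt] \in R$ for every \adtower\ $\P$ over $\pt$. Since such a $\P$ is an iterated projective bundle built from $G$-\repn s, and by splitness every $G$-\repn\ decomposes as a direct sum of $1$-dimensional characters $\oplus_{i=1}^n \ga_i$, one applies the \textbf{(EFGL)} axiom together with Proposition \ref{prop Chern class basic} and Proposition \ref{prop Nilp axiom} to compute $[\P(\oplus_i \ga_i) \to \pt]$ via the equivariant projective bundle formula as a polynomial in the Euler classes $e(\ga)$ and the FGL structure constants $f^1_{s,t}$. Iterating over the levels of $\P$ and invoking Remark \ref{rmk FGL}, which says these elements generate $\lazard_G(F)$ as a $\Z$-algebra, gives $[\P \to \pt] \in R$.

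The remaining task is to reduce a general $[Y \to \pt]$ to classes of \adtower s over $\pt$ modulo lower-dimensional contributions. I would use Proposition \ref{prop equi embed} to produce a $G$-\equi\ closed immersion $Y \embed \P(V)$ into the projectivization of a $G$-\repn\ $V$, so that $Y$ is an \inv, \sm\ closed subscheme of an \adtower. Applying equivariant embedded desingularization along with the blow up relation (Proposition \ref{prop blow up relation}), Proposition \ref{prop tower reduction}, and Lemma \ref{lemma div cut down dim}, one expects to express $[Y \to \pt]$ as an $\lazard_G(F)$-combination of classes $[\P \to \pt]$ for \adtower s $\P$ and classes $[Y' \to \pt]$ with $\dim Y' < d$; both types of classes lie in $R$ by the auxiliary claim and the induction hypothesis respectively.

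The main obstacle is the last reduction step. Since $Y \embed \P(V)$ is typically not a complete intersection, the \textbf{(Sect)} axiom cannot be applied in one shot; one must iterate blow ups along \inv\ \sm\ centers inside $\P(V)$ (or deform via Proposition \ref{prop extended double point relation}) in order to disentangle $[Y \to \pt]$ from $[\P(V) \to \pt]$ while controlling each error term as a class already covered by the induction or by the tower computation. Organizing this bookkeeping so that no error term has dimension $\geq d$ (except those already of tower type) is the subtle part of the argument.
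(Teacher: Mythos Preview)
Your outline has real gaps at each of its three stages, and the most serious one is the step you yourself flag as ``the subtle part.''

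\medskip

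\textbf{The main reduction (your third paragraph).} You propose to embed $Y \hookrightarrow \P(V)$ and then extract $[Y]$ from $[\P(V)]$ via blow-ups and the \textbf{(Sect)} axiom. But the blow-up relation compares $[\blowup{X}{Z}]$ with $[X]$ plus towers over $Z$; it never produces the class $[Z]$ itself. To get $[Y]$ as a Chern-class expression on $[\id_{\P(V)}]$ you would need $Y$ to be a (smooth) complete intersection, which it is not in general, and nothing in the toolkit of this paper bridges that gap. The paper takes an entirely different route: using birational invariance of $[Y]$ modulo $R$ (via weak factorization and the blow-up relation), it analyzes the \emph{function field} $k(Y)$ as a $G$-field (Lemma~\ref{lemma fcn field for torus action} and results imported from \cite{geo equi alg cobor}) to find an explicit birational model of $Y$ as a hypersurface inside a specific weighted-projective-type variety $W$; then $[Y]$ is recovered as $c(\O(D))[\id_W]$ for a \emph{divisor} $D$, and one can iterate. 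Your embedding-into-$\P(V)$ picture does not give access to this divisorial description.

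\medskip

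\textbf{The tower computation (your second paragraph).} There is no ``equivariant projective bundle formula'' for $\cob{}{G}{-}$ available in this paper, and no mechanism to read off $[\P(\oplus_i\alpha_i)]$ as a polynomial in Euler classes and $f^1_{s,t}$. The paper instead (Steps~2 and~3 of the proof) twists the levels of an admissible tower using Lemma~\ref{lemma tower twisting} to reduce to a single $[\P(V)]$, and then proves by a direct geometric induction (Claim~4) that $[\P(\epsilon\oplus\beta_1\oplus\cdots\oplus\beta_n)]\equiv 0$ for $n\geq 1$. This is not a formula; it is a chain of extended-double-point-relation identities on auxiliary $\hat{\P}$-bundles.

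\medskip

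\textbf{The base case $d=0$.} The claim that $[G/H]$ is ``directly'' an integer combination of Euler classes is not obvious and is not how the paper proceeds. The paper writes $G/H\cong G_1\times\cdots\times G_a$ with $G_i$ cyclic, realizes each $G_i$ as a divisor $\{t_0^{M_i}=t_1^{M_i}\}$ in $\P(\epsilon\oplus\beta_i)$, and computes $[G_i]$ via $c(\O(M_i))$ and the \textbf{(EFGL)} axiom. Some geometric input is required even here.
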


\begin{proof}
For simplicity, we will say $x \equiv 0$ if $x$ is in the image of the canonical map. By Theorem \ref{thm gen by geo cycle}, it is enough to consider elements of the form $[Y]$ where $Y$ is \girred{G}. We will proceed by induction on its geometric dimension. Suppose $\dim Y \defeq n \geq 1$. By the blow up relation, for any \girred{G}, \inv, \sm\ closed subscheme $Z \subset Y$,
$$[\blowup{Y}{Z}] - [Y] = - [\P_1 \to Z \to \pt] + [\P_2 \to Z \to \pt]$$
for some \qadtower s $\P_i \to Z$. Suppose the statement is true for elements of the form $[\P]$ where $\P$ is an \adtower\ over $\pt$ of dimension $n$. By Proposition \ref{prop tower reduction} and the induction assumption, we have $[\blowup{Y}{Z}] \equiv [Y]$.  By the \equi\ weak factorization theorem (Theorem 0.3.1) in \cite{weak factor thm}, whenever two \proj\ schemes $Y$, $Y' \in$ $\gsmcat{G}$\ are \equi ly birational, they define the same equivalence class, i.e., $[Y] \equiv [Y']$.

\medskip

\noindent Step 1 : Reduction to an \adtower\ over $\pt$.

W\withoutlog, $G$ acts on $Y$ faithfully. Then, there is a subgroup $H \subset G_f$ and an irreducible object $X \in  \gsmcat{(H \x G_t)}$ such that $Y \cong (G_f/H) \x X$. Also, the $(H \x G_t)$-action on $X$ is faithful. 

Since $H$ is abelian, we can write
$$H \cong H_1 \x \cdots \x H_q$$
where $H_i$ is a cyclic group of order $M_i$. Consider the field extensions $k(X)$ over $k(X)^{G_t}$ and $k(X)^{G_t}$ over $k$. By Lemma \ref{lemma fcn field for torus action}, 
$$k(X) \cong k(X)^{G_t}(t_1, \ldots, t_r)$$
where $r$ is the rank of $G_t$, $t_i$ are algebraically independent over $k(X)^{G_t}$ with actions given by non-trivial $G_t$-characters $\beta_i$. By claim 1 in the proof of Theorem 6.22 in \cite{geo equi alg cobor}, 
$$k(X)^{G_t} \cong k(x_1, \ldots, x_{n-r})[x_{n-r+1}, v_1, \ldots, v_q]\ /\ (f,v_1^{M_1} - g_1, \ldots, v_q^{M_q} - g_q)$$
for some $f$, $g_i \in k[x_1, \ldots, x_{n-r+1}]$ such that the $H$-action on $x_i$ is trivial and $H_j$ acts on $v_i$ non-trivially if and only if $i = j$. 

Now, we will follow the same arguments as in the proof of Theorem 6.22 in \cite{geo equi alg cobor}, which we will briefly explain here. Let 
$$X' \defeq {\rm Proj}\ k[t_0, \ldots, t_r] \x {\rm Proj}\ k[x_0, \ldots, x_{n-r+1}, v_1, \ldots, v_q]\ /\ (f,v_1^{M_1} - g_1, \ldots, v_q^{M_q} - g_q)$$
Then, $k(X') \cong k(X)$, but $X'$ may not be \sm. By resolution of singularities, we obtain $\stricttransform{X'}$ which is \equi ly birational to $X'$ and is \sm. Furthermore, by deformation of coefficients, we can deform $X'$ to $X''$ which is defined in the same way, but with generic coefficients for $f,g_i$. This deformation will at the same time deforms $\stricttransform{X'}$ to $\stricttransform{X''}$. Therefore, $\stricttransform{X'}, X''$ and $\stricttransform{X''}$ are all \sm\ and we have
$$[X] \equiv [\stricttransform{X'}] = [\stricttransform{X''}] \equiv [X'']$$
as elements in $\cob{}{G}{\pt}$. In short, we may assume 
$$X \cong {\rm Proj}\ k[t_0, \ldots, t_r] \x {\rm Proj}\ k[x_0, \ldots, x_{n-r+1}, v_1, \ldots, v_q]\ /\ (f,v_1^{M_1} - g_1, \ldots, v_q^{M_q} - g_q)$$
where the $(H \x G_t)$-action on $t_0$ is trivial and $f$, $g_i$ are generic homogeneous polynomials with degree $d$ and $M_i$ respectively (as long as $X$ is \sm).

Let 
$$W \defeq (G_f/H) \x {\rm Proj}\ k[t_0, \ldots, t_r] \x {\rm Proj}\ k[x_0, \ldots, x_{n-r+1}, v_1, \ldots, v_q]\ /\ (v_1^{M_1} - g_1, \ldots, v_q^{M_q} - g_q).$$ 
By Lemma 6.21 in \cite{geo equi alg cobor}, $W$ is \sm.

\medskip

\noindent Claim 1 : W\withoutlog, we may assume $D' \defeq \{x_{n-r+1} = 0\}$ is an \inv\ \sm\ divisor on $W$.

Consider the $G$-\equi\ map 
$$p : W \to {\rm Proj}\ k[x_0, \ldots, x_{n-r+1}] = \P^{n+r-1}$$ 
given by projection. The result then follows from the fact that $G$ acts on $\P^{n+r-1}$ trivially and the pull-back of a generic hyperplane is \sm\ ($\char k = 0$). \claimend

\medskip

Therefore, $D \defeq \{f = 0\}$ and $D'$ are both \inv\ \sm\ divisors on $W$. Then we have
\begin{eqnarray}
[Y] &=& [(G_f/H) \x X] \nonumber\\
&=& {\gp_W}_* \circ  c(\O(D))\,[\id_W] \nonumber\\
&=& {\gp_W}_* \circ c(\O(d\, D'))\,[\id_W]  \nonumber\\
&=& {\gp_W}_* \circ \sigma \circ c(\O(D'))\,[\id_W] \nonumber
\end{eqnarray}
for some $\sigma \in \Endoinf{\cob{G}{}{W}}$. Hence, $[Y] \equiv a\,[D']$ for some $a \in \lazard_G(F)$, by Theorem \ref{thm gen by geo cycle}, the induction assumption and the assumption that the statement holds for \adtower s over $\pt$. So, w\withoutlog, we may assume 
$$X \cong {\rm Proj}\ k[t_0, \ldots, t_r] \x {\rm Proj}\ k[x_0, \ldots, x_{n-r}, v_1, \ldots, v_q]\ /\ (v_1^{M_1} - g_1, \ldots, v_q^{M_q} - g_q).$$
Let $X_k \cong {\rm Proj}\ k[t_0, \ldots, t_r] \x {\rm Proj}\ k[x_0, \ldots, x_{n-r}, v_1, \ldots, v_k]\ /\ (v_1^{M_1} - g_1, \ldots, v_k^{M_k} - g_k).$

\medskip

\noindent Claim 2 : For generic $g_i$, the $(H \x G_t)$-schemes $X_k$, where $1 \leq k \leq q$, are all \sm. Moreover, the sum of the \inv\ divisors $\{g_i = 0\}$ on ${\rm Proj}\ k[t_0, \ldots, t_r] \x \Proj{k[x_0, \ldots, x_{n-r}]}$ is a \rsncd.

The first part follows from Lemma 6.21 in \cite{geo equi alg cobor}\  with the fact that the choice of $g_i$ there is actually generic. The second part is just Bertini's Theorem. \claimend

\medskip

Now, let $W$ be the $G$-scheme 
$$(G_f/H) \x {\rm Proj}\ k[t_0, \ldots, t_r] \x {\rm Proj}\ k[x_0, \ldots, x_{n-r}, v_1, \ldots, v_q]\ /\ (v_1^{M_1} - g_1, \ldots, v_{q-1}^{M_{q-1}} - g_{q-1}),$$
which may not be \sm. By claim 2, we can consider $Y$ as an \inv\ \sm\ divisor $\{v_q^{M_q} = g_q\}$ on $W$.

\medskip

\noindent Claim 3 : The singular locus of $W$ is given by $\{x_0 = \cdots = x_{n-r} = v_1 = \cdots = v_{q-1} = 0\}$.

W\withoutlog, we may assume that $G_f = H$, $r=0$ and $k$ is algebraically closed. Consider the \proj\ space $\Proj{k[x_0, \ldots, x_n]} = \P^n$ with trivial action. By claim 2, the \inv\ divisors $\{g_i = 0\}$ are \sm\ and the sum of them is a \rsncd. Suppose $p \defeq (a_0 ; \cdots; a_n; b_1 ; \cdots ; b_q)$ is a $k$-rational point in $\singular{W}$. By reordering, suppose the vectors $\gradient{(v_1^{M_1} - g_1)}, \ldots, \gradient{(v_k^{M_k} - g_k)}$, where $1 \leq k \leq q-1$, are linearly dependent at $p$ and $k$ is the smallest among such choices. Then the coordinates $b_1, \ldots, b_k$ are necessarily all zero ($\char k = 0$), which implies that $g_1(p) = \cdots = g_k(p) = 0$. Now, assume $a_0, \ldots, a_n$ are not all zero. Then, $\overline{p} \defeq (a_0 ; \cdots; a_n)$ will be a $k$-rational point in $\P^n$ which lies in the intersection of the divisors $\{g_1 = 0\}, \ldots, \{g_k = 0\}$. In addition, the vectors $\gradient{g_1}, \ldots, \gradient{g_k}$ will be linearly dependent at $\overline{p}$, which contradicts with the choice of $g_i$. Therefore, $a_0, \ldots, a_n$ are all zero. Since $p$ is in $W$, the coordinates $b_1, \ldots, b_{q-1}$ are all zero too.  \claimend

\medskip

Apply resolution of singularities on $W$ to obtain $\tilde{W}$. By claim 3, $Y$ is disjoint from $\singular{W}$ and hence can be considered as an \inv\ \sm\ divisor on $\tilde{W}$. Now, consider the \inv\ divisor $D \defeq \{v_q = 0\}$ on $W$, which is \sm\ by claim 2. For the same reason as $Y$, it can also be considered as a divisor on $\tilde{W}$. Then, as before, we have
$$[Y] = {\gp_{\tilde{W}}}_* \circ c(\O(Y)) [\id_{\tilde{W}}] = {\gp_{\tilde{W}}}_* \circ c(\O(M_q\, D)) [\id_{\tilde{W}}] \equiv a\, [D]$$
for some $a \in \lazard_G(F)$. Since $[D] = [(G_f/ H) \x X_{q-1}]$, by repeating the same argument, we may assume 
$$Y \cong (G_f/H) \x X_0 = (G_f/H) \x {\rm Proj}\ k[t_0, \ldots, t_r] \x \P^{n-r} = (G_f/H) \x \P(V) \x \P^{n-r}$$ 
for some $G_t$-\repn\ $V$. The result then follows from our assumption that the statement is true for admissible towers. That finishes step 1.

\medskip

\noindent Step 2 : Reduction to an element of the form $[\P(V)]$ where $V$ is a $(n+1)$-dimensional $G$-\repn.

We will now consider the \adtower s over $\pt$. Suppose 
$$\P = \P_m \to \P_{m-1} \to \cdots \to \P_0 = \pt$$
is an \adtower\ over $\pt$ with dimension $n$ and length $m$. W\withoutlog, we may assume $\dim \P_m = n > \dim \P_{m-1}$. We will proceed by induction on $m$ and the dimension of its $(m-1)$-th level. If $m = 1$, then $\P = \P(V)$ for some $G$-\repn\ $V$ and we are done. If $\dim \P_{m-1} = 0$, then $\P_{m-1} \cong \pt$ and we are also done.

Suppose $m \geq 2$ and $\dim \P_{m-1} \geq 1$. Following the notation in Lemma \ref{lemma lb structure over adtower}, $\P_i = \P(\cat{E}_i \oplus \L_i)$ and $D_i \defeq \P(\cat{E}_i)$. Then, by Lemma \ref{lemma lb structure over adtower}, there exist integers $d_1, \ldots, d_{m-1}$ and character $\gb$ such that 
$$\L_m \cong \O(d_1 D_1 + \cdots + d_{m-1} D_{m-1}) \otimes \gb.$$
Moreover, for $1 \leq i \leq m-1$, if we define $\hat{\P} \defeq \P(\cat{E}_m \oplus \L_m \oplus \L_m(D_i))$ and $\P' \defeq \P(\cat{E}_m \oplus \L_m(D_i))$, then, by Lemma 6.9 in \cite{geo equi alg cobor} (still holds for our group $G$), the sum of the \inv\ divisors $\P$, $\P'$ and $\hat{\P}|_{D_i}$ on $\hat{\P}$ is a \rsncd\ and $\hat{\P}|_{D_i} + \P \sim \P'$. Hence, by applying the \textbf{(EFGL)} axiom on $\hat{\P}$ and pushing everything down to $\pt$, we have
$$[\P'] = [\hat{\P}|_{D_i}] + [\P] + {\gp_{\hat{\P}}}_* \circ \gs \circ c(\O(\hat{\P}|_{D_i})) \circ c(\O(\P))\,[\id_{\hat{\P}}]$$
for some $\gs \in \Endoinf{\cob{}{G}{\hat{\P}}}$. Therefore,
$$[\P'] = [\hat{\P}|_{D_i}] + [\P] + {\gp_{\hat{\P}}}_* \circ \gs\, [\P|_{D_i} \embed \hat{\P}] \equiv [\hat{\P}|_{D_i}] + [\P]$$
by Theorem \ref{thm gen by geo cycle} and the induction assumption. Moreover, notice that 
$$\hat{\P}|_{D_i} = \P(\cat{E}_m \oplus \L_m \oplus \L_m(D_i)) \to \P_{m-1}|_{D_i}$$ 
is an \adtower\ over $\pt$ with dimension $n$ and length $m$, but with $\dim \P_{m-1}|_{D_i} = \dim \P_{m-1} - 1$. By the induction assumption, $[\hat{\P}|_{D_i}] \equiv 0$ and so,
$$[\P(\cat{E}_m \oplus \L_m)] = [\P]  \equiv [\P'] = [\P(\cat{E}_m \oplus \L_m(D_i))]$$
By repeated applications, we may assume $\L_m \cong \gb_m$ for some character $\gb_m$.

Apply the same argument on the other line bundles used in defining $\P_m$ and we will obtain
$$\P_m = \P((\O_{\P_{m-1}} \otimes \gb_1) \oplus \cdots \oplus (\O_{\P_{m-1}} \otimes \gb_p)).$$ 
Then, $[\P] = [\P(\gb_1 \oplus \cdots \oplus \gb_p)]\, [\P_{m-1}]$ with $\dim \P_{m-1} < n$ and we are done.

\medskip

\noindent Step 3 : Reduction to $n = 0$ case.

It remains to consider elements of the form $[\P(V)]$ where $V$ is a $(n+1)$-dimensional $G$-\repn. The following proof is an analogue of the proof of Lemma 4.2.3 in \cite{universal alg cobor}. 

Let $X \defeq \P(\epsilon \oplus \beta_1 \oplus \cdots \oplus \beta_{n})$ and $D \defeq \P(\epsilon \oplus \beta_1 \oplus \cdots \oplus \beta_{n-1})$ for some characters $\beta_1, \ldots, \beta_n$. We then consider $D$ as an \inv\ \sm\ divisor on $X$. Let $\hat{\P} \defeq \P(\O \oplus \O(D)) \to X$. As before, the sum of the \inv\ divisors $A \defeq \hat{\P}|_D$, $B \defeq \P(\O_X)$ and $C \defeq \P(\O_X(D))$ on $\hat{\P}$ is a \rsncd\ and $A + B \sim C$. Therefore, we have
$$[\P(\O_X(D))] = [\hat{\P}|_D] + [\P(\O_X)] + {\gp_{\hat{\P}}}_* \circ \sigma\, [\P(\O_X)|_D \embed \hat{\P}]$$
for some $\sigma$. Again, by Theorem \ref{thm gen by geo cycle} and the induction assumption on dimension, we have
$$[X] = [\P(\O_X(D))] \equiv [\hat{\P}|_D] + [\P(\O_X)] = [\P(\O_D \oplus \O_D(D)) \to D \to \pt] + [X].$$
Hence, 
$$[\P(\O \oplus \O(1) \otimes \dual{\beta_n}) \to \P(\epsilon \oplus \beta_1 \oplus \cdots \oplus \beta_{n-1}) \to \pt] \equiv 0$$
because $\O_X(D) \cong \O(1) \otimes \dual{\beta_n}$.

\medskip

\noindent Claim 4 : $[\P(\O \oplus (\oplus_{i=0}^{p-1}\, \O(1) \otimes \dual{\beta_{n-i}})) \to \P(\epsilon \oplus \beta_1 \oplus \cdots \oplus \beta_{n-p}) \to \pt] \equiv 0$ for all $1 \leq p \leq n$.

We just showed the statement for $p = 1$. For $1 \leq p \leq n - 1$, let $X \defeq \P(\epsilon \oplus \beta_1 \oplus \cdots \oplus \beta_{n-p})$ and $D \defeq \P(\epsilon \oplus \beta_1 \oplus \cdots \oplus \beta_{n-p-1})$. Apply a similar argument on 
$$\hat{\P} \defeq \P(\O \oplus (\oplus_{i=0}^{p-1}\, \O(1) \otimes \dual{\beta_{n-i}})) \oplus \O(D)) \to X,$$ 
we have
\begin{eqnarray}
&& [\P((\oplus_{i=0}^{p-1}\, \O(1) \otimes \dual{\beta_{n-i}}) \oplus \O(D)) \to X \to \pt] \nonumber\\
&\equiv& [\hat{\P}|_D \to D \to \pt] + [\P(\O \oplus (\oplus_{i=0}^{p-1}\, \O(1) \otimes \dual{\beta_{n-i}})) \to X \to \pt]. \nonumber
\end{eqnarray}
On one hand, 
\begin{eqnarray}
&& [\P((\oplus_{i=0}^{p-1}\, \O(1) \otimes \dual{\beta_{n-i}}) \oplus \O(D)) \to X \to \pt] \nonumber\\
&=& [\P((\oplus_{i=0}^{p-1}\, \dual{\beta_{n-i}}) \oplus \dual{\beta_{n-p}}) \x X] \nonumber\\
&=& [\P((\oplus_{i=0}^{p-1}\, \dual{\beta_{n-i}}) \oplus \dual{\beta_{n-p}})]\, [X] \equiv 0 \nonumber
\end{eqnarray}
by the induction assumption on dimension. On the other hand, by the induction assumption on $p$,
\begin{eqnarray}
&& [\hat{\P}|_D \to D \to \pt] + [\P(\O \oplus (\oplus_{i=0}^{p-1}\, \O(1) \otimes \dual{\beta_{n-i}})) \to X \to \pt] \nonumber\\
&\equiv& [\hat{\P}|_D \to D \to \pt] \nonumber\\
&=& [\P(\O \oplus (\oplus_{i=0}^p\, \O(1) \otimes \dual{\beta_{n-i}})) \to \P(\epsilon \oplus \beta_1 \oplus \cdots \oplus \beta_{n-p-1}) \to \pt]. \nonumber
\end{eqnarray}
\claimend

\medskip

By setting $p = n$ in claim 4, we obtain
$$0 \equiv [\P(\O \oplus (\oplus_{i=0}^{n-1}\, \O(1) \otimes \dual{\beta_{n-i}})) \to \P(\epsilon) \to \pt] = [\P(\epsilon \oplus (\oplus_{i=0}^{n-1}\, \dual{\beta_{n-i}}))].$$
The result then follows because $\beta_i$ are arbitrary.

\medskip

\noindent Step 4 : $n = 0$ case.

Finally, it remains to show the statement when $\dim Y = 0$. W\withoutlog, $G$ acts on $Y$ faithfully. In that case, $G_t$ is necessarily trivial. By the same arguments as in step 1, we reduce it to the case when $[Y] = [G/H]$. Let $G_i$ be cyclic groups of order $M_i$ such that $G/H \cong G_1 \x \cdots \x G_q.$ Then, $[G/H] = [G_1] \cdots [G_q]$ where $[G_i]$ are considered as elements in $\gsmcat{G}$\ equipped with the natural $G$-action. Let $g_i$ be a generator of $G_i$ and $\gb_i : G \to G/H \to k$ be the $G$-character such that $\gb_i(g_i) = \gx_i$, where $\gx_i$ is a primitive $M_i$-th \rou, and $\gb_i(g_j) = 1$ if $i \neq j$. Let $W \defeq \P(\gep \oplus \gb_i)$. Then, for some $\gs \in \Endoinf{\cob{}{G}{W}}$ and $\gs' \in \Endoinf{\cob{}{G}{\P(\gb_i)}}$,
$$[G_i] = {\gp_W}_* \circ c(\O(M_i))[\id_W] = {\gp_W}_* \circ \gs \circ c(\O(1))[\id_W] = {\gp_{\P(\gb_i)}}_* \circ \gs'\, [\id_{\P(\gb_i)}] \equiv 0.$$
That finishes the proof of Theorem \ref{thm gen by lazard}.
\end{proof}

\bigskip
\bigskip

\section{Fundamental properties}
\label{sect fundamental properties}

As in the algebraic cobordism theory $\Omega(-)$ in \cite{universal alg cobor}, we also have the localization property and homotopy invariance property, when $\char{k} = 0$ (see section 3.2 and 3.4 in \cite{universal alg cobor}). Since we need to use the embedded desingularization theorem and the weak factorization theorem, we will assume $\char{k} = 0$ for the rest of this section.

Let us start with the localization property. Since $\basicmod{G}{F}{}{-}$ is generated by infinite cycles, which is relatively difficult to work with, our strategy is to first define another \equi\ algebraic cobordism theory $\cob{}{G}{-}_{\rm fin}$ (which does not involve infinite sum), prove that $\cob{}{G}{-}_{\rm fin}$ and $\cob{}{G}{-}$ are canonically isomorphic and then show that the localization property holds in $\cob{}{G}{-}_{\rm fin}$.

For an object $X \in \gvar{G}$, let 
$$\basicmod{G}{F}{}{X}_{\rm fin} \defeq \oplus_{s \geq 0}\ \bigbasicmod{G}{F}{s}{X}.$$ 
We then define the basic operations : \proj\ push-forward, \sm\ pull-back, external product as in $\basicmod{G}{F}{}{-}$, and the (first) Chern class operator
$$c(\L) [f : Y \to X, \L_1, \ldots, \L_r] \defeq [f : Y \to X, \L_1, \ldots, \L_r, f^* \L],$$ 
which is also consistent with our definition of infinite Chern class operator in $\basicmod{G}{F}{}{-}$. For simplicity of notation, let $\Endofin{\basicmod{G}{F}{}{X}_{\rm fin}}$ be the $\lazard_G(F)$-subalgebra of $\Endo{\basicmod{G}{F}{}{X}_{\rm fin}}$ generated by (first) Chern class operators.

Then we define $\basicmod{G}{F}{}{-}_{\rm B}$ as the quotient of $\basicmod{G}{F}{}{-}_{\rm fin}$ by imposing the following axioms (closed \wrt\ the four basic operations : push-forward, pull-back, external product and the (first) Chern class operator) :

\medskip

\noindent \textbf{(Blow)} For all \girred{G}\ $Y \in $ $\gsmcat{G}$ and \inv, \sm\ closed subscheme $Z \subset Y$, 
\begin{eqnarray}
[\blowup{Y}{Z} \to Y] - [\id_Y] &=& -\ [\P(\O \oplus \dual{\nbundle{Z}{Y}}) \to Z \embed Y] \nonumber\\
&& +\ [\P(\O \oplus \O(1)) \to \P(\dual{\nbundle{Z}{Y}}) \to Z \embed Y]. \nonumber
\end{eqnarray}

\medskip

Next, we define $\basicmod{G}{F}{}{-}_{\rm N}$ as the quotient of $\basicmod{G}{F}{}{-}_{\rm B}$ by imposing :

\medskip

\noindent \textbf{(Nilp)} For all \girred{G}\ $Y \in$ $\gsmcat{G}$, $\L \in \picard{G}{Y}$ and set $S$ as in (\ref{eqn basic element}),
$$V^{\nilpnum{Y,\L, S}}_S(\L)[\id_Y] = 0$$
where $\nilpnum{Y,\L, S}$ is the positive integer given in Remark \ref{rmk nilpnum}.

\medskip

Finally, we define $\cob{}{G}{-}_{\rm fin}$ as the quotient of $\basicmod{G}{F}{}{-}_{\rm N}$ by imposing the \textbf{(Sect)} and \textbf{(EFGL)} axioms as in the definition of $\cob{}{G}{-}$. Notice that the equations in the \textbf{(EFGL)} axiom are all finite sums because of the \textbf{(Nilp)} axiom.

\begin{lemma}
\label{lemma fin and inf theory iso}
Suppose $\char{k} = 0$. The two theories $\cob{}{G}{-}$ and $\cob{}{G}{-}_{\rm fin}$ are canonically isomorphic.
\end{lemma}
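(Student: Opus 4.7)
The plan is to construct mutually inverse $\lazard_G(F)$-linear maps $\Phi : \cob{}{G}{-}_{\rm fin} \to \cob{}{G}{-}$ and $\Psi : \cob{}{G}{-} \to \cob{}{G}{-}_{\rm fin}$, and show they are compatible with push-forward, pull-back, Chern class operators, and external product. For $\Phi$, every finite cycle $[f : Y \to X, \L_1, \ldots, \L_s]$ is already a basic generator of the form (\ref{eqn basic element}) (take $r = s$, $i_j = 1$, $S_j = \emptyset$, a single non-zero coefficient $a_{(1,\ldots,1)} = 1$, and use $\alpha_1 = \epsilon$), yielding a canonical $\lazard_G(F)$-linear map $\basicmod{G}{F}{}{X}_{\rm fin} \to \basicmod{G}{F}{}{X}$. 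I would then verify that each defining axiom of $\cob{}{G}{-}_{\rm fin}$ holds in $\cob{}{G}{-}$: \textbf{(Sect)} and \textbf{(EFGL)} are imposed by definition in both theories, \textbf{(Blow)} holds by Proposition \ref{prop blow up relation}, and \textbf{(Nilp)} holds by Proposition \ref{prop Nilp axiom}. Hence $\Phi$ descends to the quotients.

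For $\Psi$, I would send a basic generator
$$\sigma = \sum_{I \geq 0} a_I [f : Y \to X, V^{i_1}_{S_1}(\L_1), \ldots, V^{i_r}_{S_r}(\L_r)]$$
to the same formal expression, now interpreted in $\cob{}{G}{X}_{\rm fin}$. In $\cob{}{G}{X}_{\rm fin}$ the \textbf{(Nilp)} axiom forces any summand with $i_j \geq \nilpnum{Y, f^* \L_j, S_j}$ for some $j$ to vanish (after pushing the Chern class operators along $f$), so only finitely many multi-indices $I$ contribute and the image is well-defined. For well-definedness on all of $\basicmod{G}{F}{}{X}$, I would use the fact that $\basicmod{G}{F}{}{X}$ sits as a submodule of $\bigbasicmod{G}{F}{}{X} = \prod_s \bigbasicmod{G}{F}{s}{X}$, so every element is uniquely determined by its cycle-by-cycle coefficients; any two presentations of $\sigma$ as a linear combination of generators of the form (\ref{eqn basic element}) agree term-by-term in $\bigbasicmod{G}{F}{}{X}$, and therefore yield the same (finite, after \textbf{(Nilp)}-truncation) image in $\cob{}{G}{X}_{\rm fin}$. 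I then check $\Psi$ respects the imposed axioms: \textbf{(Sect)} is literally identical in the two theories, and the \textbf{(EFGL)} identities in $\cob{}{G}{-}$, for instance $V^i(\L \otimes \cat{M})[\id_Y] = \sum_{s,t \geq 0} f^i_{s,t} V^s(\L) V^t(\cat{M})[\id_Y]$, map under $\Psi$ to their finitely-truncated versions in $\cob{}{G}{X}_{\rm fin}$, which hold there by \textbf{(EFGL)} combined with \textbf{(Nilp)}.

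Both composites $\Phi \circ \Psi$ and $\Psi \circ \Phi$ act as the identity on the obvious generating sets (finite cycles on either side), so they are the identity on the quotients. Compatibility of $\Phi$ and $\Psi$ with push-forward, \sm\ pull-back, Chern class operators and external product is immediate from the definitions of those operations in the two theories, since both send generators to generators in the evident way. The main obstacle I anticipate is the well-definedness of $\Psi$: one must carefully confirm that two presentations of the same element of $\basicmod{G}{F}{}{X}$ as a linear combination of generators of form (\ref{eqn basic element}) produce the same image in $\cob{}{G}{X}_{\rm fin}$ after \textbf{(Nilp)}-truncation. This hinges on the ``cycle-by-cycle'' interpretation of the submodule $\basicmod{G}{F}{}{X}$ inside the ambient product $\bigbasicmod{G}{F}{}{X}$, together with the observation that \textbf{(Nilp)} guarantees only finitely many nonzero contributions per generator.
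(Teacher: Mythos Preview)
Your proposal is correct and follows essentially the same approach as the paper: construct $\Phi$ by sending cycles to cycles and use Proposition~\ref{prop blow up relation} and Proposition~\ref{prop Nilp axiom} (via Remark~\ref{rmk nilpnum}) to descend; construct $\Psi$ by \textbf{(Nilp)}-truncation of infinite cycles. The only minor discrepancy is in the last step: you assert that $\Phi \circ \Psi$ is the identity because it fixes ``finite cycles on either side'', but finite cycles are not the \emph{a priori} generating set for $\cob{}{G}{X}$---for that you need Corollary~\ref{cor gen by cycle}, which is exactly what the paper invokes (phrased as surjectivity of $\Phi$) together with $\Psi \circ \Phi = \id$ to conclude.
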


\begin{proof}
Let $\Phi : \basicmod{G}{F}{}{X}_{\rm fin} \to \cob{}{G}{X}$ be the natural map that sends cycles to cycles. It clearly commutes with the four basic operations. This map descends to a map $\Phi : \cob{}{G}{-}_{\rm fin} \to \cob{}{G}{X}$ because the blow up relation holds in $\cob{}{G}{-}$ and by Remark \ref{rmk nilpnum}.

The inverse map $\Psi : \basicmod{G}{F}{}{X} \to \cob{}{G}{X}_{\rm fin}$ is also natural (infinite cycle becomes a finite sum of cycles because of the \textbf{(Nilp)} axiom), which clearly descends to a map $\Psi : \cob{}{G}{X} \to \cob{}{G}{X}_{\rm fin}$. Moreover, $\Psi \circ \Phi$ is the identity and $\Phi$ is surjective by Corollary \ref{cor gen by cycle}.
\end{proof}

Now we can proceed to prove the localization property in $\cob{}{G}{-}_{\rm fin}$. We first need the following Lemma.

\begin{lemma}
\label{lemma sheaf trivial over open subsch}
Assume that $\char{k} = 0$. Suppose $X$ is an object in $\gvar{G}$, $Z \in \gvar{G}$ is an invariant closed subscheme of $X$, $U \defeq X - Z$ is the complement, $i : Z \embed X$ and $j : U \embed X$ are the immersions.

\noindent \begin{statementslist}
{\rm (1)} & If $X$ is smooth and \girred{G},

\begin{center}$\displaystyle Z = D_1 \cup D_2 \cup \cdots \cup D_n \cup Z_0$\end{center}

where $D_i$ are distinct $G$-prime divisors on $X$ and $\codim{Z_0}{X} \geq 2$, then the following sequence is exact

\begin{center}$\displaystyle \Z^n \stackrel{a}{\longto} \picard{G}{X} \stackrel{j^*}{\longto} \picard{G}{U} \longto 0,$\end{center}

where $a$ sends $(c_1, \ldots, c_n)$ to $\O_X(\sum_{i = 1}^n c_i D_i)$. \\
{\rm (2)} & If $\L$ is a sheaf in $\picard{G}{X}$ such that $\L|_U \cong \O_U$, then, for all $x \in \cob{}{G}{X}_{\rm fin}$, the element $c(\L)(x)$ lies inside $i_*\, \cob{}{G}{Z}_{\rm fin}$.
\end{statementslist}
\end{lemma}

\begin{proof}
\noindent (1).\tab First of all, by Proposition \ref{prop equi embed}, $X$ is \qproj\ with a \glin{G}\ action. Let $W$ be an object in $\gsmcat{G}$ such that $G$ acts on $W$ freely and the principal bundle quotient $W \to W / G$ exists in the category of schemes. Then, by Proposition 23 in \cite{equi intersection}, the principal bundle quotient $X \x W \to (X \x W)/G$ exists in the category of schemes. So it is locally trivial in the \etale\ topology. Therefore, $(X \x W)/G$ is smooth (because $X \x W$ is).

Now, by Theorem 1 in \cite{equi intersection} and the fact that $X$ is locally factorial, 
$$\picard{G}{X} \cong CH_G^1(X)$$
where $CH_G(X)$ is the equivariant Chow group of $X$ (see section 2.2 in \cite{equi intersection}). By definition,  $CH_G^1(X) = CH^1((X \x W)/G)$ for some $W \in \gsmcat{G}$ as mentioned above. Since $(X \x W)/G$ is smooth (in particular, locally factorial), we have 
$$CH^1((X \x W)/G) \cong \picard{}{(X \x W)/G}$$
and the same results hold if $X$ is replaced by $U$. Therefore, the sequence 
$$\Z^n \stackrel{a}{\longto} \picard{G}{X} \stackrel{j^*}{\longto} \picard{G}{U} \longto 0$$
is equivalent to 
$$\Z^n \stackrel{b}{\longto} \picard{}{(X \x W)/G} \stackrel{((j \x \id_W)/G)^*}{\longto} \picard{}{(U \x W)/G} \longto 0,$$
where $b$ sends $(c_1, \ldots, c_n)$ to $\O(\sum_{i=1}^n c_i D_i' )$ with $D_i' \defeq (D_i \x W)/G$ (considered as prime divisors on $(X \x W)/G$), which is clearly exact.

\medskip

\noindent (2).\tab W\withoutlog, $x = [f : Y \to X, \ldots]$ where $Y \in$ $\gsmcat{G}$\ is \girred{G}. Consider the following Cartesian diagram in the category $\gvar{G}$ :
\squarediagramword{\reduced{(Y|_Z)}}{Y}{Z}{X}{i'}{f'}{f}{i}
\noindent (``red'' stands for the reduced structure) Then, $f'$ is \proj, $i'$ is a closed immersion and $Y|_U = Y - \reduced{(Y|_Z)}$ with $(f^* \L)|_{Y|_U} \cong \O_{Y|_U}$. Moreover, 
$c(\L)(x) = f_*\, c(f^*\L)[\id_Y, \ldots]$. Therefore, it is enough to show that $c(f^*\L)[\id_Y, \ldots]$ lies inside $i'_*\,  \cob{}{G}{\reduced{(Y|_Z)}}_{\rm fin}$. In other words, we may assume $X$ is \sm, \girred{G}\ and $x = [\id_X, \ldots]$. By \textbf{(A3)}, we may further assume $x = [\id_X]$.

By part (1), 
$$\L \cong \O_X(\sum_i \pm D_i)$$
for some $G$-prime divisors on $X$ such that $D_i \subset Z$. Therefore,
$$c(\L)[\id_X] = c(\O(\sum_i \pm D_i))[\id_X] = \sum_i \sigma_i \circ c(\O(D_i)) [\id_X]$$
for some $\gs_i \in \Endoinf{\cob{}{G}{X}_{\rm fin}}$ (the infinite Chern class operator acts as a finite sum in $\cob{}{G}{-}_{\rm fin}$). So we may further assume $\L \cong \O_X(D)$ for some $G$-prime divisor on $X$ such that $D \subset Z$.

By applying the embedded desingularization theorem on $D \embed X$, we have a projective, birational map $\pi : \tilde{X} \to X$ in $\gsmcat{G}$ and some \sm, $G$-\inv\ exceptional divisors $E_i$ on $\tilde{X}$ such that the strict transform $\stricttransform{D} \subset \tilde{X}$ is \sm, $\pi(\stricttransform{D}), \pi(E_i) \subseteq Z$ and 
$$\pi^* \L \cong \O_{\tilde{X}}(\stricttransform{D} + \sum_i \pm E_i).$$ 
Then, by the \textbf{(Blow)} axiom, the difference
$$c(\L)[\pi : \tilde{X} \to X] - c(\L)[\id_X]$$
lies inside $i_*\, \cob{}{G}{Z}_{\rm fin}$. Furthermore, 
\begin{eqnarray*}
c(\L)[\pi : \tilde{X} \to X] &=& \pi_* \circ c(\pi^* \L)[\id_{\tilde{X}}] \\
&=& \pi_* \circ c(\O(\stricttransform{D} + \sum_i \pm E_i))[\id_{\tilde{X}}] \\
&=& \pi_* \circ \sigma [\stricttransform{D} \embed \tilde{X}] + \sum_i \pi_* \circ \sigma_i [E_i \embed \tilde{X}],
\end{eqnarray*}
for some $\sigma, \sigma_i \in \Endoinf{\cob{}{G}{\tilde{X}}_{\rm fin}}$, which clearly lies inside $i_*\, \cob{}{G}{Z}_{\rm fin}$. Hence, $c(\L)[\id_X] \in i_*\, \cob{}{G}{Z}_{\rm fin}$ and we are done.
\end{proof}

We are now ready to prove the localization property.

\begin{thm}
\label{thm localization property}

Suppose $\char{k} = 0$ and $k$ contains a primitive $e$-th \rou, where $e$ is the exponent of $G_f$. 

Let $X$ be an object in $\gvar{G}$, $Z \in \gvar{G}$ be an invariant closed subscheme of $X$ and $U$ be the complement $X - Z$. Then the following sequence is exact
$$\cob{}{G}{Z} \stackrel{i_*}{\longto} \cob{}{G}{X} \stackrel{j^*}{\longto} \cob{}{G}{U} \longto 0$$
where $i : Z \embed X$ and $j : U \embed X$ are the immersions.
\end{thm}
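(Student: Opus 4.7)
The plan is to follow the pattern of Theorem 3.2.7 in \cite{universal alg cobor}, but with the added flexibility given by Lemma \ref{lemma fin and inf theory iso}. By that lemma, it suffices to work with $\cob{}{G}{-}_{\rm fin}$ throughout, since there I can manipulate finite sums of cycles instead of infinite ones. The composition $j^* \circ i_* = 0$ follows immediately from base change (\textbf{(A2)}), because pulling a cycle supported on $Z$ back to $U$ gives the empty cycle. Surjectivity of $j^*$ is also standard: given a cycle $[f : Y \to U, \L_1, \ldots, \L_r]$ on $U$, apply Proposition \ref{prop smooth extension}(1) to obtain a smooth $G$-equivariant compactification $\bar{Y} \embed \P(V) \x X$ with projection $\bar{f} : \bar{Y} \to X$, and use Proposition \ref{prop lb structure}(2,3) together with the surjectivity of $\picard{G}{\bar{Y}} \to \picard{G}{Y}$ (differences of $G$-invariant Cartier divisors extend by closure) to extend each $\L_i$ to a sheaf $\bar{\L}_i \in \picard{G}{\bar{Y}}$. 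Then $j^*[\bar{f},\bar{\L}_1,\ldots,\bar{\L}_r] = [f,\L_1,\ldots,\L_r]$.

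The substantive content is the exactness at $\cob{}{G}{X}$. The strategy is to construct a well-defined inverse
$$\Psi : \cob{}{G}{U}_{\rm fin} \longto \cob{}{G}{X}_{\rm fin}/i_*\cob{}{G}{Z}_{\rm fin}$$
to the map induced by $j^*$ on the quotient, by sending $[f : Y \to U, \L_1, \ldots, \L_r]$ to the class of $[\bar{f}, \bar{\L}_1, \ldots, \bar{\L}_r]$ constructed above. Once this is well-defined and respects the relations, the composition $j^* \circ \Psi$ is the identity on $\cob{}{G}{U}_{\rm fin}$ and $\Psi \circ j^*$ is the identity modulo $i_* \cob{}{G}{Z}_{\rm fin}$, giving the desired exactness.

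Well-definedness splits into two independent issues, both controlled by Lemma \ref{lemma sheaf trivial over open subsch}. First, two extensions $\bar{\L}_i$, $\bar{\L}'_i$ of the same $\L_i$ differ by a sheaf $\cat{N} \in \picard{G}{\bar{Y}}$ with $\cat{N}|_{Y} \cong \O_Y$; applying the \textbf{(EFGL)} axiom to expand $c(\bar{\L}'_i)$ in terms of $c(\bar{\L}_i)$ and $c(\cat{N})$, every extra term carries a factor of $c(\cat{N})$ which, by Lemma \ref{lemma sheaf trivial over open subsch} applied on $\bar{Y}$ (with closed subset $\bar{Y} \setminus Y$, then pushed forward by $\bar{f}$), lies in $i_* \cob{}{G}{Z}_{\rm fin}$. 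Second, two smooth compactifications $\bar{Y}$, $\bar{Y}'$ of $Y$ are $G$-equivariantly birational over $X$; by the equivariant weak factorization theorem they are connected by a chain of blow-ups along smooth invariant centers disjoint from $Y$, and each such blow-up changes the class of $[\bar{f},\ldots]$ by a cycle supported on $Z$, thanks to the \textbf{(Blow)} axiom and the fact that the exceptional locus maps into $Z$.

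The main obstacle will be checking that $\Psi$ respects the defining relations of $\cob{}{G}{U}_{\rm fin}$, in particular the \textbf{(Sect)} axiom: if $\L$ over a $G$-irreducible $Y \in $ \gsmcat{G}\ open in $U$ admits an invariant section cutting out a smooth divisor $D$, one must choose an extension $\bar{\L}$ on some $\bar{Y}$ whose extended section cuts out a smooth invariant divisor whose intersection with $Y$ is $D$. This requires a careful compactification where the closure $\bar{D}$ together with $\bar{Y} \setminus Y$ forms a \rsncd, which can be arranged by an additional embedded desingularization after Proposition \ref{prop smooth extension}(1); the residual discrepancy between $[\id_{\bar{Y}},\bar{\L}]$ and $[\bar{D} \embed \bar{Y}]$ then lands in $i_* \cob{}{G}{Z}_{\rm fin}$ by the same Chern-class-on-vanishing-sheaf argument. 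The \textbf{(EFGL)}, \textbf{(Blow)} and \textbf{(Nilp)} relations are checked similarly, since each involves only line bundles that extend to $\bar{Y}$ and the differences between competing extensions or competing blow-up centers are absorbed by $i_* \cob{}{G}{Z}_{\rm fin}$ via Lemma \ref{lemma sheaf trivial over open subsch}.
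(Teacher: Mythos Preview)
Your overall strategy matches the paper's: reduce to $\cob{}{G}{-}_{\rm fin}$ via Lemma~\ref{lemma fin and inf theory iso}, and show that every defining relation over $U$ lifts to one over $X$ modulo $i_*$. The paper packages this as a layered diagram chase through the intermediate quotients $\basicmod{G}{F}{}{-}_{\rm B}$ and $\basicmod{G}{F}{}{-}_{\rm N}$ (its Claims~1,~3,~4), while you phrase it as constructing a section $\Psi$; these are equivalent formulations. One small omission: Lemma~\ref{lemma sheaf trivial over open subsch} requires the ambient scheme to be projective, so before invoking it on $\bar{Y}$ you must first reduce to $X$ projective. The paper does this by embedding $X\embed\P(V)$ and replacing the triple $(X,Z,U)$ by $(\overline{X},\overline{X}-U,U)$.

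The substantive gap is your treatment of \textbf{(Nilp)} as ``similar'' to the other axioms. A \textbf{(Nilp)} relation over $U$ has the form $f_*\circ\sigma\circ g^*\bigl(V^r_S(\L)[\id_T]\bigr)$ with $r=\nilpnum{T,\L,S}$ tied to fixed choices of divisors and blow-up centers on $T$ (Remark~\ref{rmk nilpnum}). Unlike \textbf{(Sect)}, \textbf{(EFGL)} and \textbf{(Blow)}, you cannot absorb $g^*$ here: pulling back gives $V^r_S(g^*\L)[\id_Y]$, but $r$ need not equal $\nilpnum{Y,g^*\L,S}$, so this is not a \textbf{(Nilp)} relation on $Y$. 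Likewise, after extending to $\bar{T}$ and $\bar{\L}$, the integer $\nilpnum{\bar{T},\bar{\L},S}$ generally differs from $r$, so $V^r_S(\bar{\L})[\id_{\bar{T}}]$ is not a \textbf{(Nilp)} relation on $\bar{T}$ either. The paper's Claim~3 handles this by an induction on $\dim T$: extend $g$ to a regular $\bar{g}:\bar{Y}\to\bar{T}$ via Proposition~\ref{prop birat given by blowup}(2) and resolution of singularities, arrange by embedded desingularization that the divisors expressing $\bar{\L}$ and their pullbacks along $\bar{g}$ are smooth, then peel one Chern class off $V^r_S(\bar{g}^*\bar{\L})[\id_{\bar{Y}}]$ to land on the smooth divisors $\overline{g^{-1}D_i}\subset\bar{Y}$, whose restrictions descend to \textbf{(Nilp)} relations over the lower-dimensional $D_i\subset T$. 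This step is the technical heart of the proof and is not covered by the line-bundle-extension argument you sketch for the other axioms.
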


\begin{proof}
By Lemma \ref{lemma fin and inf theory iso}, we can consider $\cob{}{G}{-}_{\rm fin}$ instead. Our proof is similar to the proof of Theorem 3.2.7 in \cite{universal alg cobor}. First of all, the composition $j^* \circ i_*$ is clearly zero. Moreover, for any element $[f : Y \to U, \L_1, \ldots, \L_r]$ in $\cob{}{G}{U}_{\rm fin}$, by Proposition \ref{prop smooth extension}, there exists an \equi\ \proj\ map $\overline{f} : \overline{Y} \to X$ such that $\overline{Y} \in$ $\gsmcat{G}$\ and $\overline{f}|_U = f$. Also, the restriction map $\picard{G}{\overline{Y}} \to \picard{G}{Y}$ is surjective (by Lemma \ref{lemma sheaf trivial over open subsch}). So, the map $j^*$ is surjective. Hence, it is enough to show $\kernel{j^*} \subset \image{i_*}$. W\withoutlog, we may assume $X$, $U$ and $Z$ are all non-empty. By Proposition \ref{prop equi embed}, we can embedded $X$ into some $\P(V)$. It will be enough to show the exactness of the sequence
$$\cob{}{G}{\overline{X}-U} \longto \cob{}{G}{\overline{X}} \longto \cob{}{G}{U} \longto 0$$
where $\overline{X}$ is the closure of $X$ inside $\P(V)$. So, we may further assume $X$ to be \proj.

Consider the following commutative diagram :

\medskip

\begin{center}
$\begin{CD}
\basicmod{G}{F}{}{Z}_{\rm fin} @>{i_*}>> \basicmod{G}{F}{}{X}_{\rm fin} @>{j^*}>> \basicmod{G}{F}{}{U}_{\rm fin}  \\
@V{\gph_{Z,1}}VV @V{\gph_{X,1}}VV @V{\gph_{U,1}}VV \\
\basicmod{G}{F}{}{Z}_{\rm B} @>{i_*}>> \basicmod{G}{F}{}{X}_{\rm B} @>{j^*}>> \basicmod{G}{F}{}{U}_{\rm B}  \\
@V{\gph_{Z,2}}VV @V{\gph_{X,2}}VV @V{\gph_{U,2}}VV  \\
\basicmod{G}{F}{}{Z}_{\rm N} @>{i_*}>> \basicmod{G}{F}{}{X}_{\rm N} @>{j^*}>> \basicmod{G}{F}{}{U}_{\rm N} \\
@V{\gph_{Z,3}}VV @V{\gph_{X,3}}VV @V{\gph_{U,3}}VV  \\
\cob{}{G}{Z}_{\rm fin} @>{i_*}>> \cob{}{G}{X}_{\rm fin} @>{j^*}>> \cob{}{G}{U}_{\rm fin}
\end{CD}$
\end{center}

\medskip

\noindent where $\phi_{-,1}$, $\phi_{-,2}$, $\phi_{-,3}$ are the quotient maps.

\medskip

\noindent Claim 1 : 
$$j^*\ \kernel{\gph_{X,3}} \supset \kernel{\gph_{U,3}}.$$

The kernel of $\gph_{U,3}$ is generated by elements corresponding to the \textbf{(Sect)} and \textbf{(EFGL)} axioms. For the \textbf{(EFGL)} axiom, to be closed \wrt\ the basic operations (push-forward, pull-back, external product and the (first) Chern class operator), we start with elements of the form

\begin{eqnarray*}
&& V^i(\L)V^j(\L)[\id_T] - \sum_{s \geq 0}\,b^{i,j}_s\,V^s(\L)[\id_T] \\
&& V^i(\L \otimes \ga))[\id_T] - \sum_{s \geq 0}\,d(\ga)^i_s\,V^s(\L)[\id_T] \\
&& V^i(\L \otimes \cat{M})[\id_T] - \sum_{s,t \geq 0}\,f^i_{s,t}\,V^s(\L)V^t(\cat{M})[\id_T]
\end{eqnarray*}
where $T \in$ $\gsmcat{G}$\ is \girred{G}, $i,j \geq 0$, $\ga$ is a character and $\L$, $\cat{M}$ are sheaves in $\picard{G}{T}$. Pull them back along some \sm\ \morp\ $g : Y \to T$, apply $\gs \in \Endofin{\basicmod{G}{F}{}{Y}_{\rm N}}$ and then push them forward along some \proj\ \morp\ $f : Y \to U$ to obtain

\begin{eqnarray}
\label{eqn5}
&& f_* \circ \gs \circ g^*\, (V^i(\L)V^j(\L)[\id_T] - \sum_{s \geq 0}\,b^{i,j}_s\,V^s(\L)[\id_T]) \\
&& f_* \circ \gs \circ g^*\, (V^i(\L \otimes \ga))[\id_T] - \sum_{s \geq 0}\,d(\ga)^i_s\,V^s(\L)[\id_T]) \nonumber\\
&& f_* \circ \gs \circ g^*\, (V^i(\L \otimes \cat{M})[\id_T] - \sum_{s,t \geq 0}\,f^i_{s,t}\,V^s(\L)V^t(\cat{M})[\id_T]) \nonumber
\end{eqnarray}
These are the elements in the kernel of $\gph_{U,3}$ corresponding to imposing the \textbf{(EFGL)} axiom (see subsection 2.1.3 in \cite{universal alg cobor} for details). For simplicity, we will only handle equation (\ref{eqn5}). The other two will follow from similar arguments.

Since $g$ is \sm, $Y$ is in $\gsmcat{G}$. W\withoutlog, we may assume $Y$ is also \girred{G}. Extend $f : Y \to U$ to $\overline{f} : \overline{Y} \to X$ as before. Consider the following commutative diagram (rows are not necessarily exact) :

\begin{center}
$\begin{CD}
\basicmod{G}{F}{}{\reduced{(\overline{Y}|_Z)}}_{\rm N} @>{i_*}>> \basicmod{G}{F}{}{\overline{Y}}_{\rm N}  @>{j^*}>> \basicmod{G}{F}{}{Y}_{\rm N}  \\
@V{\overline{f}_*}VV @V{\overline{f}_*}VV @V{f_*}VV  \\
\basicmod{G}{F}{}{Z}_{\rm N} @>{i_*}>> \basicmod{G}{F}{}{X}_{\rm N}  @>{j^*}>> \basicmod{G}{F}{}{U}_{\rm N}
\end{CD}$
\end{center}
\noindent (By abuse of notation, we will denote the immersions $Y \embed \overline{Y}$ and $\reduced{(\overline{Y}|_Z)} \embed \overline{Y}$ by $j$ and $i$ respectively as well). By some diagram chasing, it is enough to lift the element
$$\gs \circ g^*\, (V^i(\L)V^j(\L)[\id_T] - \sum_{s \geq 0}\,b^{i,j}_s\,V^s(\L)[\id_T])$$ 
to an element in $\basicmod{G}{F}{}{\overline{Y}}_{\rm N}$ which is congruent to zero mod $\image{i_*} + \kernel{\gph_{\overline{Y},3}}$. By extending $\gs$ to $\overline{\gs}$ as before, we may further assume $\sigma = \id$.

Since $g^* \circ V^i(\L) = V^i(g^* \L) \circ g^*$ and $g^*[\id_T] = [\id_Y]$, we may assume $g = \id_Y$. By extending $\L \in \picard{G}{Y}$ to $\overline{\L} \in \picard{G}{\overline{Y}}$, the lifting we want is
$$V^i(\overline{\L}) V^j(\overline{\L})[\id_{\overline{Y}}] - \sum_{s \geq 0}\,b^{i,j}_s\,V^s(\overline{\L}) [\id_{\overline{Y}}].$$

For the \textbf{(Sect)} axiom, by similar arguments, it is enough to lift an element of the form $[\id_Y,\L] - [D \embed Y],$ where $\L$ is a sheaf in $\picard{G}{Y}$ and $D$ is an invariant \sm\ divisor on $Y$ cut out by some invariant section $s \in \gsection{Y}{\L}^G$, to an element in $\basicmod{G}{F}{}{\overline{Y}}_{\rm N}$ which is congruent to zero.

Let $\overline{D}$ be the closure of $D$ in $\overline{Y}$. Apply the embedded desingularization Theorem on $\overline{D} \embed \overline{Y}$ to obtain the following commutative diagram :
\squarediagramword{\stricttransform{D}}{\tilde{Y}}{\overline{D}}{\overline{Y}}{}{\gp}{\gp}{}
\noindent Let $E_k$ be the strict transforms of the exceptional divisors. 

On one hand, we have
\begin{eqnarray}
\pi_* \circ c(\gp^* \O_{\overline{Y}}(\overline{D}))\,[\id_{\tilde{Y}}] &=& \pi_* \circ c(\O_{\tilde{Y}}(\,\stricttransform{D} + \sum_k\, m_k E_k\,))\,[\id_{\tilde{Y}}] \nonumber\\
&& \text{for some integers $m_k$} \nonumber\\
&\equiv& \pi_* \circ c(\O(\stricttransform{D}))\,[\id_{\tilde{Y}}] + \sum_k\, \pi_* \circ \gs_k \circ c(\O(E_k))\,[\id_{\tilde{Y}}] \nonumber\\
&& \text{for some $\gs_k \in \Endofin{\basicmod{G}{F}{}{\tilde{Y}}_{\rm N}}$} \nonumber\\
&\equiv& [\stricttransform{D} \to \overline{Y}] + \sum_k \pi_* \circ \gs_k\, [E_k \embed \tilde{Y}], \nonumber
\end{eqnarray}
which is congruent to $[\stricttransform{D} \to \overline{Y}]$ because all $E_k$ lie over $\reduced{(\overline{Y}|_Z)}$.

On the other hand, by the \textbf{(Blow)} axiom, we have
$$[\tilde{Y} \to \overline{Y}, \gp^* \O_{\overline{Y}}(\overline{D})] \equiv [\id_{\overline{Y}}, \O_{\overline{Y}}(\overline{D})]$$
because the towers created by each blow up lie over the \sm\ center, which lies over $\reduced{(\overline{Y}|_Z)}$. Hence, we have
$$[\id_{\overline{Y}}, \O_{\overline{Y}}(\overline{D})] - [\stricttransform{D} \to \overline{Y}] \equiv 0,$$
which is the lifting we want. \claimend

\medskip

\noindent Claim 2 : For all \girred{G}\ $Y \in$ $\gsmcat{G}$, $\L \in \picard{G}{Y}$ and $S$, there exist \proj\ maps $f_i$, \sm\ maps $g_i$, $\sigma_i$ and $\L_i \in \picard{G}{Z_i}$ such that
$$V^{\nilpnum{Y,\L,S}}_S(\L)[\id_Y] = \pi_* \circ V^{\nilpnum{Y,\L,S}}_S(\pi^* \L) [\id_{\tilde{Y}}] + \sum_i {f_i}_* \circ \sigma_i \circ g_i^* \circ V^{\nilpnum{Z_i,\L_i,S}}_S(\L_i)[\id_{Z_i}]$$
as elements in $\basicmod{G}{F}{}{Y}_{\rm B}$, where $\pi : \tilde{Y} \to Y$ and $Z_i$ are given as part of the definition of $\nilpnum{Y,\L,S}$ (see Remark \ref{rmk nilpnum}) and $\dim Z_i < \dim Y$. 

\medskip

See the proof of Proposition \ref{prop Nilp axiom}. \claimend

\medskip

\noindent Claim 3 :
$$j^*\ \kernel{\{\gph_{X,3} \circ \gph_{X,2}\}} \supset \kernel{\gph_{U,2}}.$$

We need to lift an element of the form 
$$f_* \circ \gs \circ g^*\, V^r_S(\L)[\id_T],$$ 
where $f : Y \to U$, $\sigma$, $g : Y \to T$ are as in claim 1, $Y$ and $T$ are both \girred{G}\ and \sm, $\L$ is a sheaf in $\picard{G}{T}$, $S$ is a finite set as in (\ref{eqn basic element}) and $r = \nilpnum{T,\L,S}$, to an element in $\basicmod{G}{F}{}{X}_{\rm B}$ which is congruent to zero mod $\image{i_*} + \kernel{\{\gph_{X,3} \circ \gph_{X,2} \}}$. We will prove this by induction on $\dim T$. 

By the definition of $\nilpnum{T,\L,S}$ (see Remarks \ref{rmk nilpnum}) and claim 2, we may assume $\L \cong \O(\sum_i m_i D_i) \otimes \beta$ such that $D_i$ are distinct \inv\ \sm\ divisors, $\alpha_r \cong \dual{\beta}$ and $r > \nilpnum{D_i, \L|_{D_i}, S}$ for all $i$.

By Proposition \ref{prop smooth extension}, we can embedded $T$ into some $\P(V)$ such that its closure $\overline{T}$ is \sm. By the embedded desingularization theorem, we may also assume $\overline{D_i}$ are \sm. Let $\overline{\L} \defeq \O(\sum_i m_i \overline{D_i}) \otimes \beta \in \picard{G}{\overline{T}}$ be an extension of $\L$. By the same argument as in the proof of claim 1, we can extend $f : Y \to U$ to $\overline{f} : \overline{Y} \to X$ and it is enough to lift $g^* \, V^r_S(\L)[\id_T]$ to an element in $\basicmod{G}{F}{}{\overline{Y}}_{\rm B}$ which is congruent to zero. It is worth mentioning that even though the choices made in the definitions of $\nilpnum{T,\L,S}$ and $\nilpnum{\overline{T},\overline{\L},S}$ can be completely incoherent, all we need in this proof are the existence of extensions and smoothness of some objects.

Since $g \circ j^{-1} : \overline{Y} \dashrightarrow \overline{T}$ is an \equi\ rational map and $\overline{T}$ is \proj, by Proposition \ref{prop birat given by blowup}, we can blow up $\overline{Y}$ along some \inv\ center to obtain $Y'$ so that $g \circ j^{-1}$ lifts to an \equi, regular map. By resolution of singularities, we may assume $Y'$ is \sm. Also, $j$ lifts to an immersion $Y \embed Y'$. Therefore, we may assume $Y' = \overline{Y}$ and $g : Y \to \overline{T}$ has an extension $\overline{g} : \overline{Y} \to \overline{T}$.

If $\dim T = 0$, then $\L \cong \gb$ and the element $V^r_S(\gb)[\id_{\overline{Y}}]$ will be a lifting of $g^* \, V^r_S(\L)[\id_T]$. Also, 
$$V^r_S(\gb)[\id_{\overline{Y}}] = V^{r-1}_S(\gb) \circ c(\gb \otimes \dual{\gb})[\id_{\overline{Y}}] \equiv 0$$
(because of the \textbf{(Sect)} axiom).

Suppose $\dim T > 0$. First of all, we lift $g^* \, V^r_S(\L)[\id_T]$ to $V^r_S(\overline{g}^* \overline{\L})[\id_{\overline{Y}}]$. Since $g$ is \sm, $g^{-1} D_i$ are also distinct \inv\ \sm\ divisors on $Y$. In other words, 
$$\singular{\overline{g}^{-1} \overline{D_i}} \subseteq \overline{Y} - Y$$ 
Apply the embedded desingularization Theorem on $\cup_i\, \overline{g}^{-1} \overline{D_i} \embed \overline{Y}$ to obtain $\tilde{Y}$ and denote the composition $\tilde{Y} \to \overline{Y} \to \overline{T}$ by $\tilde{g}$. Then it is clear that the immersion $Y \embed \overline{Y}$ lifts to an immersion $Y \embed \tilde{Y}$, the map $\tilde{g}$ extends $g$ and, for all $i$,
$$\tilde{g}^* \O_{\overline{T}}(\overline{D_i}) = \O_{\tilde{Y}}(\overline{g^{-1} D_i} + \sum_k m_{i,k} D_{i,k})$$
for some integers $m_{i,k}$ and \inv\ \sm\ divisors $D_{i,k}$ (exceptional divisors) on $\tilde{Y}$ which lie over $\overline{Y} - Y$. Moreover, $\overline{g^{-1} D_i}$, the closure of $g^{-1} D_i$ in $\tilde{Y}$, is also \sm. Also, as before, it is enough to lift $g^* \, V^r_S(\L)[\id_T]$ to an element in $\basicmod{G}{F}{}{\tilde{Y}}_{\rm B}$ which is congruent to zero.

Let $r'$ be the max of $\nilpnum{D_i, \L|_{D_i}, S}$. Then, for some $\gs_1$, we have
\begin{eqnarray}
V^r_S(\tilde{g}^* \overline{\L})[\id_{\tilde{Y}}] &=& \gs_1 \circ V^{r'}_S(\tilde{g}^* \overline{\L}) \circ c(\tilde{g}^* \overline{\L} \otimes \dual{\gb})\, [\id_{\tilde{Y}}] \nonumber\\
&=& \gs_1 \circ V^{r'}_S(\tilde{g}^* \overline{\L}) \circ c(\O(\, \sum_i m_i\, \overline{g^{-1} D_i} + \sum_{i,k}\, m_i m_{i,k} D_{i,k}))\, [\id_{\tilde{Y}}], \nonumber\\
&\equiv & \sum_i \gs_{2,i} \circ V^{r'}_S(\tilde{g}^* \overline{\L})\, [\overline{g^{-1} D_i} \embed \tilde{Y}] + \sum_{i,k} \gs_{3,i,k}\, [D_{i,k} \embed \tilde{Y}] \nonumber
\end{eqnarray}
for some $\gs_{2,i}$, $\gs_{3,i,k}$. Denote $\overline{g^{-1} D_i}$ by $A_i$ for simplicity. Since  $[D_{i,k} \embed \tilde{Y}]$ lies inside $\image i_*$,

\begin{eqnarray}
V^r_S(\tilde{g}^* \overline{\L})[\id_{\tilde{Y}}] &\equiv& \sum_i \gs_{2,i} \circ V^{r'}_S(\tilde{g}^* \overline{\L})\, [A_i \embed \tilde{Y}] \nonumber\\
&=& \sum_i {h_i}_* \circ \gs_{4,i} \circ V^{r'}_S(\tilde{g}^* \overline{\L}|_{A_i})\, [\id_{A_i}], \nonumber
\end{eqnarray}
for some $\gs_{4,i}$, where $h_i$ are the immersions $A_i \embed \tilde{Y}$. Notice that ${h_i}_* \circ \gs_{4,i} \circ V^{r'}_S(\tilde{g}^* \overline{\L}|_{A_i})\, [\id_{A_i}]$ is a lifting of 
$${(h_i|_Y)}_* \circ \gs_{5,i} \circ g^*\, V^{r'}_S(\L|_{D_i})\, [\id_{D_i}]$$
where $\sigma_{5,i}$ is the restriction of $\sigma_{4,i}$ over $g^{-1} D_i$. Since $r' \geq \nilpnum{D_i, \L|_{D_i}, S}$, by the induction assumption ($\dim D_i = \dim T - 1$), it has a lifting which is congruent to zero. 

To summarize, we lift $a \defeq g^* \, V^r_S(\L)[\id_T]$ to $b \defeq V^r_S(\tilde{g}^* \overline{\L})[\id_{\tilde{Y}}]$, which is congruent to the sum of $c_i \defeq {h_i}_* \circ \gs_{4,i} \circ V^{r'}_S(\tilde{g}^* \overline{\L}|_{A_i})\, [\id_{A_i}]$. Then, by the induction assumption, $j^*(c_i)$ has a lifting $d_i$ which is congruent to zero. Hence, $b - \sum_ i c_i + \sum_i d_i$ is the lifting we want. \claimend

\medskip

\noindent Claim 4 :
$$j^*\ \kernel{\{\gph_{X,3} \circ \gph_{X,2} \circ \gph_{X,1}\}} \supset \kernel{\gph_{U,1}}.$$

It is enough to lift an element of the form 
$$g^*([\blowup{T}{W} \to T] - [\id_T] + [Q_1 \to W \embed T] - [Q_2 \to W \embed T]),$$
where $Y$, $T$ and $g : Y \to T$ are as in claim 1, $W$ is an \inv, \sm\ closed subscheme of $T$, $Q_1 = \P(\O \oplus \dual{\nbundle{W}{T}}) \to W$ and $Q_2 = \P(\O \oplus \O(1)) \to \P(\dual{\nbundle{W}{T}}) \to W$, to an element in $\basicmod{G}{F}{}{\overline{Y}}_{\rm fin}$ which is congruent to zero mod $\image{i_*} + \kernel{\{\gph_{\overline{Y},3} \circ \gph_{\overline{Y},2} \circ \gph_{\overline{Y},1} \}}$.

Since $g$ is \sm, we may assume $g = \id_Y$. In that case, denote the closure of $W$ in $\overline{Y}$ by $\overline{W}$. By applying the embedded desingularization theorem on $\overline{W} \embed \overline{Y}$, we may assume $\overline{W}$ is \sm. Then we can take
$$[\blowup{\overline{Y}}{\overline{W}} \to \overline{Y}] - [\id_{\overline{Y}}] + [Q_1' \to \overline{W} \embed \overline{Y}] - [Q_2' \to \overline{W} \embed \overline{Y}],$$
where $Q_1' \defeq \P(\O \oplus \dual{\nbundle{\overline{W}}{\overline{Y}}}) \to \overline{W}$ and $Q_2' \defeq \P(\O \oplus \O(1)) \to \P(\dual{\nbundle{\overline{W}}{\overline{Y}}}) \to \overline{W}$, as our lifting. \claimend

\medskip

By claims 1, 3, 4 and some diagram chasing, if we can show that
$$\kernel{j^*} \subset \image{i_*} + \kernel{\{\gph_{X,3} \circ \gph_{X,2} \circ \gph_{X,1}\}}$$
in the sequence
$$\basicmod{G}{F}{}{Z}_{\rm fin} \stackrel{i_*}{\longto} \basicmod{G}{F}{}{X}_{\rm fin} \stackrel{j^*}{\longto} \basicmod{G}{F}{}{U}_{\rm fin} \longto 0,$$
then we will have $\kernel{j^*} \subset \image{i_*}$ in the sequence
$$\cob{}{G}{Z}_{\rm fin} \stackrel{i_*}{\longto} \cob{}{G}{X}_{\rm fin} \stackrel{j^*}{\longto} \cob{}{G}{U}_{\rm fin} \longto 0$$
and we are done.

\medskip

\noindent Claim 5 : It is enough to consider elements of the form $x - x'$ where $x$, $x'$ are cycles and $j^*(x) = j^*(x')$.

Notice that $\basicmod{G}{F}{}{-}_{\rm fin}$ is a free $\lazard_G(F)$-module with a basis given by iso\morp\ classes of cycles. If an element in $\basicmod{G}{F}{}{X}_{\rm fin}$ lies inside $\kernel{j^*}$, we may assume it is of the form $\sum_{i=1}^n a_n x_n$ where $a_i$ are elements in $\lazard_G(F)$, $x_i$ are cycles, $j^*(x_i)$ belongs to the same iso\morp\ class for all $i$ and $\sum_{i=1}^n a_n = 0$. We can then rearrange the terms in the following way :
$$\sum_{i=1}^n a_n x_n = a_1(x_1 - x_n) + \cdots + a_{n-1}(x_{n-1} - x_n) + (a_1 + \cdots + a_{n-1} + a_n) x_n.$$
That proves the claim. \claimend

\medskip

By claim 5, it is enough to consider elements of the form
$$[f : Y \to X, \L_1, \ldots, \L_r] - [f' : Y' \to X, \L_1', \ldots, \L_r']$$
where $Y$, $Y' \in$ $\gsmcat{G}$\ are both \girred{G}\ and there is an \equi\ isomorphism $\gps : Y|_U \to Y'|_U$ such that $f' \circ \gps = f$ and $\gps^* (\L_i'|_{Y'|_U}) \cong \L_i|_{Y|_U}$ for all $i$. Let $Y'' \subset Y \x_X Y'$ be the closure of the graph of $\gps$. By resolution of singularities, we may assume $Y''$ to be \sm\ and we have a commutative diagram
\squarediagramword{Y''}{Y}{Y'}{X}{\gm}{\gm'}{f}{f'}
with \equi, \proj, birational maps $\gm$, $\gm'$ which are isomorphisms over $U$. By weak factorization Theorem, $\gm : Y'' \to Y$ can be factored into a series of blow ups or blow downs along \inv\ \sm\ centers. By the \textbf{(Blow)} axiom, 
$$[Y'' \to X, \gm^* \L_1, \ldots, \gm^* \L_r] - [Y \to X, \L_1, \ldots, \L_r] \equiv 0$$
mod $\image{i_*} + \kernel{\{\gph_{X,3} \circ \gph_{X,2} \circ \gph_{X,1}\}}$. Similarly, 
$$[Y'' \to X, \gm'^* \L_1', \ldots, \gm'^* \L_r'] - [Y' \to X, \L_1', \ldots, \L_r'] \equiv 0.$$
That reduces the case to $Y = Y'$ and $f = f'$, with sheaves $\L_i$, $\L_i' \in \picard{G}{Y}$ such that $\L_i \cong \L_i'$ over $U$. In this case,
\begin{eqnarray}
[f : Y \to X, \L_1, \ldots, \L_r] &=& f_* \circ c(\L_1) \circ \cdots c(\L_r) [\id_Y] \nonumber\\
&=& f_* \circ c(\L_1) \circ \cdots c(\L_{r-1}) \circ c(\L_r' \otimes (\L_r \otimes \dual{\L_r'})) [\id_Y] \nonumber\\
&\equiv& f_* \circ c(\L_1) \circ \cdots c(\L_{r-1}) \circ c(\L_r')[\id_Y] + f_* \circ \gs \circ c(\L_r \otimes \dual{\L_r'}) [\id_Y] \nonumber\\
&& \text{for some $\gs$} \nonumber\\
&\equiv& f_* \circ c(\L_1) \circ \cdots c(\L_{r-1}) \circ c(\L_r')[\id_Y] \nonumber
\end{eqnarray}
by Lemma \ref{lemma sheaf trivial over open subsch}. Hence, by repeating the same argument for all $\L_i$, we have
$$f_* \circ c(\L_1) \circ \cdots c(\L_r) [\id_Y] \equiv f_* \circ c(\L_1') \circ \cdots c(\L_r') [\id_Y],$$
as we want. That finishes the proof of Theorem \ref{thm localization property}.
\end{proof}

The next property we would like to show is the homotopy invariance property. We will handle the surjectivity first.

\begin{prop}
\label{prop homo invar surj}
Suppose $X$ is an object in $\gvar{G}$, $V$ is a finite dimensional $G$-\repn\ and $\pi : X \x V \to X$ is the projection. Then the induced map
$$\pi^* : \cob{}{G}{X} \to \cob{}{G}{X \x V}$$
is surjective.
\end{prop}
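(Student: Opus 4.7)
The plan is to reduce in three stages: first to the case where $V=\beta$ is a one-dimensional $G$-character, then via localization to a projective-bundle-type identity, and finally to a verification on geometric generators using the \textbf{(Sect)} and \textbf{(EFGL)} axioms together with embedded desingularization.

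Since $(G,k)$ is split, $V$ decomposes as $V \cong \beta_1 \oplus \cdots \oplus \beta_n$ into 1-dimensional characters, and $\pi$ factors as a composition of 1-dim projections
$$X \x \beta_1 \x \cdots \x \beta_k \longto X \x \beta_1 \x \cdots \x \beta_{k-1} \longto \cdots \longto X.$$
By functoriality of smooth pull-back (axiom \textbf{(A1)}), surjectivity of $\pi^*$ will follow once each single-factor projection is treated. Thus reduce to $V = \beta$. Now set $P := X \x \P(\beta \oplus \gep)$, identify $D_\infty := X \x \{\infty\} \cong X$ with closed immersion $i : D_\infty \embed P$, let $j : X \x \beta \embed P$ be the complementary open immersion, and $p : P \to X$ the projection. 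By Theorem~\ref{thm localization property} the sequence
$$\cob{}{G}{X} \stackrel{i_*}{\longto} \cob{}{G}{P} \stackrel{j^*}{\longto} \cob{}{G}{X \x \beta} \longto 0$$
is exact, and since $\pi = p \circ j$, surjectivity of $\pi^*$ is equivalent to the identity
$$\cob{}{G}{P} = p^*\,\cob{}{G}{X} + i_*\,\cob{}{G}{X}. \qquad (\star)$$

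To establish $(\star)$, I would invoke Theorem~\ref{thm gen by geo cycle} to restrict attention to geometric generators $[f : Y \to P]$ with $Y \in$ \gsmcat{G}\ smooth, \proj\ and \girred{G}, and then induct on $\dim Y$. Let $q := p_2 \circ f : Y \to \P(\beta \oplus \gep)$. Choosing coordinates $[x:y]$ on $\P(\beta \oplus \gep)$ with $x$ of weight $\beta$ and $y$ of trivial weight, one gets equivariant identifications $\O_P(D_\infty) \cong p_2^*\O(1)$ and $\O_P(D_0) \cong p_2^*\O(1) \otimes \beta^{-1}$, with canonical invariant sections cutting out $D_\infty$ and $D_0$. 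Applying embedded desingularization in characteristic zero, blow up $Y$ along $G$-invariant smooth centers contained in $q^{-1}(\{0,\infty\})$ until both $q^{-1}(0)$ and $q^{-1}(\infty)$ become reduced sncd's; by the \textbf{(Blow)} axiom and Proposition~\ref{prop blow up relation}, these modifications alter $[f : Y \to P]$ only by classes factoring through $D_\infty \cup D_0$. The \textbf{(Sect)} axiom then yields
$$c(\O_P(D_\infty))\,[f : Y \to P] = f_*[q^{-1}(\infty) \embed Y] \in i_*\cob{}{G}{X},$$
and the analogous identity at $D_0$ produces a class in $i_{0*}\cob{}{G}{X}$, where $i_0 : D_0 \embed P$. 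The equivariant relation $\O_P(D_\infty) \cong \O_P(D_0)\otimes\beta$, combined with the \textbf{(EFGL)} expansion of $c(\O_P(D_\infty))$ as a series in $c(\O_P(D_0))$ (truncated to a finite sum by Proposition~\ref{prop Nilp axiom}), connects the two computations. Finally, the identity $c(\O_P(D_0)) \circ p^* = i_{0*} \circ c(i_0^*\O_P(D_0)) = e(\beta^{-1})\,i_{0*}$, which follows from axiom \textbf{(A3)} together with the calculation $i_0^*\O_P(D_0) = \beta^{-1}$, shows that the $D_0$-supported contributions can be rewritten as elements of $p^*\cob{}{G}{X}$ modulo $i_*\cob{}{G}{X}$, completing the induction.

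The principal obstacle is the last bookkeeping step. Propagating the formal group law expansion through the induction requires showing systematically that every residue supported on $D_0$ can be re-expressed as $p^*y + i_*z$; this is essentially an equivariant projective bundle formula for the trivial $\P^1$-bundle $P \to X$ with fibrewise $G$-action $\beta$, and its verification depends on a careful interplay between the \textbf{(EFGL)} structure constants $f^1_{s,t}$, the Euler classes $e(\beta)$ and $e(\beta^{-1})$, and the nilpotence provided by Proposition~\ref{prop Nilp axiom} to force all formal power series in $c(\O_P(D_0))$ to terminate inside $\cob{}{G}{-}$.
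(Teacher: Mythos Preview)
Your reduction via localization to the statement $(\star)$ is valid, and the observation that $i_{0*}\cob{}{G}{X} \subset p^*\cob{}{G}{X} + i_*\cob{}{G}{X}$ can indeed be extracted from the \textbf{(EFGL)} expansion relating $c(\O_P(D_0))$ and $c(\O_P(D_\infty))$. However, the argument you sketch for $(\star)$ has a genuine gap, and it is precisely the one you flag as the ``principal obstacle'': after desingularizing, you only ever compute $c(\O_P(D_\infty))[f]$ and $c(\O_P(D_0))[f]$, which do lie in $i_*\cob{}{G}{X}$ and $i_{0*}\cob{}{G}{X}$. But nothing in your outline recovers $[f]$ itself from these. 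Knowing that $c(\L)[f]$ lies in a subgroup for various $\L$ does not force $[f]$ to lie there without an additional splitting principle or projective bundle formula for $P \to X$, and neither is available at this point in the paper. Your induction on $\dim Y$ does not help here, because the class you need to control, $[f:Y\to P]$, has top dimension. (Two minor slips: embedded desingularization produces snc divisors generally with multiplicities, not reduced ones; and the identity should read $c(\O_P(D_0))\circ p^* = i_{0*}$, not $e(\beta^{-1})\,i_{0*}$ --- what you wrote is rather $c(\O_P(D_0))\circ i_{0*}$.)

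The paper bypasses $(\star)$ entirely and argues directly on $\cob{}{G}{X\times V}$. Given a geometric cycle $[f:Y\to X\times V]$ it looks at the projection $\pi_2\circ f:Y\to V$ and splits into two cases. If the image is a finite $G$-invariant set, the translation automorphism $(y,v)\mapsto(y,\,\pi_2 f(y)-v)$ of $Y\times V$ realizes $[f]$ as $e(\beta)\,\pi^*[\pi_1\circ f:Y\to X]$ directly. If the image is all of $V$, the paper forms the Cartesian square of $\pi_2\circ f$ against the multiplication map $m:V\times\A^1\to V$, $(x,y)\mapsto xy$; after resolving the mild singularities of the resulting family $Z$ and comparing the fibres over $0$ and $1$ in the $\A^1$-direction, the class $[f]$ degenerates (modulo lower-dimensional contributions handled by induction and Proposition~\ref{prop tower reduction}) to classes of the form $[\tilde D_i\times V\to X\times V]=\pi^*[\tilde D_i\to X]$. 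This deformation-to-a-product trick is the key idea your approach is missing.
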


\begin{proof}
W\withoutlog, we may assume $\dim V = 1$. By Theorem \ref{thm gen by geo cycle}, it is enough to consider elements of the form  $[f : Y \to X \x V]$ where $Y \in$ $\gsmcat{G}$\ is \girred{G}. By Proposition \ref{prop equi embed} and the localization property, we may assume $X$ is \proj.

Consider the \proj\ map $\pi_2 \circ f : Y \to V$. Since $Y$ is \girred{G}, $\image{\pi_2 \circ f}$ is a reduced, \girred{G}, closed subscheme of $V$. So, the image is either $V$ or of dimension 0.

\medskip

\noindent Case 1 : Dimension of $ \image{\pi_2 \circ f} = 0$.

Let $f_1 : Y \to X$ and $f_2 : Y \to V$ be the maps $\pi_1 \circ f$ and $\pi_2 \circ f$ respectively. Since $f_2 : Y \to \image{f_2}$ is \proj, $Y$ is \proj. Consider the map $\phi : Y \x V \to X \x V$ defined by sending $(y,v)$ to $(f_1(y), f_2(y)-v)$. It is clear that $\phi$ is \equi. Notice that sending $(y,v)$ to $(y,f_2(y)-v)$ defines an iso\morp\ $g : Y \x V \iso Y \x V$ and $\phi = (f_1 \x \id_V) \circ g$. Since $Y$ is \proj, $f_1$ is \proj\ and so is $\phi$.

Notice that
$$[Y \x 0 \embed Y \x V] = c(\O(Y \x 0))[\id_{Y \x V}],$$
in which we consider $Y \x 0$ as an \inv\ divisor on $Y \x V$. Since the divisor $Y \x 0$ is the pull-back (along $\pi_2 : Y \x V \to V$) of the divisor $[0]$ on $V$ and $\O_V([0]) \cong \beta$ for some character $\gb$, we have
$$[Y \x 0 \embed Y \x V] = c(\beta)[\id_{Y \x V}] = e(\beta)[\id_{Y \x V}]$$
by Proposition \ref{prop Chern class basic}. Apply $\phi_*$ on both sides ($\phi$ is \proj), we have
$$[f : Y \to X \x V] = e(\beta)[\phi : Y \x V \to X \x V] = e(\beta)\pi^* [f_1 : Y \to X]$$
and we are done.

\medskip

\noindent Case 2 : $\image{\pi_2 \circ f} = V$.

We will proceed by induction on $\dim Y$. If $\dim Y = 0$, then $\pi_2 \circ f$ can not be surjective. Suppose $\dim Y > 0$. Consider the following Cartesian square :
\squarediagramword{Z}{V \x \A^1}{Y}{V}{p_1}{p_2}{m}{\pi_2 \circ f}
\noindent where $\A^1$ is equipped with trivial $G$-action and $m(x,y) \defeq xy$. Denote the fiber of $\pi_2 \circ f$ over zero by $Y_0$ and the fibers of $\pi_2 \circ p_1 : Z \to \A^1$ over $0, 1$ by $Z_0$, $Z_1$ respectively. Notice that $\pi_2 \circ f$, $\pi_2 \circ p_1$ are both flat and $Z$ is equidimensional.

\medskip

\noindent Claim 1 : There exist \inv\ closed subschemes $D_i \subset Y_0$, $D_i' \subset Z_0$ and integers $m_i$ such that $D_i$, $D_i'$ are $G$-prime divisors on $Y$, $Z$ respectively, $D_i' \cong D_i \x V$ as $G$-schemes, 
$$(\pi_2 \circ f)^* \O_V([0]) \cong \O_Y(\sum_i m_i D_i),$$
$$(\pi_2 \circ p_1)^* \O_{\A^1}([0]) \cong \O_Z(\sum_i m_i D_i')$$
and $\singular{Z} \subset Z_0$ with dimension $< \dim D_i$.

\medskip

Let $\spec{R}$ be an irreducible, affine open subscheme of $Y$ and $m^* : k[t] \to k[x,y]$ be the ring homo\morp\ corresponding to $m$. Then, $\pi_2 \circ f$ corresponds to an injective ring homo\morp\ $k[t] \to R$ which sends $t$ to some non-zero element $a \in R$. Therefore, $Z$ is locally given by $\spec{R[x,y]\, /\, (a - xy)}$. Let $P_i$ be the minimal prime ideals of $R / (a)$. 

Observe that $(\pi_2 \circ f)^*[0]$ defines a Cartier divisor, which induces a Weil divisor. If we take $D_i$ to be the divisor given by $P_i \subset R$ and $m_i$ to be the length of $R_{P_i} / (a)$, then $(\pi_2 \circ f)^*[0] = \sum_i m_i D_i$. Similarly, since the minimal prime ideals of 
$$R[x,y]\, /\, (a - xy, y) \cong R[x] / (a)$$
are $Q_i \defeq P_i \cdot (R[x] / (a))$, if we take $\tilde{Q}_i$ to be the preimage of $Q_i$ of the map 
$$R[x,y]\, /\, (a - xy) \to R[x,y]\, /\, (a - xy,y) \cong R[x] / (a)$$
and $D_i'$ to be the divisor given by $\tilde{Q}_i$, then $D_i' \cong D_i \x V$ and $(\pi_2 \circ p_1)^*[0] = \sum_i m_i D_i'$.

Over $V \x (\A^1 - 0)$, $Z$ is locally given by $R[x,y][y^{-1}]\, /\, (a - xy) \cong R[y][y^{-1}]$, which is regular. So, $p_1^{-1}(V \x (\A^1-0))$ is \sm. For the same reason, $p_1^{-1}((V - 0) \x \A^1)$ is also \sm. Hence, $\singular{Z} \subset p_1^{-1}(0 \x 0)$, which is a closed subscheme of $Z$ with codimension 2 ($p_1$ is flat). \claimend

\medskip

By resolution of singularities, there exists an \equi, \proj, birational map $Z' \to Z$ such that $Z' \in$ $\gsmcat{G}$. Then, we apply the embedded desingularization on $\cup_i \stricttransform{D_i'} \embed Z'$ to obtain $Z'' \to Z'$. Denote the composition $Z'' \to Z' \to Z$ by $q$. We then have 
\begin{eqnarray}
\label{eqn 30}
(\pi_2 \circ p_1 \circ q)^* \O_{\A^1}([0]) = \O_{Z''}(\sum_i m_i \stricttransform{\stricttransform{D_i'}} + \sum_j \pm \stricttransform{E_j}) 
\end{eqnarray}
for some exceptional divisors $E_j$. Resolve the singularities of $D_i$ to obtain $\tilde{D_i} \in$ $\gsmcat{G}$. 

Now, consider the following commutative diagram :

\medskip

\begin{center}
$\begin{CD}
\stricttransform{\stricttransform{D'_i}} @>>> Z'' @>>> Z' @>>> Z \\
@V{\phi}VV @. @. @| \\
\tilde{D_i} \x V @>>> D_i \x V @>{\rm iso.}>> D_i' @>>> Z 
\end{CD}$
\end{center}

\medskip

\noindent where $\phi$ is a \equi, birational map (may not be regular). Let $g : Z \to X \x V \x \A^1$ be the map defined by sending $z$ to $(\pi_1 \circ f \circ p_2(z) , p_1(z))$. It is \proj\ because $p_1$ is \proj. Precomposing it with $q$, we got a \proj\ map $h : Z'' \to X \x V \x \A^1$. By Proposition \ref{prop smooth extension}, we can extend it to a \proj\ map $\overline{h} : \overline{Z''} \to X \x V \x \P^1$ (trivial action on $\P^1$). Let $p : \overline{Z''} \to X \x V$ be the \proj\ map given by composing $\overline{h}$ with projection. Then we have
\begin{eqnarray}
[f : Y \to X \x V] &=& [Y \cong (\pi_3 \circ \overline{h})^{-1}(1) \embed \overline{Z''} \stackrel{p}{\longto} X \x V] \nonumber\\
&=& p_* (\, c((\pi_3 \circ \overline{h})^* \O_{\P^1}([1]))\, [\id_{\overline{Z''}}]\, ) \nonumber\\
&=& p_* (\, c((\pi_3 \circ \overline{h})^* \O_{\P^1}([0]))\, [\id_{\overline{Z''}}]\, ) \nonumber\\
&=& p_* (\, c(\O_{\overline{Z''}}(\sum_i m_i \stricttransform{\stricttransform{D_i'}} + \sum_j \pm \stricttransform{E_j}))\, [\id_{\overline{Z''}}]\, ) \nonumber
\end{eqnarray}
by equation (\ref{eqn 30}) ($Z''$, $\overline{Z''}$ have the same fibers over $\A^1$). Hence,
$$[f : Y \to X \x V] = \sum_i p_* \circ \sigma_i\, [\stricttransform{\stricttransform{D_i'}} \embed \overline{Z''}] + \sum_j p_* \circ \sigma_j\, [\stricttransform{E_j} \embed \overline{Z''}]$$
for some $\sigma_i$, $\sigma_j \in \Endoinf{\cob{}{G}{\overline{Z''}}}$. Notice that $\dim Y = \dim \stricttransform{\stricttransform{D_i'}} = \dim \stricttransform{E_j}$.

For simplicity, we will say $x \equiv 0$ if $x$ is an element in $\image{\pi^*} \subset \cob{}{G}{X \x V}$. By Proposition \ref{prop Nilp axiom}, Theorem \ref{thm gen by geo cycle} and the induction assumption, 
$$[f : Y \to X \x V] \equiv \sum_i p_* a_i\, [\stricttransform{\stricttransform{D_i'}} \embed \overline{Z''}] + \sum_j p_* a_j'\, [\stricttransform{E_j} \embed \overline{Z''}]$$
for some $a_i$, $a_j' \in \lazard_G(F)$. In other words, it is enough to show 
$$[\stricttransform{\stricttransform{D_i'}} \to Z \to X \x V] \equiv [\stricttransform{E_j} \to Z \to X \x V] \equiv 0.$$

By the weak factorization Theorem, the rational map $\phi : \stricttransform{\stricttransform{D_i'}} \ratmap \tilde{D_i} \x V$ can be considered as the composition of a series of blow ups or blow downs along \inv\ \sm\ centers. For simplicity, assume $\phi$ is given by a single blow up along some \inv\ \sm\ center $C$. Then, by the blow up relation, the difference $[\stricttransform{\stricttransform{D_i'}} \to X \x V] - [\tilde{D_i} \x V \to X \x V]$ is given by elements of the form $[\P \to C \to X \x V]$ where $\P \to C$ is a \qadtower. Since $\dim Y = \dim \stricttransform{\stricttransform{D_i'}} = \dim \P > \dim C$, we have $[\P \to C \to X \x V] \equiv 0$ by Proposition \ref{prop tower reduction} and the induction assumption. Hence,
$$[\stricttransform{\stricttransform{D_i'}} \to X \x V] \equiv [\tilde{D_i} \x V \to X \x V] = [\tilde{D_i} \to D_i \embed Y \to X] \x [\id_V] \equiv 0.$$

Since $\stricttransform{E_j}$ is the strict transform of the exceptional divisor $E_j$ corresponding to a certain blow up in the process of blowing up $Z$ to obtain $Z'$, or blowing up $Z'$ to obtain $Z''$, there is an object $C \in$ $\gsmcat{G}$\ such that $E_j \to C$ is a \qadtower, $\dim E_j > \dim C$ and the map $E_j \to X \x V$ factors through $E_j \to C$. Hence, by the same argument as before, 
$$[\stricttransform{E_j} \to E_j \to X \x V] \equiv [E_j \to X \x V] = [E_j \to C \to X \x V] \equiv 0.$$
That finishes the proof of Proposition \ref{prop homo invar surj}.
\end{proof}

\begin{thm}
\label{thm homotopy invariance}
Suppose $\char{k} = 0$ and $k$ contains a primitive $e$-th \rou, where $e$ is the exponent of $G_f$. 

For any object $X \in \gvar{G}$ and finite dimensional $G$-\repn\ $V$, the map
$$\pi^* : \cob{}{G}{X} \to \cob{}{G}{X \x V}$$
induced by the projection $\pi : X \x V \to X$ is an iso\morp.  
\end{thm}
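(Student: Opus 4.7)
Surjectivity is Proposition \ref{prop homo invar surj}, so only injectivity of $\pi^*$ remains.

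First reduce to $\dim V = 1$ by induction on $\dim V$: decomposing $V = V_1 \oplus V_2$ with $\dim V_1 = 1$, the map $\pi$ factors as $X \x V_1 \x V_2 \to X \x V_2 \to X$, so the rank-one case applied with base $X \x V_2$, together with the inductive hypothesis for $V_2$ over $X$, gives injectivity for $V$.

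For the base case $\dim V = 1$, write $V = \gb$ as a $G$-representation and compactify via $\P^1 := \P(V \oplus \gep)$, with open immersion $j : X \x V \embed X \x \P^1$ and closed immersion $i : X \cong X \x \infty \embed X \x \P^1$ at the fixed point $\infty := \P(V)$. Let $\rho : X \x \P^1 \to X$ be the projection; then $\pi = \rho \circ j$ and $\rho \circ i = \id_X$. Theorem \ref{thm localization property} yields the exact sequence
\begin{eqnarray}
\cob{}{G}{X} \stackrel{i_*}{\longto} \cob{}{G}{X \x \P^1} \stackrel{j^*}{\longto} \cob{}{G}{X \x V} \longto 0. \nonumber
\end{eqnarray}
If $\pi^*(x) = j^* \rho^*(x) = 0$, then $\rho^*(x) = i_*(y)$ for some $y \in \cob{}{G}{X}$.

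The key observation is that $\O_{\P^1}(1) \otimes \gb$ admits a $G$-\equi\ global section (the dual of a generator of $V$) vanishing along the smooth fixed divisor $\{0\} \subset \P^1$, while its restriction to $\infty$ satisfies $(\O_{\P^1}(1) \otimes \gb)|_{\infty} \cong N_{\infty/\P^1} \otimes \gb \cong \dual{\gb} \otimes \gb \cong \O$. Apply the operator $T := \rho_* \circ c(\gp_2^*(\O_{\P^1}(1) \otimes \gb))$ (where $\gp_2 : X \x \P^1 \to \P^1$) to both sides of $\rho^*(x) = i_*(y)$. Using the identification $\rho^*(x) = x \x [\id_{\P^1}]$, (Sect), and (A6),
\begin{eqnarray}
T \rho^*(x) &=& \rho_*\bigl(x \x c(\O_{\P^1}(1) \otimes \gb)[\id_{\P^1}]\bigr) \nonumber\\
&=& \rho_*\bigl(x \x [\{0\} \embed \P^1]\bigr) \;=\; x \x [\id_{\pt}] \;=\; x. \nonumber
\end{eqnarray}
For the right-hand side, (A3) together with Proposition \ref{prop Chern class basic}(1) give
\begin{eqnarray}
T i_*(y) \;=\; \rho_* i_* c\bigl((\gp_2 \circ i)^*(\O_{\P^1}(1) \otimes \gb)\bigr)(y) \;=\; \rho_* i_* c(\O_X)(y) \;=\; 0. \nonumber
\end{eqnarray}
Hence $x = 0$, proving injectivity.

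The only non-routine step is verifying the $G$-\equi\ identification of $\O_{\P^1}(1) \otimes \gb$ as $\O_{\P^1}(\{0\})$ and the trivialisation of its restriction to $\infty$ (both of which follow from a direct calculation in $\P(V \oplus \gep)$); once these are in hand, the remainder is a formal manipulation with the axioms of section \ref{sect basic properties}.
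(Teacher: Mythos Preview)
Your proof is correct and follows essentially the same approach as the paper: reduce to $\dim V = 1$, use the localization sequence for $X \times V \subset X \times \P(V \oplus \gep)$, and build a left inverse to $\pi^*$ via $\rho_* \circ c(\L)$ for a $G$-linearized line bundle $\L$ that restricts trivially to $\infty$ and has an invariant section vanishing at $0$. The only cosmetic differences are that the paper writes $\L = \gp_2^*\O_{\P(V \times \A^1)}([0])$ directly rather than identifying it as $\O(1) \otimes \gb$, and the paper verifies the retraction identity on geometric cycles and then invokes Theorem~\ref{thm gen by geo cycle}, whereas your formulation $\rho^*(x) = x \times [\id_{\P^1}]$ handles arbitrary $x$ at once.
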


\begin{proof}
By Proposition \ref{prop homo invar surj}, it is enough to prove the injectivity of $\pi^*$. W\withoutlog, we may assume $\dim V = 1$. By the localization property, we have the following exact sequence :
$$\cob{}{G}{X} \stackrel{i_*}{\longrightarrow} \cob{}{G}{X \x \P(V \x \A^1)} \stackrel{j^*}{\longrightarrow} \cob{}{G}{X \x V} \to 0$$
where $\A^1$ is equipped with trivial $G$-action, $i : X \cong X \x \infty \embed X \x \P(V \x \A^1)$ and $j : X \x V \embed X \x \P(V \x \A^1)$ are the immersions. 

Let $p : X \x \P(V \x \A^1) \to X$ be the projection and $\phi : \cob{}{G}{X \x \P(V \x \A^1)} \to \cob{}{G}{X}$ be the map $p_* \circ c(\L)$ where $\L \defeq \pi_2^* \O_{\P(V \x \A^1)}([0])$. Then, for any element $x \in \cob{}{G}{X}$,
\begin{eqnarray}
\phi \circ i_*(x) &=& p_* \circ c(\L) \circ i_*(x) \nonumber\\
&=& p_* \circ i_* \circ c(\L|_{X \x \infty}) (x) \nonumber\\
&=& p_* \circ i_* \circ c(\O_X) (x) = 0. \nonumber
\end{eqnarray}
Therefore, $\phi$ defines a map $\cob{}{G}{X \x V} \to \cob{}{G}{X}$. Now, for any element $[Y \to X] \in \cob{}{G}{X}$,
\begin{eqnarray}
\phi \circ \pi^*[Y \to X] &=& \phi\, [Y \x V \to X \x V] \nonumber\\
&=& p_* \circ c(\L) [Y \x \P(V \x \A^1) \to X \x \P(V \x \A^1)] \nonumber\\
&=& p_* (\, [Y \to X] \x c(\O_{\P(V \x \A^1)}([0])) [\id_{\P(V \x \A^1)}]\, ) \nonumber\\
&=& p_* (\, [Y \to X] \x [0 \embed \P(V \x \A^1)]\, ) \nonumber\\
&=& [Y \to X]. \nonumber
\end{eqnarray}
Hence, $\pi^*$ is injective by Theorem \ref{thm gen by geo cycle}.
\end{proof}

\bigskip
\bigskip

\section{Special theory and some advanced properties}
\label{sect special theory}

As in the non-equivariant configuration in \cite{universal alg cobor}, we would like the projective bundle formula and the extended homotopy property to hold in our theory $\cob{G}{}{-}$ as well. But notice that the proof of the extended homotopy property in \cite{universal alg cobor} relies on the projective bundle formula and the projective bundle formula ultimately comes from fact that in Topology, we have
$$MU(\P^{\infty}) \cong MU[[y]]$$
where $y \defeq c_1(\O_{\P^{\infty}}(1))$, which is only true without group action (see Theorem 9.6 in \cite{equi FGL 2} for the equivariant analogue). Therefore, we should not expect the projective bundle formula to hold in our theory $\cob{G}{}{-}$. But there is a remedy. 

In this section, we will define a notion called special theory, denoted as $\cob{G}{s}{-}$. Then, we will show the projective bundle formula and extended homotopy property in $\cob{G}{s}{-}$. That will then allow us to define the higher Chern class operators of a \glin{G}\ locally free sheaf of arbitrary finite rank. The importance of the higher Chern class operators will be justified by one of our main Theorems : the \equi\ Conner-Floyd isomorphism (see the proof of Theorem \ref{thm equi Conner Floyd}).

First of all, we call a $(G,F)$-\fgl\ over $R$ special if, for all $\alpha \in G^*$,
$$u(\alpha) \defeq d(\alpha)^1_1 \in R \text{ is a unit and } d(\alpha)^1_s = 0 \text{ for } s \geq 2.$$
Then, there is a universal ring 
$$\lazard_G^s(F) \defeq S^{-1} \lazard_G(F) / (d(\alpha)^1_s | s \geq 2),$$
where $S$ is the multiplicative set generated by $d(\alpha)^1_1$, and a canonical ring \homo
$$\lazard_G(F) \to \lazard_G^s(F).$$

For a multiplicative $(G,F)$-\fgl, by Lemma 2.1 in \cite{multi FGL}, 
$$l_{\ga}\, y(\gep) = y(\ga) = e(\ga) + (1 - v e(\ga))y(\gep).$$
In other words, $d(\ga)^1_0 = e(\ga)$, $d(\ga)^1_1 = 1 - v e(\ga)$ and $d(\ga)^1_s = 0$ when $s \geq 2$. Moreover, by Proposition 4.5 in \cite{multi FGL},
$$\lazard_G^m(F)[v^{-1}] \cong R(G)[v,v^{-1}]$$
with $d(\ga)^1_1 = 1 - v e(\ga) = \dual{\alpha}$ (a unit). Hence, we can conclude that there is a canonical surjective ring \homo
$$\lazard_G^s(F) \to \lazard_G^m(F)[v^{-1}] \cong R(G)[v,v^{-1}].$$

Furthermore, since the additive $(G,F)$-\fgl\ corresponds to setting $v=0$ in the multiplicative $(G,F)$-\fgl\ and $d(\ga)^1_1 = 1 - v e(\ga) = 1$ is a unit, there is also a canonical surjective ring \homo
$$\lazard_G^s(F) \to \lazard_G^a(F).$$
In conclusion, the special theory will cover the \equi\ K-theory and the \equi\ algebraic cobordism theory associated to the additive $(G,F)$-\fgl, at the very least.

Now, let us define our special theory as
$$\cob{G}{s}{-} \defeq \lazard_G^s(F) \otimes_{\lazard_G(F)} \cob{G}{}{-}$$
with objects in $\gvar{G}$. The four basic operations is clearly well-defined in $\cob{G}{s}{-}$. More importantly, by the \textbf{(EFGL)} axiom, we will have
\begin{equation}
\label{eqn special FGL}
c(\L \otimes \alpha) = d(\ga)^1_0 + d(\alpha)^1_1 c(\L) = e(\ga) + u(\alpha) c(\L)
\end{equation}
as operators on $\cob{G}{s}{X}$, for any $X \in \gvar{G}, \L \in \picard{G}{X}$ and $\alpha \in G^*$.

We are now in position to show that the projective bundle formula holds in $\cob{G}{s}{-}$. Suppose $X \in \gvar{G}$, $\cat{E}$ is a \glin{G}\ locally free sheaf of rank $r+1$ over $X$, 
$$\pi : \P \defeq \P(\cat{E}) \to X$$
be the projection map. Let 
$$\phi_i \defeq c(\O_{\P}(1))^{i} \circ \pi^* : \cob{G}{s}{X} \to \cob{G}{s}{\P}$$
for $0 \leq i \leq r$ and 
$$\Phi \defeq \oplus_{i=0}^r \phi_i : \oplus_{i=0}^r \cob{G}{s}{X} \to \cob{G}{s}{\P}.$$

\begin{thm}
\label{thm PBF}
Suppose $\char{k} = 0$. Then,
$$\Phi : \oplus_{i=0}^r \cob{G}{s}{X} \to \cob{G}{s}{\P}$$
is an isomorphism.
\end{thm}

\begin{proof}
For simplicity of notation, let $R \defeq \lazard_G^s(F)$. Within this proof, we say that a map $\psi : \cob{G}{s}{\P} \to \cob{G}{s}{X}$ is extendable if it can be written as the sum of compositions of $a \in R, c(\O_{\P}(1)), \pi_*$ or $\pi^*$. For a map $\Psi : \cob{G}{s}{\P} \to \oplus_{i=0}^r \cob{G}{s}{X}$, we say that it is extendable if it can be written as the sum of maps of the form
$$\Psi_1 \circ \Phi \circ \Psi_2 \circ \Phi \circ \cdots \circ \Psi_n$$
such that, for all $1 \leq i \leq n$, the map $\Psi_i$ is the direct sum of extendable maps $\cob{G}{s}{\P} \to \cob{G}{s}{X}$. 

The significance of extendable maps is that if $Z \in \gvar{G}$ is an \inv\ closed subscheme of $X$, $U \defeq X - Z, i : Z \embed X, j : U \embed X$ are the immersions and
$$f : \cob{G}{s}{\P|_Z} \to \oplus_{i=0}^r \cob{G}{s}{Z}$$ 
$$g : \cob{G}{s}{\P|_U} \to \oplus_{i=0}^r \cob{G}{s}{U}$$
are extendable, then there are extendable maps
$$f', g' : \cob{G}{s}{\P} \to \oplus_{i=0}^r \cob{G}{s}{X}$$
such that 
$$f' \circ i_* = i_* \circ f \text{ and } g \circ j^* = j^* \circ g'.$$

Instead of proving $\Phi$ to be an isomorphism, we will indeed show a stronger statement :
$$\Phi \text{ is surjective and has an extendable left inverse } \Psi.$$

Let us first consider the trivial case : $\cat{E} \cong \beta_0 \oplus \cdots \oplus \beta_r$ for some $\beta_i \in G^*$. Let $H_i \defeq \P(\beta_0 \oplus \cdots \oplus \hat{\beta_i} \oplus \cdots \oplus \beta_r)$ (omitting) and consider it as an \inv\ smooth divisor on $\P$. Define
$$\xi^i_j \defeq c(\O_{\P}(H_i)) \circ c(\O_{\P}(H_{i+1})) \circ \cdots \circ c(\O_{\P}(H_j))$$
if $0 \leq i \leq j \leq r$ and $\xi^i_j \defeq \id$ otherwise. Moreover, $\overline{\phi}_i \defeq \xi^0_{i-1} \circ \pi^*$ for $0 \leq i \leq r$ and 
$$\overline{\Phi} \defeq \oplus_{i=0}^r \overline{\phi}_i : \oplus_{i=0}^r \cob{G}{s}{X} \to \cob{G}{s}{\P}.$$

\medskip

\noindent Claim 1 : There exists an upper triangular matrix $M \in GL(r+1, R)$ (units on the diagonal) such that 
$$\Phi = \overline{\Phi} \circ M.$$

Notice that since $\O_{\P}(H_i) \otimes \beta_i \cong \O_{\P}(1)$ for all $i$, we have
$$c(\O(H_i) \otimes \gamma) \circ \pi^*= c(\O(H_0) \otimes \gamma') \circ \pi^* = (a + u c(\O(H_0))) \circ \pi^* = a \overline{\phi}_0 + u \overline{\phi}_1$$
(by equation (\ref{eqn special FGL})) for some $\gamma' \in G^*, a,u \in R$ such that $u$ is a unit. By induction, one can show that, for all $0 \leq i_j \leq r, \gamma_k \in G^*$ and $1 \leq n \leq r$,
$$c(\O(H_{i_1}) \otimes \gamma_1) \circ \cdots \circ c(\O(H_{i_n}) \otimes \gamma_n) \circ \pi^* = a_0 \overline{\phi}_0 + \cdots + a_{n-1} \overline{\phi}_{n-1} + u \overline{\phi}_n$$
for some $a_i, u \in R$ such that $u$ is a unit. Therefore, 
$$\phi_0 = u_{00} \overline{\phi}_0 \text{ with } u_{00}=1$$
and for $1 \leq n \leq r$,
\begin{eqnarray*}
\phi_{n} &=& c(\O_{\P}(1))^{n} \circ \pi^* \\
&=& c(\O_{\P}(H_0) \otimes \beta_0)^{n} \circ \pi^* \\
&=& a_{0,n} \overline{\phi}_0 + \cdots + a_{n-1,n} \overline{\phi}_{n-1} + u_{nn} \overline{\phi}_n
\end{eqnarray*}
and $a_{i,j}, u_{i,i}$ will be the entries of $M$. \claimend

\medskip

Next, we will define an extendable left inverse of $\overline{\Phi}$. Let $\overline{\psi}_0 \defeq \pi_* \circ \xi^1_r,$
$$\overline{\psi}_{n+1} \defeq \pi_* \circ \xi^{n+2}_r (\id - \sum_{k=0}^n \overline{\phi}_k \overline{\psi}_k)$$
for $0 \leq n \leq r-1$ (defined inductively) and $\overline{\Psi} \defeq \oplus_{i=0}^r \overline{\psi}_i$. Then it is not hard to see that $\overline{\Psi}$ is a left inverse of $\overline{\Phi}$ (see section 3.5.2 and 3.5.3 in \cite{universal alg cobor} for more details). Moreover, since
$$c(\O_{\P}(H_i)) = c(\O_{\P}(1) \otimes \dual{\beta_i}) = e(\dual{\beta_i}) + u(\dual{\beta_i}) c(\O_{\P}(1))$$
is extendable, $\overline{\psi}_0$ is extendable and so is $\overline{\psi}_{n+1}$ (by induction). Hence,
$$\Psi \defeq M^{-1} \overline{\Psi}$$
is an extendable left inverse of $\Phi$.

For the surjectivity of $\Phi$ (when $\cat{E}$ is trivial), we consider the following diagram (1) (not necessarily commutative) :

\begin{center}$\begin{CD}
\cob{G}{s}{H_r}    @>{i_*}>> \cob{G}{s}{\P} @>{j^*}>> \cob{G}{s}{\P - H_r} @>>> 0 \\
@A{\Phi}AA       @A{\Phi}AA @A{\pi^*}AA @. \\
\oplus_{i=0}^{r-1} \cob{G}{s}{X}   @>{M'}>> \oplus_{i=0}^{r} \cob{G}{s}{X} @>{a}>>  \cob{G}{s}{X} @.
\end{CD}$\end{center}
where 
$$M' = 
\begin{pmatrix}
e(\dual{\beta_r}) & 0 & 0 & \cdots & 0 \\
u(\dual{\beta_r}) & e(\dual{\beta_r}) & 0 & \cdots & 0 \\
0 & u(\dual{\beta_r}) & e(\dual{\beta_r}) & \cdots & 0 \\
\vdots & \vdots & \vdots & \ddots & \vdots \\
0 & 0 & 0 & \cdots & e(\dual{\beta_r}) \\
0 & 0 & 0 & \cdots & u(\dual{\beta_r}) \\
\end{pmatrix}$$
is a $(r+1) \x r$ matrix and $a$ is the projection of the $i=0$ part. The right square is commutative on the $i=0$ part.

\medskip

\noindent Claim 2 : The left square of diagram (1) is commutative.

By Theorem \ref{thm gen by geo cycle}, it is enough to consider elements of the form $[f : Y \to X] = f_*[\id_X]$. By the naturality of the definitions of $\P = \P(\beta_0 \oplus \cdots \oplus \beta_r)$ and $H_r = \P(\beta_0 \oplus \cdots \oplus \beta_{r-1})$, we may assume $X \in \gsmcat{G}$ and $f = \id_X$. Then,
\begin{eqnarray*}
i_* \circ \phi_i [\id_X] &=& i_* \circ c(\O_{H_r}(1))^i [\id_{H_r}]  \\
&=& c(\O_{\P}(1))^i \circ i_* [\id_{H_r}] \\
&=& c(\O_{\P}(1))^i c(\O_{\P}(H_r)) [\id_{\P}] \\
&=& c(\O_{\P}(1))^i (e(\dual{\beta_r}) + u(\dual{\beta_r}) c(\O_{\P}(1))) [\id_{\P}] \\
&=& (e(\dual{\beta_r}) c(\O_{\P}(1))^i + u(\dual{\beta_r}) c(\O_{\P}(1))^{i+1}) [\id_{\P}] \\
&=& (e(\dual{\beta_r}) \phi_i + u(\dual{\beta_r}) \phi_{i+1}) [\id_X]
\end{eqnarray*}
and we are done. \claimend

\medskip

Furthermore, for diagram (1), the top row is exact by the localization property. The right column is an isomorphism by the homotopy invariance property. The left column is an isomorphism by induction ($r=0$ case is trivial). The surjectivity of the middle column then follows from some diagram chasing. That handles the case when $\cat{E}$ is trivial.

For general $\cat{E}$, we need the following basic fact.

\medskip

\noindent Claim 3 : There exists a filtration of \inv, closed subschemes $X_j \in \gvar{G}$
$$\emptyset = X_0 \subset X_1 \subset \cdots \subset X_m = X$$
such that $\cat{E}|_{X_j - X_{j-1}}$ is trivial for all $1 \leq j \leq m$. 

By Noetherian induction, it is enough to show that there exists an \inv, open subscheme $U \subset X$ such that $\cat{E}|_U$ is trivial. W\withoutlog, we may assume $X$ is \girred{G}\ and \sm. The case when $\rank{\cat{E}} = r+ 1 = 1$ is handled by Proposition \ref{prop lb structure}.

Suppose $r \geq 1$. By Theorem \ref{thm splitting principle}, we may further assume that there exists an exact sequence
$$0 \to \L \to \cat{E} \to \cat{E}/\L \to 0$$
of \glin{G}\ locally free sheaves over $X$ of ranks 1, $r+1$ and $r$ respectively. By Proposition \ref{prop lb structure} and the induction assumption on $r$, we may assume $\L$ and $\cat{E} / \L$ are both trivial. It will then be enough to construct a $G$-\equi\ splitting map $\cat{E} / \L \to \cat{E}$.

By Proposition \ref{prop contain inv aff}, we may assume $X = \spec{A}$. Suppose $\cat{E} / \L \cong \oplus_{i = 1}^r A \beta_i$ for some $\beta_i \in G^*$. By considering the $\beta_i$-component of the exact sequence of $G$-\repn s, we have a surjective map
$$\cat{E}_{\beta_i} \to (\cat{E}/\L)_{\beta_i} \cong (A_{\epsilon}) \beta_i \oplus \bigoplus_{j \neq i} (A_{\dual{\beta_j} \beta_i}) \beta_j.$$
We may then pick a lifting $m_i \in \cat{E}_{\beta_i}$ of $\beta_i$ to define our splitting map and it will be $G$-\equi.  \claimend

\medskip

Let 
$$\emptyset = X_0 \subset X_1 \subset \cdots \subset X_m = X$$
be a filtration given by clam 3. Consider the following diagram (2) (not necessarily commutative) :
$$\begin{CD}
0 @>>> i_* \cob{G}{s}{\P|_Z} @>>> \cob{G}{s}{\P|_Y} @>{j^*}>> \cob{G}{s}{\P|_U} @>>> 0 \\
@. @A{\Phi}AA @A{\Phi}AA @A{\Phi}AA @. \\
0 @>>> {\oplus_{i=0}^r i_* \cob{G}{s}{Z}} @>>> {\oplus_{i=0}^r  \cob{G}{s}{Y}} @>{j^*}>> {\oplus_{i=0}^r  \cob{G}{s}{U}} @>>> 0 \\
\end{CD}$$
where $Y = X_j$, $Z = X_{j-1}$ and $U = Y - Z$ and the following diagram (3) (not necessarily commutative) :
$$\begin{CD}
\cob{G}{s}{\P|_Z} @>{i_*}>> \cob{G}{s}{\P|_Y}  \\
@A{\Phi}AA @A{\Phi}AA  \\
{\oplus_{i=0}^r  \cob{G}{s}{Z}} @>{i_*}>> {\oplus_{i=0}^r  \cob{G}{s}{Y}}
\end{CD}$$
By the localization property, the rows in diagram (2) are exact. Diagram (3) is clearly commutative. So the left column of diagram (2) is well-defined. By induction ($Z = X_0$ case is trivial), the left column of diagram (3) is surjective. Then so is the left column of diagram (2). The right column of (2) is surjective by the trivial case. Therefore, the middle column of diagram (2) is surjective.

By induction (not needed for the $Z = X_0$ case), the left column of (3) has an extendable left inverse $\Psi_Z$. So, we can define 
$$\overline{\Psi}_Z : \cob{G}{s}{\P|_Y} \to \oplus_{i=0}^r  \cob{G}{s}{Y}$$
as its extension, i.e., diagram (3) is commutative \wrt\ $\Psi_Z, \overline{\Psi}_Z$. So, the restriction of $\overline{\Psi}_Z$
$$\overline{\Psi}_Z : i_* \cob{G}{s}{\P|_Z} \to \oplus_{i=0}^r  i_*\cob{G}{s}{Z}$$
is well-defined. Therefore, the left square of diagram (2) \wrt\ $\overline{\Psi}_Z$ is commutative. By the trivial case, the right column of diagram (2) also has an extendable left inverse $\Psi_U$. So, we can define 
$$\overline{\Psi}_U : \cob{G}{s}{\P|_Y} \to \oplus_{i=0}^r  \cob{G}{s}{Y}$$
as its extension, i.e., the right square of diagram (2) \wrt\ $\Psi_U, \overline{\Psi}_U$ is commutative. By some diagram chasing, $\overline{\Psi}_Z$ is a left inverse of $\Phi$ on the left column of diagram (2). Moreover, $\overline{\Psi}_Z, \overline{\Psi}_U$ are clearly extendable.

Now, for an element $x \in {\oplus_{i=0}^r  \cob{G}{s}{Y}}$, we have
$$y \defeq x - \overline{\Psi}_U \circ \Phi(x) \in \kernel{j^*}$$
By the exactness of the bottom row of diagram (2), $y$ lies inside ${\oplus_{i=0}^r i_* \cob{G}{s}{Z}}$. Then,
$$y = \overline{\Psi}_Z \circ \Phi(y).$$
Therefore,
$$x - \overline{\Psi}_U \circ \Phi(x) = \overline{\Psi}_Z \circ \Phi(x - \overline{\Psi}_U \circ \Phi(x))$$
$$x  = \overline{\Psi}_Z \circ \Phi(x) + \overline{\Psi}_U \circ \Phi(x) - \overline{\Psi}_Z \circ \Phi \circ \overline{\Psi}_U \circ \Phi (x)$$
In other words, 
$$\Psi \defeq \overline{\Psi}_Z + \overline{\Psi}_U - \overline{\Psi}_Z \circ \Phi \circ \overline{\Psi}_U$$
is a left inverse of $\Phi$ of the middle column of diagram (2) and it is extendable. The result then follows by induction on $j$.
\end{proof}

Next, we will show that the extended homotopy property holds in the special theory $\cob{G}{s}{-}$. To simplify our computations, for an object $X \in \gvar{G}$, we will call a $G$-\equi\ \morp\ $f : F \to X$ a $G$-\equi\ torsor if there is an exact sequence of \glin{G}\ locally free sheaves $\cat{E}, \cat{E}'$ of finite ranks over $X$ :
$$0 \to \O_X \to \cat{E} \to \cat{E}' \to 0$$
such that $f$ is isomorphic to the projection $\P(\cat{E}) - \P(\cat{E}') \to X$.

\begin{prop}
\label{prop EH}
Suppose $\char{k} = 0$. For any $G$-\equi\ torsor $f : F \to X$ with $X \in \gvar{G}$, 
$$f^* : \cob{G}{s}{X} \to \cob{G}{s}{F}$$
is an isomorphism.
\end{prop}

\begin{proof}
By our definition, $f$ is isomorphic to $\P(\cat{E}) - \P(\cat{E}') \to X$ where
$$0 \to \O_X \to \cat{E} \to \cat{E}' \to 0$$
is exact. By considering $\P(\cat{E}')$ as an \inv\ \sm\ divisor on $\P(\cat{E})$, we have 
$$\O_{\P(\cat{E})}(\P(\cat{E}')) \cong \O_{\P(\cat{E})}(1)$$
(see the proof of Lemma 3.6.1 in \cite{universal alg cobor}). Let $\pi : \P(\cat{E}) \to X$, $\pi' : \P(\cat{E}') \to X$ be the projections and $i : \P(\cat{E}') \to \P(\cat{E})$ be the immersion. Then,
$$i_* \circ c(\O_{\P(\cat{E}')}(1))^i \circ \pi'^* = c(\O_{\P(\cat{E})}(1))^{i+1} \circ \pi^*$$
(see claim 2 of the proof of Theorem \ref{thm PBF}). By the localization property and the projective bundle formula, 
\begin{eqnarray*}
\cob{G}{s}{F} &\cong& \cob{G}{s}{\P(\cat{E})} / i_*\cob{G}{s}{\P(\cat{E}')} \\
&\cong& \oplus_{i=0}^r \cob{G}{s}{X} / i_* \oplus_{i=0}^{r-1} \cob{G}{s}{X} \\
&\cong& \cob{G}{s}{X}
\end{eqnarray*}
which is the $i=0$ level. So the isomorphism is given by $\pi^*$ as desired.
\end{proof}

As a consequence, we have the following splitting principle in $\cob{G}{s}{-}$.

\begin{prop}
\label{prop splitting principle for special}
Suppose $\char{k} = 0$. For any $Y \in \gsmcat{G}$ and \glin{G}\ locally free sheaf $\cat{E}$ of finite rank over $Y$, there exists a \sm\ morphism $f : Y' \to Y$ in $\gsmcat{G}$ such that $f^* \cat{E}$ is the direct sum of \glin{G}\ invertible sheaves and 
$$f^* : \cob{G}{s}{Y} \to \cob{G}{s}{Y'}$$
is injective.
\end{prop}

\begin{proof}
Basically, it follows from the projective bundle formula and the extended homotopy property (see Remark 4.1.2 in \cite{universal alg cobor}). But we will give some justifications here due to our slightly different definition on torsor.

Suppose
$$0 \to L \to E \to E/L \to 0$$
is an exact sequence of $G$-\equi\ vector bundles over $X$. Let $F \defeq Hom_{v.b.}(E \x_X L, L)$ and
$$j : Hom_{v.b.}(E,L) \to \P(F)$$
by sending $f$ to 
$$j(f)|_x (u,v) \defeq f|_x(u) + v$$ 
for all $x \in X, u \in E_x, v \in L_x$. Also, let
$$F' \defeq \{f \in F\ |\ f|_x(0,v)=0 \text{ for all } x \in X, v \in L_x \}.$$

Then, $F, F'$ are both $G$-\equi\ vector bundles over $X$, $\P(F')$ is an \inv\ closed subscheme of $\P(F)$, $j$ is a $G$-\equi\ immersion identifying $Hom_{v.b.}(E, L)$ to $\P(F) - \P(F')$. Moreover, if we define
$$\phi : X \x \A^1 \to F/F'$$
(trivial action on fibers of $X \x \A^1$) by sending $(x,a)$ to $f|_x(u,v) \defeq av$, then it can be shown that $\phi$ defines a $G$-\equi\ vector bundle isomorphism. Therefore,
$$0 \to F' \to F \to F/F' \cong X \x \A^1 \to 0$$
is exact. Hence, the splitting bundle $Hom_{v.b.}(E,L)$ is a $G$-\equi\ torsor in our sense. 
\end{proof}

\begin{rmk}
\label{rmk splitting principle for special}
{\rm
Suppose $\char{k} = 0$. By the standard arguments, we may further show that if 
$$0 \to \cat{E}' \to \cat{E} \to \cat{E}'' \to 0$$
is an exact sequence of \glin{G}\ locally free sheaves of finite ranks over $Y \in \gsmcat{G}$, then there exists a \sm\ morphism $f : Y' \to Y$ in $\gsmcat{G}$ such that $f^* \cat{E}', f^*\cat{E}''$ are both direct sums of \glin{G}\ invertible sheaves, $f^*\cat{E} \cong f^*\cat{E}' \oplus f^*\cat{E}''$ and 
$$f^* : \cob{G}{s}{Y} \to \cob{G}{s}{Y'}$$
is injective.
}
\end{rmk}

With the aid of the projective bundle formula, we can now define the higher Chern class operators for \glin{G}\ locally free sheaves of arbitrary finite ranks in $\cob{G}{s}{-}$.

Suppose $\char{k} = 0$. As pointed out in Remark \ref{rmk grading on equi lazard}, there is a natural grading on $\lazard_G(F)$, which induces a natural grading on $\lazard_G^s(F)$. Therefore, there is a canonical (cohomological) grading on $\cob{G}{s}{-}$ (see the proof of Theorem \ref{thm realization fct} for more details). Suppose $X \in \gvar{G}$, $\cat{E}$ is a \glin{G}\ locally free sheaf of rank $r$ over $X$ and $\pi : \P \defeq \P(\cat{E}) \to X$ be the projection. Then, by the projective bundle formula, we have an isomorphism
$$\oplus_{i = 0}^{r-1} c(\O_{\P}(1))^i \circ \pi^* : \oplus_{i = 0}^{r-1} \cob{G}{s,n-i}{X} \iso \cob{G}{s,n}{\P}$$
Therefore, we can define the higher Chern class operators of $\cat{E}$ 
$$c_i(\cat{E}) : \cob{G}{s,n}{X} \to \cob{G}{s,n+i}{X}$$
by the equality
$$\sum_{i=0}^r (-1)^i c(\O_{\P}(1))^{r-i} \circ \pi^* \circ c_i(\cat{E}) = 0$$
(as \homo s from $\cob{G}{s, n}{X}$ to $\cob{G}{s, n+r}{\P}$) with $c_0(\cat{E}) \defeq \id$.

\begin{prop}
\label{prop Chern class properties}
Suppose $X \in \gvar{G}, \L \in \picard{G}{X}, \cat{E}$ is a \glin{G}\ locally free sheaf of finite rank over $X$ and $f : X' \to X$ is a \morp\ in $\gvar{G}$. Then we have the following list of basic properties :

\noindent \begin{statementslist}
{\rm (1)} & $c_1(\L) = c(\L)$ (agrees with our usual first Chern class operator) \\
{\rm (2)} & If $f$ is \sm,  then $$c_i(f^* \cat{E}) \circ f^* = f^* \circ c_i(\cat{E})$$  \\
{\rm (3)} & If $f$ is \proj,  then $$c_i( \cat{E}) \circ f_* = f_* \circ c_i(f^* \cat{E})$$
\end{statementslist}
\end{prop}

\begin{proof}
Part (1) follows from the facts that $\pi : \P(\L) \to X$ is an isomorphism and $\pi^* \L \cong \O_{\P(\L)}(1)$. Parts (2) and (3) follow from the definition of higher Chern class operators and the projective bundle formula.
\end{proof}

\bigskip
\bigskip

\section{Comparison with other algebraic cobordism theories}
\label{sect comparison with other theories}

In this section, we will compare our \equi\ algebraic cobordism theory $\cob{}{G}{-}$ to other algebraic cobordism theories, namely, the non-\equi\ algebraic cobordism theory $\gO(-)$ as in \cite{universal alg cobor} and the \equi\ algebraic cobordism theory, which will be denoted by $\cob{\rm Tot}{G}{-}$ to avoid confusion, defined in \cite{homo equi alg cobor} (There is also a possibly equivalent definition in \cite{homo equi alg cobor 2}). 

Let us consider the theory $\Omega(-)$ first. We need to understand the relation between the universal representing ring $\lazard_G(F)$ and $\lazard$. We will use the same assumptions on $G$ and $k$ as in section \ref{sect notation}. In particular, we will not assume $\char{k} = 0$.

Recall definition 12.2 in \cite{equi FGL}. For a commutative ring $R$, denote the topological $R$-module obtained as the inverse limit of the free $R$-modules with basis $1 = y(V^0)$, $y(V^1)$, $y(V^2), \ldots, y(V^s)$ by $R\{\{F\}\}$. Then, a $(G,F)$-\fgl\ over a commutative ring $R$ is a topological $R$-module $R\{\{F\}\}$ with product given by
$$y(V^i) \cdot y(V^j) = \sum_{s \geq 0} b^{i,j}_s\, y(V^s)$$
with $b^{i,j}_s \in R$, a $G^*$-action given by 
$$l_{\ga}\, y(V^i) = \sum_{s \geq 0} d(\ga)^i_s\, y(V^s)$$
with $d(\ga)^i_s \in R$ and coproduct given by 
$$\gD y(V^i) = \sum_{s,t \geq 0} f^i_{s,t}\, y(V^s) \hat{\otimes} y(V^t)$$
with $f^i_{s,t} \in R$ satisfying the following conditions :

\medskip

\noindent \begin{statementslist}
{\rm (1)} & For all $i,s$, the coefficients $b^{i,j}_s = 0$ for sufficiently large $j$, and similarly with $i$ and $j$ exchanged. \\
{\rm (2)} & For all $\ga,s$, the coefficients $d(\ga)^i_s = 0$ for sufficiently large $i$. \\
{\rm (3)} & For all $s,t$, the coefficients $f^i_{s,t} = 0$ for sufficiently large $i$. \\
{\rm (R)} & The product is commutative, associative and unital. \\
{\rm (A)} & The action is through ring \homo s, associative and unital. \\
{\rm (T)} & The coproduct is through ring \homo s, \equi\ in the sense that $\gD \circ l_{\ga \gb} = ( l_{\ga}\, \hat{\otimes}\, l_{\gb} ) \circ \gD$, commutative, associative and unital. \\
{\rm (I)} & For all $i$, the ideal $(y(V^i))$ has additive topological basis $y(V^i)$, $y(V^{i+1}), \ldots$
\end{statementslist}

\medskip

\begin{rmk}
\label{rmk grading on equi lazard}
{\rm
By Corollary 14.3 in \cite{equi FGL}, one can show that there is a natural grading on $\lazard_G(F)$ given by
\begin{eqnarray}
\deg b^{i,j}_s &=& i + j - s, \nonumber\\
\deg d(\alpha)^i_s &=& i-s, \nonumber\\
\deg f^i_{s,t} &=& i-s-t. \nonumber
\end{eqnarray}
}
\end{rmk}

\medskip

Now, suppose $H$ is a closed subgroup of $G$ such that the pair $(H,k)$ is split (see section \ref{sect notation} for definition). We would like to define a canonical ring homo\morp\ $\lazard_G(F) \to \lazard_H(F)$. Since the induced map of character groups $G^* \to H^*$ is surjective, the complete $G$-universe $\cat{U}$ can be regarded as a complete $H$-universe and so is the complete $G$-flag $F$. Since the topological $\lazard_H(F)$-module $\lazard_H(F)\{\{F\}\}$ has product, $G^*$-action and coproduct that satisfies all the conditions listed, by the universal property of $\lazard_G(F)$, there is a canonical ring \homo
$$\gPH_{H \embed G} : \lazard_G(F) \to \lazard_H(F),$$ 
which sends structure constants in $\lazard_G(F)$ to structure constants in $\lazard_H(F)$, and it is surjective because $\lazard_H(F)$ is generated by structure constants (Corollary 14.3 in \cite{equi FGL}). 

For an object $X \in \gvar{G}$, we can then define a map
$$\gPS_{H \embed G} : \basicmod{G}{F}{}{X} \to \basicmod{H}{F}{}{X}$$ 
as the restriction of the map $\bigbasicmod{G}{F}{}{X} \to \bigbasicmod{H}{F}{}{X}$ which sends $a [Y \to X, \L_1, \ldots]$ to $\Phi_{H \embed G}(a) [Y \to X, \L_1, \ldots]$, by considering $Y \to X$ as a \proj\ map in $\gvar{H}$ and $\L_i$ as sheaves in $\picard{H}{Y}$. Notice that if we consider $\basicmod{H}{F}{}{X}$ as a $\lazard_G(F)$-module via the map $\Phi_{H \embed G}$, then $\Psi_{H \embed G}$ will be a $\lazard_G(F)$-module \homo.

\begin{prop}
Suppose $H$ is a closed subgroup of $G$ such that the pair $(H,k)$ is split. Then, for all $X \in \gvar{G}$, $\gPS_{H \embed G}$ defines a canonical $\lazard_G(F)$-module homo\morp 
$$\cob{}{G}{X} \to \cob{}{H}{X}$$ 
and it commutes with the four basic operations.
\end{prop}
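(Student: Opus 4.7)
The plan is to verify that $\gPS_{H \embed G}$ respects the $\textbf{(Sect)}$ and $\textbf{(EFGL)}$ relations so that it descends to the quotients, and then to check compatibility with the four basic operations. The arguments are essentially formal, resting on the fact that restriction of structure along $H \embed G$ preserves all the data in sight.

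First I would confirm that $\gPS_{H \embed G}$ is well defined on $\basicmod{G}{F}{}{X}$. Every $G$-\equi\ \proj\ \morp\ $f : Y \to X$ is automatically $H$-\equi\ with $Y$ still \sm, and each $G$-irreducible component of $Y$ is a finite disjoint union of $H$-irreducible pieces; using Remark \ref{rmk basic defn} to split if needed, the class $[f : Y \to X, \ldots]$ makes sense in $\basicmod{H}{F}{}{X}$. Each \glin{G}\ invertible sheaf $\L_j$ carries an induced $H$-linearization, and since $V^i$ is a $G$-\repn\ it is also an $H$-\repn\ via restriction, so the characters $\ga_i$ defining the twisted sequences $V^{i_j}_{S_j}(\L_j)$ restrict along $G^* \to H^*$ and the twisted sequences read the same way in both theories.

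Next I would verify the two axioms. For $\textbf{(Sect)}$, any $G$-\inv\ section $s \in {\rm H}^0(Y,\L)^G \subset {\rm H}^0(Y,\L)^H$ that cuts out a $G$-\inv\ \sm\ divisor $Z$ also cuts out $Z$ as an $H$-\inv\ \sm\ divisor, so $[\id_Y, \L] = [Z \embed Y]$ holds in $\cob{}{H}{Y}$. For $\textbf{(EFGL)}$, the crucial point is that the universal property defining $\gPH_{H \embed G}$ forces it to carry the structure constants $b^{i,j}_s$, $d(\ga)^i_s$, $f^i_{s,t}$ of $\lazard_G(F)$ onto the corresponding structure constants of $\lazard_H(F)$ (with $\ga$ replaced by its restriction $\bar\ga \in H^*$). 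Since $\L \otimes \ga$ as a \glin{G}\ sheaf restricts to $\L \otimes \bar\ga$ as an \glin{H}\ sheaf, applying $\gPS_{H \embed G}$ to each of the three EFGL relations in $\cob{}{G}{Y}$ produces exactly the corresponding EFGL relation in $\cob{}{H}{Y}$, giving descent to the quotient.

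Finally, compatibility with the four basic operations is essentially definitional. Proper push-forward and \sm\ pull-back act at the cycle level via composition and the Cartesian square construction, both of which commute with the forgetful functor $\gvar{G} \to \gvar{H}$. An infinite Chern class operator $\gs = \sum_I a_I V^{i_1}_{S_1}(\L_1) \cdots V^{i_r}_{S_r}(\L_r)$ is carried by $\gPS_{H \embed G}$ to the operator with coefficients $\gPH_{H \embed G}(a_I)$ acting by the same formula on pulled-back sheaves; external product, being built from the categorical product in $\gvar{G}$, is likewise preserved. The only real bookkeeping obstacle is pinning down the structure constant correspondence under character restriction in the $\textbf{(EFGL)}$ step, but this is immediate once one invokes the universal property of $\lazard_G(F)$.
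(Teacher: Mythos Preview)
Your proposal is correct and follows essentially the same approach as the paper's proof, which is a terse two-sentence argument that the map ``clearly commutes with the basic operations'' and respects \textbf{(Sect)} and \textbf{(EFGL)} ``immediately from the definition'' (by identifying $b^{i,j}_s$ with $\gPH_{H \embed G}(b^{i,j}_s)$ and similarly for the other structure constants). You have simply unpacked these assertions in more detail, including the useful observation about $G$-irreducible pieces splitting into $H$-irreducible ones, which the paper leaves entirely implicit.
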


\begin{proof}
The map $\gPS_{H \embed G}$ clearly commutes with the basic operations. The fact that it respects the \textbf{(Sect)} and \textbf{(EFGL)} axioms (by identifying $b^{i,j}_s$ with $\gPH_{H \embed G}(b^{i,j}_s)$ and similarly for other structure constants) follows immediately from the definition.
\end{proof}

As pointed out in example 12.3 (ii) in \cite{equi FGL}, if $G$ is the trivial group $\{1\}$, then the notion of ``$(G,F)$-\fgl'' agrees with the notion of ``\fgl'' as in \cite{universal alg cobor}. More precisely, $R\{\{F\}\} = R[[y]]$ with $y(V^i) = y^i$ and the coproduct 
$$\gD : R[[y]] \to R[[u]] \hat{\otimes} R[[v]] \cong R[[u,v]]$$
is given by
$$\gD(y) = F(u,v) = \sum_{s,t \geq 0} a_{s,t}\, u^s v^t.$$
Therefore, there is a canonical ring iso\morp\ $\overline{\gPH} : \lazard_{\{1\}}(F) \iso \lazard$ which sends $b^{i,j}_s$ to $\gd^{i+j}_s$, $d(\gep)^i_s$ to $\gd^i_s$ and $f^1_{s,t}$ to $a_{s,t}$. In addition, since 
$$(\sum_{s,t \geq 0}\ f^1_{s,t}\, y^s \otimes y^t)^i = (\gD(y))^i = \gD(y^i) = \sum_{s,t \geq 0}\ f^i_{s,t}\, y^s \otimes y^t,$$
the map $\overline{\gPH}$ will send $f^i_{s,t}$ to $a^i_{s,t}$ if we define elements $a^i_{s,t} \in \lazard$ by the equation :
$$(\sum_{s,t \geq 0}\ a_{s,t}\, u^s v^t)^i = \sum_{s,t \geq 0}\ a^i_{s,t}\, u^s v^t.$$
Thus, for any object $X \in \gvar{G}$, we have a canonical $\lazard$-module \homo
$$\overline{\gPS} : \basicmod{\{1\}}{F}{}{X} \to \Omega(X)$$ 
which sends $\sum_I a_I [Y \to X, V^{i_1}_{S_1}(\L_1), \ldots, V^{i_r}_{S_r}(\L_r)]$ to 
$$\sum_I \overline{\Phi}(a_I) [Y \to X, \L_1, \ldots, \L_1, \L_2, \ldots, \L_2, \ldots, \L_r, \ldots, \L_r]$$ 
(for each $1\leq j \leq r$, there are $i_j - \# S_j$ copies of $\L_j$). Notice that it is a finite sum because of the \textbf{(Dim)} axiom in $\Omega(-)$.

\begin{prop}
\label{prop iso to non equi theory}
Suppose $G$ is the trivial group. Then, for all $X \in \gvar{G}$, $\overline{\gPS}$ defines a canonical $\lazard$-module iso\morp\ $\cob{}{G}{X} \iso \Omega(X)$ and it commutes with the four basic operations.
\end{prop}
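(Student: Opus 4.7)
The plan is to show that $\overline{\gPS}$ descends to $\cob{}{G}{X}$, commutes with the four basic operations, and is inverted by a canonically defined map $\gO(X)\to \cob{}{G}{X}$. On the basic level $\overline{\gPS}$ is manifestly a $\lazard$-module map compatible with \proj\ push-forward, \sm\ pull-back, Chern class operators, and external product, since the corresponding definitions are parallel in the two theories. Descent to $\cob{}{G}{X}$ reduces to checking that $\overline{\gPS}$ kills the \textbf{(Sect)} and \textbf{(EFGL)} relations. The \textbf{(Sect)} axiom is literally the one used in \cite{universal alg cobor}, so it poses no problem. Under $\overline{\gPH}$ one has $b^{i,j}_s\mapsto \gd^{i+j}_s$, $d(\gep)^i_s\mapsto \gd^i_s$ and $f^i_{s,t}\mapsto a^i_{s,t}$, so the first two \textbf{(EFGL)} equations map to tautologies in $\gO(Y)$, while the third becomes $c(\L\otimes\cat{M})^i[\id_Y] = F(c(\L),c(\cat{M}))^i[\id_Y]$, a consequence of \textbf{(FGL)} and \textbf{(Dim)} in $\gO$. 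Surjectivity is evident, since every generator of $\gO(X)$ lies in the image.

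For injectivity the plan is to construct an inverse $\Theta:\gO(X)\to \cob{}{G}{X}$ by setting, on the level of $\lazard\otimes_\Z Z(X)$,
$$\Theta\bigl(a\otimes [Y\to X,\L_1,\ldots,\L_r]\bigr) \defeq \overline{\gPH}^{-1}(a)\cdot [Y\to X,\L_1,\ldots,\L_r],$$
the right-hand side viewed as a finite element of $\basicmod{G}{F}{}{X}$. For $\Theta$ to descend to $\gO(X)$ one must check that \textbf{(Sect)}, \textbf{(Dim)}, and \textbf{(FGL)} map to zero. The \textbf{(Sect)} relation transfers verbatim, and once \textbf{(Dim)} is known in $\cob{}{G}{-}$, the \textbf{(FGL)} relation follows from \textbf{(EFGL)} by discarding tails that vanish. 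The main obstacle is therefore to prove the \textbf{(Dim)} statement in $\cob{}{G}{-}$ for trivial $G$: for every \girred{G}\ $Y\in$ \gsmcat{G}\ of dimension $n$ and every $\L_1,\ldots,\L_{n+1}\in\picard{G}{Y}$,
$$[\id_Y,\L_1,\ldots,\L_{n+1}] = 0 \text{ in } \cob{}{G}{Y}.$$

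I plan to establish this \textbf{(Dim)} identity by induction on $n$. The case $n=0$ follows because line bundles on reduced $0$-dimensional schemes are trivial, so $c(\L_1)[\id_Y]=0$ by \textbf{(Sect)}. For the inductive step, Proposition \ref{prop lb structure} (applied without character twist, since $G=\{1\}$) gives $\L_{n+1}\cong \O_Y(D)$ with $|D|\subsetneq Y$. Because $\L_{n+1}$ is trivial on $U\defeq Y\setminus|D|$, the \textbf{(Sect)} axiom forces $c(\L_{n+1})[\id_U]=0$, and localization (Theorem \ref{thm localization property}) supplies $y_0\in \cob{}{G}{|D|}$ with $c(\L_{n+1})[\id_Y]=i_*y_0$ for $i:|D|\embed Y$. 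Property \textbf{(A3)} then rewrites
$$c(\L_1)\cdots c(\L_{n+1})[\id_Y] = i_*\bigl(c(\L_1|_{|D|})\cdots c(\L_n|_{|D|})(y_0)\bigr),$$
and Theorem \ref{thm gen by geo cycle} expresses $y_0$ as a $\lazard_G(F)$-linear combination of geometric cycles $[Z_k\to |D|]$ with $Z_k\in$ \gsmcat{G}\ of dimension at most $n-1$. The projection formula reduces each summand to a class of the form $[\id_{Z_k},\L_1|_{Z_k},\ldots,\L_n|_{Z_k}]$ carrying $n$ line bundles on a source of dimension at most $n-1$, and such classes vanish by the inductive hypothesis.

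Once \textbf{(Dim)} is in place, the two compositions $\Theta\circ\overline{\gPS}$ and $\overline{\gPS}\circ\Theta$ act as the identity on generating iso\morp\ classes of cycles (after identifying $\lazard_{\{1\}}(F)$ with $\lazard$ via $\overline{\gPH}$), and therefore equal the identity maps on $\cob{}{G}{X}$ and $\gO(X)$. The hard part throughout is the \textbf{(Dim)} verification; the guiding idea is that whenever a line bundle is generically trivial, localization compresses $c(\L)[\id_Y]$ into the image of a closed-immersion push-forward, so iterating the Chern class operators drives the class into ever smaller-dimensional subschemes and eventually into the empty one.
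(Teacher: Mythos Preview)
Your overall strategy---build the inverse $\Theta$ on $\lazard\otimes_\Z Z(X)$ and reduce everything to verifying \textbf{(Dim)} in $\cob{}{G}{-}$---matches the paper's. The divergence, and the trouble, is in how you prove \textbf{(Dim)}.

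\medskip

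\textbf{First gap: characteristic.} The proposition is stated without assuming $\char{k}=0$ (the opening of this section explicitly says so), yet your induction step invokes Theorem~\ref{thm localization property} and Theorem~\ref{thm gen by geo cycle}, both of which carry a $\char{k}=0$ hypothesis (they rely on embedded desingularization and weak factorization). So as written your argument only covers characteristic zero, while the paper's proof is characteristic-free: it writes $\L_r\cong\cat{M}_1\otimes\dual{\cat{M}_2}$ with $\cat{M}_1,\cat{M}_2$ very ample, uses \textbf{(EFGL)} to reduce to the case where all $\L_i$ are very ample, and then appeals to (the $G=\{1\}$ case of) Lemma~2.3.9(1) in \cite{universal alg cobor}, a Bertini-type moving argument that needs no resolution of singularities.

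\medskip

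\textbf{Second gap: the dimension bound on $Z_k$.} Even granting $\char{k}=0$, the step ``Theorem~\ref{thm gen by geo cycle} expresses $y_0$ as a $\lazard_G(F)$-linear combination of geometric cycles $[Z_k\to|D|]$ with $\dim Z_k\le n-1$'' is not justified. Theorem~\ref{thm gen by geo cycle} says $\cob{}{G}{|D|}$ is generated by geometric cycles, with \emph{no} bound on $\dim Z_k$; its refined form only bounds the correction terms relative to the dimension of the source of a \emph{given} cycle, and the leading term $[Z\to|D|]$ survives unchanged. Since a projective morphism $Z\to|D|$ places no constraint on $\dim Z$, you cannot conclude $\dim Z_k\le n-1$, and without that your induction does not close. (Indeed, knowing that $\cob{}{G}{|D|}$ is generated by geometric cycles of dimension $\le\dim|D|$ is essentially \textbf{(Dim)} itself, so invoking it here would be circular.) One could try to trace through the proof of Lemma~\ref{lemma sheaf trivial over open subsch} to produce an explicit $y_0$ with dimension control, but you do not do this, and it would still leave the characteristic issue.

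\medskip

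The cleanest fix is to adopt the paper's route for \textbf{(Dim)}: reduce to very ample line bundles via the formal group law, then use the classical moving argument.
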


\begin{proof}
The map $\overline{\Psi}$ clearly commutes with the basic operations and respects the \textbf{(Sect)} axiom. For the \textbf{(EFGL)} axiom, 

\begin{eqnarray}
\overline{\gPS}(V^i(\L)V^j(\L)[\id_Y]) &=& c(\L)^i c(\L)^j[\id_Y] \nonumber\\
&=& \sum_{s \geq 0} \gd^{i+j}_s\, c(\L)^s[\id_Y] \nonumber\\
&=& \overline{\gPS}(\sum_{s \geq 0}\ b^{i,j}_s\, V^s(\L)[\id_Y]) \nonumber
\end{eqnarray}
and the rest is similar. Hence, $\overline{\gPS}$ defines a canonical $\lazard$-module \homo\ $\cob{}{G}{X} \to \Omega(X)$.

Its inverse is also naturally defined :
$$\overline{\gPS}^{-1} : \lazard \otimes_{\Z} Z(X) \to \cob{}{G}{X},$$
which sends $a[Y \to X, \L_1, \ldots, \L_r]$ to $\overline{\Phi}^{-1}(a) [Y \to X, \L_1, \ldots, \L_r]$.

Recall that $\Omega(-)$ is defined by imposing the \textbf{(Dim)}, \textbf{(Sect)} and \textbf{(FGL)} axioms on $\lazard \otimes_{\Z} Z(-)$. For the \textbf{(Dim)} axiom, we need to show the following claim.

\medskip

\noindent Claim 1 : Suppose $Y \in \smcat$ (the category of \sm, \qproj\ schemes over $k$) is irreducible, $r > \dim Y$ and $\L_1, \ldots, \L_r$ are invertible sheaves over $Y$. 
$$\overline{\gPS}^{-1}([\id_Y, \L_1, \ldots, \L_r]) = 0.$$

Let $\cat{M}_1$, $\cat{M}_2$ be two very ample invertible sheaves over $Y$ such that $\L_r \cong \cat{M}_1 \otimes \dual{\cat{M}_2}$. Then,
\begin{eqnarray}
\overline{\gPS}^{-1}([\id_Y, \L_1, \ldots, \L_r]) &=& c(\L_r)[\id_Y, \L_1, \ldots, \L_{r-1}] \nonumber\\
&=& c(\cat{M}_1 \otimes \dual{\cat{M}_2})[\id_Y, \L_1, \ldots, \L_{r-1}] \nonumber\\
&=& c(\cat{M}_1)[\id_Y, \L_1, \ldots, \L_{r-1}] + \gs \circ c(\cat{M}_2)[\id_Y, \L_1, \ldots, \L_{r-1}] \nonumber
\end{eqnarray}
for some $\gs$. So, w\withoutlog, we may assume $\L_1, \ldots, \L_r$ are all very ample. Then the result follows from an \equi\ version (with trivial group $G$) of Lemma 2.3.9(1) in \cite{universal alg cobor}. \claimend

\medskip

The map $\overline{\gPS}^{-1}$ clearly respects the \textbf{(Sect)} axiom. For the \textbf{(FGL)} axiom, $\overline{\gPS}^{-1}$ sends $c(\L \otimes \cat{M})[\id_Y]$ and $\sum_{s,t \geq 0}\, a_{s,t}\, c(\L)^s c(\cat{M})^t[\id_Y]$ to $c(\L \otimes \cat{M})[\id_Y]$ and $\sum_{s,t \geq 0}\, f^1_{s,t}\, V^s(\L) V^t(\cat{M})[\id_Y]$ respectively. Hence, it defines a canonical map 
$$\overline{\gPS}^{-1} : \gO(X) \to \cob{}{G}{X}.$$

Clearly, $\overline{\gPS} \circ \overline{\Psi}^{-1}$ is the identity. By claim 1, for any $Y \in$ $\gsmcat{G}$, $\L \in \picard{G}{Y}$ and $S$ as in (\ref{eqn basic element}), we have
$$V^n_S(\L)[\id_Y] = 0,$$
as elements in $\cob{}{G}{Y}$, for sufficiently large $n$. Therefore, $\overline{\Psi}^{-1}$ is surjective and that finishes the proof.
\end{proof}

\begin{cor}
\label{cor map to non equi theory}
For all $X \in \gvar{G}$, the ``forgetful map'' given by $\overline{\gPS} \circ \gPS_{\{1\} \embed G}$ defines a $\lazard_G(F)$-module \homo
$$\cob{}{G}{X} \to \Omega(X)$$
and it commutes with the four basic operations.
\end{cor}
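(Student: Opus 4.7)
The plan is to obtain the corollary as a direct composition of two facts established just before: the map $\gPS_{\{1\} \embed G} : \cob{}{G}{X} \to \cob{}{\{1\}}{X}$ from the discussion preceding Proposition \ref{prop iso to non equi theory} (which is a $\lazard_G(F)$-module \homo\ commuting with the four basic operations, once $\cob{}{\{1\}}{X}$ is viewed as a $\lazard_G(F)$-module through $\gPH_{\{1\} \embed G} : \lazard_G(F) \to \lazard_{\{1\}}(F)$), and the iso\morp\ $\overline{\gPS} : \cob{}{\{1\}}{X} \iso \gO(X)$ from Proposition \ref{prop iso to non equi theory}. The composition $\overline{\gPS} \circ \gPS_{\{1\} \embed G}$ is therefore well-defined as a map $\cob{}{G}{X} \to \gO(X)$.

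The key verifications are essentially formal. First, for $\lazard_G(F)$-linearity, one notes that $\gPS_{\{1\} \embed G}$ is $\lazard_G(F)$-linear by construction, while $\overline{\gPS}$ is $\lazard_{\{1\}}(F)$-linear and, via the ring \homo\ $\overline{\gPH} \circ \gPH_{\{1\} \embed G} : \lazard_G(F) \to \lazard$ (which makes $\gO(X)$ into an $\lazard_G(F)$-module), it is also $\lazard_G(F)$-linear. Hence the composition is $\lazard_G(F)$-linear. Second, compatibility with \proj\ push-forward, \sm\ pull-back, (infinite) Chern class operators and external product follows because each of the two constituent maps commutes with these operations, as already asserted in the preceding Proposition and the construction of $\gPS_{\{1\} \embed G}$.

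The only subtlety worth pausing on is that in $\cob{}{G}{-}$ the Chern class machinery involves the \emph{infinite} operators $V^i(\L)$, while in $\Omega(-)$ one has only the usual Chern class operator $c(\L)$, and the \textbf{(Dim)} axiom forces $c(\L)^n[\id_Y] = 0$ for $n > \dim Y$. But this is already built into the definition of $\overline{\gPS}$, since the map $\sum_I a_I [Y, V^{i_1}_{S_1}(\L_1), \ldots] \mapsto \sum_I \overline{\gPH}(a_I)[Y, \L_1, \ldots, \L_1, \ldots]$ is a finite sum precisely because of \textbf{(Dim)}. So no extra work is required, and the statement follows immediately. There is no real obstacle; the entire proof is a one-line composition, with the substantive content having already been absorbed into Proposition \ref{prop iso to non equi theory} and the construction of $\gPS_{\{1\} \embed G}$.
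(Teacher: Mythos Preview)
Your proposal is correct and matches the paper's approach: the paper gives no proof at all (just a $\square$), treating the corollary as immediate from the preceding proposition on $\gPS_{H \embed G}$ and Proposition \ref{prop iso to non equi theory}. Your expanded justification is accurate and, if anything, more explicit than the paper about the $\lazard_G(F)$-module structure and the role of \textbf{(Dim)} in making the image a finite sum.
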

\begin{flushright}
$\square$
\end{flushright}

Our second goal is to compare our theory $\cob{}{G}{-}$ to the \equi\ algebraic cobordism theory $\cob{\rm Tot}{G}{-}$ defined in \cite{homo equi alg cobor} using Totaro's approximation of $EG$. Following the basic assumptions in \cite{homo equi alg cobor}, $G$ will be a split torus of rank $n$ and $\char{k} = 0$ for the rest of this section.

Recall that $\cob{\rm Tot}{G}{X}$ is defined to be the inverse limit of $\Omega(X \x^G U_i)$ where $\{(W_i,U_i)\}$ is a good system of $G$-\repn s (in particular, the $G$-actions on $U_i$ are free) and $X \x^G U_i \defeq (X \x U_i) / G$ (see definition 1 in \cite{homo equi alg cobor} for details). When $G = (\torus)^n$, there is a simple choice of $\{(W_i, U_i)\}$. Let $W_i \defeq (\A^i)^n$ be a $G$-\repn\ with action given by 
$$(g_1, \ldots, g_n) \cdot (a_1, \ldots, a_n) \defeq (g_1 a_1, \ldots, g_n a_n)$$
and $U_i \defeq (\A^i - 0)^n$. Then $\{(W_i, U_i)\}$ forms a good system of $G$-\repn s. Also notice that $U_i/G \cong (\P^{i-1})^n$. For $1 \leq j \leq n$, let 
$$D_{ij} \defeq \P^{i-1} \x \cdots \x \P^{i-1} \x H \x \P^{i-1} \x \cdots \P^{i-1},$$
where $H \subset \P^{i-1}$ is a hyperplane in the $j$-th copy of $\P^{i-1}$, and we will consider $D_{ij}$ as a \sm\ divisor on $U_i/G$.

Let $\hat{\lazard}_G(F)$ be the completion of $\lazard_G(F)$ \wrt\ the ideal generated by the Euler classes and $\gamma_1, \ldots, \gamma_n$ be the natural set of generators of $G^* \cong \Z^n$. By Theorem 6.5 in \cite{equi FGL 2}, there is a ring isomorphism $\hat{\lazard}_G(F) \iso \lazard[[z_1, \ldots, z_n]]$ which sends $e(\gamma_j)$ to $z_j$. Therefore, we have a ring homo\morp
$$\phi : \lazard_G(F) \to \lazard[[z_1, \ldots, z_n]],$$ 
which sends $e(\gamma_j)$ to $z_j$. 

Now, for an object $X \in \gvar{G}$, we define an abelian group homo\morp 
$$\Psi_{\rm Tot} : \basicmod{G}{F}{}{X} \to \cob{\rm Tot}{G}{X}$$
by sending $a [f : Y \to X, \L_1, \ldots, \L_r]$ to
$$\sum_K a'_K\, c(\O(\hat{D}_{i1}))^{k_1} \cdots c(\O(\hat{D}_{in}))^{k_n} [\hat{f} : Y \x^G U_i \to X \x^G U_i, \hat{\L}_1, \ldots, \hat{\L}_r],$$
where $a'_K \in \lazard$ is given by the equation $\phi(a) = \sum_K a'_K z_1^{k_1} \cdots z_n^{k_n}$ with $K$ as multi-index, $\hat{D}_{ij}$ is the pull-back of $D_{ij}$ via $X \x^G U_i \to U_i/G$, $\hat{f} = f \x^G \id_{U_i}$ and $\hat{\L}_j$ are the sheaves naturally induced by $\L_j$. Notice that, by the \textbf{(Dim)} axiom in $\Omega(-)$, this gives a well-defined element in $\Omega(X \x^G U_i)$, for all $i$, and the case of infinite cycle is covered. Moreover, by its naturality, $\Psi_{\rm Tot}$ is also compatible with the inverse system. Hence, the map $\Psi_{\rm Tot}$ is well-defined.

\begin{prop}
\label{prop map to homo equi cobor theory}
Suppose $G$ is a split torus and $\char{k} = 0$. Then $\Psi_{\rm Tot}$ defines an abelian group homo\morp
$$\Psi_{\rm Tot} : \cob{}{G}{X} \to \cob{\rm Tot}{G}{X},$$
for any $X \in \gvar{G}$, and it commutes with the \proj\ push-forward, \sm\ pull-back and (first) Chern class operator.
\end{prop}

\begin{proof}
The only non-trivial part is the well-definedness of 
$$\Psi_{\rm Tot} : \cob{}{G}{X} \to \Omega(X \x^G U_i)$$
for all $i$. 

Recall that for a character $\alpha$, we call the element $e(\alpha) \defeq d(\alpha)^1_0 \in \lazard_G(F)$ the Euler class of $\alpha$. Let $I \subset \lazard_G(F)$ be the ideal generated by all the Euler classes and $\widehat{\lazard{\rm Z}}^{G,F}(X)$, $\cobcomp{}{G}{X}$ be the analogues of $\basicmod{G}{F}{}{X}$, $\cob{}{G}{X}$ respectively, defined by using coefficient ring $\hat{\lazard}_G(F)$ instead of $\lazard_G(F)$. Notice that there is an analogue 
$$\hat{\Psi}_{\rm Tot} : \widehat{\lazard{\rm Z}}^{G,F}(X) \to \Omega^G_{\rm Tot}(X)$$
of $\Psi_{\rm Tot}$ and the map $\Psi_{\rm Tot}$ factors through $\widehat{\lazard{\rm Z}}^{G,F}(X)$. Hence, it will be enough to show the well-definedness of 
$$\hat{\Psi}_{\rm Tot} : \cobcomp{}{G}{X} \to \Omega(X \x^G U_i).$$
Also, it is not hard to see that it respects the \textbf{(Sect)} axiom. So it remains to show that it respects the \textbf{(EFGL)} axiom.

Let $n$ be the rank of $G$. By Theorem 6.5 in \cite{equi FGL 2}, 
$$\hat{\lazard}_G(F)\{\{F\}\} \cong \lazard[[z_1, \ldots, z_n]]\{\{F\}\} \cong \lazard[[z_1, \ldots, z_n, \overline{y}]].$$
Following the notation in \cite{equi FGL} (see the proof of Lemma 13.1), for a character $\beta$, we let
$$y(\beta) = l_{\beta} y(V^1) = l_{\beta} y(\epsilon)$$
as an element in $\lazard_G(F)\{\{ F \}\}$ (or $\hat{\lazard}_G(F)\{\{F\}\}$).

\medskip

\noindent Claim 1 : For any character $\beta$, there are unique elements $g(\beta)_j \in \lazard[[z_1, \ldots, z_n]]$ such that 
\begin{eqnarray}
\label{eqn9}
y(\beta) = \sum_j g(\beta)_j \overline{y}^j.
\end{eqnarray}

By Theorem 6.5 in \cite{equi FGL 2},
\begin{eqnarray}
\label{eqn11}
\hat{\lazard}_G(F)\{\{F\}\} \cong \hat{\lazard}_G(F)[[\overline{y}]]
\end{eqnarray}
where $\overline{y}$ corresponds to $y(\epsilon)$. For any character $\beta$,
$$y(\beta) = e(\beta) + \overline{y}\, \sum_{i \geq 1} d(\beta)^1_i\, y(\alpha_2) \cdots y(\alpha_i).$$
For each $y(\alpha_j)$,  we can apply the above equation with $\beta = \alpha_j$. By repeating this argument, we got
$$y(\beta) = \sum_{i \geq 0} g(\beta)_i\, \overline{y}^i$$
for some elements $g(\beta)_i \in \hat{\lazard}_G(F)$ (Each $g(\beta)_i$ is given by a finite sum of elements in $\hat{\lazard}_G(F)$ because the flag $F$ is complete and $y(\epsilon) = \overline{y}$). These elements are unique by equation (\ref{eqn11}). \claimend

\medskip

\noindent Claim 2 : For all $n \geq 0$, $x \in \cob{}{G}{X}$, the element $c(\L)^m (x)$ lies inside $I^n \cob{}{G}{X}$ for sufficiently large $m$.

By Theorem \ref{thm gen by geo cycle}, we may assume $x = [f : Y \to X]$. If $\dim Y = 0$, then $f^* \L \cong \beta$ for some character $\beta$ and the statement is clearly true. The result then follows from Theorem \ref{thm gen by geo cycle} and the induction assumption on $\dim Y$. \claimend

\medskip

\noindent Claim 3 : For all \girred{G}\ $Y \in$ $\gsmcat{G}$, $\L \in \picard{G}{Y}$, the operator $\sum_j g(\beta)_j\, c(\L)^j$ is well-defined (in the theory $\cobcomp{}{G}{-}$) and 
\begin{eqnarray}
\label{eqn 37}
c(\L \otimes \beta) = \sum_j g(\beta)_j\, c(\L)^j.
\end{eqnarray}

For its well-definedness, since
$$\cobcomp{}{G}{X} = \lim_{\stackrel{\longleftarrow}{n}} \cob{}{G}{X}\, /\, I^n \cob{}{G}{X},$$
it suffices to show its well-definedness as a map $\cobcomp{}{G}{X} \to \cob{}{G}{X}\, /\, I^n \cob{}{G}{X}$ and it commutes with the maps in the inverse system. Then it follows from claim 2.

For the second part, in $\cobcomp{}{G}{-}$,
$$c(\L \otimes \beta) = e(\beta) + c(\L)\, \sum_{i \geq 1} d(\beta)^1_i\, c(\L \otimes \alpha_2) \cdots c(\L \otimes \alpha_i).$$
Then the result follows from a similar argument as in claim 1. \claimend

\medskip

Observe that $y(V^i) = y(\alpha_1) \cdots y(\alpha_i)$, which can be expressed in terms of $\overline{y}$ by equation (\ref{eqn9}). Therefore, the equation
\begin{eqnarray}
\label{eqn8}
y(V^i) y(V^j) = \sum_s b^{i,j}_s y(V^s) 
\end{eqnarray}
can be expressed in terms of $\overline{y}$ (the right hand side in terms of $\overline{y}$ is well-defined because the flag $F$ is complete). On the other hand, $V^i(\L) = c(\L \otimes \alpha_1) \cdots c(\L \otimes \alpha_i)$, which can also be expressed in terms of $c(\L)$ by equation (\ref{eqn 37}). Hence, by equation (\ref{eqn8}), we have
$$V^i(\L) V^j(\L)[\id_Y] = \sum_s b^{i,j}_s V^s(\L)[\id_Y]$$
(well-definedness while expressed in terms of $c(\L)$ follows from claim 2). The rest of the \textbf{(EFGL)} axiom follows from similar arguments.
\end{proof}

\begin{cor}
\label{cor lazard iso theory over a pt}
Suppose $G$ is a split torus and $\char{k} = 0$. If the completion map $\lazard_G(F) \to \hat{\lazard}_G(F)$ is injective, then the canonical ring homo\morp\
$$\lazard_G(F) \to \cob{}{G}{\pt}$$
is an iso\morp.
\end{cor}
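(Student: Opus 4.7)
The plan is to combine the surjectivity already established in Theorem~\ref{thm gen by lazard} with an injectivity argument obtained by factoring the canonical map $\lazard_G(F) \to \cob{}{G}{\pt}$ through the homomorphism $\Psi_{\rm Tot}$ of Proposition~\ref{prop map to homo equi cobor theory}. The only remaining task is to show that injectivity of the completion map $\lazard_G(F) \to \hat{\lazard}_G(F)$ forces injectivity of $\lazard_G(F) \to \cob{}{G}{\pt}$.

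First I would compute $\cob{\rm Tot}{G}{\pt}$ explicitly using the good system $(W_i, U_i) = ((\A^i)^n, (\A^i - 0)^n)$ introduced before Proposition~\ref{prop map to homo equi cobor theory}, for which $U_i/G \cong (\P^{i-1})^n$. Iterating the projective bundle formula for $\Omega(-)$ (Chapter~3 of \cite{universal alg cobor}) gives
\[
\Omega(U_i/G) \cong \lazard[t_1, \ldots, t_n]/(t_1^i, \ldots, t_n^i),
\]
with $t_j$ corresponding to the Chern class operator of $\O(D_{ij})$ acting on $[\id_{U_i/G}]$. Passing to the inverse limit produces a canonical iso\morp\ $\cob{\rm Tot}{G}{\pt} \iso \lazard[[z_1, \ldots, z_n]]$ sending the class of $c(\O(\hat{D}_{ij}))[\id]$ to $z_j$, and by Theorem~6.5 in \cite{equi FGL 2} this target is simultaneously identified with $\hat{\lazard}_G(F)$ via $e(\gamma_j) \mapsto z_j$.

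Next I would verify that the composition
\[
\lazard_G(F) \longto \cob{}{G}{\pt} \stackrel{\Psi_{\rm Tot}}{\longto} \cob{\rm Tot}{G}{\pt} \iso \hat{\lazard}_G(F)
\]
coincides with the completion map $\phi$. Tracing through the definition of $\Psi_{\rm Tot}$ in the paragraph preceding Proposition~\ref{prop map to homo equi cobor theory}, the image of $a\,[\id_{\pt}]$ in $\Omega(U_i/G)$ is obtained by writing $\phi(a) = \sum_K a'_K z_1^{k_1} \cdots z_n^{k_n}$ and substituting $z_j \mapsto c(\O(D_{ij}))$ acting on $[\id_{U_i/G}]$; under the projective bundle identification this is exactly the image of $\phi(a)$ modulo $(z_1^i, \ldots, z_n^i)$, and passing to the inverse limit recovers $\phi(a)$ itself. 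Injectivity of $\phi$ then forces $\lazard_G(F) \to \cob{}{G}{\pt}$ to be injective, which together with surjectivity completes the proof. The main step requiring care is the coherent matching of the two independent identifications of $\cob{\rm Tot}{G}{\pt}$ and of $\hat{\lazard}_G(F)$ with $\lazard[[z_1, \ldots, z_n]]$; this is bookkeeping once one observes that $\Psi_{\rm Tot}$ was built precisely via the assignment $e(\gamma_j) \mapsto c(\O(D_{ij}))$.
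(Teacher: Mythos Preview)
Your proposal is correct and follows essentially the same approach as the paper: surjectivity from Theorem~\ref{thm gen by lazard}, injectivity by showing the composition $\lazard_G(F) \to \cob{}{G}{\pt} \stackrel{\Psi_{\rm Tot}}{\longto} \cob{\rm Tot}{G}{\pt} \cong \lazard[[z_1,\ldots,z_n]]$ coincides with the completion map. The paper is terser, citing section~3.3 of \cite{homo equi alg cobor} for the identification $\cob{\rm Tot}{G}{\pt} \cong \lazard[[z_1,\ldots,z_n]]$ rather than deriving it from the projective bundle formula as you do, and it asserts rather than traces through that the composite is the completion map; your added bookkeeping is sound and fills in exactly those details.
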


\begin{proof}
The surjectivity is given by Theorem \ref{thm gen by lazard}. For the injectivity, let $n$ be the rank of $G$ and $f : \lazard_G(F) \to \cob{}{G}{\pt}$ be the canonical map. As pointed out in section 3.3 in \cite{homo equi alg cobor}, 
$$\cob{\rm Tot}{G}{\pt} \cong \lazard[[z_1, \ldots, z_n]]$$ 
with $z_j$ corresponding to the element $[D_{ij} \embed U_i/G] \in \Omega(U_i/G)$.  Then, by Proposition \ref{prop map to homo equi cobor theory}, we have a composition of maps
$$\lazard_G(F) \stackrel{f}{\longto} \cob{}{G}{\pt} \stackrel{\Psi_{\rm Tot}}{\longto} \cob{\rm Tot}{G}{\pt} \cong \lazard[[z_1, \ldots, z_n]],$$
which is nothing but the completion map $\lazard_G(F) \to \hat{\lazard}_G(F)$. Hence $f$ is injective.
\end{proof}

\begin{rmk}
{\rm
Notice that there is an analogue of Corollary \ref{cor lazard iso theory over a pt} in Topology. In his paper \cite{equi FGL 2}, Greenlees conjectured that, for any compact abelian Lie group $G$, the canonical ring \homo
$$\nu : \lazard_G(F) \to MU_G,$$
where $MU_G$ is Tom Dieck's equivariant cobordism ring, is an isomorphism. It is shown in \cite{equi FGL 2} that $\nu$ is surjective (Theorem 13.1) and the completion of $\nu$ \wrt\ the ideal generated by Euler classes is an isomorphism (Proposition 13.3). Hence the analogue of Corollary \ref{cor lazard iso theory over a pt} is also true in Topology, i.e., if $G$ is a torus and the completion map $\lazard_G(F) \to \hat{\lazard}_G(F)$ is injective, then $\nu$ is an isomorphism.
}
\end{rmk}

\bigskip
\bigskip

\section{More on $\cob{\rm Tot}{G}{-}$}
\label{sect more on Totaro cobor}

In Heller and Malag\'{o}n-L\'{o}pez's paper \cite{homo equi alg cobor}, they define an equivariant algebraic cobordism theory $\cob{\rm Tot}{G}{-}$ for connected linear algebraic group $G$ over a field of characteristic zero. In order to have a full comparison with our theory, we need to first extend their definition to allow $G$ to be a split diagonalizable group (not connected). Since Corollary 3 in \cite{homo equi alg cobor} still holds, $\cob{\rm Tot}{G}{-}$ is well-defined for such $G$ and its definition is still independent of the choice of good system of \repn s. In this section, we will compute $\cob{\rm Tot}{G}{\pt}$ and generalize Proposition \ref{prop map to homo equi cobor theory}\ and Corollary \ref{cor lazard iso theory over a pt}. As in \cite{homo equi alg cobor}, we will assume $\char{k} = 0$.

Let us first compute $\cob{\rm Tot}{G}{\pt}$ when $G$ is a cyclic group of order $n$. Let $\alpha$ be a generator of $G^*$. For all $i > 0$, let $W_i$ be $\A^i$ with $G$-action given by $g \cdot w = \alpha(g) w$ and $U_i \defeq W_i - 0$. Then $\{(W_i, V_i)\}$ forms a good system of \repn s. Let $E_i$ be the \equi\ line bundle over $\P(W_i)$ corresponding to the sheaf $\O_{\P^i}(n)$ and $s : \P(W_i) \to E_i$ be the zero section. 

Now, $W_i = \spec{k[x_1, \ldots, x_i]}$ and $U_i \supset D(x_j) = \spec{k[x_1, \ldots, x_i][x_j^{-1}]}$. On the other hand, $\P(W_i)|_{D(x_j)} = \spec{k[\frac{x_1}{x_j}, \ldots, \frac{x_i}{x_j}]}$ and 
$$E_i|_{D(x_j)} = \spec{k[\frac{x_1}{x_j}, \ldots, \frac{x_i}{x_j}][v_j]}$$ 
where $v_j$ corresponds to the section $x_j^{-n}$. Notice that the $G$-actions on $\P(W_i)$ and $E_i$ are both trivial. Define an \equi\ map
$$q_j : k[\frac{x_1}{x_j}, \ldots, \frac{x_i}{x_j}][v_j][v_j^{-1}] \to k[x_1, \ldots, x_i][x_j^{-1}]$$
by sending $\frac{x_k}{x_j}$ to $\frac{x_k}{x_j}$ and $v_j$ to $x_j^{-n}$ and let
$$p_j : D(x_j) \to (E_i - s(\P(W_i)))|_{D(x_j)}$$
be the corresponding \equi\ map. 

\begin{lemma}
The \equi\ maps $p_j$ patch together to define an \equi\ map
$$p : U_i \to E_i - s(\P(W_i))$$
and it is isomorphic to the quotient map $U_i \to U_i/G$.
\end{lemma}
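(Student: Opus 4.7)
The plan is to prove the lemma in two steps: first, verify that the local maps $p_j$ agree on overlaps and glue to a global $G$-equivariant morphism $p$; second, show that this $p$ is $G$-invariant and realizes the geometric quotient $U_i \to U_i/G$. The underlying geometric picture is that $E_i - s(\P(W_i))$ is the $\G_m/\mu_n$-bundle over $\P(W_i)$ obtained from the tautological $\G_m$-bundle $U_i = W_i - 0 \to \P(W_i)$ by taking the quotient by the subgroup $\mu_n \cong G$ acting through $\alpha$.

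For the patching step, I would use the standard transition formula for the line bundle $E_i = \spec_{\P(W_i)}(\mathrm{Sym}\,\O_{\P^i}(-n))$: over $D(x_j) \cap D(x_k)$ one has $v_j = (x_k/x_j)^n v_k$. Under $q_j$, $v_j$ is sent to $x_j^{-n}$, and under $q_k$, $v_k$ is sent to $x_k^{-n}$, so the two ring maps agree on the overlap since $(x_k/x_j)^n \cdot x_k^{-n} = x_j^{-n}$ in $k[x_1,\ldots,x_i][x_j^{-1},x_k^{-1}]$. The maps also agree on the $x_\ell/x_j$ and $x_\ell/x_k$ coordinates in the obvious way. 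Equivariance of each $p_j$ is automatic since both $\P(W_i)$ and $E_i$ carry trivial $G$-actions by construction, and one only needs to observe that the image of $q_j$ is contained in the $G$-invariant subring (see below).

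For the second step, I would first note that $p$ factors through $U_i/G$: on each $D(x_j)$, the image of $p_j^{\#}$ is contained in $(k[x_1,\ldots,x_i][x_j^{-1}])^G$, because $g \cdot (x_\ell/x_j) = x_\ell/x_j$ and $g \cdot x_j^{-n} = \alpha(g)^{-n} x_j^{-n} = x_j^{-n}$ (since $\alpha$ has order $n$). To see that the induced map $\bar p : U_i/G \to E_i - s(\P(W_i))$ is an isomorphism, I would compute the invariants explicitly: a Laurent monomial $x_1^{a_1}\cdots x_i^{a_i}$ in $k[x_1,\ldots,x_i][x_j^{-1}]$ is $G$-invariant iff $n \mid a_1 + \cdots + a_i$, so
\[
\bigl(k[x_1,\ldots,x_i][x_j^{-1}]\bigr)^G = k[x_1/x_j,\ldots,\widehat{x_j/x_j},\ldots,x_i/x_j][x_j^n, x_j^{-n}].
\]
Under the identification $v_j \leftrightarrow x_j^{-n}$, this is precisely the coordinate ring of $(E_i - s(\P(W_i)))|_{D(x_j)}$, and $\bar p^{\#}|_{D(x_j)}$ is the tautological identification. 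Since these isomorphisms agree on overlaps by the first step, $\bar p$ is an isomorphism globally.

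I do not expect a serious obstacle here; the content is essentially bookkeeping about the presentation of $\O_{\P^i}(n)$ and its total space in affine charts, together with the standard computation that the ring of $\mu_n$-invariants of $k[x_1,\ldots,x_i][x_j^{-1}]$ under the diagonal weight-one action is generated by the degree-$0$ ratios and by $x_j^{\pm n}$. The only mildly delicate point to be careful about is the sign convention in the transition formula (i.e., whether $E_i$ corresponds to $\O(n)$ or $\O(-n)$ on the nose, and which generator is called $v_j$); this is fixed once and for all by the stated prescription $v_j \leftrightarrow x_j^{-n}$.
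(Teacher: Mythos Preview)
Your proposal is correct and follows essentially the same approach as the paper's proof, which is extremely terse: the paper simply says the patching ``follows from the naturality of the definition of $p_j$'' and that ``it is not hard to see that $q_j$ is injective and its image is precisely $(k[x_1,\ldots,x_i][x_j^{-1}])^G$''. You have supplied exactly the details behind these two sentences---the transition check $v_j = (x_k/x_j)^n v_k$ and the explicit description of the $\mu_n$-invariants as $k[x_\ell/x_j][x_j^{\pm n}]$---so there is nothing to add.
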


\begin{proof}
The patching follows from the naturality of the definition of $p_j$. Also, it is not hard to see that $q_j$ is injective and its image is precisely $(k[x_1, \ldots, x_i][x_j^{-1}])^G$. Hence, $p_j$ is isomorphic to the quotient map and the result then follows.
\end{proof}

By the projective bundle formula, 
$$\Omega(\P(W_i)) = \oplus_{k=0}^{i-1}\, \lazard \cdot c(\O_{\P(W_i)}(1))^k [\id_{\P(W_i)}].$$
By the localization property, the sequence
$$\Omega(\P(W_i)) \stackrel{s_*}{\longto} \Omega(E_i) \longto \Omega(U_i/G) \longto 0$$
is exact. By the extended homotopy property, 
$$\Omega(E_i) = \oplus_{k=0}^{i-1}\, \lazard \cdot c(\L_i)^k [\id_{E_i}]$$
where $\pi : E_i \to \P(W_i)$ is the projection and $\L_i \defeq \pi^* \O_{\P(W_i)}(1)$. Notice that
$$s_*[\id_{\P(W_i)}] = c(\pi^* O_{\P(W_i)}(n)) [\id_{E_i}] = F^n(c(\L_i))[\id_{E_i}],$$
where $F^n(u) \in \lazard[[u]]$ is defined inductively by $F^0(u) \defeq 0$ and $F^{n+1}(u) \defeq F(F^n(u),u)$. Therefore,
$$\Omega(U_i/G) \cong (\oplus_{k=0}^{i-1}\, \lazard \cdot c(\L_i)^k [\id_{E_i}])\ /\ (c(\L_i)^l \circ F^n(c(\L_i))[\id_{E_i}] \ |\ 0 \leq l \leq i-1)$$
as $\lazard$-modules. Moreover, these isomorphisms commute with the maps in the inverse system. Hence, 
$$\cob{\rm Tot}{G}{\pt} = \lim_{\stackrel{\leftarrow}{i}} \Omega(U_i/G) \cong \lazard [[t]] / (F^n(t))$$
where $t^j$ is identified with $c(\L_i)^j [\id_{E_i}]$.

In general, suppose 
$$G \cong G_f \x G_t = (\prod_{j=1}^s G_j) \x G_t$$ 
where $G_j$ is a cyclic group of order $n_j$ and $G_t$ is a split torus of rank $r$. Let $\beta_j$ be a generator of $G_j^*$, $\gamma_1, \ldots, \gamma_r$ be the standard set of generators of $G_t^*$. Also, let $W(\beta_j)_i$, $W(\gamma_k)_i$ be $\A^i$ with $G$-actions given by $\beta_j$, $\gamma_k$ respectively, $U(\beta_j)_i \defeq W(\beta_j)_i - 0$, $U(\gamma_k)_i \defeq W(\gamma_k)_i - 0$, $W_i \defeq \prod_{j=1}^s W(\beta_j)_i \x \prod_{k=1}^r W(\gamma_k)_i$ and $U_i \defeq \prod_{j=1}^s U(\beta_j)_i \x \prod_{k=1}^r U(\gamma_k)_i$. Then $\{(W_i, U_i)\}$ forms a good system of \repn s. Similarly, we have 
$$U_i/ G = (\prod_{j=1}^s U(\beta_j)_i / G_j) \x (\prod_{k=1}^r U(\gamma_k)_i / \torus) \cong (\prod_{j=1}^r (E(\beta_j)_i - s(\P(W^j_i)))) \x \prod_{k=1}^r \P(W(\gamma_k)_i)$$
where $E(\beta_j)_i$ is the line bundle over $\P(W(\beta_j)_i)$ corresponding to the sheaf $\O_{\P(W(\beta_j)_i)}(n_j)$ and $s : \P(W(\beta_j)_i) \to E(\beta_j)_i$ is the zero section. As before, let 
$$\pi : F_i \defeq (\prod_{j=1}^r E(\beta_j)_i) \x \prod_{k=1}^r \P(W(\gamma_k)_i) \to \prod_{j=1}^r \P(W(\beta_j)_i),$$ 
$$\pi' : F_i  \to \prod_{k=1}^r \P(W(\gamma_k)_i)$$ 
be the projections, $\L(\beta_j)_i \defeq \pi^* \circ \pi_j^* \O_{\P(W(\beta_j)_i)}(1)$ and $\L(\gamma_k)_i \defeq {\pi'}^* \circ \pi_k^* \O_{\P(W(\gamma_k)_i)}(1)$. By similar calculations, we have 
\begin{eqnarray}
\label{eqn Totaro ring}
\cob{\rm Tot}{G}{\pt} \cong \lazard [[t_1, \ldots, t_s, z_1, \ldots, z_r]] / (F^{n_1}(t_1), \ldots, F^{n_s}(t_s))
\end{eqnarray}
\noindent where $t_j^p$ is identified with $c(\L(\beta_j)_i)^p [\id_{F_i}]$ and $z_k^q$ is identified with $c(\L(\gamma_k)_i)^q [\id_{F_i}]$.

Now, as in section \ref{sect comparison with other theories}, let 
$$\phi : \lazard_G(F) \to \hat{\lazard}_G(F) \cong \lazard [[t_1, \ldots, t_s, z_1, \ldots, z_r]] / (F^{n_1}(t_1), \ldots, F^{n_s}(t_s))$$
which sends $e(\beta_j)$, $e(\gamma_k)$ to $t_j$, $z_k$ respectively. For an object $X \in \gvar{G}$, we define an abelian group homo\morp 
$$\Psi_{\rm Tot} : \basicmod{G}{F}{}{X} \to \cob{\rm Tot}{G}{X}$$
by sending $a [f : Y \to X, \cat{M}_1, \ldots]$ to
$$\sum_{PQ} a'_{PQ}\, \prod_{j=1}^s c(\O(\widehat{\L(\beta_j)}_i))^{p_j} \circ \prod_{k=1}^r  c(\O(\widehat{\L(\gamma_k)}_i))^{q_k} [\hat{f} : Y \x^G U_i \to X \x^G U_i, \hat{\cat{M}}_1, \ldots],$$
where $a'_{PQ} \in \lazard$ is given by the equation $\phi(a) = \sum_{PQ} a'_{PQ} t_1^{p_1} \cdots t_s^{p_s} z_1^{q_1} \cdots z_r^{q_r}$ with $P, Q$ as multi-indices, $\widehat{\L(\beta_j)}_i$, $\widehat{\L(\gamma_k)}_i$ are the pull-backs of $\L(\beta_j)_i$, $\L(\gamma_k)_i$ respectively, via $X \x^G U_i \to U_i/G$, $\hat{f} = f \x^G \id_{U_i}$ and $\hat{\cat{M}}_l$ are the sheaves naturally induced by $\cat{M}_l$. For the same reason as in section \ref{sect comparison with other theories}, the map $\Psi_{\rm Tot}$ is well-defined.

\begin{prop}
\label{prop map to homo equi cobor theory 2}
Suppose $\char{k} = 0$. Then $\Psi_{\rm Tot}$ defines an abelian group homo\morp
$$\Psi_{\rm Tot} : \cob{}{G}{X} \to \cob{\rm Tot}{G}{X},$$
for any $X \in$ $\gvar{G}$, and it commutes with the \proj\ push-forward, \sm\ pull-back and (first) Chern class operator.
\end{prop}

\begin{proof}
See the proof of Proposition \ref{prop map to homo equi cobor theory}.
\end{proof}

\begin{cor}
\label{cor lazard iso theory over a pt 2}
Suppose $\char{k} = 0$. If the completion map $\lazard_G(F) \to \hat{\lazard}_G(F)$ is injective, then the canonical ring homo\morp\
$$\lazard_G(F) \to \cob{}{G}{\pt}$$
is an iso\morp.
\end{cor}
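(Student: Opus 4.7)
The plan is to mimic the proof of Corollary \ref{cor lazard iso theory over a pt} almost verbatim, using the newly computed description of $\cob{\rm Tot}{G}{\pt}$ for split diagonalizable $G$ and the extended $\Psi_{\rm Tot}$ map. Surjectivity of the canonical map $f : \lazard_G(F) \to \cob{}{G}{\pt}$ is already handed to us by Theorem \ref{thm gen by lazard}, so the entire content of the corollary lies in injectivity.

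For injectivity, I would form the composition
\[
\lazard_G(F) \xrightarrow{f} \cob{}{G}{\pt} \xrightarrow{\Psi_{\rm Tot}} \cob{\rm Tot}{G}{\pt}
\]
and identify the target. By equation (\ref{eqn Totaro ring}) we have a canonical ring isomorphism
\[
\cob{\rm Tot}{G}{\pt} \cong \lazard[[t_1,\ldots,t_s,z_1,\ldots,z_r]]/(F^{n_1}(t_1),\ldots,F^{n_s}(t_s)),
\]
and by Theorem 6.5 of \cite{equi FGL 2} (used already in the definition of $\Psi_{\rm Tot}$) this ring is canonically isomorphic to $\hat{\lazard}_G(F)$, with the generators $t_j$, $z_k$ corresponding to the Euler classes $e(\beta_j)$, $e(\gamma_k)$.

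The key verification is then that, under these identifications, the composition $\Psi_{\rm Tot} \circ f$ agrees with the completion homomorphism $\lazard_G(F) \to \hat{\lazard}_G(F)$. This is essentially built into the definition of $\Psi_{\rm Tot}$: unwinding the formula, $\Psi_{\rm Tot}(a\,[\id_{\pt}])$ is exactly the image of $\phi(a)$ under the identification of $\cob{\rm Tot}{G}{\pt}$ with $\hat{\lazard}_G(F)$, where $\phi$ is the map $\lazard_G(F) \to \hat{\lazard}_G(F)$ sending $e(\beta_j)\mapsto t_j$, $e(\gamma_k)\mapsto z_k$, i.e.\ the completion map itself. Granted this, if the hypothesis forces the completion map to be injective, then $f$ must be injective, which combined with surjectivity from Theorem \ref{thm gen by lazard} finishes the proof.

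The only step that requires any real care is the compatibility check in the previous paragraph; everything else is formal. The potential subtlety is that the $\Psi_{\rm Tot}$ used here is the extended version defined in this section (allowing $G$ to be split diagonalizable rather than a split torus), so one needs to be sure that the extension still factors through completion at the augmentation ideal generated by the Euler classes, and that the identification of $\cob{\rm Tot}{G}{\pt}$ with $\hat{\lazard}_G(F)$ really does send $[\hat{f}:Y\x^G U_i\to X\x^G U_i,\ldots]$ corresponding to $a\,[\id_{\pt}]$ to the image of $a$ under completion. Since $\Psi_{\rm Tot}$ was constructed precisely by writing $\phi(a) = \sum a'_{PQ} t_1^{p_1}\cdots z_r^{q_r}$ and substituting Chern classes of the tautological line bundles for the $t_j$ and $z_k$, this matches tautologically.
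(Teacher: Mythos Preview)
Your argument is correct and is essentially identical to the paper's own proof, which simply refers back to the proof of Corollary~\ref{cor lazard iso theory over a pt}: surjectivity by Theorem~\ref{thm gen by lazard}, and injectivity by identifying the composite $\lazard_G(F)\to\cob{}{G}{\pt}\xrightarrow{\Psi_{\rm Tot}}\cob{\rm Tot}{G}{\pt}\cong\hat{\lazard}_G(F)$ with the completion map via equation~(\ref{eqn Totaro ring}). Your additional remarks about the extended $\Psi_{\rm Tot}$ and the compatibility check are accurate and spell out what the paper leaves implicit.
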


\begin{proof}
See the proof of Corollary \ref{cor lazard iso theory over a pt}.
\end{proof}

\bigskip

\bigskip

\section{Comparison with the \equi\ K-theory}
\label{sect K theory}

In this section, we will compare our \equi\ algebraic cobordism theory to the \equi\ K-theory. Recall the following definition of \equi\ K-theory from \cite{equi K theory}. Suppose $G$ is a group scheme over $k$ and $X$ is in $\gvar{G}$. Denote the abelian category of $G$-\equi, coherent sheaves over $X$ by $M(G;X)$. Then define 
$$K'_n(G;X) \defeq K_n(M(G;X)).$$
Also, denote the abelian category of $G$-\equi, locally free coherent sheaves over $X$ by $P(G;X)$ and define 
$$K_n(G;X) \defeq K_n(P(G;X)).$$
We then have the following list of basic results (see section 2 in \cite{equi K theory}) :

\noindent \begin{statementslist}
{\rm (1)} & $K'_n(G;X)$ has flat pull-back and \proj\ push-forward. \nonumber\\
{\rm (2)} & If $G$ is the trivial group, then $K'_n(G;X) = K'_n(X)$ (the ordinary $K$-theory). \nonumber\\
{\rm (3)} & There is a natural iso\morp\ $R(G) \iso K'_0(G;\pt)$. \nonumber\\
%{\rm (4)} & If $H \to G$ is a group schemes \homo, then there is a natural \homo\ $K'_n(G;X) \to K'_n(H;X)$. \nonumber\\
{\rm (4)} & If $X$ is \sm\ and \qproj\ over $k$, then the natural \homo\ $K_n(G;X) \to K'_n(G;X)$ is an iso\morp\ (Proposition 2.20 in \cite{equi K theory}). \nonumber
\end{statementslist}

\begin{rmk}
{\rm
In this paper, we only focus on the $K_0'(G;-)$ and $K_0(G;-)$ theories. In order to have \proj\ push-forward, we need to consider $K_0'(G;-)$. But for external product, we need the ring structure on $K_0(G;-)$. Hence, we will focus on the category $\gsmcat{G}$.
}
\end{rmk}

For an object $X \in$ $\gsmcat{G}$\ and $\L \in \picard{G}{X}$, define 
$$c_K(\L) \defeq ([\O_X] - [\dual{\L}]) v^{-1}$$
and $V^i_{K,S}(\L)$ analogously as elements in $K_0(G;X)[v,v^{-1}]$. Then $K_0(G;-)[v,v^{-1}]$ is a theory on $\gsmcat{G}$\ with four basic operations, i.e.,
\begin{eqnarray}
f_![\cat{E}] &\defeq& v^{\dim f}\, \sum_i\, (-1)^i [R^i f_* \cat{E}] \nonumber\\
&& \text{(when $f$ is \proj\ and equidimensional),} \nonumber\\
f^*[\cat{E}] &\defeq& [f^* \cat{E}] \nonumber\\
&& \text{(when $f$ is flat),} \nonumber\\
c_K(\L)[\cat{E}] &\defeq& c_K(\L) \cdot [\cat{E}], \nonumber\\
\ [\cat{E_1}] \x [\cat{E_2}] &\defeq &[\gp_1^* \cat{E_1}] \cdot [\gp_2^* \cat{E_2}] \nonumber
\end{eqnarray}
where $\cat{E_1}$, $\cat{E_2}$ are \glin{G}\ locally free coherent sheaves over $X_1$, $X_2$ respectively.

Also recall the notion of ``multiplicative \fgl'' in section 7 of \cite{equi FGL 2} (also see \cite{multi FGL}). Suppose $G$ is an abelian compact Lie group and $F$ is a complete $G$-flag. A $(G,F)$-\equi\ \fgl\ (over a ring $R$) is multiplicative if its coproduct has the property
$$\gD y(\gep) = y(\gep) \otimes 1 + 1 \otimes y(\gep) - v\, y(\gep) \otimes y(\gep)$$
for some element $v \in R$. In other words, $f^1_{1,1} = -v$ and $f^1_{s,t} = 0$ if $s$ or $t > 1$. Denote its representing ring by $\lazard^m_G(F)$. Then we have a natural surjective map $\lazard_G(F) \to \lazard^m_G(F)$ (by sending $f^1_{1,1}$ to $-v$ and $f^1_{s,t}$ to zero if $s$ or $t > 1$). In addition, by Proposition 4.5 in \cite{multi FGL}, there is a natural iso\morp
$$\lazard^m_G(F)[v^{-1}] \iso R(G)[v,v^{-1}]$$
which sends Euler classes $d(\ga)^1_0 = e(\ga)$ to Euler classes $e_K(\ga) \defeq (1 - \dual{\ga})v^{-1}$ (by Remarks \ref{rmk FGL}, $\lazard^m_G(F)$ is generated by $v$ and the Euler classes, so the map is uniquely determined). Hence, there is a natural ring \homo\
$$\gPH_K : \lazard_G(F) \to \lazard^m_G(F) \to R(G)[v,v^{-1}].$$

\begin{rmk}
{\rm
Readers should be aware that the definitions of the Euler classes in the equivariant K-theory in our paper and \cite{multi FGL} (also \cite{equi FGL} and \cite{equi FGL 2}) are different. In our paper,
$$e_K(\alpha) = (1 - \dual{\alpha})v^{-1},$$
while in \cite{multi FGL},
$$\tilde{e}_K(\alpha) =  (1 - \alpha)v^{-1}$$
(see remark 3.4). That means the isomorphism 
$$\lazard^m_G(F)[v^{-1}] \iso R(G)[v,v^{-1}]$$
(Proposition 4.5 in \cite{multi FGL}) in fact sends $e(\alpha) = d(\alpha)^1_0$ to $\tilde{e}_K(\alpha) = (1- \alpha)v^{-1}$. But it can be easily remedied by composing with the isomorphism $R(G)[v,v^{-1}] \iso R(G)[v,v^{-1}]$ which sends $\alpha$ to $\dual{\alpha}$. Since we would like to have $e_K(\alpha) = c_K(\alpha)$ and the axiom \textbf{(Sect)} in the equivariant K-theory, this disagreement is inevitable (see the proof of Lemma \ref{lemma Sect in K theory}). 
}
\end{rmk}

Our first objective in this section is to define a canonical map from our \equi\ algebraic cobordism theory $\cob{}{G}{-}$ to the \equi\ K-theory $K_0(G;-)[v,v^{-1}]$. For the rest of this section, we will use the same assumptions on $G$ and $k$ as in section \ref{sect notation} (so that our theory $\cob{}{G}{-}$ is well-defined). 

We will start with showing some basic results in $K_0(G;-)[v,v^{-1}]$. As in $\cob{}{G}{-}$, let $\Endofin{K_0(G;-)[v,v^{-1}]}$ be the $R(G)[v,v^{-1}]$-subalgebra of $\Endo{K_0(G;-)[v,v^{-1}]}$ generated by $c_K(\L)$.

\begin{lemma}
\label{lemma basic axiom hold in K theory}
Axioms \textbf{(A1)-(A8)} hold in the \equi\ K-theory.
\end{lemma}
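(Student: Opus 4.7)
The plan is to verify each axiom by reducing it to a standard, well-known property of (equivariant) $K$-theory, and then note that the Chern class operator $c_K(\L) = ([\O] - [\dual{\L}])v^{-1}$ is a special case of multiplication by an element of $K_0(G;X)[v,v^{-1}]$, so operator identities involving $c_K$ collapse to identities in the commutative ring $K_0(G;X)[v,v^{-1}]$. I would also observe up front that $\Endofin{K_0(G;-)[v,v^{-1}]}$ is just the subring of $K_0(G;X)[v,v^{-1}]$ generated by classes of the form $c_K(\L)$ acting by multiplication, so every $\sigma$ in the statements corresponds to some element $[\sigma] \in K_0(G;X)[v,v^{-1}]$ and $\sigma^f$ corresponds to $f^*[\sigma]$.

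For \textbf{(A1)}, functoriality of flat pull-back for locally free sheaves is immediate from $(gf)^* \cat{E} \cong f^* g^* \cat{E}$. For \textbf{(A5)}, commutativity follows since $K_0(G;X)[v,v^{-1}]$ is a commutative ring. For \textbf{(A2)}, given a Cartesian square with $f$ projective and $g$ smooth, smoothness makes $g$ flat, so flat base change for higher direct images gives $g^* R^i f_* \cat{E} \cong R^i f'_* g'^* \cat{E}$; matching the factors $v^{\dim f} = v^{\dim f'}$ yields $g^* \circ f_! = f'_! \circ g'^*$. The reducedness of $X \x_Z Y$ is handled by Proposition \ref{prop stay reduced}, as in axioms \textbf{(A1)-(A8)} for $\cob{}{G}{-}$. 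For \textbf{(A3)}, since $c_K(\L)$ acts by multiplication, this is the projection formula $f_*(f^* x \cdot y) = x \cdot f_* y$ applied with $x = [\O] - [\dual{\L}]$; for \textbf{(A4)}, this is just the fact that flat pull-back is a ring map, so $g^*([\O] - [\dual{\L}]) = [\O] - [g^*\dual{\L}]$.

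For \textbf{(A6)} and \textbf{(A7)}, these are the standard Künneth-style compatibilities: smooth pull-back of the external product $[\pi_1^*\cat{E}_1] \cdot [\pi_2^*\cat{E}_2]$ is computed component-wise, and the projective push-forward of an external product decomposes via flat base change along the projections $\pi_i$. For \textbf{(A8)}, writing $\sigma(x) \x y = (\sigma(x) \otimes [\O_{X'}]) \cdot \pi_1^*(?)$-style manipulations reduce to the identity $\pi_1^*(zx) \cdot \pi_2^* y = \pi_1^* z \cdot (\pi_1^* x \cdot \pi_2^* y)$ in the commutative ring $K_0(G;X \x X')[v,v^{-1}]$, together with the fact that $\pi_1^*[\sigma]$ is exactly the operator $\sigma^{\pi_1}$ by the same calculation as in \textbf{(A4)}.

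The main obstacle, and really the only nontrivial input, is \textbf{(A2)}: one needs flat base change for coherent higher direct images to hold in the $G$-equivariant setting, which requires verifying that taking invariants/linearizations is compatible with the usual base change iso\morp. This is standard under our running hypotheses on $(G,k)$ (split, with $\char{k}$ coprime to $|G_f|$), since then the category of $G$-equivariant coherent sheaves is well-behaved and equivariant flat base change reduces to ordinary flat base change after forgetting the $G$-action, with the linearization transported along the canonical iso\morp. Once \textbf{(A2)} is in place, the remaining axioms are formal consequences of the ring structure on $K_0(G;-)[v,v^{-1}]$ and the projection formula, so the proof reduces to a brief verification rather than a long computation.
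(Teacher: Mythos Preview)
Your proposal is correct and takes essentially the same approach as the paper. The paper's proof is the single line ``Follows from the definitions and some basic facts about $G$-equivariant sheaves,'' and your write-up is precisely a spelled-out version of that sentence, identifying the relevant basic facts (flat base change, projection formula, ring structure) and reducing the operator statements to multiplication in the commutative ring $K_0(G;X)[v,v^{-1}]$.
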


\begin{proof}
Follows from the definitions and some basic facts about $G$-\equi\ sheaves.
\end{proof}

\begin{lemma}
\label{lemma FGL in K theory}
For any $X \in$ $\gsmcat{G}$, $\L$, $\cat{M} \in \picard{G}{X}$ and character $\alpha$, 

\noindent \begin{statementslist}
{\rm (1)} & $c_K(\O_X) = 0$ \\
{\rm (2)} & $c_K(\alpha) = e_K(\alpha)$ \\
{\rm (3)} & $c_K(\L \otimes \cat{M}) = c_K(\L) + c_K(\cat{M}) - v\, c_K(\L) c_K(\cat{M})$ \\
{\rm (4)} & $c_K(\dual{\L}) = \gs_K \cdot c_K(\L)$ for some $\gs_K \in \Endofin{K(G;X)[v,v^{-1}]}$.
\end{statementslist}
\end{lemma}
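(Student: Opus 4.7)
The plan is to proceed directly from the definition $c_K(\L) = ([\O_X] - [\dual{\L}])v^{-1}$ and carry out straightforward manipulations inside the ring $K_0(G;X)[v,v^{-1}]$. Parts (1) and (2) are essentially unpacking notation: for (1), $\dual{\O_X} \cong \O_X$ so the defining expression vanishes; for (2), the character $\alpha$, viewed as a sheaf on $X$ via $\pi_X^*$, contributes the class $\alpha \in R(G)$ through the $R(G)$-module structure on $K_0(G;X)$, so $c_K(\alpha) = (1 - \dual{\alpha})v^{-1}$, which is the definition of $e_K(\alpha)$.

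Part (3), which is the multiplicative formal group law, is the only computation of substance. I would use that $[\dual{\L \otimes \cat{M}}] = [\dual{\L}] \cdot [\dual{\cat{M}}]$ in the ring $K_0(G;X)$ and then expand both sides of the desired identity as polynomials in $v^{-1}$ with coefficients in $K_0(G;X)$. A direct comparison of the three terms
\[
([\O_X] - [\dual{\L}\otimes\dual{\cat{M}}])v^{-1}, \quad (2[\O_X] - [\dual{\L}] - [\dual{\cat{M}}])v^{-1}, \quad ([\O_X] - [\dual{\L}])([\O_X] - [\dual{\cat{M}}])v^{-1}
\]
yields the identity after cancellation.

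Part (4) is then a formal consequence of (1) and (3). Applying (3) with $\cat{M} = \dual{\L}$ gives
\[
0 = c_K(\O_X) = c_K(\L) + c_K(\dual{\L}) - v\,c_K(\L)\,c_K(\dual{\L}),
\]
so rearranging produces $c_K(\dual{\L}) = (-1 + v\,c_K(\dual{\L}))\cdot c_K(\L)$, and we can take $\gs_K := -1 + v\,c_K(\dual{\L})$, which lies in $\Endofin{K_0(G;X)[v,v^{-1}]}$. There is no real obstacle here; every step is a direct algebraic verification, mirroring Propositions \ref{prop Chern class basic} and \ref{prop FGL inverse} in the cobordism setting with the multiplicative FGL in place of the universal one.
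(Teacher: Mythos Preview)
Your proof is correct and follows essentially the same approach as the paper, which simply states that (1)--(3) follow directly from the definition and that (4) is an analogue of Proposition~\ref{prop FGL inverse}. Your explicit computation for (3) and the rearrangement $c_K(\dual{\L}) = (-1 + v\,c_K(\dual{\L}))\cdot c_K(\L)$ for (4) spell out exactly what the paper leaves implicit.
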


\begin{proof}
Part (1), (2), (3) follow directly from the definition. Part (4) is an analogue of Proposition \ref{prop FGL inverse}.
\end{proof}

We certainly need the \textbf{(Sect)} axiom to hold in the \equi\ K-theory.

\begin{lemma}
\label{lemma Sect in K theory}
Suppose $X \in$ $\gsmcat{G}$\ is \girred{G}\ and $\L$ is a sheaf in $\picard{G}{X}$. Moreover, if there exists an invariant section $s \in \gsection{X}{\L}^G$ that cuts out an invariant \sm\ divisor $Z$ on $X$, then the following equality holds in $K_0(G;X)[v,v^{-1}]$ {\rm :}
$$c_K(\L) = i_![\O_Z]$$
where $i : Z \embed X$ is the immersion.
\end{lemma}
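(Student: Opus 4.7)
The proof will proceed via the Koszul resolution attached to the $G$-invariant section $s$. Since $s: \O_X \to \L$ cuts out the smooth divisor $Z$, dualizing $s$ and using the identification $\dual{\L} \otimes \L \cong \O_X$ yields a map $\dual{\L} \to \O_X$ whose local model at any point of $Z$ is multiplication by a regular element (a local equation of $Z$). This gives the short exact sequence
$$0 \to \dual{\L} \to \O_X \to i_*\O_Z \to 0,$$
which is $G$-equivariant precisely because $s \in H^0(X,\L)^G$. Thus the relation $[i_*\O_Z] = [\O_X] - [\dual{\L}]$ holds in $K'_0(G;X)$, and under the isomorphism $K_0(G;X) \iso K'_0(G;X)$ (fact (4), using smoothness of $X$), the same relation holds in $K_0(G;X)$.

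Next, I would compute $i_![\O_Z]$ directly from the definition. The closed immersion $i : Z \embed X$ has relative dimension $\dim Z - \dim X = -1$, so the normalizing factor is $v^{\dim i} = v^{-1}$. Since $i$ is a closed immersion, $R^j i_* \O_Z = 0$ for $j > 0$, leaving only the $j = 0$ term. Combining this with the Koszul computation gives
$$i_![\O_Z] \;=\; v^{-1}[i_*\O_Z] \;=\; v^{-1}\bigl([\O_X] - [\dual{\L}]\bigr) \;=\; c_K(\L),$$
which is the desired equality.

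The only delicate point is ensuring that the Koszul sequence is genuinely exact in the $G$-equivariant category and that it produces a class in $K_0(G;X)$ (rather than merely $K'_0(G;X)$). Exactness is a local question and follows from $Z$ being a smooth Cartier divisor cut out by $s$; $G$-equivariance of all three terms and both arrows follows from $s$ being $G$-invariant; and the passage to $K_0$ uses the smoothness of $X$ via fact (4). No further input is required.
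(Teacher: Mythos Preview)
Your proof is correct and essentially identical to the paper's: both use the short exact sequence $0 \to \dual{\L} \to \O_X \to i_*\O_Z \to 0$ (the paper writes $\O_X(-Z)$ in place of $\dual{\L}$ via $\L \cong \O_X(Z)$) together with exactness of $i_*$ to conclude. Your version is more explicit about the computation of $i_!$ and the dimension shift by $v^{-1}$, but the argument is the same.
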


\begin{proof}
It follows from the facts that $\L \cong \O_X(Z)$, the functor $i_*$ is exact and the following sequence is exact :
$$0 \to \O_X(-Z) \to \O_X \to i_* \O_Z \to 0.$$
\end{proof}

Next, we will show that the double point relation holds in $K_0(G;-)[v,v^{-1}]$ by following the same recipe as in section \ref{sect basic properties}.

\begin{lemma}
\label{lemma DPR residue in K theory}
Suppose $Y$ is an object in $\gsmcat{G}$, $E_1$, $E_2$ are two invariant \sm\ divisors on $Y$ with transverse intersection $D$. Then, as elements in $K_0(G;D)[v,v^{-1}]$,
$$-v = - p_! [\O_{\P_D}]$$
where $p : \P_D \defeq \P(\O_D \oplus \O_D(E_1)) \to D$.
\end{lemma}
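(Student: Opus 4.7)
The plan is a direct computation using the definition of the Gysin map $p_!$ in equivariant $K$-theory. Since $\P_D = \P(\O_D \oplus \O_D(E_1))$ is a Zariski-locally trivial $\P^1$-bundle over $D$, the projection $p$ is a smooth projective morphism of relative dimension $1$. By the definition recalled just before the lemma,
$$p_![\O_{\P_D}] = v^{\dim p}\sum_{i \geq 0}(-1)^i\,[R^i p_* \O_{\P_D}] = v\sum_{i \geq 0}(-1)^i\,[R^i p_* \O_{\P_D}].$$

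The key input is the standard cohomology of the structure sheaf of a projective bundle: for any rank-$2$ $G$-linearized locally free sheaf $\cat{E}$ on $D$, one has $p_* \O_{\P(\cat{E})} \cong \O_D$ and $R^i p_* \O_{\P(\cat{E})} = 0$ for $i \geq 1$. This is verified by working Zariski-locally on $D$ (where $\P(\cat{E}) \cong \P^1 \times D$) and using the base-change/Künneth computation $H^0(\P^1,\O) = k$, $H^1(\P^1,\O) = 0$; the $G$-equivariance is preserved since the action is compatible with the bundle structure.

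Substituting this into the formula above yields
$$p_![\O_{\P_D}] = v \cdot [\O_D] = v$$
in $K_0(G;D)[v,v^{-1}]$, using that $[\O_D]$ is the multiplicative unit of $K_0(G;D)$. Negating both sides gives $-v = -p_![\O_{\P_D}]$, as desired. There is no real obstacle here; the only thing to watch is that the equivariant direct image computation behaves as in the non-equivariant case, which follows from the fact that $\P_D \to D$ is a locally trivial $\P^1$-bundle in the equivariant sense and the defining property of $p_!$.
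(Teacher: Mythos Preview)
Your proof is correct and considerably shorter than the paper's. The paper does not compute $R^i p_*\O_{\P_D}$ directly; instead it runs the same blow-up argument used to prove Lemma~\ref{lemma FGL remaining terms} in the cobordism theory. Concretely, it blows up $\P_D \times \P^1$ along the section $D \times 0$, writes the fiber over $0$ as $\P_D + E$ (with $E$ the exceptional divisor), applies the \textbf{(Sect)} axiom (Lemma~\ref{lemma Sect in K theory}) and the multiplicative formal group law in $K$-theory (Lemma~\ref{lemma FGL in K theory}(3)) to compare fibers over $0$ and $\infty$, and then pushes everything down to $D$ to obtain $p_![\O_{\P_D}] = 2\,p_![\O_{\P_D}] - v$, hence $p_![\O_{\P_D}] = v$.

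The trade-off is this: your argument exploits the explicit description of $f_!$ in $K$-theory via higher direct images, which makes the result a one-line cohomology computation; the paper's argument is agnostic to that description and uses only the formal properties \textbf{(Sect)} and \textbf{(EFGL)}$_K$ already verified in Lemmas~\ref{lemma FGL in K theory} and~\ref{lemma Sect in K theory}. The latter has the virtue of being a verbatim transcription of the cobordism proof, so it illustrates that the double-point machinery carries over to any theory satisfying those axioms, but for the specific statement at hand your direct approach is both valid and more economical.
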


\begin{proof}
This result is an analogue of Lemma \ref{lemma FGL remaining terms}. First of all, the surjective \morp\ $\O_D \oplus \O_D(E_1) \to \O_D(E_1)$ of sheaves over $D$ defines an \inv\ section $D \embed \P_D$. Let $Y'$ be the blow up of $\P_D \x \P^1$ (trivial action on $\P^1$) along $D \x 0$. Then we have a map $Y' \to \P_D \x \P^1 \to \P^1.$ By Lemma \ref{lemma Sect in K theory}, we have
\begin{eqnarray}
{i_{\infty}}_![\O_{Y'_{\infty}}] &=& c_K(\O_{Y'}(Y'_{\infty})) \nonumber\\
&& \text{where $i_{\infty} : Y'_{\infty} \embed Y'$} \nonumber\\
&=& c_K(\O_{Y'}(Y'_0)) \nonumber\\
&=& c_K(\O_{Y'}(\P_D + E)) \nonumber\\
&& \text{where $E$ is the exceptional divisor} \nonumber\\
&=& c_K(\O_{Y'}(\P_D)) + c_K(\O_{Y'}(E)) - v\, c_K(\O_{Y'}(\P_D)) c_K(\O_{Y'}(E)), \nonumber
\end{eqnarray}
which is, by Lemma \ref{lemma Sect in K theory}, equal to 
$${i_{\P_D}}_![\O_{\P_D}] + {i_E}_![\O_E] - v\, c_K(\O_{Y'}(\P_D)) \cdot {i_E}_![\O_E]$$
where $i_{\P_D}$ and $i_E$ are the corresponding immersions. Moreover, 
$$c_K(\O_{Y'}(\P_D)) \cdot {i_E}_![\O_E] = {i_E}_! (c_K(\O_E(\P_D))) = {i_E}_! \circ {i_{\P_D \cap E}}_![\O_{\P_D \cap E}].$$
So, we have
$${i_{\infty}}_![\O_{Y'_{\infty}}] = {i_{\P_D}}_![\O_{P_D}] + {i_E}_![\O_E] - v\, {i_E}_! \circ {i_{\P_D \cap E}}_![\O_{\P_D \cap E}].$$
Notice that $Y'_{\infty} \cong E \cong \P_D$ and $\P_D \cap E \cong D$. By pushing the above equality down to $K_0(G;D)[v,v^{-1}]$, we have
$$p_![\O_{P_D}] = p_![\O_{P_D}] + p_![\O_{P_D}] - v$$
and we are done.
\end{proof}

\begin{lemma}
\label{lemma DPR in K theory}
Suppose $A$, $B$, $C$ are invariant \sm\ divisors on $Y \in$ $\gsmcat{G}$\ such that $A + B \sim C$, $C$ is disjoint from $A \cup B$ and $A + B + C$ is a \rsncd. Then, as elements in $K_0(G;Y)[v,v^{-1}]$,
$${i_C}_![\O_C] = {i_A}_![\O_A] + {i_B}_![\O_B] - {i_D}_! \circ p_![\O_{\P_D}]$$
where $D \defeq A \cap B$, $i_A$, $i_B$, $i_C$ and $i_D$ are the corresponding immersions and $p : \P_D \defeq \P(\O_D \oplus \O_D(A)) \to D$.
\end{lemma}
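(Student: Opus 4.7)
The plan is to mirror the proof of Proposition \ref{prop double point relation} step by step, with each ingredient from the algebraic cobordism side replaced by its K-theory analogue that has already been established earlier in this section. The structural shape of the argument is identical: use the ``$\textbf{(Sect)}$ axiom in $K$-theory'' (Lemma \ref{lemma Sect in K theory}) to rewrite each $[\O_{(-)}]$ on the right-hand side as a Chern class operator applied to $[\O_Y]$, use the multiplicative formal group law (Lemma \ref{lemma FGL in K theory}(3)) to combine them, and use Lemma \ref{lemma DPR residue in K theory} to account for the error term.

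Concretely, since $A+B \sim C$ and $A$, $B$, $C$ are invariant smooth divisors, we have $\O_Y(C) \cong \O_Y(A) \otimes \O_Y(B)$, hence by Lemma \ref{lemma Sect in K theory} and Lemma \ref{lemma FGL in K theory}(3),
\[
{i_C}_![\O_C] \;=\; c_K(\O_Y(C))[\O_Y] \;=\; c_K(\O_Y(A))[\O_Y] + c_K(\O_Y(B))[\O_Y] - v\, c_K(\O_Y(A))\, c_K(\O_Y(B))[\O_Y].
\]
The first two terms are ${i_A}_![\O_A]$ and ${i_B}_![\O_B]$ by Lemma \ref{lemma Sect in K theory} again. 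So it suffices to identify
\[
v\, c_K(\O_Y(A))\, c_K(\O_Y(B))[\O_Y] \;=\; {i_D}_!\circ p_![\O_{\P_D}].
\]

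For the left-hand side, I would apply $c_K(\O_Y(A))[\O_Y] = {i_A}_![\O_A]$ first, and then use axiom \textbf{(A3)} (the projection formula for $i_A$, available by Lemma \ref{lemma basic axiom hold in K theory}) to push $c_K(\O_Y(B))$ through ${i_A}_!$:
\[
c_K(\O_Y(B))\cdot {i_A}_![\O_A] \;=\; {i_A}_!\bigl(c_K(\O_Y(B)|_A)[\O_A]\bigr) \;=\; {i_A}_!\,{j}_![\O_D] \;=\; {i_D}_![\O_D],
\]
where $j : D \embed A$ is the inclusion (here $B|_A = D$ scheme-theoretically because $A+B$ is a \rsncd, so $B$ cuts out $D$ transversally on $A$, and Lemma \ref{lemma Sect in K theory} applies). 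Therefore the expression we must identify becomes $v\cdot {i_D}_![\O_D]$, and by factoring the scalar $v \in R(G)[v,v^{-1}]$ out through ${i_D}_!$, this equals ${i_D}_!(v\cdot [\O_D])$. An application of Lemma \ref{lemma DPR residue in K theory} over $D$ (which is legitimate because the transverse intersection hypothesis of that lemma is furnished precisely by the \rsncd\ assumption on $A+B$) gives $v\cdot [\O_D] = p_![\O_{\P_D}]$ in $K_0(G;D)[v,v^{-1}]$, and plugging this in closes the argument.

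There is no serious obstacle: every ingredient is either already stated (the \textbf{(Sect)} analogue Lemma \ref{lemma Sect in K theory}, the multiplicative FGL Lemma \ref{lemma FGL in K theory}(3), the basic axioms Lemma \ref{lemma basic axiom hold in K theory}, and the residue computation Lemma \ref{lemma DPR residue in K theory}) or formal. The only point requiring care is the identification $\O_A(B) \cong \O_A(D)$ as $G$-linearized invertible sheaves on $A$, which is automatic since $B$ meets $A$ transversally along $D$ by the \rsncd\ hypothesis; thereafter the calculation is purely mechanical bookkeeping of projection formulas.
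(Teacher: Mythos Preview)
Your proof is correct and follows essentially the same route as the paper's own argument. The only cosmetic difference is that the paper applies the projection formula through ${i_B}_!$ (writing $j:D\embed B$) rather than through ${i_A}_!$ as you do, but by symmetry this is immaterial.
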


\begin{proof}
By Lemma \ref{lemma Sect in K theory}, we have
\begin{eqnarray}
{i_C}_![\O_C] &=& c_K(\O_Y(C)) \nonumber\\
&=& c_K(\O_Y(A + B)) \nonumber\\
&=& c_K(\O_Y(A)) + c_K(\O_Y(B)) - v\,c_K(\O_Y(A))c_K(\O_Y(B)) \nonumber\\
&=& {i_A}_![\O_A] + {i_B}_![\O_B] - v\, c_K(\O_Y(A)) \cdot {i_B}_![\O_B] \nonumber\\
&=& {i_A}_![\O_A] + {i_B}_![\O_B] - v\, {i_B}_! \circ j_![\O_D] \nonumber
\end{eqnarray}
where $j : D \embed B$. The result then follows from Lemma \ref{lemma DPR residue in K theory}.
\end{proof}

Hence, the double point relation holds in $K_0(G;-)[v,v^{-1}]$, so does the blow up relation and the extended double point relation. We also need an analogue of Proposition \ref{prop Nilp axiom}.

\begin{lemma}
\label{lemma Nilp axiom in K theory}
Suppose $\char{k} = 0$. For any \girred{G}\ $Y \in$ $\gsmcat{G}$, $\L \in \picard{G}{Y}$ and $S$ as in (\ref{eqn basic element}),
$$V^n_{K,S}(\L) = 0,$$
as elements in $K_0(G;Y)[v,v^{-1}]$, for sufficiently large $n$.
\end{lemma}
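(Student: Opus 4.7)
The plan is to mirror the proof of Proposition \ref{prop Nilp axiom} verbatim, substituting $c_K$ for $c$ throughout, and using the $K$-theoretic ingredients already assembled: Lemma \ref{lemma basic axiom hold in K theory} (basic axioms), Lemma \ref{lemma FGL in K theory} (multiplicative FGL, $c_K(\O_X)=0$, and invertibility of $c_K(\L)$ up to an endomorphism), Lemma \ref{lemma Sect in K theory} (the $K$-theoretic Sect relation), and Lemma \ref{lemma DPR in K theory} (the double-point relation in $K$-theory, which immediately yields the blow-up relation as in \cite{alg cobor by DPR}).

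I would proceed by induction on $\dim Y$. For the base case $\dim Y = 0$, Proposition \ref{prop lb structure} gives $\L\cong\beta$ for some character $\beta$. Choose $N$ with $N > \max S$ and $\alpha_N\cong \dual\beta$ (possible because $F$ is a complete $G$-flag). Then $\L\otimes\alpha_N\cong \O_Y$, and using Lemma \ref{lemma FGL in K theory}(1), the factor $c_K(\L\otimes\alpha_N)$ appearing in $V^N_{K,S}(\L)$ kills the product, giving $V^n_{K,S}(\L)=0$ for all $n\geq N$.

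For the induction step, write $\L\cong\O(\sum_i \pm D_i)\otimes\beta$ using Proposition \ref{prop lb structure}, apply the embedded desingularization theorem to $\cup_i D_i\embed Y$ to obtain $\pi:\tilde Y\to Y$, a composition of blow-ups along invariant smooth centers, with smooth strict transforms $\stricttransform{D_i}$. Using the $K$-theoretic blow-up relation and the push-pull formula \textbf{(A2)}--\textbf{(A3)}, reduce to showing $V^n_{K,S}(\pi^*\L)=0$ on $\tilde Y$; after absorbing the exceptional divisors into the $D_i$, one may assume the $D_i$ are smooth. Pick $N_i$ (by induction on dimension) with $V^n_{K,S}(\L|_{D_i})=0$ on $D_i$ for $n\geq N_i$, and choose $N>\max S$, $N\geq \max_i N_i$, with $\alpha_N\cong\dual\beta$. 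Then for $n\geq N$, using the multiplicative FGL formula $c_K(\L_1\otimes \L_2)=c_K(\L_1)+c_K(\L_2)-v\,c_K(\L_1)c_K(\L_2)$ of Lemma \ref{lemma FGL in K theory}(3), together with Lemma \ref{lemma FGL in K theory}(4) to handle signs, one rewrites
\[
V^n_{K,S}(\L)=V^{N-1}_{K,S}(\L)\circ c_K(\L\otimes\alpha_N)\circ\big(\text{tail}\big)
\]
as a finite sum $\sum_i V^{N-1}_{K,S}(\L)\circ\sigma_i\circ c_K(\O(D_i))$ for suitable $\sigma_i\in\Endofin{K_0(G;\tilde Y)[v,v^{-1}]}$. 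Applying Lemma \ref{lemma Sect in K theory} converts $c_K(\O(D_i))$ into ${j_i}_![\O_{D_i}]$, and \textbf{(A3)} moves $V^{N-1}_{K,S}(\L)\circ\sigma_i$ across $j_i$ to become $V^{N-1}_{K,S}(\L|_{D_i})\circ\sigma_i^{j_i}$, which vanishes by the inductive hypothesis since $N-1\geq N_i$.

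The only point that requires slight care—and which I expect to be the main (though still routine) obstacle—is the rewriting step: verifying that the commutation and distribution of $c_K$'s works cleanly with the multiplicative FGL (rather than the universal one used in Proposition \ref{prop Nilp axiom}). This amounts to checking that the ``inverse'' operator of Lemma \ref{lemma FGL in K theory}(4) and the product formula of Lemma \ref{lemma FGL in K theory}(3) combine to express $c_K(\L\otimes\alpha_N)$, after using $\alpha_N\cong\dual\beta$ and $c_K(\O_Y)=0$, as an $\Endofin{}$-multiple of $\sum_i\pm c_K(\O(D_i))$ on $\tilde Y$. Once this algebraic identity is in place the induction closes exactly as above.
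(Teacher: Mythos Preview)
Your proposal is correct and is exactly the approach the paper takes: the paper's proof consists of the single sentence ``See the proof of Proposition \ref{prop Nilp axiom},'' and what you have written is precisely the unpacking of that reference using the $K$-theoretic substitutes (Lemmas \ref{lemma basic axiom hold in K theory}--\ref{lemma DPR in K theory}) in place of the corresponding facts in $\cob{}{G}{-}$. The point you flag as needing care---rewriting $c_K(\O(\sum_i \pm D_i))$ via the multiplicative formal group law---is indeed routine, as Lemma \ref{lemma FGL in K theory}(3),(4) give the needed identities directly.
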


\begin{proof}
See the proof of Proposition \ref{prop Nilp axiom}. 
\end{proof}

Now, suppose $\char{k} = 0$. For any $X \in$ $\gsmcat{G}$, since $K_0(G;X)$ can be considered as a $K_0(G;\pt)$-module and $K_0(G;\pt) \cong R(G)$, we have a natural map 
$$\gPS_K : R(G)[v,v^{-1}] \otimes_{\lazard_G(F)} \basicmod{G}{F}{}{X} \to K_0(G;X)[v,v^{-1}]$$
defined by sending $b \otimes \sum_I a_I [f : Y \to X, V^{i_1}_{S_1}(\L_1), \ldots, V^{i_r}_{S_r}(\L_r)]$ to 
$$\sum_I b\, \Phi_K(a_I)\, f_! (V^{i_1}_{K,S_1}(\L_1) \cdots V^{i_r}_{K,S_r}(\L_r)),$$
which is actually a finite sum by Lemma \ref{lemma Nilp axiom in K theory}.
 
Our main objective in this section is to prove the \equi\ Conner-Floyd isomorphism, i.e., $\Psi_K$ defines a map
$$R(G)[v,v^{-1}] \otimes_{\lazard_G(F)} \cob{}{G}{X} \to K_0(G;X)[v,v^{-1}]$$
and it is an isomorphism. We will handle the well-definedness and surjectivity of $\Psi_K$ first.

\begin{prop}
\label{prop map to K theory}
Suppose $\char{k} = 0$. Then $\gPS_K$ defines a canonical, surjective, $R(G)[v,v^{-1}]$-module homo\morp 
$$\gPS_K : R(G)[v,v^{-1}] \otimes_{\lazard_G(F)} \cob{}{G}{X} \to K_0(G;X)[v,v^{-1}],$$
for any $X \in$ $\gsmcat{G}$, and it commutes with the four basic operations.
\end{prop}

\begin{proof}
First of all, it is not hard to see that $\gPS_K$ commutes with the four basic operations. By Lemma \ref{lemma Sect in K theory}, the map $\gPS_K$ respects the \textbf{(Sect)} axiom. For the \textbf{(EFGL)} axiom, we need to show that, as elements in $K_0(G;Y)[v,v^{-1}]$ where $Y \in$ $\gsmcat{G}$\ is \girred{G},
\begin{eqnarray}
V^i_K(\L) \cdot V^j_K(\L) &=& \sum_{s \geq 0}\,b^{i,j}_s\,V^s_K(\L), \nonumber\\
V^i_K(\L \otimes \ga) &=& \sum_{s \geq 0}\,d(\ga)^i_s\,V^s_K(\L), \nonumber\\
V^i_K(\L \otimes \cat{M}) &=& \sum_{s,t \geq 0}\,f^i_{s,t}\,V^s_K(\L) \cdot V^t_K(\cat{M}), \nonumber
\end{eqnarray}
where $b^{i,j}_s$, $d(\ga)^i_s$, $f^i_{s,t}$ are considered to be elements in $R(G)[v,v^{-1}]$ via $\gPH_K$.

Notice that
\begin{eqnarray}
\label{eqn 26}
&& V^i_K(\L) \cdot V^j_K(\L) \\
&=& V^{i-1}_K(\L) \cdot c_K(\L \otimes \ga_i) \cdot V^j_K(\L) \nonumber\\
&=& V^{i-1}_K(\L) \cdot c_K(\L \otimes \ga_{j+1} \otimes \gb) \cdot V^j_K(\L) \nonumber\\
&& \text{where $\gb \defeq \dual{\ga_{j+1}} \otimes \ga_i$} \nonumber\\
&=& V^{i-1}_K(\L) \cdot (c_K(\L \otimes \ga_{j+1}) + e_K(\gb) - v\, e_K(\gb) c_K(\L \otimes \ga_{j+1})) \cdot V^j_K(\L) \nonumber\\
&=&  e_K(\gb) \cdot V^{i-1}_K(\L) \cdot V^j_K(\L) + (1 - v\, e_K(\gb)) \cdot V^{i-1}_K(\L) \cdot V^{j+1}_K(\L). \nonumber
\end{eqnarray}
So, inductively, we can write 
$$V^i_K(\L) \cdot V^j_K(\L) = \sum_{s \geq 0} B^{i,j}_s V^s_K(\L)$$
for some elements $B^{i,j}_s \in R(G)[v,v^{-1}]$ uniquely determined by the above process. Thus, it is enough to show $B^{i,j}_s = b^{i.j}_s$.

Consider the multiplicative \fgl\ $\lazard^m_G(F)[v^{-1}]\{\{F\}\}.$ By Lemma 2.1 in \cite{multi FGL}, 
$$l_{\ga}\, y(\gep) = y(\ga) = e(\ga) + (1 - v e(\ga))y(\gep).$$
In other words, $d(\ga)^1_0 = e(\ga)$, $d(\ga)^1_1 = 1 - v e(\ga)$ and $d(\ga)^1_s = 0$ when $s > 1$. Also, if we apply $l_{\gb}$ on the above equation, we have
$$l_{\gb}\, y(\ga) = e(\ga) + (1 - v e(\ga)) l_{\gb}\, y(\gep) =  e(\ga) + (1 - v e(\ga)) y(\gb).$$
Therefore,
\begin{equation}
\label{eqn3}
l_{\ga}\, y(\gb) = y(\ga \otimes \gb) = l_{\gb}\, y(\ga) =e(\ga) + (1 - v e(\ga)) y(\gb).
\end{equation}

Now, we apply an analogue of the procedure in (\ref{eqn 26}) to the element $y(V^i) \cdot y(V^j) \in \lazard^m_G(F)[v^{-1}]\{\{F\}\}$ :
\begin{eqnarray}
y(V^i) \cdot y(V^j) &=& y(V^{i-1}) \cdot y(\ga_i) \cdot y(V^j) \nonumber\\
&=& y(V^{i-1}) \cdot y(\gb \otimes \ga_{j+1}) \cdot y(V^j) \nonumber\\
&& \text{where $\gb \defeq \dual{\ga_{j+1}} \otimes \ga_i$} \nonumber\\
&=& y(V^{i-1}) \cdot l_{\gb}\, y(\ga_{j+1}) \cdot y(V^j) \nonumber\\
&=& y(V^{i-1}) \cdot (e(\gb) + (1 - v e(\gb)) y(\ga_{j+1})) \cdot y(V^j) \nonumber\\
&& \text{by equation (\ref{eqn3})} \nonumber\\
&=&  e(\gb) \cdot y(V^{i-1}) \cdot y(V^j) + (1 - v e(\gb)) \cdot y(V^{i-1}) \cdot y(V^{j+1}). \nonumber
\end{eqnarray}
Again, inductively, we can write 
$$y(V^i) \cdot y(V^j) = \sum_{s \geq 0} \overline{B}^{i,j}_s y(V^s)$$
for some elements $\overline{B}^{i,j}_s \in \lazard^m_G(F)[v^{-1}]$. Since $e(\gb)$ is identified with $e_K(\gb)$, we have $B^{i,j}_s = \overline{B}^{i,j}_s$. Also, 
$$y(V^i) \cdot y(V^j) = \sum_{s \geq 0} b^{i,j}_s y(V^s)$$
by definition. Since $\{y(V^s)\}$ is a basis, $b^{i,j}_s = \overline{B}^{i,j}_s = B^{i,j}_s$.

Similarly, we have
\begin{eqnarray}
V^i_K(\L \otimes \ga) &=& V^{i-1}_K(\L \otimes \ga) \cdot c_K(\L \otimes \gb) \nonumber\\
&& \text{where $\gb \defeq \ga \otimes \ga_i$} \nonumber\\
&=& e_K(\gb) \cdot V^{i-1}_K(\L \otimes \ga) + (1 - v e_K(\gb)) \cdot V^{i-1}_K(\L \otimes \ga) \cdot c_K(\L). \nonumber
\end{eqnarray}
By induction, $V^{i-1}_K(\L \otimes \ga)$ can be expressed by $\{V^s_K(\L)\}$ with uniquely determined coefficients and 
$$V^s_K(\L) \cdot c_K(\L) = V^s_K(\L) \cdot V^1_K(\L) = \sum_t b^{s,1}_t V^t_K(\L).$$ 
Hence,
$$V^i_K(\L \otimes \ga) = \sum_{s \geq 0}\,D(\ga)^i_s\,V^s_K(\L)$$
for some uniquely determined elements $D(\ga)^i_s \in R(G)[v,v^{-1}]$. On the other hand, in $\lazard^m_G(F)[v^{-1}]\{\{F\}\}$, we have
\begin{eqnarray}
l_{\ga}\, y(V^i) &=& l_{\ga}\, y(V^{i-1}) \cdot l_{\ga}\, y(\ga_i) \nonumber\\
&=& l_{\ga}\, y(V^{i-1}) \cdot l_{\gb}\, y(\gep) \nonumber\\
&& \text{where $\gb \defeq \ga \otimes \ga_i$} \nonumber\\
&=& e(\gb) \cdot l_{\ga}\, y(V^{i-1}) + (1 - v e(\gb)) \cdot l_{\ga}\, y(V^{i-1}) \cdot y(V^1). \nonumber
\end{eqnarray}
Hence, by the same reason, $D(\ga)^i_s = d(\ga)^i_s$.

Again, 
\begin{eqnarray}
V^i_K(\L \otimes \cat{M}) &=& V^{i-1}_K(\L \otimes \cat{M}) \cdot c_K(\L \otimes \cat{M} \otimes \ga_i) \nonumber\\
&=& e_K(\ga_i) \cdot V^{i-1}_K(\L \otimes \cat{M}) + (1 - v e_K(\ga_i)) \cdot V^{i-1}_K(\L \otimes \cat{M}) \cdot c_K(\L \otimes \cat{M}). \nonumber
\end{eqnarray}
By induction, we can express $V^{i-1}_K(\L \otimes \cat{M})$ in terms of $V^s_K(\L) V^t_K(\cat{M})$. Also, 
$$c_K(\L \otimes \cat{M}) = V^1_K(\L) + V^1_K(\cat{M}) - v V^1_K(\L) V^1_K(\cat{M}).$$
Express the products by $b^{i,j}_s$ as before and we get
$$V^i_K(\L \otimes \cat{M}) = \sum_{s,t \geq 0}\,F^i_{s,t}\,V^s_K(\L) V^t_K(\cat{M})$$
for some uniquely determined elements $F^i_{s,t}$. On the other hand, in $\lazard^m_G(F)[v^{-1}]\{\{F\}\}$, we have
\begin{eqnarray}
\gD y(V^i) &=& \gD y(V^{i-1}) \cdot \gD y(\ga_i) \nonumber\\
&=& \gD y(V^{i-1}) \cdot \gD ( e(\ga_i) + (1 - v e(\ga_i)) y(\gep) ) \nonumber\\
&=& e(\ga_i) \cdot \gD y(V^{i-1}) + (1 - v e(\ga_i)) \cdot \gD y(V^{i-1}) \cdot \gD y(\gep) \nonumber
\end{eqnarray}
(The coproduct $\Delta$ is a unital, $\lazard_G(F)$-module homomorphism). 

Again, by induction, we can express $\gD y(V^{i-1})$ in terms of $y(V^s) \otimes y(V^t)$. Also, we have
$$\gD y(V^1) =  \gD y(\gep) = y(\gep) \otimes 1 + 1 \otimes y(\gep) - v \cdot y(\gep) \otimes y(\gep).$$
Hence, $F^i_{s,t} = f^i_{s,t}$ by the same reason (because $\{y(V^s) \otimes y(V^t)\}$ form a basis). That finishes the proof of the well-definedness of 
$$\gPS_K : R(G)[v,v^{-1}] \otimes_{\lazard_G(F)} \cob{}{G}{X} \to K_0(G;X)[v,v^{-1}].$$

For the surjectivity of $\Psi_K$, we proceed by induction on the dimension of $X$. W\withoutlog, we may assume $X$ to be \girred{G}. As a $R(G)[v,v^{-1}]$-module, $K_0(G;X)[v,v^{-1}]$ is generated by elements of the form $[\cat{E}]$ where $\cat{E}$ is a \glin{G}\ locally free sheaf of finite rank over $X$. If $\dim X = 0$, then $\cat{E}$ splits. So, we may assume the rank of $\cat{E}$ is 1. Then, we have 
\begin{eqnarray}
\label{eqn6}
\Psi_K([\id_X]- v\, [\id_X, \dual{\cat{E}}]) = [\O_X] - v\, c_K(\dual{\cat{E}}) = [\cat{E}]. 
\end{eqnarray}
That handles the $\dim X = 0$ case.

Suppose $\dim X > 0$. By the blow up relation in the \equi\ K-theory, if $Z \subset X$ is an \inv, \sm\ closed subscheme, then we have
$$\pi_![\O_{\tilde{X}}] - [\O_X] = - {i_Z}_! \circ {p_1}_![ \O_{\P_1}] + {i_Z}_! \circ {p_2}_! [\O_{\P_2}]$$
where $\pi : \tilde{X} \to X$ is the blow up of $X$ along $Z$, $i_Z : Z \embed X$ is the immersion, $p_1$ and $p_2$ are the maps $\P_1 \defeq \P(\O \oplus \dual{\nbundle{Z}{X}}) \to Z$ and $\P_2 \defeq \P(\O \oplus \O(1)) \to \P(\dual{\nbundle{Z}{X}}) \to Z$ respectively. Since $\dim \pi = \dim (i_Z \circ p_1) = \dim (i_Z \circ p_1) = 0$, if we multiply the above equation by $[\cat{E}]$, we have
\begin{eqnarray}
\pi_! [\pi^* \cat{E}] - [\cat{E}] &=& - [\cat{E}] \cdot {i_Z}_! \circ {p_1}_! [\O_{\P_1}] + [\cat{E}] \cdot {i_Z}_! \circ {p_2}_! [\O_{\P_2}] \nonumber\\
&=& - {i_Z}_! \circ {p_1}_!\, [p_1^*(\cat{E}|_Z)] + {i_Z}_! \circ {p_2}_!\, [p_2^*(\cat{E}|_Z)]. \nonumber
\end{eqnarray}
Therefore,
$$\pi_! [\pi^* \cat{E}] - [\cat{E}] = {i_Z}_!(x)$$
for some $x \in K_0(G;Z)[v,v^{-1}]$. By the induction assumption, the right hand side is in the image of $\Psi_K$. So it is enough to show that $[\pi^* \cat{E}]$ lies inside the image of $\Psi_K$. By Theorem \ref{thm splitting principle}, we may assume $\cat{E}$ splits. Hence, it is enough to show the surjectivity when $\rank{\cat{E}} = 1$, which follows from equation (\ref{eqn6}). That finishes the proof of Proposition \ref{prop map to K theory}.
\end{proof}

For the injectivity of $\Psi_K$, we will follow the same procedure as in section 4.2 in \cite{universal alg cobor}, start by constructing a left inverse of $\Psi_K$ called the Chern character. Since its definition is based on the higher Chern class operators (see section \ref{sect special theory}), we will assume $\char{k} = 0$ for the rest of this section.

For simplicity of notation, let 
$$A(-) \defeq R(G)[v,v^{-1}] \otimes_{\lazard_G(F)} \cob{G}{}{-}$$
be a theory on $\gvar{G}$ with the four basic operations. Then, $A(-)$ will have
$$c(\L \otimes \cat{M}) = c(\L) + c(\cat{M}) - vc(\L) c(\cat{M})$$
(as operators on $A(X)$) for any $X \in \gvar{G}$ and $\L, \cat{M} \in \picard{G}{X}$ from the multiplicative \fgl. Moreover, as pointed out in section \ref{sect special theory}, there is a canonical surjective ring \homo
$$\lazard_G^s(F) \to \lazard_G^m(F)[v^{-1}] \cong R(G)[v,v^{-1}].$$
Therefore, the projective bundle formula (Theorem \ref{thm PBF}) and the extended homotopy property (Proposition \ref{prop EH}) hold in $A(-)$. Furthermore, we have the splitting principle (Proposition \ref{prop splitting principle for special} and Remark \ref{rmk splitting principle for special}) and the higher Chern class operators for \glin{G}\ locally free sheaves are well-defined in $A(-)$.

Other than the list of properties in Proposition \ref{prop Chern class properties}, we will also need the Whitney sum formula to define our Chern character. To this end, we first need the following Lemma.

\begin{lemma}
\label{lemma invertible u}
For all $X \in \gvar{G}$, $\L \in \picard{G}{X}$, the endomorphism
$$u(\L) \defeq 1 - v c(\L) : A(X) \to A(X)$$
has an inverse of the form
$$\sum_{i = 0}^n a_i u(\L)^i$$
for some $n \geq 0$ and $a_i \in R(G)[v,v^{-1}]$.
\end{lemma}

\begin{proof}
We will prove this by induction on $\dim X$ following a similar procedure as in the proof of Theorem \ref{thm PBF}. In particular, we will show that $u(\L)$ is surjective and has a left inverse of the given form (which is also a right inverse). 

Suppose $\L$ is trivial, i.e., $\L \cong \beta$ for some $\beta \in G^*$. Then, 
$$u(\L) = 1 - v c(\L) = 1 - v e(\beta) = \dual{\beta} \in R(G)[v,v^{-1}]$$
has an inverse of the form $a_0 \defeq \beta$. That handles the cases when $\L$ is trivial or $\dim X = 0$ (by Proposition \ref{prop lb structure}).

Now, suppose $\dim X \geq 1$. Similar to the proof of Theorem \ref{thm PBF}, we have a filtration of \inv, closed subschemes $X_j \in \gvar{G}$
$$\emptyset = X_0 \subset X_1 \subset \cdots \subset X_m = X$$
such that $\L|_{X_j - X_{j-1}}$ is trivial for all $1 \leq j \leq m$. Let $Y \defeq X_j, Z \defeq X_{j-1}, U \defeq Y - Z$, $i : Z \embed X$ and $j : U \embed X$ be the immersions. Then, we have the following diagram (1) :

$$\begin{CD}
0 @>>> i_* A(Z) @>>> A(X) @>{j^*}>> A(U) @>>> 0 \\
@. @A{u(\L)}AA @A{u(\L)}AA @A{u(\L|_U)}AA @. \\
0 @>>> i_* A(Z) @>>> A(X) @>{j^*}>> A(U) @>>> 0 \\
\end{CD}$$
and the following diagram (2) :
$$\begin{CD}
A(Z) @>{i_*}>> A(X)  \\
@A{u(\L|_Z)}AA @A{u(\L)}AA  \\
A(Z) @>{i_*}>> A(X)
\end{CD}$$

As in the proof of Theorem \ref{thm PBF}, the rows of diagram (1) are exact and both diagrams are commutative. By induction and the trivial case, the left and right columns of diagram (1) is surjective, so is its middle column. Moreover, by induction, the left inverse of $u(\L|_Z)$ is of the form 
$$\psi_Z \defeq \sum_{i = 0}^n a_i u(\L|_Z)^i.$$
So we can extend it to a map 
$$\overline{\psi}_Z \defeq \sum_{i = 0}^n a_i u(\L)^i : A(X) \to A(X),$$
i.e., $i_* \circ \psi_Z = \overline{\psi}_Z \circ i_*$. By the trivial case, the left inverse of $u(\L|_U)$ is of the form 
$$\psi_U \defeq \sum_{i = 0}^m b_i u(\L|_U)^i,$$
which can be extended to a map
$$\overline{\psi}_U \defeq \sum_{i = 0}^m b_i u(\L)^i : A(X) \to A(X),$$
i.e., $j^* \circ \overline{\psi}_U = \psi_U \circ j^*$. Hence,
$$\psi \defeq \overline{\psi}_Z + \overline{\psi}_U - \overline{\psi}_Z \circ u(\L) \circ \overline{\psi}_U$$
is a left inverse of $u(\L)$, which is of the form desired.
\end{proof}

\begin{prop}
\label{prop Whitney sum}
Suppose $X \in \gvar{G}$ and $\cat{E}, \cat{E}', \cat{E''}$ are \glin{G}\ locally free sheaves of finite ranks over $X$.

\noindent \begin{statementslist}
{\rm (1)} & If there is an exact sequence $0 \to \cat{E}' \to \cat{E} \to \cat{E}'' \to 0$, then
$$c_i(\cat{E}) = \sum_{j=0}^i c_j(\cat{E}') c_{i-j}(\cat{E}'')$$
\\
{\rm (2)} & $$c_i(\cat{E}) c_j(\cat{E}') = c_j(\cat{E}') c_i(\cat{E}) $$
\end{statementslist}
\end{prop}

\begin{proof}
\noindent (1) : \tab By Theorem \ref{thm gen by geo cycle}, it is enough to consider geometric cycle. Since
$$c_i(\cat{E})[f : Y \to X] = c_i(\cat{E}) f_* [\id_Y] = f_* c_i(f^*\cat{E}) [\id_Y],$$
it is enough to consider $[\id_X]$ where $X \in \gsmcat{G}$ is \girred{G}. By splitting principle (Proposition \ref{prop splitting principle for special} and Remark \ref{rmk splitting principle for special}), it is enough to show that
$$c_i(\cat{E})[\id_X] = S_i(c(\L_1), \ldots, c(\L_r)) [\id_X]$$
where $\cat{E} \cong \oplus_{i=1}^r \L_i$ and $S_i$ is the symmetric polynomial with degree $i$.

Let 
$$\pi : \P \defeq \P(\cat{E}) \to X$$
be the projection and $H_i \defeq \P(\oplus_{j \neq i} \L_j)$ be an \inv\ \sm\ divisor on $\P$. Then, 
$$\O_{\P}(H_i) \otimes \pi^* \L_i \cong \O_{\P}(1).$$
Therefore,
\begin{eqnarray*}
0 &=& c(\O(H_1)) c(\O(H_2)) \cdots c(\O(H_r)) [\id_{\P}] \\
&=& \prod_{i=1}^r c(\O(1) \otimes \pi^* \dual{\L_i})  [\id_{\P}] \\
&=& \prod_{i=1}^r (\xi + c(\pi^* \dual{\L_i}) - v \xi c(\pi^* \dual{\L_i}))  [\id_{\P}],
\end{eqnarray*}
where $\xi \defeq c(\O_{\P}(1))$, which is equal to 
$$\prod_{i=1}^r (u(\pi^* \dual{\L_i})\xi + c(\pi^* \dual{\L_i}))  [\id_{\P}].$$
By Lemma \ref{lemma invertible u}, $u(\pi^* \dual{\L_i})$ an isomorphism. So we have
$$0 = \prod_{i=1}^r (\xi + u(\pi^* \dual{\L_i})^{-1} c(\pi^* \dual{\L_i}))  [\id_{\P}].$$

\medskip

\noindent Claim 1 : $u(\dual{\L})^{-1} c(\dual{\L}) = -c(\L)$ as endomorphisms on $A(X)$ for all $X \in \gvar{G}$ and $\L \in \picard{G}{X}$.

It is enough to show
$$c(\dual{\L}) = - u(\dual{\L}) c(\L).$$
As before, it is enough to consider $[\id_X]$ where $X \in \gsmcat{G}$ is \girred{G}. Then,
$$0 = c(\L \otimes \dual{\L}) [\id_X] = (c(\L) + c(\dual{\L}) - vc(\L) c(\dual{\L}))[\id_X] = (c(\dual{\L}) + u(\dual{\L})c(\L)) [\id_X]$$
and the result follows. \claimend

\medskip

By Claim 1, we have
\begin{eqnarray*}
0 &=& \prod_{i=1}^r (\xi - c(\pi^* \L_i))  [\id_{\P}] \\
&=& \sum_{i=0}^r (-1)^i \xi^{r-i} S_i(c(\pi^* \L_1), \ldots, c(\pi^* \L_r)) [\id_P] \\
&=& \sum_{i=0}^r (-1)^i c(\O_{\P}(1))^{r-i} \circ \pi^* \circ S_i(c(\L_1), \ldots, c(\L_r)) [\id_X].
\end{eqnarray*}
The result then follows from the definition of the higher Chern class operators and the projective bundle formula.

\medskip

\noindent (2) : \tab It follows from the splitting principle (Proposition \ref{prop splitting principle for special}) and part (1).
\end{proof}

Next, similar to \cite{universal alg cobor}, we need the following technical results (see section 4.2.1 in \cite{universal alg cobor}).

\begin{prop}
\label{prop tower in special theory}
Suppose $Y \in$ $\gsmcat{G}$\ is \girred{G}, $\P \to Y$ is a \qadtower. Then
$$[\pi : \P \to Y] = v^{\dim \pi} [\id_Y]$$
in $A(Y)$. The same holds in the \equi\ K-theory, i.e., 
$$\pi_{!} [\O_{\P}] = v^{\dim \pi} [\O_Y]$$
in $K_0(G;Y)[v,v^{-1}]$.
\end{prop}

\begin{proof}
Let us consider the $A(-)$ theory first. The arguments for the \equi\ K-theory will be analogous.

This proof is very similar to the proof of Proposition \ref{prop tower reduction} and Theorem \ref{thm gen by lazard}, so we will only give a sketch. We will prove the statement by induction on $\dim Y$ and then $\dim \P$. First of all, by the blow up relation,
$$[\blowup{Y}{Z} \to Y] - [\id_Y] = -[\P_1 \to Z \embed Y] + [\P_2 \to Z \embed Y]$$
for some \qadtower s $\P_1, \P_2$. By induction, we have $[\blowup{Y}{Z} \to Y] = [\id_Y]$. We can then apply $\pi_* \circ \pi^*$ on both sides to get $[\tilde{\P} \to \blowup{Y}{Z} \to Y] = [\P \to Y]$. By the same arguments in step 1 of the proof of Proposition \ref{prop tower reduction}, we may assume $\P$ is given by invertible sheaves (Step 1). In fact, we may further assume $\P$ is a \adtower\ (Step 2).

By lemma \ref{lemma tower twisting} (and remark \ref{rmk twisting tower same structure}), 
$$ [\P' \to Y] - [\P \to Y] = [Q_0 \to D \embed Y] - [Q_1 \to D \embed Y] +\ [Q_2 \to D \embed Y] - [Q_3 \to D \embed Y]$$
where $\P'$ is $\P$ twisted by some \inv\ \sm\ divisor $D \subset Y$ and $Q_i$ are \qadtower s over $D$. By the induction assumption, $[\P' \to Y] = [\P \to Y]$. So we may assume $\P \cong Q \x Y$ for some \adtower\ $Q \to \pt$. By the induction assumption, $[Q] = v^{\dim Q}[\id_{\pt}]$ and we are done.

Now, we need to handle the case when $\dim Y = 0$. In this case, $\P \cong Q \x Y$ for some \adtower\ $Q \to \pt$. So we reduce to the case when $Y = \pt$. The result then follows from the same arguments (with multiplicative \fgl) in step 2, 3 and claim 4 in the proof of Theorem \ref{thm gen by lazard}.

For the \equi\ K-theory, since the blow up relation, analogues of Lemma \ref{lemma tower twisting} and remark \ref{rmk twisting tower same structure} still hold, we can apply the same arguments to reduce to the case when $\P \cong Q \x Y$ for some \adtower\ $Q \to \pt$. Then,
$$\pi_! [\O_{\P}] = \pi_! \circ \pi_1^* [\O_Q] = \pi_Y^* \circ \pi_{Q!} [\O_Q]$$
where $\pi_Q, \pi_Y$ are the structure morphisms. That reduces to the case when $Y = \pt$. The rest then follows from the same arguments (with multiplicative \fgl) in step 2, 3 and claim 4 in the proof of Theorem \ref{thm gen by lazard}.
\end{proof}

\begin{cor}
\label{cor blow up in special theory}
Suppose $Z$ is an \inv\ closed subscheme of $Y$ such that $Z$, $Y$ are both in $\gsmcat{G}$. Then, 
$$[\blowup{Y}{Z} \to Y] = [\id_Y] $$
in $A(Y)$. The same holds in the \equi\ K-theory.
\end{cor}

\begin{proof}
By the blow up relation and Proposition \ref{prop tower in special theory}.
\end{proof}

Now, similar to section 4.2.2 in \cite{universal alg cobor}, we define the Chern character 
$$\tilde{ch} : K_0(G;X)[v,v^{-1}] \to \Endo{A(X)}$$
$$\tilde{ch}(\cat{E}v^n) \defeq (\rank{\cat{E}} - v c_1(\dual{\cat{E}}))v^n$$
for $X \in \gsmcat{G}$ (Well-defined because of Proposition \ref{prop Whitney sum}) and 
$$ch : K_0(G;X)[v,v^{-1}] \to A(X)$$
$$ch(\cat{E}v^n) \defeq \tilde{ch}(\cat{E}v^n)[\id_X].$$

\begin{prop}
\label{prop ch properties}
Suppose $f : X' \to X$ is a \morp\ in $\gsmcat{G}$, $\L, \cat{M} \in \picard{G}{X}$ and $\cat{E}, \cat{F}$ are \glin{G}\ locally free sheaves over $X$.

\noindent \begin{statementslist}
{\rm (1)} & $$\tilde{ch}([\cat{E}]) \circ f_! = f_* \circ \tilde{ch}([f^* \cat{E}]) \text{\tab if $f$ is \proj}$$
$$\tilde{ch}([f^* \cat{E}]) \circ f^* = f^* \circ \tilde{ch}([\cat{E}]) \text{\tab if $f$ is \sm}$$ \\
{\rm (2)} & If $Z \in \gsmcat{G}$ is an \inv\ closed subscheme of $X$ and $\pi : \blowup{X}{Z} \to X$ is the blow up, then
$$ch([\cat{E}]) = \pi_* ch([\pi^* \cat{E}])$$ \\
{\rm (3)} & $$\tilde{ch}(\L \otimes \cat{M}) = \tilde{ch}(\L) \circ \tilde{ch}(\cat{M})$$ \\
{\rm (4)} & $$\tilde{ch}(c_K(\L) \cdot [\cat{M}]) = c(\L) \circ \tilde{ch}(\cat{M})$$ \\
{\rm (5)} & $$\tilde{ch}(\cat{E} \otimes \cat{F}) = \tilde{ch}(\cat{E}) \circ \tilde{ch}(\cat{F})$$ \\
{\rm (6)} & $$\tilde{ch}(c_K(\L) \cdot [\cat{E}]) = c(\L) \circ \tilde{ch}(\cat{E})$$ \\
\end{statementslist}
\end{prop}

\begin{proof}
Part (1) follows from the basic properties of the higher Chern class operators (Proposition \ref{prop Chern class properties}). Part (2) follows from Corollary \ref{cor blow up in special theory}. Parts (3) and (4) simply follow from the multiplicative \fgl. Parts (5) and (6) follow from the splitting principle (Proposition \ref{prop splitting principle for special}) and parts (3) and (4) respectively.
\end{proof}

\begin{cor}
$\tilde{ch}$ is a ring homomorphism and $ch$ commutes with \sm\ pull-backs, the first Chern class operators for \glin{G}\ invertible sheaves and external products.
\end{cor}

\begin{proof}
By Proposition \ref{prop Chern class properties} and \ref{prop ch properties}.
\end{proof}

\begin{prop}
\label{prop ch comm with push forward}
$ch$ commutes with \proj\ push-forwards.
\end{prop}

\begin{proof}
As this proof is highly similar to the proof of Proposition 4.2.9 in \cite{universal alg cobor}, we will only give a sketch here.

Since the splitting principle holds in $A(-)$ (Proposition \ref{prop splitting principle for special}) and the \textbf{(Sect)} axiom holds in the \equi\ K-theory (Lemma \ref{lemma Sect in K theory}), the \equi\ analogue of Lemma 4.2.8 in \cite{universal alg cobor} holds, by the same arguments. As in the proof of Proposition 4.2.9 in \cite{universal alg cobor}, by Proposition \ref{prop equi embed}, it is enough to consider the cases when $i : Z \embed X$ in $\gsmcat{G}$ is a closed immersion or when $\pi_2 : \P(V) \x X \to X$ is a projection. 

For the case $\pi : \P(V) \x X \to X$, by the projection formula and the projective bundle formula in the \equi\ K-theory (see Theorem 10 in \cite{equi K theory 2}) and the projective bundle formula in $A(-)$, one can reduce to showing 
$$ch(\pi_! (c_K(\O(1))^n)) = \pi_* ch( c_K(\O(1))^n)$$
with $X = \pt$ and $0 \leq n \leq \dim V - 1$. 

Let $V \cong \beta_0 \oplus \cdots \oplus \beta_r$ for some $\beta_i \in G^*$, $\P \defeq \P(V)$, $H_i \defeq \P(\oplus_{j \neq i} \beta_j)$ as an \inv\ \sm\ divisor on $\P$ and
$$\xi^i_j \defeq c(\O_{\P}(H_i)) \circ c(\O_{\P}(H_{i+1})) \circ \cdots \circ c(\O_{\P}(H_j))$$
if $0 \leq i \leq j \leq r$ (as in $A(-)$ or $K_0(G;-)[v,v^{-1}]$). Since $\O_{\P}(1) \cong \O_{\P}(H_i) \otimes \beta_i$, we have
$$c_K(\O(1)) = c_K(\O(H_r) \otimes \beta_r) = e(\beta_r) + (1 - ve(\beta_r))c(\O(H_r)) = a + b \xi^r_r$$
for some $a,b \in R(G)[v,v^{-1}]$. Inductively, it can be shown that, for $0 \leq n \leq r$, 
$$c_K(\O(1))^n = a_0 + a_1 \xi^r_r + \cdots + a_n \xi^{r-n+1}_r$$
for some $a_i \in R(G)[v,v^{-1}]$. Since $ch(\xi^i_j) = \xi^i_j [\id_{\P}]$, the statement will be true if we can show that
$$ch(\pi_! [\O_{\P(V)}]) = \pi_* ch(\O_{\P(V)})$$
for arbitrary $G$-\repn\ $V$. By Proposition \ref{prop tower in special theory}, both sides of the equation are $v^{\dim \P(V)}$. So we are done.

The case $i : Z \embed X$ follows from the exact same arguments as in the proof of Proposition 4.2.9 in \cite{universal alg cobor}.
\end{proof}

We are now ready to prove one of our major results in this paper : the equivariant Conner-Floyd isomorphism.

\begin{thm}
\label{thm equi Conner Floyd}
Suppose $\char{k} = 0$ and $k$ contains a primitive $e$-th \rou, where $e$ is the exponent of $G_f$. Then there is a canonical $R(G)[v,v^{-1}]$-module isomorphism 
$$\gPS_K : R(G)[v,v^{-1}] \otimes_{\lazard_G(F)} \cob{}{G}{X} \to K_0(G;X)[v,v^{-1}],$$
for any $X \in$ $\gsmcat{G}$, and it commutes with the four basic operations.
\end{thm}

\begin{proof}
By Proposition \ref{prop map to K theory}, it remains to show that $\Psi_K$ is injective. As mentioned before, we will show that the Chern character is a left inverse of $\Psi_K$. By Theorem \ref{thm gen by geo cycle}, it is enough to show that
$$ch \circ \gPS_K [f : Y \to X] = [f : Y \to X]$$
for arbitrary geometric cycle $[f : Y \to X]$. But since $\Psi_K$ and $ch$ both commute with push-forwards (Proposition \ref{prop map to K theory} and \ref{prop ch comm with push forward}), we have
\begin{eqnarray*}
ch \circ \gPS_K [f : Y \to X] &=& ch \circ \gPS_K \circ f_* [\id_Y] \\
&=& f_* \circ ch \circ \gPS_K [\id_Y] \\
&=& f_* [\id_Y] \\
&=& [f : Y \to X]
\end{eqnarray*}
as desired.
\end{proof}

\bigskip

\bigskip

\section{Realization functor}
\label{sect realization functor}

In this section, we will establish a realization functor from our equivariant algebraic cobordism theory to Tom Dieck's equivariant complex cobordism theory, when $k = \C$ and $G$ is a compact abelian Lie group.

Recall from section 1 in \cite{complex cobor} that, for a compact Lie group $G$ and a pointed $G$-manifold $X$, 
$$\widetilde{MU}_G^{2k}(X) \defeq \lim_{\stackrel{\longto}{V}} [S^V \wedge X, MU(\dim_{\C} V + k, G)]$$
where $V$ runs through all finite dimensional complex $G$-\repn s, $S^V$ is the one-point compactification of $V$, $MU(r,G)$ is the Thom space of the universal rank $r$ complex $G$-vector bundle and $[-,-]$ denotes pointed $G$-homotopy set. 

Then, the Tom Dieck's \equi\ complex cobordism abelian group for a $G$-manifold $X$ is defined to be
$$MU_G^{2k}(X) \defeq \widetilde{MU}_G^{2k}(X^+)$$
where $X^+$ is $X$ with a separate base point.

This theory has a naturally defined pull-back for any $G$-map and a cup product :
$$MU_G^{2r}(X) \x MU_G^{2s}(X) \stackrel{\cup}{\longto} MU_G^{2r+2s}(X).$$ 
With this cup product, $MU_G(X)$ becomes a commutative ring, or $MU_G$-algebra, with unity $[X^+ \to MU(0,G) = S^0]$. Also, pull-back is a ring \homo. For any \equi\ line bundle $L \to X$, we have an element 
$$c_1(L) \defeq [X^+ \stackrel{a}{\to} M(L) \stackrel{b}{\to} MU(1, G)] \in MU_G^2(X)$$ 
where $M(L)$ is the Thom space of $L$, $a$ is given by the zero section and $b$ is given by the classifying map. We can then define a (first) Chern class operator
$$c_1(L) : MU_G^{2k}(X) \to MU_G^{2k + 2}(X)$$
by sending $x$ to $c_1(L) \cup x$. It also has a naturally defined external product :
$$MU_G^{2r}(X) \x MU_G^{2s}(Y) \stackrel{\wedge}{\longto} MU_G^{2r+2s}(X \x Y).$$ 
It then forms a multiplicative \equi\ cohomology theory. 

For any $r$-dimensional complex $G$-\repn\ $W$, we have a suspension iso\morp
$$MU_G^{2k}(X) \iso \widetilde{MU}_G^{2k + 2r}(S^W \wedge X^+).$$
More generally, for any rank $r$ \equi\ vector bundle $E \to X$, the map $[M(E) \to MU(r, G)]$ given by the classifying map defines an element in $\widetilde{MU}_G^{2r}(M(E))$, which is called the Thom class of $E$ and will be denoted by $Th(E)$. 

Suppose further that $G$ is abelian. Then, we have the Thom iso\morp
$$MU_G^{2k}(X) \iso \widetilde{MU}_G^{2k + 2r}(M(E))$$
defined by sending $x$ to $d^*(x \wedge Th(E))$ where $d : M(E) \to X^+ \wedge M(E)$ is given by projection (see section 2 in \cite{completion of complex cobor}). Moreover, this theory also has a canonical $MU_G$-orientation (following the terminology in \cite{equi Riemann Roch}) : 
$$Th(\O_{\P(\cat{U})}(1)) \in \widetilde{MU}_G^2(M(\O_{\P(\cat{U})}(1))) = \widetilde{MU}_G^2(MU(1,G))$$
where $\O_{\P(\cat{U})}(1)$ is the universal complex $G$-line bundle. Indeed, $MU_G(-)$ is the universal complex oriented cohomology theory with orientation in degree 2 (when $G$ is abelian, see Theorem 1.2 in \cite{universality of complex cobor}). 

As pointed out in Example 11.3 in \cite{equi FGL}, for a compact abelian Lie group G and a complex oriented cohomology theory $E_G^*(-)$, the pair $E_G$, $E_G(\P(\cat{U}))$ with $y(\epsilon)$ being the complex orientation, defines a $G$-\fgl. In particular, for a complete $G$-flag $F$, the pair $MU_G$, $MU_G(\P(\cat{U})) \cong MU_G\{\{F\}\}$ (as $MU_G$-module, by Theorem 9.6 in \cite{equi FGL 2}) forms a $(G,F)$-\fgl. Hence, there is a canonical ring \homo
$$\nu : \lazard_G(F) \to MU_G.$$
By Theorem 13.1 in \cite{equi FGL 2}, $\nu$ is surjective and it is conjectured to be an iso\morp\ for all compact abelian Lie group (see section 13 in \cite{equi FGL 2}). 

Our goal in this section is to define a realization functor :
$$\Psi_{\rm Top} : \cob{G}{}{-} \to MU_G^{}(-),$$
which commutes with the four basic operations. To this end, we first need to have a \proj\ push-forward (also known as Gysin \homo) in $MU_G(-)$. For simplicity, we will assume $G$ to be abelian for the rest of this section.

Recall the following definition of Gysin \homo\ from section 2 in \cite{equi Riemann Roch}. Suppose $f : X \to Y$ is a map in $\gsmcat{G}$\ such that $X$, $Y$ are both equidimensional and \proj\ (In \cite{equi Riemann Roch}, $X$, $Y$ are required to be $MU_G$-oriented, i.e., their tangent bundles are $MU_G$-oriented. But in $MU_G(-)$, all \equi\ vector bundles have Thom classes and hence, $MU_G$-oriented). Define the Gysin \homo
$$f_! : MU_G^{2k}(X) \to MU_G^{2k - 2 \dim f}(Y)$$
as the composition
$$MU_G^{2k}(X) \stackrel{a}{\iso} \widetilde{MU}_G^{2k + 2r}(M(N_f)) \stackrel{b^*}{\to} \widetilde{MU}_G^{2k + 2r}(S^{V_X} \wedge Y^+) \stackrel{c}{\iso} MU_G^{2k - 2 \dim f}(Y)$$
where $V_X$ is a $G$-\repn, $X \embed V_X$ is an \equi\ embedding (in the sense of Topology), $N_f$ is the rank $r$ normal bundle of the embedding $X \embed Y \x V_X$, which will be identified with the tubular neighborhood of $X$ inside $Y \x V_X$ and it is assumed to be inside $Y \x {\rm Int\,} D(V_X)$ (interior of the unit disk), $a$ is the Thom iso\morp, $b$ is the collapsing map and $c$ is the suspension iso\morp. By Lemma 2.2 in \cite{equi Riemann Roch}, the above definition is independent of all choices made.

Since we would like to consider $MU_G(-)$ as a theory with four basic operations : \proj\ push-forward, \sm\ pull-back, (first) Chern class operator and external product, we will focus on the full subcategory of $\gsmcat{G}$\ consisting of \proj\ objects, denoted by $\gprojsmcat{G}$, for the rest of this section.

\begin{prop}
\label{prop MU basic properties}
For the theory $MU_G(-)$ on $\gprojsmcat{G}$,

\noindent \begin{statementslist}
{\rm (1)} & $c(f^*L) = f^*(c(L))$ \\
{\rm (2)} & If $x \in MU_G(X)$ and $y \in MU_G(Y)$, then $x \wedge y = \pi_1^*(x) \cup \pi_2^*(y)$ \\
{\rm (3)} & If $x, y \in MU_G(X)$, then $x \cup y = \Delta^* (x \wedge y)$ where $\Delta$ is the diagonal map \\
{\rm (4)} & $f_!(x \cup f^*(y)) = f_!(x) \cup y$ \\
{\rm (5)} & {\rm \textbf{(D1)-(D4), (A1)-(A8)} hold (see \cite{universal alg cobor})}
\end{statementslist}
\end{prop}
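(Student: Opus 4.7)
The plan is to verify each property by reducing it to standard manipulations with Thom classes, classifying maps, and the formal structure of the Gysin homomorphism as defined via $f_! = c \circ b^* \circ a$ in terms of the Thom isomorphism $a$, collapse map $b$, and suspension isomorphism $c$. Properties (1), (2), (3) are not really about the Gysin map at all: (1) follows from the naturality of the classifying map, since $f^*L$ is classified by the composition of $f$ with the classifying map of $L$, so $c_1(f^*L) = f^*(c_1(L))$ in $MU_G^2(X)$; (2) and (3) are the standard compatibility of cup and external product in any multiplicative equivariant cohomology theory, using $x \wedge y = \pi_1^*(x) \cup \pi_2^*(y)$ as the definition of cup via external product and the diagonal embedding.

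For (4), the projection formula, I would first reduce to the case of a closed embedding $f : X \hookrightarrow Y$ and the case of the projection of a trivial vector bundle, since every projective morphism in $G$-Sm factors (after equivariant embedding as in Proposition 3.2) through a closed embedding into $Y \times \mathbb{P}(V)$, and $\mathbb{P}(V) \to \mathrm{pt}$ may be treated via the Thom--Pontryagin collapse. For a closed embedding, the projection formula is the compatibility of the Thom isomorphism with cup products: under the Thom isomorphism $MU_G^{2k}(X) \cong \widetilde{MU}_G^{2k+2r}(M(N_f))$, multiplication by $f^*(y)$ on the left corresponds to multiplication by $y$ pulled back to $M(N_f)$ on the right, which is preserved under the collapse and suspension maps. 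For a projection, the same statement reduces to the obvious identity that $\pi^*(y) \cup z = z \cup \pi^*(y)$ commutes with the suspension isomorphism. These are routine diagram chases in the cofiber sequence defining $M(N_f) \to Y^+ \to (Y - X)^+$.

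For (5), the axioms \textbf{(D1)--(D4)} are the basic formal properties of the Gysin map (functoriality, compatibility with disjoint unions, behavior on closed embeddings, base change) and \textbf{(A1)--(A8)} were already spelled out earlier in the paper. The functoriality $(g \circ f)_! = g_! \circ f_!$ in \textbf{(A1)} (resp.\ for pull-back) follows from choosing a joint embedding $X \hookrightarrow Y \times V_X$ and $Y \hookrightarrow Z \times V_Y$ so that $X \hookrightarrow Z \times V_Y \times V_X$, under which the normal bundles and collapsing maps compose compatibly. The base change axiom \textbf{(A2)} is obtained by the standard transversality argument: for $f$ projective, $g$ smooth and a Cartesian square, an equivariant tubular neighborhood of $X$ inside $Z \times V_X$ pulls back along $g$ to one of $X \times_Z Y$ inside $Y \times V_X$, and the Thom--collapse--suspension composition commutes with $g^*$. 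The commutation axioms \textbf{(A3), (A4)} are the special cases of the projection formula (4) applied to Chern class operators, using that $c_1(L)$ acts by cup product with a fixed class. \textbf{(A5)} is the commutativity of the ring $MU_G(X)$, and \textbf{(A6), (A7), (A8)} follow directly from the standard compatibility of the external product with Gysin maps, pull-backs, and cup products in any multiplicative equivariant cohomology theory.

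The only non-trivial step is the projection formula (4), and even there the hard part is just bookkeeping: identifying the cup product on $M(N_f)$ via the Thom isomorphism with the cup product on $X$ twisted by $f^*y$, and verifying that the collapse map $b : S^{V_X} \wedge Y^+ \to M(N_f)$ respects this structure. Since this is standard for complex-oriented equivariant cohomology theories (see e.g.\ the treatment in \cite{equi Riemann Roch} and the discussion in Section 2 of that reference), we will cite this freely and only write out the diagram that exhibits (4) explicitly.
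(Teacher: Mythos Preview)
Your proposal is correct and follows essentially the same approach as the paper. The paper's proof is even terser: it simply cites Lemma~2.2 of \cite{equi Riemann Roch} for both \textbf{(D1)} and the projection formula~(4), rather than sketching the factorization into a closed embedding and a projection as you do; and for \textbf{(A2)} it gives exactly the tubular-neighborhood pullback argument you outline. One small labeling slip: you attribute $(g\circ f)_! = g_! \circ f_!$ to \textbf{(A1)}, but in this paper \textbf{(A1)} is the functoriality of \emph{smooth pull-back} (which the paper dispatches by saying $MU_G(-)$ is a cohomology theory), while push-forward functoriality is \textbf{(D1)}.
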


\begin{proof}
(1), (2), (3) follow directly from definition. \textbf{(D2), (D3), (D4)} are what we call pull-back, (first) Chern class operator and external product, which we just defined. 

\noindent \textbf{(D1)} and (4) : From Lemma 2.2 in \cite{equi Riemann Roch}.

\noindent \textbf{(A1)} : Follows from the fact that $MU_G(-)$ is a cohomology theory.

\noindent \textbf{(A2)} : Consider the following diagram
\squarediagramword{W}{X}{Y}{Z}{g'}{f'}{f}{g}
which is Cartesian and $W, X, Y, Z$ are objects in $\gprojsmcat{G}$. Choose equivariant embeddings $e_X : X \embed V_X$ and $e_Y : Y \embed V_Y$. Then, we have a Cartesian diagram
\squarediagramword{X \x_{Z \x V_X} (Y \x V_X)}{X}{Y \x V_X}{Z \x V_X}{a}{b}{c}{d}
where $d = g \x \id_{V_X}$, $c = f \x e_X$ is an equivariant embedding and so is $b$. Notice that 
$$X \x_{Z \x V_X} (Y \x V_X) \cong X \x_Z Y = W$$
and $a$ is isomorphic to $g'$. Also, if we denote the normal bundle of $c : X \embed Z \x V_X$ by $N_f$, then  and the normal bundle of $b : W \embed Y \x V_X$ will be $g'^* N_f$.

To define $N_{f'}$ for the Gysin \homo, we let $V_W \defeq V_X \x V_Y$ and consider the equivariant embedding 
$$Y \x V_X \embed Y \x V_X \x V_Y = Y \x V_W$$
which sends $(y,v)$ to $(y,v,e_Y(y))$. Then the composition
$$W \stackrel{b}{\embed} Y \x V_X \embed Y \x V_W,$$
which sends $(x,y)$ to $(y, e_X(x), e_Y(y))$, will be an equivariant embedding and the corresponding normal bundle $N_{f'}$ of $W$ is isomorphic to $(g'^* N_f) \x V_Y$.

Now, for an element $t = [S^V \wedge X^+ \to MU(k,G)] \in MU_G(X)$,
\begin{eqnarray*}
&& f_!' \circ {g'}^*(t) \\
&=& [S^V \wedge Y^+ \wedge S^{V_X} \wedge S^{V_Y} \to S^V \wedge M(N_{f'}) \to S^V \wedge W^+ \wedge M(N_{f'}) \to MU(r'+k,G)] 
\end{eqnarray*}
where $r' \defeq \rank{N_{f'}}$. On the other hand,
\begin{eqnarray*}
g^* \circ f_!(t) &=&  [S^V \wedge Y^+ \wedge S^{V_X} \to S^V \wedge Z^+ \wedge S^{V_X}  \\
&& \to S^V \wedge M(N_f) \to S^V \wedge X^+ \wedge M(N_f) \to MU(r+k,G)] 
\end{eqnarray*}
where $r = \rank{N_f}$, which is equal to 
$$[S^V \wedge Y^+ \wedge S^{V_X} \wedge S^{V_Y} \to \cdots \to MU(r+k,G) \wedge S^{V_Y} \to MU(r'+k,G)] $$
in the direct system. By following the definitions, it can be seen that the above two maps agree.

\noindent \textbf{(A3)} : Follows from (1) and (4).

\noindent \textbf{(A4)} : Follows from (1) and the fact that pull-back is a ring \homo.

\noindent \textbf{(A5)} : Follows from the fact that $MU_G(X)$ is a commutative ring.

\noindent \textbf{(A6)} : If $X \embed V_X$, $Y \embed V_Y$, then $X \x Y \embed V_X \x V_Y$ and $N_{f \x g} \cong (\pi_1^* N_f) \oplus (\pi_2^* N_g)$.

\noindent \textbf{(A7)} : Follows from definition.

\noindent \textbf{(A8)} : Follows from (2).
\end{proof}

Next, we need an analogue of Proposition \ref{prop Nilp axiom}\ in $MU_G(-)$. For an equivariant line bundle $L$ over $Y \in$ $\gprojsmcat{G}$, let 
$$V^n(L) \defeq c(L \otimes \alpha_1) \circ \cdots \circ c(L \otimes \alpha_n)$$
(Here we are considering $\alpha_i$ as an equivariant line bundle over $Y$).

\begin{lemma}
\label{lemma axioms hold in MU}
\textbf{(Sect)}, \textbf{(EFGL)}, double point relation, blow up relation, extended double point relation hold in $MU_G(-)$. Moreover, for all \girred{G}\ $Y \in$ $\gprojsmcat{G}$, $L$ \equi\ line bundle over $Y$, finite set $S$ as in (\ref{eqn basic element}), 
$$V^n_S(L)(1_Y) = 0$$
in $MU_G(Y)$, for sufficiently large $n$.
\end{lemma}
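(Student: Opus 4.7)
The plan is to first install \textbf{(Sect)} and \textbf{(EFGL)} directly in $MU_G(-)$, and then let every other statement in the lemma fall out formally: Lemma \ref{lemma FGL remaining terms} and Propositions \ref{prop double point relation}, \ref{prop blow up relation}, \ref{prop extended double point relation} were derived in section \ref{sect basic properties} using nothing more than \textbf{(Sect)}, \textbf{(EFGL)}, and the package \textbf{(A1)}--\textbf{(A8)}, and the latter was already supplied by Proposition \ref{prop MU basic properties}. The nilpotency assertion will then be handled by an induction that mirrors Proposition \ref{prop Nilp axiom}.

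For \textbf{(Sect)}, given an invariant section $s \in H^0(Y,L)^G$ vanishing transversely along the invariant smooth divisor $Z \stackrel{i}{\embed} Y$, the goal is the equivariant self-intersection identity $c(L)(1_Y) = i_!(1_Z)$ in $MU_G^2(Y)$. I would view $c(L)$ as $s_0^*\, Th(L)$, where $s_0 : Y \to L$ is the zero section, deform $s_0$ equivariantly to $s$ inside an equivariant tubular neighborhood of $Z$ in the total space of $L$, and use the canonical equivariant isomorphism $N_{Z/Y} \cong L|_Z$ to identify $s^*\, Th(L)$ with exactly the class $i_!(1_Z)$ appearing in the Thom--Gysin definition of $i_!$ recalled at the start of this section.

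For \textbf{(EFGL)}, recall that by Example 11.3 of \cite{equi FGL} together with Theorem 9.6 of \cite{equi FGL 2}, the pair $(MU_G,\ MU_G(\P(\cat{U})) \cong MU_G\{\{F\}\})$ carries a canonical $(G,F)$-formal group law, and the canonical ring homomorphism $\nu : \lazard_G(F) \to MU_G$ sends the structure constants $b^{i,j}_s$, $d(\alpha)^i_s$, $f^i_{s,t}$ to the structure constants of this FGL. Given a \girred{G}\ $Y \in$ \gprojsmcat{G}\ and $L, M \in \picard{G}{Y}$, one uses over $\mathbb{C}$ an equivariant classifying map $\phi_L : Y \to \P(\cat{U})$ with $\phi_L^* \O_{\P(\cat{U})}(1) \cong L$. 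Pulling back the universal product and $G^*$-action identities along $\phi_L^*$ yields the first two \textbf{(EFGL)} relations in $MU_G(Y)$, while the map $(\phi_L, \phi_M) : Y \to \P(\cat{U}) \x \P(\cat{U})$ together with the K\"unneth-type isomorphism $MU_G(\P(\cat{U}) \x \P(\cat{U})) \cong MU_G\{\{F\}\} \hat{\otimes}_{MU_G} MU_G\{\{F\}\}$ turns the coproduct relation into the third identity $V^i(L \otimes M) = \sum_{s,t} f^i_{s,t}\, V^s(L) V^t(M)$.

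With \textbf{(Sect)} and \textbf{(EFGL)} in hand, the double point, blow up, and extended double point relations in $MU_G(-)$ are immediate transplants of Propositions \ref{prop double point relation}--\ref{prop extended double point relation}, since those proofs invoke only these axioms and \textbf{(A1)}--\textbf{(A8)}. For nilpotency, I would induct on $\dim Y$ exactly as in Proposition \ref{prop Nilp axiom}: in the base case $L \cong \beta$ for some character, and any $N > \max S$ with $\alpha_N \cong \dual{\beta}$ forces $c(L \otimes \alpha_N) = c(\O_Y) = 0$ inside the product defining $V^N_S(L)$; for the inductive step, embedded desingularization writes $L \cong \O(\sum_i \pm D_i) \otimes \beta$ with smooth $G$-prime $D_i$, and the blow up relation together with \textbf{(EFGL)} and Proposition \ref{prop FGL inverse} reduces the claim to the inductive hypothesis on each $D_i$. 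The delicate point in the plan is the topological classifying map $\phi_L$ used in the \textbf{(EFGL)} step: one must justify that an algebraic equivariant line bundle on a complex $G$-variety gives rise, via its underlying equivariant topological line bundle, to a map into $\P(\cat{U})$ that is unique up to equivariant homotopy and compatible with the $MU_G$-orientation. Once this translation is made precise, the rest is a routine replay of the axiomatic reasoning from section \ref{sect basic properties}.
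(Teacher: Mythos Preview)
Your proposal is correct and follows essentially the same route as the paper: \textbf{(Sect)} is the standard equivariant self-intersection identity via Thom classes (the paper writes out both $i_!(1_D)$ and $c(L)$ directly from the definitions and matches them, while you phrase it as a deformation of the zero section to $s$, but these are the same argument), and \textbf{(EFGL)} is obtained exactly as you say by pulling back the universal identities on $MU_G(\P(\cat{U})) \cong MU_G\{\{F\}\}$ along the classifying map for $L$. The remaining relations and the nilpotency statement are then, in both your write-up and the paper's, formal consequences of \textbf{(Sect)}, \textbf{(EFGL)}, \textbf{(A1)}--\textbf{(A8)}, and the arguments of section~\ref{sect basic properties} and Proposition~\ref{prop Nilp axiom}.
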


\begin{proof}
For \textbf{(Sect)}, let $D \subset Y \in$ $\gprojsmcat{G}$\ be an \inv\ \sm\ divisor and $L$ is the line bundle corresponding to $\O_Y(D)$. We need to show $c(L) = i_!(1_D)$ where $i : D \embed Y$ is the closed immersion. Choose an equivariant embedding $D \embed V_D$. Let $r$ be the rank of its normal bundle $N_i$. Then,
$$i_!(1_D) = [S^{V_D} \wedge Y^+ \to M(N_i) \to MU(r,G)]$$
and 
$$c(L) = [Y^+ \to M(L) \to MU(1,G)] = [S^{V_D} \wedge Y^+ \to M(L \x V_D) \to MU(r,G)].$$
So they agree.

For \textbf{(EFGL)}, see remark \ref{rmk why EFGL}. The other properties follow from the same proofs as in section \ref{sect basic properties}\ and Proposition \ref{prop Nilp axiom}, with the correspondence 
$$[f : Y \to X] = f_! (1_Y).$$
\end{proof}

Now, for all $X \in$ $\gprojsmcat{G}$, we define a $\lazard_G(F)$-module \homo
$$\Psi_{\rm Top} : \lazard {\rm Z}_{G,F}(X) \to MU_G(X)$$
by sending $a [f : Y \to X, \L_1, \ldots, \L_r]$ to $\nu(a) f_!(c(L_1) \cdots c(L_r))$ where $L_i$ is the \equi\ line bundle corresponding to $\L_i$. The case of infinite cycles is covered because of Lemma \ref{lemma axioms hold in MU}.

\begin{thm}
\label{thm realization fct}
$\Psi_{\rm Top}$ descends to a canonical $\lazard_G(F)$-\homo
$$\cob{G}{}{X} \to MU_G(X),$$
for all $X \in$ $\gprojsmcat{G}$, and it commutes with the four basic operations. When $X$ is equidimensional, there is a canonical grading on $\cob{G}{}{X}$ and $\Psi_{\rm Top} : \cob{G}{*}{X} \to MU_G^{2*}(X).$
\end{thm}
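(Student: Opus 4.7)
The proof will proceed in three steps: descent through the defining relations of $\cob{G}{}{X}$, verification of compatibility with the four basic operations, and construction of the grading when $X$ is equidimensional.

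For the descent, I would invoke Lemma~\ref{lemma axioms hold in MU}, which asserts that the \textbf{(Sect)} and \textbf{(EFGL)} axioms hold in $MU_G(-)$. The (Sect) relator $[\id_Y, \L] - [Z \embed Y]$ is mapped to $c_1(L)(1_Y) - i_!(1_Z)$, which vanishes by the first assertion of Lemma~\ref{lemma axioms hold in MU}. For (EFGL), the canonical map $\nu : \lazard_G(F) \to MU_G$ sends the structure constants $b^{i,j}_s$, $d(\alpha)^i_s$, $f^i_{s,t}$ to the corresponding structure constants of the $(G,F)$-formal group law on $MU_G(\P(\cat{U})) \cong MU_G\{\{F\}\}$ by construction; pulling back along the classifying map $cl_L : Y \to \P(\cat{U})$ for any equivariant line bundle yields the desired identities on $Y$. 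The nilpotency part of Lemma~\ref{lemma axioms hold in MU} ensures that the image of an infinite cycle from the definition of $\basicmod{G}{F}{}{X}$ is a well-defined finite sum in $MU_G(X)$.

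For compatibility with the basic operations, I would verify each one using the properties collected in Proposition~\ref{prop MU basic properties}. Projective push-forward follows from functoriality of the Gysin homomorphism $(g \circ f)_! = g_! \circ f_!$, which is a consequence of axioms \textbf{(A1)}--\textbf{(A3)}, combined with the projection formula (property~(4)). Smooth pull-back uses axiom \textbf{(A2)} applied to a Cartesian square, plus property~(1) to identify pullbacks of Chern classes. Commutation with the Chern class operator follows by direct computation from the definition, using \textbf{(A3)} and (4). External product compatibility combines \textbf{(A6)} with property~(2) of Proposition~\ref{prop MU basic properties}.

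For the grading, when $X$ is equidimensional I define
$$\deg\bigl(a[f: Y \to X, \L_1, \ldots, \L_r]\bigr) \defeq \deg a + (\dim X - \dim Y) + r,$$
using the grading on $\lazard_G(F)$ from Remark~\ref{rmk grading on equi lazard}, where $\deg b^{i,j}_s = i+j-s$, $\deg d(\alpha)^i_s = i-s$, and $\deg f^i_{s,t} = i-s-t$. The (Sect) relation is then homogeneous of degree $1$ on both sides since $\dim Z = \dim Y - 1$, and the (EFGL) relations are homogeneous by direct inspection of these formulas. Under $\Psi_{\rm Top}$, a cycle of degree $d$ lands in $MU_G^{2r - 2\dim f}(X) = MU_G^{2d}(X)$ via $c_1(L_i) \in MU_G^2$ and $f_! : MU_G^{2k}(Y) \to MU_G^{2k - 2\dim f}(X)$. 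The main obstacle I anticipate is the first step, specifically confirming that $\nu$ transports the (EFGL) relations in $\cob{G}{}{Y}$ to genuine identities in $MU_G(Y)$ for \emph{arbitrary} line bundles $\L \in \picard{G}{Y}$ (rather than just $\O(1)$ over $\P(\cat{U})$); this requires the classifying-map argument already used inside the proof of Lemma~\ref{lemma axioms hold in MU} and must be applied coherently to each of the three families of (EFGL) relations.
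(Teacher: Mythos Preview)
Your descent argument and compatibility with the four operations are essentially what the paper does (the paper dispatches these in one sentence each, invoking Lemma~\ref{lemma axioms hold in MU} and saying the map ``clearly commutes'' with the operations), so those parts are fine.

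The grading argument has a genuine gap. You correctly define the degree of a cycle and check that \textbf{(Sect)} and \textbf{(EFGL)} are homogeneous, but when you write ``a cycle of degree $d$ lands in $MU_G^{2r-2\dim f}(X) = MU_G^{2d}(X)$'' you are silently assuming the coefficient $a \in \lazard_G(F)$ has degree zero. For a general homogeneous element $a[f:Y\to X,\L_1,\ldots,\L_r]$ of degree $d = \deg a + r - \dim f$, its image is $\nu(a)\, f_!(c(L_1)\cdots c(L_r)) \in MU_G^{\deg \nu(a) + 2r - 2\dim f}(X)$, and this equals $MU_G^{2d}(X)$ only if $\nu(a) \in MU_G^{2\deg a}$. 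You have not established that $\nu:\lazard_G(F)^* \to MU_G^{2*}$, and this is \emph{not} immediate from the universal property defining $\nu$: one needs to know that the structure constants of the $(G,F)$-formal group law on $MU_G\{\{F\}\}$ sit in the expected degrees of $MU_G$. This is in fact where the paper spends most of its effort: it checks the generators $e(\alpha)$ and $f^1_{s,t}$ of $\lazard_G(F)$ separately, the latter via an explicit geometric computation on $\P(\dual{\alpha_1}\oplus\dual{\alpha_2}) \times \P(\dual{\alpha_1}\oplus\dual{\alpha_2})$ that expresses $f^1_{1,1}$ (and inductively all $f^1_{s,t}$) as an element of $\cob{}{G}{\pt}$ whose image under $\Psi_{\rm Top}$ visibly has the right degree.

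A secondary technical point you pass over: the grading you write down is a grading on \emph{cycles}, but elements of $\basicmod{G}{F}{}{X}$ are infinite sums of cycles with unbounded $r$, so the grading is not literally defined there. The paper handles this by invoking the isomorphism $\cob{}{G}{-} \cong \cob{}{G}{-}_{\rm fin}$ from Section~\ref{sect fundamental properties} and checking that \textbf{(Blow)} and \textbf{(Nilp)} are also homogeneous.
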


\begin{proof}
It clearly commutes with the operations. By Lemma \ref{lemma axioms hold in MU}, \textbf{(Sect)} and \textbf{(EFGL)} hold in $MU_G(-)$ and hence, $\Psi_{\rm Top} : \cob{G}{}{X} \to MU_G(X)$ is well-defined.

For the (cohomological) grading, first of all, $\lazard_G(F)$ has a natural grading, by Remark \ref{rmk grading on equi lazard}. We then define 
$$\deg [f : Y \to X, \L_1, \ldots, \L_r] \defeq r - \dim f.$$ 
Since $\cob{}{G}{-} \cong \cob{}{G}{-}_{\rm fin}$ (as in section \ref{sect fundamental properties}) and this grading is preserved by \textbf{(Blow)}, \textbf{(Nilp)}, \textbf{(Sect)}, \textbf{(EFGL)}, it defines a grading on $\cob{}{G}{-}$ (It is not well-defined in $\basicmod{G}{F}{}{-}$ because of the infinite cycles). For a homogeneous element $a \in \lazard_G(F)$,
$$\deg (a [f : Y \to X, \L_1, \ldots, \L_r]) = \deg a + r - \dim f$$
and 
$$\deg (\nu(a) f_!(c(L_1) \cdots c(L_r))) = \deg \nu(a) + 2r - 2\dim f.$$
Therefore, it is enough to show $\nu : \lazard_G(F)^* \to MU_G^{2*}$. Notice that $\lazard_G(F)$ is generated by $e(\alpha)$ and $f^1_{s,t}$. Moreover, $\deg e(\alpha) = 1$ and 
$$\nu(e(\alpha)) = \Psi_{\rm Top}(c(\alpha)[\id_{\pt}]) = c(\alpha),$$
which is of degree 2. It remains to show the statement for $f^1_{s,t}$.

Let $X \defeq \P(\dual{\alpha_1} \oplus \dual{\alpha_2})$ and $Y \defeq X \x X$. Then, in $\cob{}{G}{Y}$, we have
\begin{eqnarray}
&& V^1(\O(1,1))[\id_Y] \nonumber\\
&=& \sum_{s,t} f^1_{s,t} V^s(\O(1,0)) V^t(\O(0,1)) [\id_Y] \nonumber\\
&=& c(\O(1,0)) [\id_Y] + c(\O(0,1)) [\id_Y] + f^1_{1,1} V^1(\O(1,0)) V^1(\O(0,1)) [\id_Y] \nonumber\\
&& +\ f^1_{2,1} V^2(\O(1,0)) V^1(\O(0,1)) [\id_Y] + \cdots \nonumber\\
&=& [\P(\dual{\alpha_2}) \x X \embed Y] + [X \x \P(\dual{\alpha_2}) \embed Y] + f^1_{1,1} [\P(\dual{\alpha_2}) \x \P(\dual{\alpha_2}) \embed Y] \nonumber
\end{eqnarray}
Push this equality down to $\pt$, we have
$$[Y, \O(1,1)] = 2 [X] + f^1_{1,1}.$$
Hence,
$$\nu(f^1_{1,1}) = \Psi_{\rm Top}([Y, \O(1,1)] - 2 [X]).$$
It shows that $\deg \nu(f^1_{1,1}) = -2 = 2 \deg{f^1_{1,1}}$. The result for general $f^1_{s,t}$ follows from similar arguments, inductively.
\end{proof}
 
\begin{rmks} 
{\rm 
The proof of Theorem \ref{thm realization fct}\ also gives a geometric description to the elements $f^1_{s,t} \in \cob{}{G}{\pt}$ and $\nu(f^1_{s,t}) \in MU_G$.
}
\end{rmks}

\bigskip

\bigskip

\section{Flag dependency}
\label{sect flag indep}

In this last section, we will show that our definition of $\cob{}{G}{-}$ is indeed independent of the choice of the complete $G$-flag $F$. We will use the same notations and assumptions on $G$, $k$ and $F$ as in section \ref{sect notation}, except that we will not assume $\alpha_1 = \epsilon$ anymore.

Throughout this paper, we have been using a flag-dependent definition of equivariant formal group law over a commutative ring $R$. But there is indeed a flag-independent definition, which is called $G$-\equi\ formal group law over $R$ (see section 11 of \cite{equi FGL} for details). Since it is shown in section 13 of \cite{equi FGL} that it is equivalent to the flag-dependent definition we have been using, we will not include the details about this notion here. 

Suppose $F'$ is another complete $G$-flag given by 
$$0 = W^0 \subset W^1 \subset W^2 \subset \cdots.$$ 
Denote the 1-dimensional characters $W^i/W^{i-1}$ by $\beta_i$. Again, we will not assume $\beta_1 = \epsilon$. 

According to the results in section 13 of \cite{equi FGL}, there is a canonical map
$$\phi : \lazard_G(F) \to \lazard_G(F')$$
defined as follow.

By definition, $\lazard_G(F')\{\{F'\}\}$ is a $(G,F')$-\equi\ formal group law over $\lazard_G(F')$. By Lemma 13.1 in \cite{equi FGL}, it defines a $G$-\equi\ formal group law over $\lazard_G(F')$ (flag-independent). By Lemma 13.2 in \cite{equi FGL}, it then defines a $(G,F)$-\equi\ formal group law over $\lazard_G(F')$. Then, $\phi : \lazard_G(F) \to \lazard_G(F')$ is the unique map given by the universal property of $\lazard_G(F)$. By symmetry, there is also a canonical map $\phi' : \lazard_G(F') \to \lazard_G(F)$. 

Notice that we have the following commutative diagram :

\medskip

\begin{center}
$\begin{CD}
R\{\{F\}\} @>{\overline{\phi}}>> R'\{\{F\}\} @>{\psi}>> R'\{\{F'\}\} @>{\overline{\phi'}}>> R\{\{F'\}\} @>{\psi'}>> R\{\{F\}\} \\
@AAA @AAA @AAA @AAA @AAA \\
R @>{\phi}>> R' @= R' @>{\phi'}>> R @= R
\end{CD}$
\end{center}

\medskip

\noindent where $R \defeq \lazard_G(F)$, $R' \defeq \lazard_G(F	')$, the maps $\psi$, $\psi'$ are the iso\morp s given by Lemma 13.1 and 13.2 in \cite{equi FGL} and $\overline{\phi}$, $\overline{\phi'}$ are the maps given by the universal properties of $R$, $R'$ respectively. Observe that $\overline{\phi}, \overline{\phi'}, \psi$ and $\psi'$ all preserve product, $G^*$-action and coproduct. 

\begin{prop}
\label{prop flag coef iso}
The compositions $\psi' \circ \overline{\phi'} \circ \psi \circ \overline{\phi}$ and $\phi' \circ \phi$ are both identity maps. In addition, $\psi \circ \overline{\phi}$ and $\phi$ are both iso\morp s.
\end{prop}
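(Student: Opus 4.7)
The strategy is to apply the universal property of $R = \lazard_G(F)$ as a representing object for $(G,F)$-formal group laws, combined with the naturality of the identification isomorphisms $\psi, \psi'$ with respect to the base ring. First, I would record that $\psi$ and $\psi'$ are functorial in the base ring: the formulas expressing the topological basis $\{y(W^i)\}$ in terms of $\{y(V^j)\}$ (constructed in Lemmas 13.1 and 13.2 of \cite{equi FGL}) involve only polynomial expressions in the structure constants, so they commute with pushforward along any ring map. Consequently, $\psi$ and $\psi'$ realize mutually inverse equivalences between the categories of $(G,F)$- and $(G,F')$-formal group laws.

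Granting this functoriality, consider the composition
$$\Phi \defeq \psi' \circ \overline{\phi'} \circ \psi \circ \overline{\phi} : R\{\{F\}\} \longrightarrow R\{\{F\}\}.$$
By the commutativity of the displayed diagram and the stated preservation of product, $G^*$-action, and coproduct by each factor, $\Phi$ is a structure-preserving map of $(G,F)$-formal group laws covering the ring endomorphism $\phi' \circ \phi$. Unwinding the definitions of $\overline{\phi}$ and $\overline{\phi'}$ as the canonical classifying maps given by the universal property applied to structures transported via $\psi$ and $\psi'$, together with the functoriality above, one verifies that $\Phi$ realizes the pushforward of the universal $(G,F)$-formal group law on $R\{\{F\}\}$ along $\phi' \circ \phi$. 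Since $\mathrm{id}_R$ also classifies the same universal structure, the uniqueness clause of the universal property of $R$ forces $\phi' \circ \phi = \mathrm{id}_R$. Under this identification $\Phi$ is an $R$-linear structure-preserving endomorphism of the universal object, which by universality must equal $\mathrm{id}_{R\{\{F\}\}}$.

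By the symmetric argument, interchanging the roles of $F$ and $F'$, one obtains $\phi \circ \phi' = \mathrm{id}_{R'}$, so $\phi$ and $\phi'$ are mutually inverse ring isomorphisms; similarly, $\psi \circ \overline{\phi}$ is an isomorphism of topological modules with inverse $\psi' \circ \overline{\phi'}$. The main obstacle is the functoriality claim for $\psi, \psi'$ in the base ring, on which the entire argument rests; this comes down to inspecting the recursive formulas in the proofs of Lemmas 13.1 and 13.2 of \cite{equi FGL} and checking that they involve only polynomial expressions in the structure constants. Once this is in hand, the remaining steps are formal consequences of the universal property.
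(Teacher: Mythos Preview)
Your approach via the universal property and naturality is correct in outline but takes a different route from the paper. The paper argues concretely: it shows directly that $f = \psi' \circ \overline{\phi'} \circ \psi \circ \overline{\phi}$ fixes $y(\alpha_1)$ via the one-line computation
\[
f(y(\alpha_1)) \;=\; \psi' \circ \overline{\phi'}\bigl(l_{\alpha_1\dual{\beta_1}}\, y(\beta_1)\bigr) \;=\; l_{\alpha_1\dual{\beta_1}} \circ l_{\beta_1\dual{\alpha_1}}\, y(\alpha_1) \;=\; y(\alpha_1),
\]
and then, since each $y(V^i)$ is a product of $G^*$-translates of $y(\alpha_1)$ and $f$ preserves both product and $G^*$-action, $f$ fixes every $y(V^i)$; comparing coefficients in the structure equations then forces $\phi' \circ \phi$ to fix all structure constants. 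This is short and self-contained, needing nothing from \cite{equi FGL} beyond the fact that $\psi,\psi'$ intertwine the $G^*$-action.

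Your argument trades this computation for a Yoneda step, but as written the uniqueness deduction is incomplete: knowing that $\Phi$ is the base-change map along $\phi'\circ\phi$ tells you only that $\phi'\circ\phi$ classifies the \emph{pushforward} of the universal law, not that this pushforward coincides with the universal law. To invoke uniqueness against $\id_R$ you must separately verify that the target structure of $\Phi$ is again the universal $(G,F)$-law, which does follow by tracing the definitions of $\phi,\phi'$ as classifying maps together with your functoriality claim. Once that missing half is supplied your argument is valid; equivalently and more cleanly, a natural isomorphism between two representable set-valued functors on commutative rings is induced by an isomorphism of the representing objects, which yields $\phi,\phi'$ mutually inverse without ever mentioning $\Phi$. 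Either way the functoriality you defer to \cite{equi FGL} carries the real weight, whereas the paper's direct computation avoids that dependence entirely.
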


\begin{proof}
First of all, as the notation suggests, $\psi$ will send
$$y(\gamma) \defeq l_{\gamma \dual{\alpha_1}} y(\alpha_1) = l_{\gamma \dual{\alpha_1}} y(V^1) \in R'\{\{ F \}\}$$
to 
$$y(\gamma) \defeq l_{\gamma \dual{\beta_1}} y(\beta_1) = l_{\gamma \dual{\beta_1}} y(W^1) \in R'\{\{ F' \}\}$$
and similarly for $\psi'$. In other words, $\psi,\psi'$ fix elements of the form $y(\gamma)$.

Let $f \defeq \psi' \circ \overline{\phi'} \circ \psi \circ \overline{\phi}$. By Lemma 13.2 in \cite{equi FGL}, 
$$y(V^i) = y(\alpha_1) \cdots y(\alpha_i),$$
the map $f$ fixes $y(V^i)$. In $R\{\{F\}\}$, we have
$$y(V^i)y(V^j) = \sum_s b^{i,j}_s y(V^s).$$
Then, $f$ fixes the left hand side and sends the right hand side to $\sum_s (\phi' \circ \phi (b^{i,j}_s))\, y(V^s)$. Therefore,
$$\sum_s b^{i,j}_s y(V^s) = \sum_s (\phi' \circ \phi (b^{i,j}_s))\, y(V^s)$$
and hence, $\phi' \circ \phi (b^{i,j}_s) = b^{i,j}_s$. By similar arguments, $\phi' \circ \phi$ fixes all structure constants and hence is an identity. By symmetry, $\phi \circ \phi'$ is also an identity. So $\phi$ is an iso\morp. Since $f$ fixes $y(V^i)$ and elements in $R$, $f$ is an identity. Again, by symmetry, $\psi \circ \overline{\phi}$ is an iso\morp.
\end{proof}

Let us denote the \equi\ algebraic cobordism theories corresponding to $F$ and $F'$ by $\cob{}{G,F}{-}$ and $\cob{}{G,F'}{-}$ respectively. Moreover, define the theories $\bigcob{}{G,F}{-}$ and $\bigcob{}{G,F'}{-}$ by imposing the \textbf{(Sect)} and \textbf{(EFGL)} axioms on $\bigbasicmod{G}{F}{}{-}$ and $\bigbasicmod{G}{F'}{}{-}$ respectively. Notice that since the $\lazard_G(F)$-submodule of $\bigbasicmod{G}{F}{}{-}$ corresponding to imposing the axioms is actually a submodule of $\basicmod{G}{F}{}{-}$, we have $\cob{}{G,F}{-} \subset \bigcob{}{G,F}{-}$ and similarly, $\cob{}{G,F'}{-} \subset \bigcob{}{G,F'}{-}$.

For an object $X \in \gvar{G}$, let 
$$\overline{\Psi}_{F,F'} : \bigbasicmod{G}{F}{}{X} \to \bigbasicmod{G}{F'}{}{X}$$
be the canonical map which sends $a[f : Y \to X, \ldots]$ to $\phi(a) [f : Y \to X, \ldots]$, which induces a map 
$$\Psi_{F,F'} : \basicmod{G}{F}{}{X} \embed \bigbasicmod{G}{F}{}{X} \stackrel{\overline{\Psi}_{F,F'}}{\longto} \bigbasicmod{G}{F'}{}{X} \to \bigcob{}{G,F'}{X}.$$
Our goal is to show that it descends to a map $\cob{}{G,F}{X} \to \bigcob{}{G,F'}{X}$ with image inside $\cob{}{G,F'}{X}$.

First of all, for each infinite Chern class operator on $\bigbasicmod{G}{F}{}{-}$, we need to define an associated infinite Chern class operator on $\bigbasicmod{G}{F'}{}{-}$. For any $i \geq 0$ and $S$ as in (\ref{eqn basic element}), let 
$$y(V^i_S) \defeq \prod_{\stackrel{1 \leq j \leq i}{j \notin S}} y(\alpha_j)$$
as an element in $\lazard_G(F)\{\{F\}\}$, if $i \geq \max{S}$. Otherwise, set it to zero. Also, let $a^{i,S}_j \in \lazard_G(F')$ be the unique coefficients satisfying the following equation :
\begin{eqnarray}
\label{eqn 27}
\psi \circ \overline{\phi} (y(V^i_S)) = \sum_{j \geq 0} a^{i,S}_j\, y(W^j).
\end{eqnarray}

\begin{lemma}
\label{lemma large coef is zero}
For all $j \geq 0$ and $S$ as in (\ref{eqn basic element}), 
$$a^{i,S}_j = 0$$
for sufficiently large $i$.
\end{lemma}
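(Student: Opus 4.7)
The plan is to reduce the vanishing to an ideal-membership statement in $\lazard_G(F')\{\{F'\}\}$. First, I would verify the explicit identification
$$\psi \circ \overline{\phi}\bigl(y(V^i_S)\bigr) = \prod_{l \leq i,\, l \notin S} z(\alpha_l), \qquad \text{where } z(\alpha) \defeq l_{\alpha \dual{\beta_1}} z(W^1).$$
This is immediate from the proof of Proposition \ref{prop flag coef iso}: $\psi \circ \overline{\phi}$ is a ring homomorphism that preserves the $G^*$-action, so applied to $y(\alpha_l) = l_{\alpha_l \dual{\alpha_1}} y(V^1)$ it yields $l_{\alpha_l \dual{\beta_1}} z(W^1) = z(\alpha_l)$, and the claim follows by multiplicativity.

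By axiom (I) in the definition of a $(G,F')$-equivariant formal group law, every element of the ideal $(z(W^{j+1}))$ has a unique topological expansion in the basis $\{z(W^{j+1}), z(W^{j+2}), \ldots\}$, so that its coefficients on $z(W^0), \ldots, z(W^j)$ are zero. Thus it suffices to show that
$$\prod_{l \leq i,\, l \notin S} z(\alpha_l) \in (z(W^{j+1}))$$
for $i$ sufficiently large.

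The crucial input is the completeness of the $G$-flag $F$: each character of $G$, and in particular each of $\beta_1, \ldots, \beta_{j+1}$, occurs as $\alpha_l$ for infinitely many $l$. Since $S$ is finite, we can choose distinct indices $l_1, \ldots, l_{j+1}$, each lying outside $S$, with $\alpha_{l_k} = \beta_k$. Setting $N \defeq \max(l_1, \ldots, l_{j+1})$, for every $i \geq N$ the factorization
$$\prod_{l \leq i,\, l \notin S} z(\alpha_l) = \bigl( z(\beta_1)\, z(\beta_2) \cdots z(\beta_{j+1}) \bigr) \cdot \prod_{l \in T_i} z(\alpha_l) = z(W^{j+1}) \cdot \prod_{l \in T_i} z(\alpha_l)$$
holds, where $T_i = \{1, \ldots, i\} \setminus (S \cup \{l_1, \ldots, l_{j+1}\})$, and the right-hand side manifestly lies in $(z(W^{j+1}))$. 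The one point to confirm outside this bookkeeping is that ``complete $G$-flag'' entails the infinitary appearance of every character; this is built into the definition, since the flag arises from a filtration of a complete $G$-universe containing every irreducible with infinite multiplicity. No further structural input is required.
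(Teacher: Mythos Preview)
Your proof is correct and follows essentially the same approach as the paper's: both arguments factor $\psi \circ \overline{\phi}(y(V^i_S))$ as $y(W^{j+1})$ times a remainder in $\lazard_G(F')\{\{F'\}\}$ and then read off the vanishing of the $j$-th coefficient. The paper phrases the last step via the identity $b'^{\,j+1,k}_j = 0$ (citing Proposition~14.1 of \cite{equi FGL}), whereas you invoke axiom~(I) directly; these are equivalent, and your appeal to the completeness of $F$ (rather than $F'$) is the logically correct one.
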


\begin{proof}
Since $F'$ is a complete $G$-flag, for sufficiently large $i$,
$$y(V^i_S) = y(W^{j+1}) y(\gamma_1) \cdots y(\gamma_n)$$
for some $n$ and characters $\gamma_k$. As the map $\psi \circ \overline{\phi}$ fixes elements of the form $y(\gamma)$, it also fixes the element 
$$y(W^{j+1}) = y(\beta_1) \cdots y(\beta_{j+1}).$$
Therefore,
\begin{eqnarray}
\psi \circ \overline{\phi}(y(V^i_S)) &=& \psi \circ \overline{\phi}(y(W^{j+1}) y(\gamma_1) \cdots y(\gamma_n)) \nonumber\\
&=& y(W^{j+1}) \psi \circ \overline{\phi}(y(\gamma_1) \cdots y(\gamma_n)) \nonumber\\
&=& y(W^{j+1}) \sum_{k \geq 0} a_k' y(W^k), \nonumber
\end{eqnarray}
for some $a_k' \in \lazard_G(F')$. Therefore,
$$\psi \circ \overline{\phi}(y(V^i_S)) = \sum_{k,\, l \geq 0} a_k'\, {b'}^{j+1,k}_l\, y(W^l).$$
The result then follows from the fact that $b'^{j+1,k}_j = 0$ for all $k$ (by Proposition 14.1 in \cite{equi FGL}).
\end{proof}

Now, we define an operator $W^i_S(\L)$ on $\bigbasicmod{G}{F'}{}{X}$ as an analogue of $V^i_S(\L)$ (use $\beta_j$ instead of $\alpha_j$). Then, for each infinite Chern class operator $\sigma = \sum_I a_I V^{i_1}_{S_1}(\L_1) \cdots V^{i_r}_{S_r}(\L_r)$ on $\bigbasicmod{G}{F}{}{X}$, we define an associated infinite Chern class operator on $\bigbasicmod{G}{F'}{}{X}$ :
$$\Psi_{F,F'}(\sigma) \defeq \sum_J \sum_I \phi(a_I) a^{i_1, S_1}_{j_1} \cdots a^{i_r, S_r}_{j_r}\, W^{j_1}(\L_1) \cdots W^{j_r}(\L_r)$$ 
(it is well-defined by Lemma \ref{lemma large coef is zero}). 

\begin{lemma}
\label{lemma flag indep}
For all $i \geq 0$ and $S$ as in (\ref{eqn basic element}), 
$$V^i_S(\L) = \sum_{j \geq 0} a^{i,S}_j\, W^j(\L)$$
as operators on $\bigcob{}{G,F'}{X}$.
\end{lemma}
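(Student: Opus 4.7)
The plan is to promote the formal identity (\ref{eqn 27}) in $R'\{\{F'\}\}$ to an operator identity via the defining universal property of $\lazard_G(F')$. Since both $V^i_S(\L)$ and $\sum_j a^{i,S}_j W^j(\L)$ are $R'$-linear combinations of composites of Chern class operators, and since Chern classes mutually commute by \textbf{(A5)} and interact with projective pushforward and smooth pullback via \textbf{(A3)}, \textbf{(A4)}, it suffices to verify the identity applied to $[\id_Y]$ for an arbitrary \girred{G}\ $Y \in$ \gsmcat{G}\ equipped with an arbitrary $\L \in \picard{G}{Y}$: a general element of $\bigcob{}{G,F'}{X}$ is an infinite sum of $f_*\, c(\cat{M}_1) \cdots c(\cat{M}_r)[\id_Z]$ for various $f : Z \to X$, and both sides of the claimed identity commute past $f_*$ and past each $c(\cat{M}_i)$ onto $[\id_Z]$.

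For such a pair $(Y,\L)$, I will construct an $R'$-algebra homomorphism
$$\Omega_{Y,\L} : R'\{\{F'\}\} \longrightarrow A_{Y,\L},$$
where $A_{Y,\L}$ is the commutative $R'$-subalgebra of $\Endoinf{\bigcob{}{G,F'}{Y}}$ generated by the Chern class operators $c(\L \otimes \beta)$ for $\beta \in G^*$. The map is specified by $\Omega_{Y,\L}(y(\beta)) = c(\L \otimes \beta)$, or equivalently $y(W^j) \mapsto W^j(\L)$, the two prescriptions being consistent by $G^*$-equivariance. Its well-definedness reduces to verifying the three $(G,F')$-formal group law relations --- product, $G^*$-action, and coproduct --- at the operator level: each is exactly the \textbf{(EFGL)} axiom for the flag $F'$ applied to $[\id_Y]$, upgraded from cycles to operators by writing an arbitrary $x = f_*\, c(\cat{M}_1) \cdots c(\cat{M}_r)[\id_Z]$ and commuting the relevant Chern class operators through $f_*$ and the $c(\cat{M}_i)$ via \textbf{(A3)}, \textbf{(A4)}, \textbf{(A5)} before reapplying \textbf{(EFGL)} on $\bigcob{}{G,F'}{Z}$. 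The potentially-infinite sums in $R'\{\{F'\}\}$ are matched by the infinite-cycle structure $\bigcob{}{G,F'}{Y} = \prod_{s \geq 0} \bigbasicmod{G}{F'}{s}{Y}$, so convergence is automatic.

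With $\Omega_{Y,\L}$ in hand, the lemma follows by evaluating both sides of (\ref{eqn 27}) under $\Omega_{Y,\L}$ and applying the resulting operators to $[\id_Y]$. The composite $\psi \circ \overline{\phi}$ is a ring homomorphism preserving the $G^*$-action (by the commutative diagram preceding Proposition \ref{prop flag coef iso}), so it sends each $y(\alpha_j) \in R\{\{F\}\}$ to the element of $R'\{\{F'\}\}$ characterized by the same $G^*$-action, which is again denoted $y(\alpha_j)$; hence $\psi \circ \overline{\phi}(y(V^i_S)) = \prod_{j \notin S,\, 1 \leq j \leq i} y(\alpha_j)$ in $R'\{\{F'\}\}$. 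Under $\Omega_{Y,\L}$ the LHS of (\ref{eqn 27}) maps to $\prod c(\L \otimes \alpha_j) = V^i_S(\L)$ and the RHS maps to $\sum_j a^{i,S}_j\, W^j(\L)$, yielding the desired equality. The main obstacle is the careful bookkeeping needed to propagate \textbf{(EFGL)} from an identity on $[\id_Y]$ to an identity in the operator algebra $A_{Y,\L}$; once that is done, the existence of $\Omega_{Y,\L}$ is automatic from Corollary~14.3 of \cite{equi FGL}.
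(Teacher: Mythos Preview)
Your proof is correct and takes essentially the same approach as the paper: both exploit the dictionary $y(\beta) \leftrightarrow c(\L \otimes \beta)$, with the paper carrying out a direct parallel computation (expanding $\psi \circ \overline{\phi}(y(V^i_S))$ and $V^i_S(\L)$ via the structure constants $d'(\cdot)^1_k$ and ${b'}^{k,l}_p$ and matching coefficients) while you package that same dictionary as the ring homomorphism $\Omega_{Y,\L}$. One remark: you do not actually need Corollary~14.3 of \cite{equi FGL} here, since that result produces maps out of $R'$ rather than out of $R'\{\{F'\}\}$; instead, $\Omega_{Y,\L}$ is specified directly on the topological basis $\{y(W^j)\}$, and its multiplicativity and $G^*$-equivariance are precisely the operator forms of the \textbf{(EFGL)} product and action relations, which you have already argued for.
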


\begin{proof}
Note that, as an operator on $\bigcob{}{G,F'}{X}$, 
$$V^i_S(\L) = c(\L \otimes \alpha_1)  c(\L \otimes \alpha_2) \cdots c(\L \otimes \alpha_{i})$$
omitting $\L \otimes \alpha_k$ whenever $k \in S$.

If $i < \max{S}$, then the statement is trivially true. Suppose $i \geq \max{S}$. Denote the indices from 1 to $i$ which is not in $S$ by $j_1, \ldots, j_n$. Then, on one hand,
\begin{eqnarray}
\psi \circ \overline{\phi} (y(V^i_S)) &=& \psi \circ \overline{\phi} (y(\alpha_{j_1}) \cdots y(\alpha_{j_n})) \nonumber\\
&=& l_{\alpha_{j_1} \dual{\beta_1}}\, y(W^1) \cdots l_{\alpha_{j_n} \dual{\beta_1}}\, y(W^1) \nonumber\\
&=& (\sum_{k_1}\, d'(\alpha_{j_1} \dual{\beta_1})^1_{k_1}\, y(W^{k_1})) \cdots (\sum_{k_n}\, d'(\alpha_{j_n} \dual{\beta_1})^1_{k_n}\, y(W^{k_n})), \nonumber
\end{eqnarray}
which can then be expressed in terms of $\{y(W^j)\}$ by the equation
$$y(W^k) y(W^l) = \sum_p b'^{k,l}_p y(W^p).$$
On the other hand,
\begin{eqnarray}
V^i_S(\L) &=& c(\L \otimes \alpha_{j_1}) \cdots c(\L \otimes \alpha_{j_n}) \nonumber\\
&=& W^1(\L \otimes \alpha_{j_1} \dual{\beta_1}) \cdots W^1(\L \otimes \alpha_{j_n} \dual{\beta_1}) \nonumber\\
&=& (\sum_{k_1}\, d'(\alpha_{j_1} \dual{\beta_1})^1_{k_1}\, W^{k_1}(\L)) \cdots (\sum_{k_n}\, d'(\alpha_{j_n} \dual{\beta_1})^1_{k_n}\, W^{k_n}(\L)), \nonumber
\end{eqnarray}
which can be expressed in terms of $\{W^j(\L)\}$ by the equation
$$W^k(\L) W^l(\L) = \sum_p b'^{k,l}_p W^p(\L).$$
The result then follows from matching the coefficients and equation (\ref{eqn 27}).
\end{proof}

By the map $\phi : \lazard_G(F) \to \lazard_G(F')$, we may consider $\cob{}{G,F'}{-}$ as a $\lazard_G(F)$-module.

\begin{prop}
\label{prop flag independent}
For any $X \in \gvar{G}$, $\Psi_{F,F'}$ defines a canonical $\lazard_G(F)$-module iso\morp 
$$\Psi_{F,F'} : \cob{}{G,F}{X} \iso \cob{}{G,F'}{X}$$ 
and it commutes with the \proj\ push-forward, \sm\ pull-back, external product and the infinite Chern class operator, i.e., $\Psi_{F,F'} \circ \sigma = \Psi_{F,F'}(\sigma) \circ \Psi_{F,F'}$.
\end{prop}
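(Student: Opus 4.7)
The plan is to verify four things: (i) the formula defines a map from $\basicmod{G}{F}{}{X}$ into $\basicmod{G}{F'}{}{X}$; (ii) it respects the \textbf{(Sect)} and \textbf{(EFGL)} relations cutting out $\cob{}{G,F}{X}$; (iii) the symmetric construction $\Psi_{F',F}$ furnishes a two-sided inverse; and (iv) compatibility with the four basic operations. Before beginning, note that $\Psi_{F,F'}$ is a $\lazard_G(F)$-module map by construction, using the module structure on $\cob{}{G,F'}{X}$ induced by $\phi$.

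For (i), the explicit expression $\Psi_{F,F'}(\sigma) = \sum_J\sum_I \phi(a_I)\,a^{i_1,S_1}_{j_1}\cdots a^{i_r,S_r}_{j_r}\,W^{j_1}(\L_1)\cdots W^{j_r}(\L_r)$ is a genuine infinite Chern class operator of the form (\ref{eqn basic element}) for flag $F'$: Lemma \ref{lemma large coef is zero} guarantees that in each fixed multi-index $J$ the sum on $I$ is finite enough to collect into a single coefficient in $\lazard_G(F')$. Lemma \ref{lemma flag indep} then shows that in $\bigcob{}{G,F'}{X}$ this element agrees with the naive image $\overline{\Psi}_{F,F'}(\sigma)$ of $\sigma$ viewed as an operator via the identifications of Chern classes.

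For (ii), the \textbf{(Sect)} axiom is preserved term-by-term because the generating classes $[\id_Y,\L]$ and $[Z\embed Y]$ are geometric cycles carried over with the coefficient $1$ mapped to $1$. The heart of the argument is \textbf{(EFGL)}: one must check that $\Psi_{F,F'}$ sends each relation such as $V^i(\L)V^j(\L)[\id_Y]-\sum_s b^{i,j}_s V^s(\L)[\id_Y]$ to zero in $\cob{}{G,F'}{X}$. Since $\psi\circ\overline{\phi}$ is an isomorphism of topological $R$-algebras respecting product, $G^*$-action, and coproduct (Proposition \ref{prop flag coef iso} and the construction of $\phi$ from Section 13 of \cite{equi FGL}), rewriting each $V^n_T(\L)$ appearing in the image as $\sum_j a^{n,T}_j W^j(\L)$ via Lemma \ref{lemma flag indep} and matching coefficients in the basis of $y(W^\bullet)$-monomials reduces the claim to the $(G,F')$-FGL relations, which hold in $\cob{}{G,F'}{X}$ by the \textbf{(EFGL)} axiom for $F'$.

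For (iii), by symmetry I would apply the same construction to produce $\Psi_{F',F}$; then $\Psi_{F',F}\circ\Psi_{F,F'}$ acts on coefficients by $\phi'\circ\phi$ (which is the identity by Proposition \ref{prop flag coef iso}) and acts on each operator $V^i_S(\L)$ by the iterated rewriting whose structure constants are precisely those produced by $\psi'\circ\overline{\phi'}\circ\psi\circ\overline{\phi}$ applied to $y(V^i_S)$; the computation in Proposition \ref{prop flag coef iso} shows this composition fixes $y(V^i_S)$, so $\Psi_{F',F}\circ\Psi_{F,F'}=\id$, and symmetrically in the other order. For (iv), projective push-forward and smooth pull-back commute with $\Psi_{F,F'}$ trivially because they modify only the map $Y\to X$; external product commutes because $\phi$ is a ring homomorphism and the Chern operators are introduced through pull-back along the projections; and compatibility with infinite Chern class operators is tautological from the definition of $\Psi_{F,F'}(\sigma)$. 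The main obstacle is step (ii): the formal rearrangements involve infinite sums, and one must check these are interpreted coherently inside the quotient theories, which is precisely where completeness of both flags and Lemma \ref{lemma large coef is zero} are essential.
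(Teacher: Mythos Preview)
Your proposal is correct and follows essentially the same route as the paper: both arguments define $\Psi_{F,F'}$ via the naive coefficient change $\overline{\Psi}_{F,F'}$, use Lemma~\ref{lemma flag indep} (together with Lemma~\ref{lemma large coef is zero}) to rewrite $V^i_S(\L)$ in terms of $W^j(\L)$ so that the image lands in $\cob{}{G,F'}{X}$, verify \textbf{(Sect)} and \textbf{(EFGL)} via the dictionary between $y(V^i_S)$ and $V^i_S(\L)$, and conclude bijectivity from Proposition~\ref{prop flag coef iso}. One small imprecision: in your step~(i) you say the map lands in $\basicmod{G}{F'}{}{X}$, but as you yourself note in the body of that step, the identification with $\Psi_{F,F'}(\sigma)[\id_Y]$ only holds in the quotient $\bigcob{}{G,F'}{X}$; the paper handles this by first mapping to $\bigcob{}{G,F'}{X}$ and then observing the image is generated by elements $f_*\circ\Psi_{F,F'}(\sigma)[\id_Y]$, hence lies in $\cob{}{G,F'}{X}$.
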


\begin{proof}
The map 
$$\Psi_{F,F'} : \basicmod{G}{F}{}{-} \to \bigcob{}{G,F'}{-}$$ 
clearly commutes with the \proj\ push-forward, \sm\ pull-back and external product. For the infinite Chern class operator, 
\begin{eqnarray}
\Psi_{F,F'} \circ \sigma [\id_Y] &=& \Psi_{F,F'}(\sum_I a_I V^{i_1}_{S_1}(\L_1) \cdots V^{i_r}_{S_r}(\L_r)[\id_Y]) \nonumber\\
&=& \sum_I \phi(a_I) V^{i_1}_{S_1}(\L_1) \cdots V^{i_r}_{S_r}(\L_r)[\id_Y] \nonumber\\
&=& \sum_I \sum_J \phi(a_I) a^{i_1, S_1}_{j_1} \cdots a^{i_r, S_r}_{j_r} W^{j_1}(\L_1) \cdots W^{i_r}(\L_r)[\id_Y] \nonumber
\end{eqnarray}
by Lemma \ref{lemma flag indep}, which is equal to $\Psi_{F,F'}(\sigma) \circ \Psi_{F,F'} [\id_Y]$. So, $\Psi_{F,F'}$ commutes with the basic operations. Clearly, it respects the \textbf{(Sect)} axiom. Moreover, by Lemma \ref{lemma flag indep} and the dictionary between $y(V^i_S)$ and $V^i_S(\L)$, the map $\Psi_{F,F'}$ also respects the \textbf{(EFGL)} axiom. Therefore, $\Psi_{F,F'}$ descends to a map $\cob{}{G,F}{-} \to \bigcob{}{G,F'}{-}$. Since $\cob{}{G,F}{X}$ is generated by elements of the form $f_* \circ \sigma [\id_Y]$ and $\Psi_{F,F'}$ commutes with the basic operations, the image of $\Psi_{F,F'}$ lies inside $\cob{}{G,F'}{X}$. Hence, we have a canonical $\lazard_G(F)$-module \homo
$$\Psi_{F,F'} : \cob{}{G,F}{X} \to \cob{}{G,F'}{X}.$$ 
By symmetry, we also have $\Psi_{F',F} : \cob{}{G,F'}{X} \to \cob{}{G,F}{X}.$ Then, $\Psi_{F,F'}$ is an iso\morp\ because $\Psi_{F',F} \circ \Psi_{F,F'}$ and $\Psi_{F,F'} \circ \Psi_{F',F}$ are both identity maps (by Proposition \ref{prop flag coef iso}).
\end{proof}

\bigskip

\bigskip

%Bibliography	


\begin{thebibliography} {XYZUST}

\label{bib weak factor thm}
\bibitem[AKMW]{weak factor thm} Abramovich, D. ; Karu, K. ; Matsuki, K.; Wlodarczyk, J. : 
Torification and factorization of birational maps. J. Amer. Math. Soc. 15 (2002), no. 3, 531--572.

\label{bib embed desing thm}
\bibitem[BMi]{embed desing thm} Bierstone, E. ; Milman, P. D. : 
Canonical desingularization in characteristic zero by blowing up the maximum strata of a local invariant. Invent. Math. 128 (1997), no. 2, 207--302.

%\label{bib on linearization of lb}
%\bibitem[Br]{on linearization of lb} Brion, M. : 
%On linearization of line bundles, J. Math. Sci. Univ. Tokyo 22 (2015), 1--35. arXiv:1312.6267

\label{bib equi FGL}
\bibitem[CGKr]{equi FGL} Cole, Michael ; Greenlees, J. P. C. ; Kriz, I. :
Equivariant formal group laws. Proc. London Math. Soc. (3) 81 (2000), no. 2, 355--386.

\label{bib universality of complex cobor}
\bibitem[CGKr2]{universality of complex cobor} Cole, Michael ; Greenlees, J. P. C. ; Kriz, I. :
The universality of equivariant complex bordism. Math. Z. 239 (2002), no. 3, 455--475.

\label{bib equivariant conner floyd iso}
\bibitem[Co]{equi conner floyd} Costenoble, S. :
The equivariant Conner-Floyd isomorphism. Trans. Amer. Math. Soc. 304 (1987), no. 2, 801--818. 

\label{bib equi intersection}
\bibitem[EGr]{equi intersection} Edidin, D. ; Graham, W. :
Equivariant intersection theory. Invent. Math. 131 (1998), no. 3, 595--634. 

\label{bib completion of complex cobor}
\bibitem[FuKa]{completion of complex cobor} Fujii, Michikazu ; Kamata, Masayoshi :
On the completion of the $G$-equivariant unitary cobordism rings of G-spaces. Publ. Res. Inst. Math. Sci. 19 (1983), no. 2, 577--600.

\label{bib equi FGL 2}
\bibitem[G]{equi FGL 2} Greenlees, J. P. C. :
Equivariant formal group laws and complex oriented cohomology theories. Equivariant stable homotopy theory and related areas (Stanford, CA, 2000). Homology Homotopy Appl. 3 (2001), no. 2, 225--263.

\label{bib multi FGL}
\bibitem[G2]{multi FGL} Greenlees, J. P. C. :
Multiplicative equivariant formal group laws. J. Pure Appl. Algebra 165 (2001), no. 2, 183--200.

\label{bib EGA IV}
\bibitem[Gr]{EGA IV} Grothendieck, A. : El\'{e}ments de g\'{e}om\'{e}trie alg\'{e}brique. IV. \'{E}tude locale des sch\'{e}mas et des morphismes de sch\'{e}mas IV. Inst. Hautes \'{E}tudes Sci. Publ. Math. No. 32 1967

\label{bib hartshorne}
\bibitem[H]{hartshorne} Hartshorne, R. : 
Algebraic geometry. Graduate Texts in Mathematics, No. 52. Springer--Verlag, New York--Heidelberg, 1977. xvi+496 pp.

\label{bib homo equi alg cobor}
\bibitem[HeMa]{homo equi alg cobor} Heller, J. ; Malag\'{o}n-L\'{o}pez, J. : 
Equivariant algebraic cobordism. Preprint. arXiv:1006.5509v1 [math.AG]

\label{bib equi Riemann Roch}
\bibitem[Kat]{equi Riemann Roch} Kawakubo, Katsuo : Equivariant Riemann--Roch theorems, localization and formal group law. Osaka J. Math. 17 (1980), no. 3, 531--571.

\label{bib equi affine}
\bibitem[KraKu]{equi affine} Kraft, H. ; Kutzschebauch, F. : Equivariant Affine Line Bundles and Linearization. Math. Res. Lett. 3, 619--627 (1996)

\label{bib homo equi alg cobor 2}
\bibitem[Kri]{homo equi alg cobor 2} Krishna, A. : 
Equivariant cobordism of schemes. Preprint. arXiv:1006.3176v2 [math.AG]

\label{bib geo equi alg cobor}
\bibitem[Li]{geo equi alg cobor} Liu, C. L. : 
Equivariant Algebraic Cobordism and Double Point Relations. Preprint. arXiv:1110.5282 [math.AG]

\label{bib universal alg cobor}
\bibitem[LMo]{universal alg cobor} Levine, M. ; Morel, F. : 
Algebraic cobordism. Springer Monographs in Mathematics. Springer, Berlin, 2007. xii+244 pp.

\label{bib alg cobor by DPR}
\bibitem[LP]{alg cobor by DPR} Levine, M. ; Pandharipande, R. : 
Algebraic cobordism revisited. Invent. Math. 176 (2009), no. 1, 63--130.

\label{bib equi K theory}
\bibitem[Me]{equi K theory} Merkurjev, A. S. : 
Comparison of the equivariant and the standard K-theory of algebraic varieties. (Russian) Algebra i Analiz 9 (1997), no. 4, 175--214; translation in St. Petersburg Math. J. 9 (1998), no. 4, 815--850.

\label{bib equi K theory 2}
\bibitem[Me2]{equi K theory 2} Merkurjev, A. S. :
Equivariant K-theory. Handbook of K-theory. Vol. 1, 2, 925--954, Springer, Berlin, 2005.

%\label{bib abelian varieties}
%\bibitem[Mu]{abelian varieties} Mumford, D. : 
%Abelian varieties. With appendices by C. P. Ramanujam and Yuri Manin. Corrected reprint of the second (1974) edition. Tata Institute of Fundamental Research Studies in %Mathematics, 5. Published for the Tata Institute of Fundamental Research, Bombay; by Hindustan Book Agency, New Delhi, 2008. xii+263 pp.

\label{bib GIT}
\bibitem[MuFKi]{GIT} Mumford, D. ; Fogarty, J. ; Kirwan, F. : 
Geometric invariant theory. Third edition. Ergebnisse der Mathematik und ihrer Grenzgebiete (2) [Results in Mathematics and Related Areas (2)], 34. Springer-Verlag, Berlin, 1994. xiv+292 pp.

\label{bib equi completion}
\bibitem[S]{equi completion} Sumihiro, H. : 
Equivariant completion. II. J. Math. Kyoto Univ. 15 (1975), no. 3, 573--605.

\label{bib complex cobor}
\bibitem[T]{complex cobor} Tom Dieck, Tammo :
Bordism of G-manifolds and integrality theorems. Topology 9 1970 345--358.


\end{thebibliography}
\end{document}